\documentclass[11pt]{article}

\usepackage[T1]{fontenc}
\usepackage[utf8]{inputenc}
\usepackage{lmodern}
\usepackage[a4paper,margin=1.08in]{geometry}
\usepackage{amsmath,amssymb,amsthm,mathtools,mathrsfs}
\usepackage{booktabs}
\usepackage{microtype}
\usepackage{etoolbox}
\usepackage{hyperref}
\hypersetup{
  hidelinks,
  pdftitle={Quadratic variations of rough generalized Hermite processes: hidden long memory and a universal Gaussian boundary field},
  pdfauthor={Obayda Julien Assaad},
  pdfsubject={Limit theorems for fractionally filtered generalized Hermite processes},
  pdfkeywords={generalized Hermite process, quadratic variation, Rosenblatt process, Wiener chaos, zero roughness}
}

\newtheorem{theorem}{Theorem}[section]
\newtheorem{corollary}[theorem]{Corollary}
\newtheorem{proposition}[theorem]{Proposition}
\newtheorem{lemma}[theorem]{Lemma}
\theoremstyle{definition}

\theoremstyle{remark}
\newtheorem{remark}[theorem]{Remark}
\AfterEndEnvironment{proof}{\par}

\newcommand{\R}{\mathbb R}
\newcommand{\E}{\mathbb E}
\newcommand{\dd}{\,\mathrm d}
\newcommand{\one}{\boldsymbol 1}
\newcommand{\1}{\mathbf 1}
\newcommand{\Sym}{\operatorname{Sym}}
\newcommand{\per}{\operatorname{per}}
\newcommand{\Var}{\operatorname{Var}}
\newcommand{\Cov}{\operatorname{Cov}}

\renewcommand{\P}{\mathbb P}

\title{Quadratic variations of rough generalized Hermite processes:\\
hidden long memory and a universal Gaussian boundary field}
\author{Obayda Julien Assaad\\[0.35em]
\normalsize Independent researcher, Chisinau, Moldova\\[-0.1em]
\small\href{mailto:obayda.assaad@gmail.com}%
{obayda.assaad@gmail.com}}
\date{}

\begin{document}
\maketitle

\begin{abstract}
We study centered quadratic variations of fractionally filtered generalized
Hermite processes. For every chaos order $q\ge2$, every admissible
anisotropic monomial kernel $g_{\boldsymbol\gamma}$, and every fixed
$0<h\le1/2$, we prove that $N^{2h+\theta-1}V_N$ converges to a Rosenblatt
process with self-similarity exponent $1-\theta$, without subtracting any
chaos projection. Thus the visible roughness $h$ determines the
normalization, whereas the latent singularity exponent $\theta$ determines
the memory of the limit. The proof uses a sharp paired-filter threshold and
charged diagram power counting.

At $h=0$, the bare kernel has logarithmically divergent energy.
Variance-normalized $h$-regularization and a hard endpoint cutoff have the
same residue and converge in finite-dimensional distributions to the
universal Gaussian field $G_t=(\zeta_t-\zeta_0)/\sqrt2$. Although $G$ has no
stochastically continuous modification, the iterated boundary-first
quadratic energies converge to $\sqrt3B$. In the isotropic case,
simultaneous limits $h_N\downarrow0$ are governed by
$\lambda_N=h_NN^{1/2-\theta}$: $\lambda_N\to0$ yields a Brownian limit,
$\lambda_N\to\lambda\in(0,\infty)$ yields an independent
Brownian--Rosenblatt sum, and $\lambda_N\to\infty$ yields a Rosenblatt limit
after division by $\lambda_N$. We also obtain an explicit polynomial rate
in fixed Malliavin--Sobolev norms and functional convergence in
little-H\"older spaces.
\end{abstract}

\medskip
\noindent\textbf{Keywords.}
Generalized Hermite process; quadratic variation; Gaussian--Rosenblatt
transition; zero roughness; Wiener chaos; singular fractional filter.

\smallskip
\noindent\textbf{2020 Mathematics Subject Classification.}
Primary 60F05; Secondary 60G18, 60H05, 60G22.

\section{Introduction}

Generalized Hermite processes are non-Gaussian self-similar processes with
stationary increments living in a fixed Wiener chaos.  They arise from the
classical noncentral-limit mechanism of Taqqu \cite{Taqqu1975,Taqqu1979};
their integral representations and the Rosenblatt member of the class are
developed in \cite{Tudor2008,PipirasTaqqu2010,Tudor2013,BaiTaqqu2014}.
Fractional filtering changes their self-similarity index without destroying
this finite-chaos representation; rough versions and related generalized
Wiener--Hermite integrals were developed in
\cite{BaiTaqqu2014,AssaadDiezTudor}.  Starting with a long-memory exponent
$H_0>1/2$, we apply a filter of order $\beta=h-H_0<0$ and observe a process
of exponent $0<h\le1/2$.  Its covariance has the scaling of a rough process
up to and including the Brownian regularity threshold.
The first question is whether a nonlinear local energy still detects the
long memory hidden by this covariance-level change of regularity.

At the singular endpoint $h=0$, where the bare kernel leaves $L^2$, a
second question is whether a canonical renormalized object exists and how
the zero-roughness limit interacts with the high-frequency limit.

Every variance-normalized member of the class has covariance
\[
 \E[X_tX_s]
 =\frac12\bigl(t^{2h}+s^{2h}-|t-s|^{2h}\bigr),
\]
independently of the chaos order $q$, the latent exponent $H_0$, and the
anisotropic kernel parameters.  The covariance therefore sees only $h$.
The centered quadratic energy sees more: its normalization involves both
$h$ and the dominant singularity exponent $\theta$, whereas the
self-similarity exponent of its limiting field is $1-\theta$ and is
independent of the filter.  This separation is the hidden-memory mechanism
studied below.

The variation theory of self-similar processes provides the immediate
background; see Tudor's monograph \cite{Tudor2013}.  For ordinary Hermite
processes of arbitrary order, the chaos expansion of the raw quadratic
variation is asymptotically governed by the maximal nontrivial contraction:
after normalization, the surviving term belongs to the second Wiener chaos
and has a Rosenblatt limit.  This reproduction phenomenon, its
Malliavin-calculus proof, and its consequences for the self-similarity
parameter were established in
\cite{ChronopoulouTudorViens2009,TudorViens2009,CTV}.  For the Rosenblatt
process, replacing first increments by longer finite-difference or wavelet
filters does not remove that leading second-chaos contribution
\cite{LongerFilters}.  Quantitative nonnormal approximation for Hermite
power variations and central/noncentral limits for weighted power variations
of fractional Brownian motion were obtained in
\cite{BretonNourdin2008,NourdinNualartTudor2010}; the anisotropic analogue for
Hermite variations of the fractional Brownian sheet was treated in
\cite{ReveillacStauchTudor2012}.  These results explain both the robustness of
Hermite limits and the sensitivity of the limiting chaos to the memory
exponents.

A complementary multiscale line replaces increments by wavelet coefficients
and squares them.  For the Rosenblatt process the wavelet statistic still has
a non-Gaussian limit even when the wavelet has several vanishing moments
\cite{BardetTudor2010}.  For nonlinear transforms of long-memory Gaussian
sequences, the large-scale wavelet coefficients and their scalograms can
select different chaotic orders according to the Hermite rank and the joint
scale--sample regime
\cite{ClauselEtAl2012,ClauselEtAl2013}.  In the pure Hermite-polynomial case
the normalized scalogram again reproduces a Rosenblatt limit
\cite{ClauselWavelet2014}, whereas quadratic variations of sums of two
consecutive Hermite processes can be Gaussian or Rosenblatt according to the
dependence and parameter regime \cite{ClauselSum2014}.  Related quadratic
variations of the fractional-colored stochastic heat equation display their
own Gaussian/non-Gaussian phase transition \cite{TorresTudorViens2014}.

Recent work changes the statistic more radically: suitably selected
increments, modified wavelet coefficients, and modified weighted power
variations recover asymptotic normality for Hermite processes
\cite{AyacheTudor2024,LoosveldtTudor2025,AyacheLoosveldtTudor2026}.
Here, by contrast, the statistic remains the unmodified quadratic variation
built from consecutive first increments.  It is the underlying generalized
Hermite process that is moved below Brownian regularity by a singular
fractional filter.  The questions are therefore different: whether the
second-chaos reproduction mechanism survives for every admissible
anisotropy in the symmetrized monomial class, how its scale separates the visible exponent $h$ from the hidden
memory exponent $\theta$, and what replaces it when the kernel reaches the
non-$L^2$ boundary $h=0$.

The classical central and noncentral limit theories for nonlinear Gaussian
functionals go back to Breuer--Major, Dobrushin--Major and Taqqu
\cite{BreuerMajor1983,DobrushinMajor1979,Taqqu1979}.  The present statistic
starts instead from a non-Gaussian process already lying in a fixed chaos.
The result controls the complete quadratic variation without subtracting
lower-chaos projections by hand, covers every admissible anisotropy in
the symmetrized monomial class at every order, identifies the complete family
$M^2$ of surviving second-chaos sectors, and supplies an explicit polynomial
rate in fixed Malliavin--Sobolev norms together with functional convergence
in little-H\"older spaces.  These features are proved in one normalization.

The paired argument below is valid throughout $\beta\in(-1,0)$ and in
particular includes the endpoint $h=1/2$.  Below $1/2$ the problem changes
character.  When
$\beta\le-1/2$, treating the two endpoint singularities separately imposes
the unusable condition $a<1+\beta$ on each Riesz edge of exponent $a$.  A
quadratic variation, however, always contains the filters in pairs.  Their
absolute autocorrelation behaves like $|z-s|^{2\beta+1}$ near
$s\in\{-1,0,1\}$ when $\beta<-1/2$, has a logarithmic singularity when
$\beta=-1/2$, and is locally bounded when $\beta>-1/2$.  In all three
cases this gives the doubled budget
\[
 a<\min\{1,2(1+\beta)\}.
\]

The apparent anisotropic restriction disappears when one keeps the port
labels throughout the closure.  Writing
$\theta_j=-2\gamma_j-1$ and assigning charge $\theta_j/2$ to the
$j$th port, every closed diagram satisfies an exact cut identity.  It makes
all compact collision inequalities strict under admissibility alone.
After the filter tails are represented by two additional roots, the same
charge count controls every macroscopic partition face.  Consequently the
fixed-positive-roughness theorem holds for every admissible anisotropic
exponent vector in the monomial class throughout $0<h\le1/2$; the Brownian threshold is included.

The leading projection is the contraction of order $q-1$, but its
normalization depends on the observed roughness:
\[
 N^{2h+\theta-1}V_N(t),
 \qquad \theta=-2\gamma_*-1.
\]
It converges to a Rosenblatt process of exponent $1-\theta$, which depends on
the hidden kernel rather than on $h$.  In the isotropic model
$\theta=2(1-H_0)/q$, so the second-chaos limit retains a quantitative trace
of the parent-chaos order even though all variance-normalized processes with
the same $h$ have identical covariance.  This is a structural separation of
visible roughness from hidden memory, not an estimation statement.

The fixed-parameter theorem does not extend continuously to $h=0$.  The
normalizing Riesz energy has a simple pole, and the bare multiple-integral
kernel leaves $L^2$.  We compute its residue and show that both
variance-normalized $h$-regularization and an endpoint cutoff select the same
Gaussian cylindrical field.  More precisely, the pole $1/(2h)$ and the cutoff divergence
$\log(1/\varepsilon)$ have the same residue.  This is the exact analogue of
the dimensional-regularization/cutoff dictionary.  Thus zero roughness is
not a missing value of the Rosenblatt theorem: it changes both the chaos and
the available path topology.

The two orders of limits, $h\downarrow0$ and $N\to\infty$, do not commute.
In the isotropic model their competition is measured exactly by
\[
 \lambda_N=h_NN^{1/2-\theta}.
\]
The active second-chaos projection has size $\lambda_N$, while the
remaining even chaoses produce a Brownian motion of variance parameter
$3$.  At the critical scale the two limits coexist and are independent.
The proof requires uniform control of translated endpoint roots on four
macroscopic blocks; it cannot be obtained by inserting $h_N$ into a
fixed-$h$ estimate.

The common algebraic skeleton, the three distinct analytic mechanisms, and
the precise role of every appendix are described in
Section~\ref{sec:proof-strategy}.  In particular, the boundary and crossover
arguments require additional uniform diagram estimates developed for the
present problem and are not formal consequences of the
fixed-positive-roughness theorem.

\section{Model and notation}

We use the convention
$\E[I_q(f)I_q(g)]=q!\langle f,g\rangle$ for symmetric kernels and the
standard Malliavin--Sobolev notation of
\cite{Nualart2006,NourdinPeccati2012}.  For multiple-integral product
formulae, contractions, cumulants, and diagram expansions we also refer to
\cite{PeccatiTaqqu2011}.

Throughout,
\[
 \langle x\rangle=(1+|x|^2)^{1/2}.
\]
We denote by $\mathcal C_r$ the $r$th Wiener chaos and by $d_{\rm W}$ the
$1$-Wasserstein distance.  The notation $B(a,b)$ denotes Euler's beta
function, whereas $B=(B_t)_{t\ge0}$ denotes a standard Brownian motion when
no arguments are displayed.  The Skorokhod space $D([0,T])$ is endowed with
the $J_1$ topology.

Fix $q\ge2$ and exponents
\begin{equation}
-1<\gamma_q\le\cdots\le\gamma_1<-\frac12,
\qquad
-\frac{q+1}{2}<\alpha:=\sum_{j=1}^q\gamma_j<-\frac q2.
\label{eq:admissible}
\end{equation}
Put
\begin{equation}
H_0=\alpha+\frac q2+1\in\left(\frac12,1\right),
\qquad
\beta=h-H_0\in(-1,0),
\qquad 0<h\le\frac12.
\label{eq:parameters}
\end{equation}
Let
\[
g_{\boldsymbol\gamma}(x_1,\ldots,x_q)
=\frac1{q!}\sum_{\sigma\in\mathfrak S_q}
\prod_{j=1}^q(x_j)_+^{\gamma_{\sigma(j)}}.
\]
On an isonormal Gaussian space over $L^2(\R)$ define
\begin{equation}
L_t^{(\beta)}(\mathbf x)=A_{\boldsymbol\gamma,\beta}
\int_\R\big[(t-u)_+^\beta-(-u)_+^\beta\big]
g_{\boldsymbol\gamma}(u\one-\mathbf x)\dd u,
\qquad
X_t=I_q(L_t^{(\beta)}),
\label{eq:process}
\end{equation}
where $A_{\boldsymbol\gamma,\beta}>0$ is fixed by
$\E[X_1^2]=1$.  Proposition~\ref{prop:processconstruction} below proves
that the kernel is square integrable and that the process is
$h$-self-similar with stationary increments.  In particular,
\begin{equation}
\E[(X_{t+r}-X_t)^2]=|r|^{2h}.
\label{eq:incrementvariance}
\end{equation}
At $h=1/2$ this is Brownian covariance.  The process is nevertheless not a
Brownian motion: each nonzero $X_t$ remains in the $q$th Wiener chaos with
$q\ge2$ and is therefore non-Gaussian.

For $t\ge0$ set
\begin{equation}
V_N^{(h)}(t)=\sum_{i=0}^{\lfloor Nt\rfloor-1}
\left[(X_{(i+1)/N}-X_{i/N})^2-N^{-2h}\right].
\label{eq:variation}
\end{equation}
Write
\[
\gamma_*=\gamma_1,
\qquad M=\{j:\gamma_j=\gamma_*\},
\qquad \mu=|M|,
\qquad \theta=-2\gamma_*-1.
\]
We use $M^2=M\times M$.
Since $\alpha\le q\gamma_*$,
$H_0\le1-q\theta/2$, and therefore
$0<\theta\le2(1-H_0)/q<1/q$.  The limiting second-chaos kernel is
\begin{equation}
K_t^{(\gamma_*)}(x,y)
=\int_0^t(s-x)_+^{\gamma_*}(s-y)_+^{\gamma_*}\dd s.
\label{eq:rosenblattkernel}
\end{equation}
The process $I_2(K_t^{(\gamma_*)})$ is, up to a deterministic
normalization, a Rosenblatt process with self-similarity exponent
$H_R=2\gamma_*+2=1-\theta$.

\subsection{Constants}

For $a\in(0,1)$ define the Riesz energy
\begin{equation}
\Lambda_a(\beta)=\int_{\R^2}\varphi_\beta(x)\varphi_\beta(y)
|x-y|^{-a}\dd x\dd y,
\qquad
\varphi_\beta(x)=(1-x)_+^\beta-(-x)_+^\beta.
\label{eq:energy}
\end{equation}
The integral is understood as the positive spectral energy; for the
parameters used below it is absolutely convergent after pairing the two
filters.  Fourier inversion gives
\begin{equation}
\Lambda_a(\beta)=
\frac{2\Gamma(1-a)\sin(\pi a/2)\Gamma(\beta+1)^2}
{\Gamma(3+2\beta-a)
 \sin\!\big(\pi(2+2\beta-a)/2\big)},
\qquad 2\beta<a<2(1+\beta).
\label{eq:energyexplicit}
\end{equation}
Indeed, with the convention
$\widehat f(\xi)=\int_\R e^{-i\xi x}f(x)\dd x$, one has in the sense of
tempered distributions
\[
|\widehat{\varphi_\beta}(\xi)|^2
=2\Gamma(\beta+1)^2(1-\cos\xi)|\xi|^{-2\beta-2}
\]
and
\[
\widehat{|\,\cdot\,|^{-a}}(\xi)
=2\Gamma(1-a)\sin(\pi a/2)|\xi|^{a-1}.
\]
Put $\nu=2+2\beta-a\in(0,2)$.  Parseval's identity and
\[
\int_\R(1-\cos\xi)|\xi|^{-1-\nu}\dd\xi
=\frac{\pi}{\Gamma(1+\nu)\sin(\pi\nu/2)}
\]
give \eqref{eq:energyexplicit}.  One may justify the distributional
calculation by multiplying both Fourier factors by
$e^{-\varepsilon\xi^2}\1_{\{|\xi|\le R\}}$, first sending $R$ to infinity
and then $\varepsilon$ to zero.  Lemma~\ref{lem:roughconvolution}, proved
below, supplies spatial domination.  As a direct consistency check,
$\varphi_0=\1_{[0,1)}$ and
\[
 \Lambda_a(0)=\frac{2}{(1-a)(2-a)}
 =\int_0^1\!\int_0^1|x-y|^{-a}\dd x\dd y.
\]
For the two energies used here,
\begin{align}
\Lambda_c(\beta)
&=\frac{2\Gamma(1-c)\sin(\pi c/2)\Gamma(\beta+1)^2}
{\Gamma(1+2h+\theta)\sin(\pi(2h+\theta)/2)},
\label{eq:Lambdacexplicit}\\
\Lambda_d(\beta)
&=\frac{2\Gamma(1-d)\sin(\pi d/2)\Gamma(\beta+1)^2}
{\Gamma(1+2h)\sin(\pi h)}.
\label{eq:Lambdadexplicit}
\end{align}

Put
\[
W_{jk}=B(\gamma_j+1,-\gamma_j-\gamma_k-1),
\qquad \mathcal B_0=\per(W),
\]
and let $\mathcal B_*$ be the permanent of the minor obtained by deleting a
row and a column indexed by elements of $M$.  Permuting rows and columns
carrying the common maximal exponent shows that this permanent is independent
of the selected pair of maximal labels.  Finally set
\begin{equation}
d=2(1-H_0),
\qquad c=d-\theta.
\label{eq:cd}
\end{equation}
The inequality $H_0\le1-q\theta/2$ gives $d\ge q\theta$, hence
$0<c=d-\theta<d<1$; since $h>0$,
\(c,d<2(1+\beta)\).  Hence both energies in
\eqref{eq:constant} below are finite and strictly positive.

\begin{proposition}[Construction and normalization]
\label{prop:processconstruction}
For every $t\ge0$, the kernel $L_t^{(\beta)}$ in
\eqref{eq:process} belongs to $L_s^2(\R^q)$.  With
\begin{equation}
A_{\boldsymbol\gamma,\beta}^2
=\frac1{\mathcal B_0\Lambda_d(\beta)},
\label{eq:Aexplicit}
\end{equation}
the process $X_t=I_q(L_t^{(\beta)})$ is centered, has stationary
increments, is $h$-self-similar, and satisfies
\begin{equation}
\E[(X_t-X_s)^2]=|t-s|^{2h}.
\label{eq:processcovariance}
\end{equation}
It admits a continuous modification.
\end{proposition}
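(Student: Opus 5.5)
The strategy is to compute $\E[(X_t-X_s)^2]$ exactly by first integrating out the $q$ spatial variables. This reduces the $L^2(\R^q)$ norm of the kernel to a Riesz energy of the filter difference, and square integrability, stationarity, self-similarity, the normalization constant and the continuous modification all follow from that single identity.

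\textbf{Step 1: integrate out the spatial variables.} For $u,v\in\R$ and exponents $\gamma_j,\gamma_k\in(-1,-1/2)$, the Beta integral gives
\[
\int_\R (u-x)_+^{\gamma_j}(v-x)_+^{\gamma_k}\dd x
=B(\gamma_{\min}+1,-\gamma_j-\gamma_k-1)\,|u-v|^{\gamma_j+\gamma_k+1}.
\]
Here the first argument of $B$ carries the exponent attached to the later of $u,v$. Expanding both symmetrizations of $g_{\boldsymbol\gamma}$ and integrating coordinate by coordinate, I obtain
\[
\int_{\R^q} g_{\boldsymbol\gamma}(u\one-\mathbf x)\,g_{\boldsymbol\gamma}(v\one-\mathbf x)\dd\mathbf x
=\frac{1}{q!}\sum_{\sigma}\prod_j W_{j\sigma(j)}\,|u-v|^{\sum_j(\gamma_j+\gamma_{\sigma(j)})+q}
=\frac{\mathcal B_0}{q!}\,|u-v|^{2\alpha+q}.
\]
This uses $\sum_j(\gamma_j+\gamma_{\sigma(j)})=2\alpha$ for every $\sigma$, and the convention that $W$ encodes the Beta weights. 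The asymmetry of $W$ under $u<v$ versus $u>v$ needs a short check: by the symmetry $B(a,b)$ with $a+b$ fixed, after summing over $\sigma$ the permanent is invariant under transposition. Since $2\alpha+q=-d$ with $d=2(1-H_0)\in(0,1)$, this is a Riesz kernel $|u-v|^{-d}$.

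\textbf{Step 2: Fubini and the Riesz energy.} I apply Fubini to
\[
\|L_t^{(\beta)}-L_s^{(\beta)}\|^2
=A^2\frac{\mathcal B_0}{q!}\int\!\!\int \psi(u)\psi(v)|u-v|^{-d}\dd u\dd v,
\qquad \psi=(t-\cdot)_+^\beta-(s-\cdot)_+^\beta .
\]
This step is the main obstacle. For $\beta\le-1/2$ the filter difference $\psi$ is not in $L^1_{\rm loc}$ in the product sense, so absolute convergence of the $2q+2$-fold integral cannot be taken for granted. I would invoke Lemma~\ref{lem:roughconvolution}, which bounds the paired autocorrelation $\int|\psi(u)||\psi(u+z)|\dd u$ by $|z|^{2\beta+1}$, a logarithm, or a bounded function near the singular points, together with tail decay $|z|^{2\beta-2}$ at infinity. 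Because $d<\min\{1,2(1+\beta)\}$ (which holds since $h>0$, as noted after \eqref{eq:cd}), the integral $\int(\text{autocorrelation})(z)|z|^{-d}\dd z$ converges absolutely. This justifies Tonelli on the absolute values and hence Fubini. Translating $u\mapsto u+s$ and scaling by $t-s$ then turns the energy into $|t-s|^{2\beta-d+2}\Lambda_d(\beta)=|t-s|^{2h}\Lambda_d(\beta)$, using $2\beta+2-d=2h$. With $\E[I_q(f)^2]=q!\|f\|^2$ and the choice \eqref{eq:Aexplicit}, this gives \eqref{eq:processcovariance}. Taking $s=0$ shows $L_t^{(\beta)}\in L^2_s(\R^q)$; symmetry of the kernel holds because $g_{\boldsymbol\gamma}$ is symmetrized. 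Positivity of $\Lambda_d(\beta)$ follows from \eqref{eq:Lambdadexplicit}, and positivity of $\mathcal B_0$ holds because it is a permanent of a matrix with positive entries.

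\textbf{Step 3: structural properties.} Centering holds because $q\ge1$. For stationarity and self-similarity I would use invariance in law of the isonormal process under the translations $x\mapsto x+r$ and the dilations $x\mapsto cx$ (with the factor $c^{1/2}$). The kernel increment satisfies $L_{t+r}-L_r=L_t(\cdot-r)$ by substituting $u\mapsto u+r$. Under dilation, $L_{ct}(c\,\cdot)=c^{\beta+1+\alpha}L_t$, and $\beta+1+\alpha+q/2=h$, so the finite-dimensional laws of $(X_{ct})$ and $(c^hX_t)$ agree. Finally, hypercontractivity in the $q$th chaos gives $\E|X_t-X_s|^p\le C_p|t-s|^{ph}$ for every $p$. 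Choosing $ph>1$, Kolmogorov's criterion yields a continuous (indeed Hölder of every order $<h$) modification.
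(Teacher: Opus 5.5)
Your proposal is correct and follows essentially the same route as the paper: the spatial beta contractions give $\frac{\mathcal B_0}{q!}|u-v|^{-d}$, Lemma~\ref{lem:roughconvolution} at $k=0$ (valid because $d<\min\{1,2(1+\beta)\}$) justifies Tonelli and the scaling to $\Lambda_d(\beta)|t-s|^{2h}$, and translation, homogeneity, hypercontractivity and Kolmogorov give the remaining properties. There are two harmless slips: the first argument of the beta factor carries the exponent attached to the \emph{earlier} time (this does not matter since $\per(W)=\per(W^{\top})$), and the absolute autocorrelation decays only like $|z|^{\beta-1}$ (Lemma~\ref{lem:autocorrelation}), not like $|z|^{2\beta-2}$, which still suffices because $\beta-1-d<-1$.
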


\begin{proof}
For $u\ne v$, spatial contraction of the two symmetrized monomial kernels
gives
\begin{equation}
\int_{\R^q}g_{\boldsymbol\gamma}(u\one-\mathbf x)
g_{\boldsymbol\gamma}(v\one-\mathbf x)\dd\mathbf x
=\frac{\mathcal B_0}{q!}|u-v|^{-d}.
\label{eq:gfullcontractionmodel}
\end{equation}
Indeed, each relative permutation occurs exactly $q!$ times among the
$(q!)^2$ pairs of permutations.  Exchanging the two permutations exchanges
the regions $u>v$ and $v>u$ and leaves the double sum unchanged; hence both
orientations give the same permanent constant.  If
\(
\varphi_{\beta,t}(u)=(t-u)_+^\beta-(-u)_+^\beta,
\)
then scaling in the absolutely convergent energy gives
\begin{align*}
q!\|L_t^{(\beta)}\|_2^2
&=A_{\boldsymbol\gamma,\beta}^2\mathcal B_0
\int_{\R^2}\varphi_{\beta,t}(u)\varphi_{\beta,t}(v)
|u-v|^{-d}\dd u\dd v\\
&=A_{\boldsymbol\gamma,\beta}^2\mathcal B_0
\Lambda_d(\beta)t^{2+2\beta-d}
=t^{2h}.
\end{align*}
Here $0<d<1$ and $d<2(1+\beta)=d+2h$; absolute convergence follows from
Lemma~\ref{lem:roughconvolution} below (or directly by its endpoint and tail
decomposition).  This proves both square integrability and
\eqref{eq:Aexplicit}.

Translation of the temporal and spatial variables shows that
$X_{t+r}-X_r$ has the same law as $X_t$.  Moreover, homogeneity gives
\[
L_{ct}^{(\beta)}(c\mathbf x)
=c^{\alpha+\beta+1}L_t^{(\beta)}(\mathbf x).
\]
White-noise scaling contributes $c^{q/2}$ to the multiple integral, and
$\alpha+\beta+1+q/2=h$; hence $X$ is $h$-self-similar.  Formula
\eqref{eq:processcovariance} follows.  Finally, finite-chaos
hypercontractivity gives, for every $p\ge2$,
\(
\E|X_t-X_s|^p\le C_p|t-s|^{ph}.
\)
Choosing $p>1/h$ and applying Kolmogorov's criterion yields a continuous
modification.
\end{proof}

\section{Main results}

\begin{theorem}[Fixed positive roughness: strong noncentral limit]
\label{thm:main}
Assume \eqref{eq:admissible} and \eqref{eq:parameters}.  Put
\[
 Z_N^{(h)}(t)=N^{2h+\theta-1}V_N^{(h)}(t),
 \qquad
 Z(t)=C_{\boldsymbol\gamma,h}I_2(K_t^{(\gamma_*)}).
\]
For every $\ell\ge1$ and every $t_1,\ldots,t_\ell\ge0$,
\begin{equation}
 \bigl(Z_N^{(h)}(t_1),\ldots,Z_N^{(h)}(t_\ell)\bigr)
 \longrightarrow
 \bigl(Z(t_1),\ldots,Z(t_\ell)\bigr)
 \quad\text{in }L^2(\Omega;\R^\ell),
 \label{eq:mainlimit}
\end{equation}
where
\begin{equation}
 C_{\boldsymbol\gamma,h}
 =\frac{\mu^2\mathcal B_*}{\mathcal B_0}
 \frac{\Lambda_c(\beta)}{\Lambda_d(\beta)}>0.
 \label{eq:constant}
\end{equation}
The only branches of valuation zero are the second-chaos sectors obtained
by contracting $q-1$ variables and leaving two labels in $M^2$ free.

The convergence is quantitative in every fixed Malliavin--Sobolev norm.
With the explicit $\kappa>0$ and
$\ell_N\in\{1,(\log N)^{1/2}\}$ defined in
\eqref{eq:kglobal}--\eqref{eq:logfactor}, for every $T>0$, $k\ge0$, and
$p\ge2$,
\begin{equation}
 \sup_{0\le t\le T}
 \|Z_N^{(h)}(t)-Z(t)\|_{\mathbb D^{k,p}}
 \le C_{T,k,p}\ell_NN^{-\kappa}.
 \label{eq:malliavinrate}
\end{equation}
Let $\overline Z_N^{(h)}$ be the polygonal interpolation on
$N^{-1}\mathbb Z_+$, and let $c^\eta([0,T];B)$ be the closure of smooth
$B$-valued functions in the $C^\eta$ norm.  For every $p>1$,
$0<\eta<1-\theta$, and $0\le j\le k$,
\begin{equation}
 \E\!\left[
 \left\|D^j(\overline Z_N^{(h)}-Z)\right\|_
 {c^\eta([0,T];L^2(\R^j))}^{p}\right]\longrightarrow0,
 \qquad D^0Y:=Y.
 \label{eq:functional}
\end{equation}
In particular, the convergence holds in law in the separable space
$c^\eta([0,T])$.
\end{theorem}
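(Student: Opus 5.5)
Write $\Delta_i=X_{(i+1)/N}-X_{i/N}=I_q(f_{i,N})$, where $f_{i,N}=L^{(\beta)}_{(i+1)/N}-L^{(\beta)}_{i/N}$. The product formula gives
\[
\Delta_i^2-N^{-2h}=\sum_{r=0}^{q-1}r!\binom qr^2 I_{2q-2r}\bigl(f_{i,N}\tilde\otimes_r f_{i,N}\bigr),
\]
because the $r=q$ term is exactly $\E\Delta_i^2=N^{-2h}$ by \eqref{eq:incrementvariance}. The plan is to isolate the term $r=q-1$ and show that, after multiplication by $N^{2h+\theta-1}$ and summation over $i$, it converges in $L^2$ to $Z(t)$. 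Every term with $r\le q-2$ should vanish at a polynomial rate.

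\textbf{Leading term.} In the $(q-1)$-contraction I expand both symmetrized monomials into permutations. The $q-1$ contracted coordinates pair a port $j$ of one copy with a port $k$ of the other. Each pair integrates to a beta factor $B(\gamma_j+1,-\gamma_j-\gamma_k-1)|u-v|^{-(\ldots)}$, as in \eqref{eq:gfullcontractionmodel}. The two free coordinates carry $(u-x)_+^{\gamma_a}(v-y)_+^{\gamma_b}$. After rescaling $u,v$ by $N$ and summing over $i$, the kernel becomes a Riemann-type sum. Its limit is $\int_0^t(s-x)_+^{\gamma_a}(s-y)_+^{\gamma_b}\,ds$, weighted by the paired filter energy with exponent $d-\gamma$-dependent.

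The power counting gives a factor $N^{1-2h-(-2\gamma_a-1+\ldots)}$-type scaling. Only the choice $a,b\in M$ has valuation zero under the normalization $N^{2h+\theta-1}$; every other free pair is suppressed by a positive power of $N$. Counting the permutations that leave the free labels in $M^2$ produces $\mu^2\mathcal B_*$. The spatial pairing of the two filters yields $\Lambda_c(\beta)$ with $c=d-\theta$, and dividing by the normalization \eqref{eq:Aexplicit} gives \eqref{eq:constant}. The $L^2(\R^2)$ convergence of the kernels I would prove by dominated convergence. For the domination I would use the paired-filter autocorrelation bound $|z-s|^{2\beta+1}$ (or its logarithmic or bounded versions) and the doubled budget $a<\min\{1,2(1+\beta)\}$. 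Multi-time convergence is then linear in the kernels.

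\textbf{Remaining chaoses.} For $r\le q-2$, and for the non-$M^2$ sectors, I would compute $\E[I_{2q-2r}(\cdot)^2]$ as a closed diagram over four time roots $u_i,v_i,u_j,v_j$. The filter tails would be represented by two extra roots, and the sum over $i,j$ would be controlled. Charge counting supplies the estimates. Assigning charge $\theta_j/2$ to port $j$ gives the cut identity, so each compact collision exponent is integrable under \eqref{eq:admissible}. Each macroscopic partition face then yields a power $N^{-\kappa}$, possibly with a $\log N$ factor at the borderline, which is the source of $\ell_N$.

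I expect this diagram estimate to be the main obstacle. It must be uniform over all anisotropic partitions up to $h=1/2$, where the individual filters are too singular and only paired filters work. I would first try to reduce each diagram to a product of Riesz kernels $|u-v|^{-a_e}$ on a graph. Then I would apply a Hölder/Brascamp–Lieb-type power-counting lemma on each face, checking strictness through the charge identity.

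\textbf{Quantitative and functional parts.} The $L^2$ bound on each chaos upgrades to $\mathbb D^{k,p}$ by hypercontractivity, since only finitely many chaoses appear. Derivatives act on the kernels, so the same diagram bounds apply, and this gives \eqref{eq:malliavinrate}. For \eqref{eq:functional}, stationarity and scaling give moment bounds on increments, $\E|Z_N(t)-Z_N(s)|^p\le C|t-s|^{p(1-\theta)}$ for $|t-s|\ge1/N$, and the polygonal interpolation handles $|t-s|<1/N$. The same holds for $D^jZ_N$ in $L^2(\R^j)$. Combined with the finite-dimensional rates, a Garsia–Rodemich–Rumsey argument gives tightness, and hence convergence, in $c^{\eta'}$ for $\eta<\eta'<1-\theta$. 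Interpolating with the sup-norm rate then yields \eqref{eq:functional}.
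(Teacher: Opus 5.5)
You follow the paper for the residual chaoses (closed charged diagrams, the cut identity from the port charges $\theta_j/2$, two extra roots for the filter tails, face-by-face power counting) and for the Malliavin and little-H\"older upgrades. Your scaling shortcut for increments is also sound: stationarity and $h$-self-similarity give $Z_N^{(h)}((j+L)/N)-Z_N^{(h)}(j/N)\overset{\rm law}{=}(L/N)^{1-\theta}Z_L^{(h)}(1)$.

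The gap is in the leading term. You want $F_{N,t}\to F_t$ in $L^2(\R^2)$, with $F_{N,t}$ the active kernel of \eqref{eq:FNactive}, by dominated convergence, using the paired-filter autocorrelation bound as the majorant. That bound controls integrated quantities such as $\int\rho_\beta(x)\rho_\beta(x-z)\dd x$ against Riesz weights. It does not give a pointwise majorant of $F_{N,t}(x,y)$ that is uniform in $N$, and for $h<1/2$ none is apparent. On the cell $(k-1)/N<x,y<k/N$, the summand $i=k-1$ alone equals $D_MN^{\theta}g(k-Nx,k-Ny)$, where
$g(s,s')=\int_{1-s}^1\int_{1-s'}^1(1-a)^\beta(1-b)^\beta|a-b|^{-c}(a-1+s)^{\gamma_*}(b-1+s')^{\gamma_*}\dd a\dd b$
is positive and homogeneous of degree $2\beta+2+2\gamma_*-c=2h-1<0$. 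So $F_{N,t}$ is singular at the diagonal lattice points $(k/N,k/N)$. These points move with $N$ and become dense, and you give no $N$-independent square-integrable function dominating them.

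Even with a majorant, dominated convergence only gives qualitative convergence. The statement asserts \eqref{eq:malliavinrate} with $\kappa\le\kappa_{\mathrm{act}}=\frac12\min\{\delta_\beta,1-2\theta\}$, $\delta_\beta=-\beta/(1-\beta)$, and a logarithm when $\delta_\beta=1-2\theta$. Nothing in your plan produces these. That logarithm comes from Toeplitz summation in the active sector, not from a degenerate partition face.

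The missing idea is to avoid pointwise convergence entirely:
\begin{itemize}
\item Expand $\|F_{N,t_N}-F_{t_N}\|_2^2=\|F_{N,t_N}\|_2^2-2\langle F_{N,t_N},F_{t_N}\rangle+\|F_{t_N}\|_2^2$.
\item Integrate out the two free spatial variables with the beta identity. The first term becomes exactly $D_M^2b_{\gamma_*}^2N^{2\theta-2}\sum_{i,j<Nt}\Gamma_*(j-i)$, and the cross term becomes a mixed sum $\sum_i\int J_*(u-i)\dd u$.
\item Prove $|\Gamma_*(k)-\Lambda_c(\beta)^2|k|^{-2\theta}|\le C|k|^{-2\theta-\delta_\beta}$ and the analogue for $J_*$. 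Truncate the filter tails at $R=|k|^{1/(1-\beta)}$. Bound the discarded part by $O(R^\beta)$ using the marked-tail H\"older--Finner estimate at saturated vertex degree $d<\min\{1,2(1+\beta)\}$. For $J_*$ use instead the anchored three-edge tail bound, since the one-filter estimate $\int\rho_\beta(x)|z-x|^{-d}\dd x\lesssim\langle z\rangle^{-d}$ is generally false. Freezing the two long edges costs $O(R/|k|)$.
\item Apply the discrete and mixed Toeplitz estimates with main exponent $2\theta$. This gives $\|F_{N,t}-F_t\|_2^2\le C\,\mathfrak r_N(2\theta,\delta_\beta)$, which yields both the convergence and the rate.
\end{itemize}
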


\begin{corollary}[Entire isotropic rough-to-Brownian range]
\label{cor:isotropic}
If $\gamma_1=\cdots=\gamma_q$, then
the parameters satisfy
\begin{equation}
 H_0=1-\frac{q\theta}{2},
 \qquad H_R=1-\theta=1-\frac{2(1-H_0)}q.
 \label{eq:isotropicrelations}
\end{equation}
Theorem~\ref{thm:main} applies throughout $0<h\le1/2$.
For $q=2$ in particular,
\begin{equation}
 N^{2h-H_0}V_N^{(h)}(t)
 \longrightarrow
 \frac{2}{B(H_0/2,1-H_0)}
 \frac{\Lambda_{1-H_0}(h-H_0)}
      {\Lambda_{2(1-H_0)}(h-H_0)}
 I_2\!\left(K_t^{(H_0/2-1)}\right),
 \qquad 0<h\le\frac12.
 \label{eq:q2}
\end{equation}
The process on the right is an explicitly scaled Rosenblatt process of
self-similarity exponent $H_0$; no convention-dependent normalization is
hidden in \eqref{eq:q2}.
\end{corollary}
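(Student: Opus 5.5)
The corollary is a direct specialization of Theorem~\ref{thm:main}. The plan is to check that the isotropic choice always satisfies the admissibility conditions \eqref{eq:admissible}, and then evaluate the combinatorial constants in \eqref{eq:constant} for $q=2$. No new analysis is needed: the limit, its mode of convergence, the rate \eqref{eq:malliavinrate} and the functional statement \eqref{eq:functional} are all inherited.

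\emph{Step 1: parameter relations and admissibility.} Set $\gamma_1=\cdots=\gamma_q=\gamma_*$, so that $\theta=-2\gamma_*-1$, i.e.\ $\gamma_*=-(1+\theta)/2$.
\begin{itemize}
\item Then $\alpha=q\gamma_*=-q/2-q\theta/2$, and by \eqref{eq:parameters}
\[
H_0=\alpha+\tfrac q2+1=1-\tfrac{q\theta}{2}.
\]
Equivalently $\theta=2(1-H_0)/q$, and hence $H_R=1-\theta=1-2(1-H_0)/q$. This is \eqref{eq:isotropicrelations}; it also shows that the general inequality $H_0\le 1-q\theta/2$ becomes an equality.
\item For admissibility, $-1<\gamma_*<-1/2$ is equivalent to $0<\theta<1$. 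The condition $-(q+1)/2<\alpha<-q/2$ is equivalent to $0<q\theta<1$, i.e.\ $H_0\in(1/2,1)$.
\item So every isotropic exponent with $H_0\in(1/2,1)$ is admissible. Since Theorem~\ref{thm:main} imposes no further constraint linking $h$ to $\boldsymbol\gamma$, it applies for every $0<h\le 1/2$.
\item Here $M=\{1,\dots,q\}$ and $\mu=q$, so all $q^2$ label pairs survive in the second-chaos limit.
\end{itemize}

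\emph{Step 2: the case $q=2$.} Put $\gamma_*=H_0/2-1$.
\begin{itemize}
\item Then $\theta=1-H_0$, so the normalization exponent is $2h+\theta-1=2h-H_0$ and the kernel is $K_t^{(H_0/2-1)}$ with $H_R=H_0$.
\item All entries of $W$ are equal to $w:=B(\gamma_*+1,-2\gamma_*-1)=B(H_0/2,1-H_0)$. The $2\times2$ permanent gives $\mathcal B_0=2w^2$, and the $1\times1$ minor gives $\mathcal B_*=w$.
\item With $\mu=2$, the prefactor is
\[
\frac{\mu^2\mathcal B_*}{\mathcal B_0}=\frac{4w}{2w^2}=\frac{2}{B(H_0/2,1-H_0)}.
\]
\item From \eqref{eq:cd}, $d=2(1-H_0)$ and $c=d-\theta=1-H_0$, with $\beta=h-H_0$. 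Substituting into \eqref{eq:constant} yields exactly the constant in \eqref{eq:q2}.
\item Positivity and finiteness of the two Riesz energies follow from $0<c<d<1$ and $c,d<2(1+\beta)$, as noted before Proposition~\ref{prop:processconstruction}.
\end{itemize}

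\emph{Step 3: no hidden normalization.} The final sentence of the corollary requires that nothing be absorbed into a convention. Every ingredient of the right side of \eqref{eq:q2} is explicit:
\begin{itemize}
\item $A_{\boldsymbol\gamma,\beta}$ is fixed by \eqref{eq:Aexplicit};
\item $\Lambda_c(\beta)$ and $\Lambda_d(\beta)$ have the closed forms \eqref{eq:Lambdacexplicit}--\eqref{eq:Lambdadexplicit};
\item the double Wiener integral of the explicit kernel \eqref{eq:rosenblattkernel} is defined with the stated convention $\E[I_2(f)I_2(g)]=2\langle f,g\rangle$.
\end{itemize}
The only place an error could slip in is the bookkeeping of symmetrization factors in the permanents (the $1/q!$ in $g_{\boldsymbol\gamma}$ versus the relative-permutation count). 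I expect this to be the main, if modest, obstacle. I would settle it by rechecking that the ratio $\mathcal B_*/\mathcal B_0$ enters \eqref{eq:constant} exactly as computed, and by confirming the $q=2$ value independently through a direct contraction $L^{(\beta)}_t\otimes_1 L^{(\beta)}_t$.
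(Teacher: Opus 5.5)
Your proposal is correct and follows the same route as the paper, which treats the corollary as a direct specialization of Theorem~\ref{thm:main}. The ingredients are the isotropic identity $H_0=1-q\theta/2$, admissibility reducing to $0<q\theta<1$, and $\mu=q$; for $q=2$ the values $\mathcal B_0=2w^2$ and $\mathcal B_*=w$ with $w=B(H_0/2,1-H_0)$, together with $c=1-H_0$, $d=2(1-H_0)$ and $2h+\theta-1=2h-H_0$, give exactly \eqref{eq:q2}. The independent contraction check you propose in Step~3 is not needed, since $\mathcal B_0$ and $\mathcal B_*$ enter \eqref{eq:constant} directly as the permanents you computed.
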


\begin{remark}[Consistency with the Hermite reproduction property]
At the unfiltered Hermite point $h=H_0$, the identity
$H_R=1-2(1-H_0)/q$ is exactly the reproduction exponent of the quadratic
variation of an order-$q$ Hermite process.  For $q=2$ it reduces to
$H_R=H_0$, the Rosenblatt-to-Rosenblatt reproduction property.  Although
that point lies outside the rough window considered here, the limiting
contraction has the correct classical specialization.
\end{remark}

\begin{theorem}[Universal Gaussian boundary at zero roughness]
\label{thm:zero-main}
Assume only \eqref{eq:admissible}, and put $\beta_0=-H_0$.  For every
$t>0$, the bare boundary kernel $\widetilde L_{t,0}$ in
\eqref{eq:bare-boundary-kernel} does not belong to $L^2(\R^q)$; its
squared energy diverges logarithmically.

Let $X^{(h)}$ be the variance-normalized $h$-regularized family in
\eqref{eq:analytic-family}, and let $X_\varepsilon$ be the logarithmically
normalized hard-cutoff family in \eqref{eq:cutoff-process}.  For every
finite set $F\subset[0,\infty)$,
\begin{equation}
 (X_t^{(h)})_{t\in F}
 \xrightarrow[h\downarrow0]{\rm law}(G_t)_{t\in F},
 \qquad
 (X_{\varepsilon,t})_{t\in F}
 \xrightarrow[\varepsilon\downarrow0]{\rm law}(G_t)_{t\in F},
 \label{eq:zero-main-fdd}
\end{equation}
where $G_0=0$ and
$G_t=(\zeta_t-\zeta_0)/\sqrt2$ for $t>0$, with
$(\zeta_t)_{t\ge0}$ independent standard Gaussian variables.  For every
fixed $t>0$,
\begin{equation}
 d_{\rm W}(X_t^{(h)},N(0,1))=O_t(\sqrt h),
 \qquad
 d_{\rm W}(X_{\varepsilon,t},N(0,1))
 =O_t\!\left((\log(1/\varepsilon))^{-1/2}\right).
 \label{eq:zero-main-rates}
\end{equation}
The two regularizations have the same endpoint residue, according to the
dictionary \eqref{eq:boundary-dictionary}.  The field $G$ has no
stochastically continuous modification, and neither selection converges in
probability on the original isonormal space.  Nevertheless, for every
$T>0$, the boundary-first quadratic variations in
\eqref{eq:boundary-qv} and \eqref{eq:boundary-cutoff-qv} satisfy
\begin{equation}
 \lim_{N\to\infty}\lim_{h\downarrow0}Q_{N,h}
 \Longrightarrow\sqrt3B,
 \qquad
 \lim_{N\to\infty}\lim_{\varepsilon\downarrow0}Q_{N,\varepsilon}
 \Longrightarrow\sqrt3B
 \quad\text{in }D([0,T]).
 \label{eq:zero-main-qv}
\end{equation}
\end{theorem}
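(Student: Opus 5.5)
The proof rests on one computation: the covariance of $X^{(h)}$ as $h\downarrow0$, controlled through the residue of the Riesz energy at its pole. Everything else follows from chaos methods.

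\textbf{Step 1: divergence of the bare kernel.} With $\beta_0=-H_0$ we have $2+2\beta_0-d=0$. The full contraction \eqref{eq:gfullcontractionmodel} then gives
\[
q!\|\widetilde L_{t,0}\|^2\propto\mathcal B_0\int\!\!\int\varphi_{\beta_0,t}(u)\varphi_{\beta_0,t}(v)|u-v|^{-d}\dd u\dd v .
\]
This integral is scale invariant. Near each endpoint $s\in\{0,t\}$ it behaves like $\int\!\!\int|u-s|^{\beta_0}|v-s|^{\beta_0}|u-v|^{-d}$, a homogeneous integrand of degree $-2$ in two variables, so it diverges logarithmically. For the same reason the tail $u,v\to-\infty$, where the two filters nearly cancel, contributes another logarithm.

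\textbf{Step 2: the residue.} I would expand \eqref{eq:Lambdadexplicit} near $h=0$. There $\sin(\pi h)\sim\pi h$ while the other factors are regular, so $\Lambda_d(\beta)=\rho/h+O(1)$ with an explicit residue $\rho$. Computing the endpoint-localized energy with cutoff $|u-s|>\varepsilon$ should give $\rho'\log(1/\varepsilon)$, and matching $\rho'=2\rho$ per the dictionary \eqref{eq:boundary-dictionary} yields the equal residues. The same calculation applies to the cross energy between $X_t$ and $X_s$ for $t\neq s$.

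\textbf{Step 3: the covariance limit.} Write each increment kernel as the sum of a contribution localized at the left endpoint $0$ and one localized at $t$, which carry opposite signs. After variance normalization by $1/h$ or $1/\log(1/\varepsilon)$, the pole is shared equally by the two endpoints, each with mass $1/2$. Cross terms between distinct points, and all bulk contributions, are $O(h)$ or $O(1/\log(1/\varepsilon))$. This gives
\[
\E[X_tX_s]\to\tfrac12\bigl(1+\1_{t=s}\bigr)\quad (t,s>0),
\]
which is exactly the covariance of $(\zeta_t-\zeta_0)/\sqrt2$. Alternatively, one can read the limit off the covariance formula $\tfrac12(t^{2h}+s^{2h}-|t-s|^{2h})$, which tends to $1/2$ for $t\neq s$ and to $1$ on the diagonal.

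\textbf{Step 4: Gaussianity and rates.} Each $X_t$ lies in the $q$th chaos, so I would apply the Fourth Moment Theorem together with the Nourdin--Peccati bound
\[
d_{\rm W}\le C\max_r\|L\otimes_rL\|.
\]
For $1\le r\le q-1$, a contraction $\otimes_r$ leaves a free kernel in which at most one endpoint singularity is integrated against a partially contracted $g$. Power counting shows that this partial contraction does not see the full logarithmic pole. Hence $\|L\otimes_rL\|^2=O(1)$ before normalization, and after dividing by the squared normalizing energy it is $O(h)$, respectively $O(1/\log)$. Taking square roots gives \eqref{eq:zero-main-rates}. For the joint statement I would use the multivariate Peccati--Tudor criterion, which combined with Step 3 gives the fdd convergence \eqref{eq:zero-main-fdd}.

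\textbf{Step 4 is the main obstacle.} The contraction estimate must be uniform in $h$ and must handle the anisotropic charges. I would reuse the charged cut identity from the positive-$h$ analysis, now at $\beta=\beta_0$, and track the single marginal face.

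\textbf{Step 5: no continuous modification, no convergence in probability.} The field $G$ is not stochastically continuous, since $\E(G_t-G_s)^2=1$ for all $t\neq s$, so no modification of it can be continuous in probability. Suppose $X^{(h)}_t$ converged in probability along some sequence $h_n\downarrow0$. The limits for different $t$ would then be distinct $L^2$ elements whose increments are asymptotically orthogonal to every fixed chaos element. Weak convergence $X^{(h)}_t\rightharpoonup0$ in $L^2$ then contradicts the unit norm of the putative limit.

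\textbf{Step 6: the boundary-first quadratic variations.} Taking $h\downarrow0$ first, $Q_{N,h}$ becomes a functional of the increments $G_{(i+1)/N}-G_{i/N}=(\zeta_{i+1}-\zeta_i)/\sqrt2$. These form a 1-dependent stationary Gaussian sequence with variance $1$ and lag-one correlation $-1/2$. Apply Breuer--Major to the squares $H_2$ of these increments: the asymptotic variance is
\[
2\sum_k\rho(k)^2=2\bigl(1+2\cdot\tfrac14\bigr)=3 .
\]
This gives convergence to $\sqrt3B$. Tightness in $D([0,T])$ follows from the fourth-moment bound for 1-dependent sums. Convergence of the inner limit also requires the convergence of moments, which holds by hypercontractivity in fixed chaos. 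The cutoff family is handled identically.
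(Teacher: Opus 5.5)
\textbf{The gap is in Step~4, which carries both the Gaussianity and the rates: it rests on a false estimate.} The squared norm $\|L\otimes_rL\|^2$ of the unnormalized kernel expands into \emph{connected} closed diagrams on four temporal vertices. All four filter variables can collapse onto one common endpoint (for instance $0$, which every observation time shares). That complete pinned face has rank $4$ and total weight $2d+4(1-\frac d2-h)=4-4h$, so its deficit is exactly $4h$. On it the four filters have the same sign and all spatial beta integrals are positive. It therefore contributes a positive amount of order $\int_0^\eta r^{4h-1}\dd r\asymp h^{-1}$, respectively $\log(1/\varepsilon)$ for the cutoff. So the contraction does see the pole, and $\|L\otimes_rL\|^2=O(1)$ is wrong; your own remark about tracking ``the single marginal face'' points to this face. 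Your arithmetic is also inconsistent. $O(1)$ divided by the squared energy $\asymp h^{-2}$ gives $O(h^2)$, not $O(h)$, so you reach the stated $\sqrt h$ only through a compensating slip.

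\textbf{The missing idea is that a connected diagram sees the pole exactly once.} The cut identity $w_D(S)=\frac d2|S|-B_D(S)$ gives a uniform gap on every proper face:
unmarked components have deficit at least $1-d$;
components with $p<|S|$ marks keep $(|S|-p)(1-\frac d2)$;
proper fully marked sets keep $B_D(S)>0$ by connectivity;
and the full vertex set with an omitted edge keeps $R_D>0$.
So every proper face has a gap $\delta_*>0$ uniform in $0<h\le h_0$. The dyadic summation then contains a single geometric series, of ratio $2^{-4h}$, giving $O(h^{-1})$ (Lemma~\ref{lem:one-critical-root}). Gaussianity comes from counting poles. The normalization $A_h^4\asymp h^2$ removes two poles, because $\|L\|^4$ is a product of two disconnected copies, while the connected contraction diagram has only one. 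Hence $\|f_h\otimes_rf_h\|^2=O(h)$.

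You also need to control the escape region $u_v\to-\infty$ uniformly in $h$. This forces you to keep the cancellation $|\varphi|\lesssim(1+|u|)^{\beta_1-1}$ inside each filter, rather than expanding into one-sided terms that are not integrable at $-\infty$.

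\textbf{Step~1 also contains an error.} The tail does not add a second logarithm. As $u\to-\infty$, $\varphi_{\beta_0,t}(u)\sim\beta_0t|u|^{\beta_0-1}$, so the tail integrand is homogeneous of degree $2\beta_0-2-d=-4$ and integrable. Only the two endpoints diverge, each positively, giving $2c_0\log(1/\varepsilon)$. A common left-tail logarithm would contradict the covariance $\frac12$ at distinct times that you correctly obtain in Step~3.

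The remaining steps are sound. Step~6 replaces the paper's martingale decomposition by Breuer--Major, which is fine. Step~5's weak-convergence argument is a valid, simpler alternative to the paper's non-Gaussianity argument: $\langle\widetilde L_{t,h},g\rangle$ stays bounded for nice $g$ while $A_h\to0$. You do need to invoke fixed-chaos hypercontractivity to upgrade convergence in probability to $L^2$ convergence before drawing the contradiction.
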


\begin{theorem}[Simultaneous Gaussian--Rosenblatt transition]
\label{thm:crossover}
Assume $\gamma_1=\cdots=\gamma_q=\gamma$ and let $h_N\downarrow0$.  Put
\begin{equation}
 \mathcal Q_N(t)=N^{2h_N-1/2}V_N^{(h_N)}(t),
 \qquad
 \lambda_N=h_NN^{1/2-\theta}.
 \label{eq:phase-parameter}
\end{equation}
Then, in $D([0,T])$,
\begin{align}
 \lambda_N\to0
 &\quad\Longrightarrow\quad
 \mathcal Q_N\Longrightarrow\sqrt3B,
 \label{eq:crossover-brownian}\\
 \lambda_N\to\lambda\in(0,\infty)
 &\quad\Longrightarrow\quad
 \mathcal Q_N\Longrightarrow
 \sqrt3B+\lambda\overline C_{q,\theta}I_2(K^{(\gamma)}),
 \label{eq:crossover-mixed}\\
 \lambda_N\to\infty
 &\quad\Longrightarrow\quad
 \lambda_N^{-1}\mathcal Q_N\Longrightarrow
 \overline C_{q,\theta}I_2(K^{(\gamma)}),
 \label{eq:crossover-rosenblatt}
\end{align}
where $\overline C_{q,\theta}>0$ is explicit in
\eqref{eq:crossover-constant}.  The Brownian and Rosenblatt processes in
the mixed limit are independent.  If $(\lambda_N)$ is bounded, polygonal
interpolations converge in $C^\eta([0,T])$ for every $\eta<1/2$.
\end{theorem}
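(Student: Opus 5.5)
We prove Theorem~\ref{thm:crossover} by expanding $V_N^{(h_N)}$ into Wiener chaoses with the product formula. In the isotropic case all exponents equal $\gamma$, so $M=\{1,\dots,q\}$ and $\mu=q$. Each squared increment $(X_{(i+1)/N}-X_{i/N})^2$ equals $\sum_{r=0}^{q-1} r!\binom qr^2 I_{2q-2r}(\Delta_i\otimes_r\Delta_i)$. The $r=q$ term is exactly cancelled by the centering $N^{-2h_N}$. We write
\[
\mathcal Q_N=\mathcal R_N+\mathcal G_N,
\]
where $\mathcal R_N$ is the second-chaos projection ($r=q-1$) and $\mathcal G_N$ collects the chaoses $4,\dots,2q$ ($r\le q-2$).

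\emph{Step 1: the Rosenblatt part.} We recompute the second-chaos kernel with $h=h_N$ inserted, keeping track of the $h$-dependence of the constants. By \eqref{eq:Aexplicit}, $A^2=1/(\mathcal B_0\Lambda_d(\beta))$. Formula \eqref{eq:Lambdadexplicit} contains the factor $\Gamma(1+2h)\sin(\pi h)\sim\pi h$ in the denominator, so $\Lambda_d(\beta_N)\sim C/h_N$ and hence $A^2\asymp h_N$. This is the residue of Theorem~\ref{thm:zero-main}. The contracted kernel carries $\Lambda_c(\beta_N)$, which stays bounded because $2h+\theta>0$ remains away from $0$. Consequently
\[
N^{2h_N-1/2}\cdot N^{1-2h_N-\theta}\,C_{\gamma,h_N}\sim h_N N^{1/2-\theta}\,\overline C_{q,\theta}=\lambda_N\overline C_{q,\theta},
\]
with $\overline C_{q,\theta}=\lim_{h\to0}C_{\gamma,h}/h$. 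We then need the analogue of the $L^2$ convergence \eqref{eq:mainlimit}, uniform in $h\in(0,h_0]$: after the prefactor is removed, the normalized second-chaos kernel converges to $K_t^{(\gamma)}$ in $L^2(\R^2)$, with an error $o(1)$ uniform in $h$. This gives $\mathcal R_N=\lambda_N(\overline C_{q,\theta}I_2(K^{(\gamma)})+o_{L^2}(1))$.

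\emph{Step 2: the Brownian part.} For $r\le q-2$, the covariance $\E[\mathcal G_N(t)\mathcal G_N(s)]$ reduces to sums of Riesz pairings between increments with exponent $d$ or with combinations of the lower contraction exponents. As $h_N\to0$, every increment $X_{(i+1)/N}-X_{i/N}$ becomes, by Theorem~\ref{thm:zero-main}, asymptotically a Gaussian variable of the form $N^{-h}(\zeta_{i+1}-\zeta_i)/\sqrt2$ with vanishing correlation at lag $\ge2$. The normalized squared increments therefore behave like a $1$-dependent sequence $((\zeta_{i+1}-\zeta_i)^2/2-1)$, whose long-run variance is $2+2\cdot\tfrac12\cdot2\cdot\tfrac14\cdot\ldots=3$. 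More precisely, $\Var(\xi_i)=2$ and $\Cov(\xi_i,\xi_{i+1})=2\cdot(1/2)^2=1/2$, so the sum is $2+2\cdot\tfrac12=3$. This yields $\Var\mathcal G_N(t)\to3t$. For the Gaussianity we use the fourth moment theorem chaos by chaos (Peccati–Tudor for joint convergence): we show that all nontrivial contractions of the $\mathcal G_N$-kernels vanish. Here the concentration of the kernel mass near the endpoint roots, at rate $h_N$, makes the contractions $O(h_N)+O(N^{-\delta})$. We also need the cross-covariance with $\mathcal R_N$, but this vanishes automatically because the two parts live in orthogonal chaoses. Independence of the limits in \eqref{eq:crossover-mixed} then follows from the Nourdin–Rosiński asymptotic independence criterion: we show $\|\mathcal R\text{-kernel}\otimes_1\mathcal G\text{-kernel}\|\to0$, or equivalently that $\Cov(\mathcal R_N^2,\mathcal G_N^2)\to0$.

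\emph{Step 3: tightness and the three regimes.} Hypercontractivity in the finite chaos, together with the block variance bounds $\E|\mathcal G_N(t)-\mathcal G_N(s)|^2\lesssim |t-s|$ and $\E|\mathcal R_N(t)-\mathcal R_N(s)|^2\lesssim\lambda_N^2|t-s|^{2-2\theta}$ on the grid, gives Kolmogorov–Chentsov tightness in $D$ and Hölder tightness of the polygonal interpolations for $\eta<1/2$ when $\lambda_N$ is bounded. The three regimes \eqref{eq:crossover-brownian}–\eqref{eq:crossover-rosenblatt} then follow by comparing $\lambda_N$ with the $O(1)$ size of $\mathcal G_N$; in the last regime we divide by $\lambda_N$, and the Brownian part disappears.

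The main obstacle is the uniformity in $h_N\downarrow0$ in Steps 1 and 2. The diagram bounds used for Theorem~\ref{thm:main} degenerate like $1/h$, because the filter $(t-u)_+^{\beta}$ with $\beta\to-H_0$ loses integrability at the endpoint roots. I would first split each temporal integral into four macroscopic blocks: near $i/N$, near $(i+1)/N$, the bulk, and the tail. On the endpoint blocks I would translate the roots and extract the explicit residue $1/(2h)$, and bound the remainders uniformly using the paired-filter autocorrelation estimate $|z-s|^{2\beta+1}$. The aim is error terms of the form $O(h_N)+O(N^{-\delta})$ with $\delta>0$ independent of $h$.
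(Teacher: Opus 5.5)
Your architecture matches the paper's: the second chaos versus the chaoses $4,\dots,2q$, then $A_h^2\asymp h$ with $\Lambda_c(\beta_h)$ bounded, so the active sector has size $\lambda_N$. The bulk is handled by a contraction criterion, independence by contractions, and tightness by hypercontractive block bounds. However, Step 1 is false as stated. The normalized active kernel does \emph{not} converge with an error that is $o(1)$ uniformly in $h$. At the lags $k\in\{-1,0,1\}$ all four filter variables of the correlation $\Gamma_h(k)$ can collide at one grid point. The radial surplus there is $4+4\beta_h-2(q-1)\theta-2\theta=4h$, and on a cone at that point the integrand has fixed sign, so $\Gamma_h(k)\asymp h^{-1}$. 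This adds $\asymp h^{-1}N^{-(1-2\theta)}$ to the squared kernel error. After the prefactor $A_h^2N^{1/2-\theta}\asymp hN^{1/2-\theta}$ it becomes an absolute $L^2$ error of exact order $\sqrt{h_N}$. The relative error is then $\sqrt{h_N}/\lambda_N$, which diverges, for instance, when $h_N=N^{-1}$. So $\mathcal R_N=\lambda_N(\overline C_{q,\theta}I_2(K^{(\gamma)})+o_{L^2}(1))$ fails in part of the Brownian regime, and your target of relative errors $O(h_N)+O(N^{-\delta})$ is unattainable. Your active block bound must also include the local term $h_N|t-s|$ next to $\lambda_N^2|t-s|^{2-2\theta}$. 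The repair is to isolate this root sector explicitly, as in Lemma~\ref{lem:root-regular}, and prove $\|\mathcal R_N-\lambda_N\overline C_{q,\theta}I_2(K^{(\gamma)})\|_2\le C(\sqrt{h_N}+\lambda_Nh_N+\lambda_NN^{-\kappa_0})$, which suffices in all three regimes. When $\lambda_N\to\infty$, tightness then uses $h_N/\lambda_N^2\to0$ together with $|t-s|\ge N^{-1}$ on the grid. There the bulk is not $O(1)$ in general, since for $4\theta<1$ its variance can grow like $\lambda_N^2N^{-2\theta}$, but it is still $o(\lambda_N)$.

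The genuine gap is in Step 2. Theorem~\ref{thm:zero-main} concerns a fixed finite set of times, with constants depending on that set. It gives no control of the $N^2$ covariances on a growing grid, so it cannot yield $\Var\mathcal G_N(t)\to3t$. You need the exact two-copy expansion $\Cov(\xi^{\rm bulk}_0,\xi^{\rm bulk}_k)=2\rho_h(k)^2$ plus connected $q$-regular diagrams with at least four crossing edges. Each such diagram is bounded uniformly by $Ch\1_{\{|k|\le1\}}+Ch^2\langle k\rangle^{-4\theta}$. For $4\theta<1$ the long-range part then sums to $O(h_N^2N^{1-4\theta})=O(\lambda_N^2N^{-2\theta})$, which is where $\lambda_N=O(1)$ enters. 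More seriously, ``concentration of the kernel mass near the roots makes the contractions $O(h_N)+O(N^{-\delta})$'' is not a mechanism that works. Per-increment smallness $\|f\otimes_rf\|^2=O(h)$ does not survive the quadruple lattice sum with prefactor $N^{-2}$, and the dangerous terms are exactly those that are \emph{not} small in $h$. In the four-block graphs for $\|b_{r,N}\otimes_pb_{r,N}\|^2$, each primitive component either sits at its endpoint root or is regular. A root component costs $h^{-1}$ against $A_h^8\asymp h^4$, so a sector with $R$ root components carries only $h^{4-R}$, which is nothing at all when $R=4$. Its decay must come from lattice counting. The root constraints tie the macroblock indices together, leaving at most $c_R$ free translations, with $c_0=4$, $c_1\le3$, $c_2,c_3\le2$, $c_4=1$. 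The dyadic partition-face deficits are then all positive once $h_N\le LN^{-(1/2-\theta)}$ converts $h^{4-R}$ into $N^{-(4-R)(1/2-\theta)}$. The completely fragmented four-cycle in chaos $2q$ is independent of $h$ altogether. It is killed only by the trace bound $N^{-2}|\operatorname{Tr}((D_uR_{h,N})^4)|=O(N^{-1})$, where $R_{h,N}$ is the Toeplitz matrix of $\rho_h$ and $\|\rho_h\|_{\ell^1}=2$. Without this counting the bulk fourth-moment condition is unproved. So are the mixed contractions needed for Nourdin--Rosi\'nski, which reduce to the same bulk self-contractions by contraction Cauchy--Schwarz. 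Hence \eqref{eq:crossover-brownian}--\eqref{eq:crossover-mixed} and the independence claim do not yet follow from your argument.
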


Theorem~\ref{thm:zero-main} is not an endpoint substitution in
Theorem~\ref{thm:main}: the bare kernel has left $L^2$, the limiting chaos
changes, and the path topology collapses.  Theorem~\ref{thm:crossover}
quantifies precisely the noncommutation of the roughness and mesh limits.
Together the three theorems form one quadratic-energy phase diagram.

\section{Strategy of the proofs and paper-specific technical inputs}
\label{sec:proof-strategy}

The three theorems use the same diagrammatic language, but they are not
obtained from one another by a formal passage to the limit.  At fixed
$h>0$, the problem is to retain enough integrability after taking absolute
values and then identify the unique surviving Wiener-chaos projection.  At
$h=0$, one must isolate and renormalize the unique logarithmically divergent
collision.  In the simultaneous limit, the constants in the fixed-$h$
estimates are no longer uniform, and a separate root--regular decomposition
is needed.  We give the logical chain here before entering the estimates.

\paragraph{Fixed positive roughness.}
The algebraic starting point is Lemma~\ref{lem:stationaryreduction}.  The
Wiener product formula gives the exact finite orthogonal expansion
\eqref{eq:exactchaosdecomposition} into chaoses of orders
$2,4,\ldots,2q$.  Proposition~\ref{prop:branchgrammar} then enumerates all
monomial contraction branches, and Proposition~\ref{prop:maxcontraction}
identifies the maximal contraction of order $q-1$, which is the candidate
surviving projection on the forced scale.

The analytic obstruction is already present before the branches are summed.
Below Brownian regularity, estimating a single singular filter loses too
much at its endpoints.  Quadratic variation always produces two filters,
however, and Lemmas~\ref{lem:autocorrelation} and
\ref{lem:roughconvolution} show that their absolute autocorrelation has the
sharp doubled budget
\[
 a<\min\{1,2(1+\beta)\}.
\]
This controls one paired convolution but not an arbitrary anisotropic
closure.  For the latter, each port carrying label $j$ is assigned charge
$c_j=\theta_j/2$.  Since an edge has weight equal to the sum of its endpoint
charges, every vertex set $S$ satisfies the exact cut identity
\[
 w_D(S)=\frac d2|S|-B_D(S).
\]
Lemma~\ref{lem:charged-collective-closure} uses the outgoing charge
$B_D(S)$ to make every proper local collision strictly integrable.  After
the two filter tails are represented by roots,
Lemma~\ref{lem:two-root-lattice-face} applies the same conservation law to
all macroscopic partition faces.  Thus absolute values do not erase the
essential information: signed cancellation is replaced by positive
conservation of homogeneity.  This is what removes any additional
anisotropic condition.

Proposition~\ref{prop:maxcontraction} and Lemma~\ref{lem:scaling} identify
as candidate zero-valuation branches the second-chaos sectors whose two
free labels belong to $M^2$, and they force the normalization
$N^{2h+\theta-1}$.  Proposition~\ref{prop:valuation} later proves that no
other branch has valuation zero.  Proposition~\ref{prop:active} identifies
the strong limit of the active sectors.  Its two deferred deterministic
inputs are in
Appendix~\ref{app:riemann}: Proposition~\ref{prop:displacedriemann} proves
weak convergence of the displaced Riemann sums, while
Proposition~\ref{prop:activescalarproducts} gives the exact norm and mixed
scalar-product identities.  Lemmas~\ref{lem:pairedrate} and
\ref{lem:toeplitz} turn those identities into the strong convergence proved
in Proposition~\ref{prop:active}.
For every other branch, Lemma~\ref{lem:exhaustive-closure} converts the
charge crossing the two lattice blocks into a strictly negative valuation;
Propositions~\ref{prop:residual} and \ref{prop:valuation} eliminate all
non-active second-chaos branches and all higher chaoses.  Finally,
Appendix~\ref{app:stochasticdetails} turns the resulting finite-chaos block
bounds into Malliavin--Sobolev and little-H\"older convergence; it adds no
new diagrammatic mechanism.

\paragraph{The zero-roughness boundary.}
The fixed-$h$ proof cannot be continued by substitution.  Proposition
\ref{prop:boundary-residue} computes the simple pole
\[
 \Lambda_d(\beta_0+h)\sim\frac{c_0}{h},
 \qquad
 A_{\boldsymbol\gamma,\beta_0+h}^2
 \sim\frac{h}{\mathcal B_0c_0},
\]
and the local logarithmic residue.  Proposition
\ref{prop:cutoff-covariance} computes the hard-cutoff covariance and the
common-residue dictionary.  These covariance calculations identify the only
possible limit, but they do not prove Gaussianity.  The missing input is
Appendix~\ref{app:boundary-diagrams}.  Lemma~\ref{lem:one-critical-root}
shows that every proper collision face has a uniform positive deficit,
whereas the complete collision pinned to one common endpoint has deficit
$|V|h$.  Hence a connected diagram can produce at most one factor $h^{-1}$,
or one cutoff logarithm.  After inserting the normalization above,
Lemma~\ref{lem:boundary-fourcopy} makes every proper contraction vanish.
The fixed-chaos criterion then gives the same Gaussian field for $h$-regularization
and cutoff regularization.  The final Brownian energy limit is proved in
the main text from the explicit one-dependent boundary sequence and the
martingale decomposition \eqref{eq:boundary-martingale-decomposition}.

\paragraph{The simultaneous transition.}
For $h=h_N\downarrow0$, the product formula gives the exact orthogonal
decomposition
\[
 \mathcal Q_N=\mathcal A_N+\mathcal B_N
\]
of \eqref{eq:active-bulk}, where $\mathcal A_N$ is the complete second
chaos and $\mathcal B_N$ contains the chaoses $4,\ldots,2q$.  The residue
expansion shows that the active sector has size
\[
 \lambda_N=h_NN^{1/2-\theta}.
\]
Appendix~\ref{app:root-regular} supplies the uniform engine shared by the
two sectors.  Within each connected component,
Lemma~\ref{lem:root-regular} separates the sole potentially critical
full-root sector, which carries the explicit $h^{-1}$ loss, from a regular
part that retains the full Riesz exponent of every edge across every nested
macroscopic cut.  Appendix~\ref{app:uniform-active} uses this decomposition
to prove the uniform Rosenblatt approximation of $\mathcal A_N$ in
Proposition~\ref{prop:uniform-active}.

Appendix~\ref{app:bulk} treats the complementary sector.  Proposition
\ref{prop:bulk-covariance} identifies the Brownian covariance
$3(s\wedge t)$.  Lemma~\ref{lem:four-block-grammar} organizes every proper
contraction into a four-block graph, and
Lemma~\ref{lem:dyadic-chain-face-reduction} reduces all multiscale
configurations to finitely many partition-face deficits.  The partition-face
bounds control all nonfragmented diagrams, the root-hypergraph count handles
the resonant components, and the completely fragmented packet is estimated
separately by the trace bound.  Together these arguments give
Proposition~\ref{prop:bulk-fourcopy}; the contraction Cauchy--Schwarz
estimate then also kills every active--bulk contraction.
Proposition~\ref{prop:bulk-clt-crossover} yields the Gaussian bulk and its
asymptotic independence from the Rosenblatt sector, while
Proposition~\ref{prop:crossover-blocks} supplies tightness in all three
regimes.

\section{Paired-filter integrability in the rough regime}
\label{sec:paired-filter}

The proof begins with the key estimate specific to the rough regime.  We
develop these analytic bounds first; Section~\ref{sec:chaos-decomposition}
then applies them to the exact Wiener-chaos expansion.  Let
\[
\rho_\beta(x)=|\varphi_\beta(x)|,
\qquad
q_\beta(z)=(\rho_\beta*\widetilde\rho_\beta)(z)
=\int_\R\rho_\beta(x)\rho_\beta(x-z)\dd x.
\]

\begin{lemma}[Absolute filter autocorrelation]\label{lem:autocorrelation}
For $-1<\beta<0$, $\rho_\beta\in L^1(\R)$.  Moreover, for $|z|\ge2$,
\begin{equation}
q_\beta(z)\le C_\beta |z|^{\beta-1},
\label{eq:qtail}
\end{equation}
whereas on compact sets
\begin{equation}
q_\beta(z)\le C_\beta
\left(1+\sum_{s\in\{-1,0,1\}}\Psi_\beta(|z-s|)\right),
\label{eq:qlocal}
\end{equation}
with
\[
\Psi_\beta(r)=
\begin{cases}
r^{2\beta+1},&-1<\beta<-1/2,\\
1+|\log r|,&\beta=-1/2,\\
1,&-1/2<\beta<0.
\end{cases}
\]
For every $s\in\{-1,0,1\}$ and all sufficiently small $|r|>0$, these
local estimates are two-sided:
\begin{equation}
 q_\beta(s+r)\asymp
 \begin{cases}
 |r|^{2\beta+1},&-1<\beta<-1/2,\\
 1+|\log|r||,&\beta=-1/2,\\
 1,&-1/2<\beta<0.
 \end{cases}
 \label{eq:qlocal-twosided}
\end{equation}
\end{lemma}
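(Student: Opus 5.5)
The plan is to first describe $\rho_\beta$ pointwise and then treat $q_\beta$ as a correlation of two functions whose only singularities sit at $0$ and $1$.

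\textbf{Pointwise shape and integrability.} We have $\varphi_\beta(x)=(1-x)^\beta$ for $0<x<1$ and $\varphi_\beta(x)=(1-x)^\beta-(-x)^\beta$ for $x<0$, while $\varphi_\beta=0$ for $x\ge1$. For $x<-1$, the mean value theorem gives $\rho_\beta(x)\le|\beta|\,|x|^{\beta-1}$, which is integrable at $-\infty$ because $\beta-1<-1$. Near the endpoints, $\rho_\beta(x)\le C(|x|^\beta+|1-x|^\beta)$, and this is integrable since $\beta>-1$. Hence $\rho_\beta\in L^1$. We will record the bound
\[
\rho_\beta(x)\le C\bigl(|x|^\beta\1_{|x|\le2}+|1-x|^\beta\1_{|1-x|\le2}\bigr)+C\langle x\rangle^{\beta-1}.
\]
We also need the matching lower bounds: $\rho_\beta(x)\asymp|x|^\beta$ as $x\uparrow0$, and $\rho_\beta(x)\asymp|1-x|^\beta$ as $x\uparrow1$. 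Both hold because the other term stays bounded there.

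\textbf{Tail estimate \eqref{eq:qtail}.} Take $|z|\ge2$; by the symmetry $q_\beta(-z)=q_\beta(z)$ we may assume $z>0$. Split the integral into the region where $x$ lies in the bounded support core $[-1,1]$, the region where $x-z$ does, and the region where both are in the tails. Near the core, $\rho_\beta(x-z)\lesssim z^{\beta-1}$ for $x$ in the core, and $\rho_\beta$ is integrable there, so this part contributes $O(z^{\beta-1})$. The symmetric part is handled the same way. The far part is a convolution of $\langle\cdot\rangle^{\beta-1}$ with itself. Both factors are integrable and decay like $|\cdot|^{\beta-1}$, so the standard estimate gives $O(z^{\beta-1})$.

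\textbf{Local estimates \eqref{eq:qlocal} and \eqref{eq:qlocal-twosided}.} For $z$ in a compact set, insert the pointwise bound into $q_\beta(z)$. Every cross term pairs a singular factor $|x-a|^\beta$ with either a bounded piece, an $L^1$ tail, or a second singular factor $|x-z-b|^\beta$, where $a,b\in\{0,1\}$. Only the product of two singular factors can blow up, and it does so when their centres collide, that is when $z=a-b\in\{-1,0,1\}$. Writing $r=z-s$, the model integral is
\[
\int_{|x|\le 3}|x|^\beta|x-r|^\beta\dd x .
\]
Rescaling $x=|r|u$ shows this is
\begin{itemize}
\item $\asymp|r|^{2\beta+1}$ when $2\beta+1<0$, since the rescaled integral converges on $\R$;
\item $\asymp\log(1/|r|)$ when $2\beta+1=0$;
\item bounded when $2\beta+1>0$.
\end{itemize}
This yields the upper bound \eqref{eq:qlocal}.

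For the lower bound in \eqref{eq:qlocal-twosided}, use positivity. Restrict the integral to a one-sided neighbourhood where both factors obey the $\asymp$ lower bounds, for instance $x\uparrow0$ with $x-z\uparrow0$ when $s=0$. For $s=\pm1$, the singularities at $0$ and $1$ align, and both are one-sided from the left, so they overlap on a one-sided interval of length comparable to a constant. Rescaling this restricted integral produces the same $|r|^{2\beta+1}$ or logarithmic order. In the case $\beta>-1/2$, $q_\beta$ is continuous and strictly positive near $s$, because $\rho_\beta>0$ on $(-\infty,1)$ and the two shifted supports overlap; hence $q_\beta\asymp1$ there.

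\textbf{Main obstacle.} The delicate point is the one-sidedness in the lower bound at $s=\pm1$. The singularity at $1$ faces left, and $\rho_\beta$ vanishes to the right of $1$, so we must check that the overlapping region near the aligned singularities has positive length on the correct side for both signs of $r$. When $r$ has the unfavourable sign, the two singular sides are disjoint in the immediate vicinity. I would then expect the integral to still pick up order $|r|^{2\beta+1}$ from the region at distance about $|r|$, again by the same rescaling, but this has to be verified.
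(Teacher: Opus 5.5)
Your proposal is correct and follows essentially the paper's route. Both arguments use pointwise endpoint/tail bounds for $\rho_\beta$, reduce each colliding endpoint pair to the model $\int|u|^\beta|u-r|^\beta$, obtain the lower bounds from positivity plus a one-sided restriction, and prove the $|z|^{\beta-1}$ tail by a near/far splitting. The only cosmetic difference is that the paper splits at $x=-z/2$ rather than at the core $[-1,1]$.

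You should drop the worry in your final paragraph, because it does not arise. Both singularities of $\rho_\beta$ face left: $\rho_\beta(x)\to1$ as $x\downarrow0$, and $\rho_\beta=0$ on $[1,\infty)$. Hence every colliding pair shares a left neighbourhood of fixed length, for either sign of $r$. The restricted integral is then exactly the paper's model $\int_0^c u^\beta(u+|r|)^\beta\dd u$, so there is no unfavourable sign to check. This is what your own earlier sentence, that both singularities are one-sided from the left, already says.
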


\begin{proof}
For $x\le0$, the fundamental theorem of calculus gives the exact
representation
\[
\rho_\beta(x)=(-x)^\beta-(1-x)^\beta
=-\beta\int_{-x}^{1-x}r^{\beta-1}\dd r.
\]
It follows that
\begin{equation}
\rho_\beta(x)\le C_\beta
\begin{cases}
|x|^\beta,&-1\le x<0,\\
|x|^{\beta-1},&x<-1,
\end{cases}
\label{eq:rhotail}
\end{equation}
while for $0<x<1$ one has $\rho_\beta(x)=(1-x)^\beta$, and for $x\ge1$
it vanishes.  Direct integration of these three bounds proves
$\rho_\beta\in L^1$.

We now quantify every possible local collision.  For $0<r<1/4$ put
\[
I_\beta(r)=\int_{|u|<2}|u|^\beta|u-r|^\beta\dd u.
\]
On $|u|\le2r$, the substitution $u=rv$ gives a contribution at most
$Cr^{2\beta+1}$.  On $2r<|u|<2$, one has $|u-r|\asymp|u|$, and hence
\begin{equation}
\int_{2r<|u|<2}|u|^{2\beta}\dd u\le C
\begin{cases}
1,&2\beta+1>0,\\
1+|\log r|,&2\beta+1=0,\\
r^{2\beta+1},&2\beta+1<0.
\end{cases}
\label{eq:endpointcollisionintegral}
\end{equation}
The singular endpoints of the two factors in
$\rho_\beta(x)\rho_\beta(x-z)$ are respectively $\{0,1\}$ and
$\{z,z+1\}$.  Their possible differences are $-1,0,1$.  A fixed partition
of unity around these four points reduces every colliding pair to
\eqref{eq:endpointcollisionintegral}; on the complement both factors are
bounded.  This proves \eqref{eq:qlocal}.

The same localization gives the matching lower bounds.  For each
$s\in\{-1,0,1\}$ select one colliding pair of one-sided endpoints.  After
translation, its contribution contains, up to fixed positive constants,
\[
 \int_0^c u^\beta(u+|r|)^\beta\dd u.
\]
For $\beta<-1/2$, restriction to $0<u<|r|$ followed by scaling gives
$c|r|^{2\beta+1}$; for $\beta=-1/2$, restriction to $|r|<u<c$ gives
$c|\log|r||$; and for $\beta>-1/2$, a fixed interval away from the
endpoint gives a positive constant.  This proves
\eqref{eq:qlocal-twosided}.

Finally take $z\ge4$ and split the convolution at $x=-z/2$.  If
$x\ge-z/2$, then $x-z\le-z/2$ and
$\rho_\beta(x-z)\le Cz^{\beta-1}$ by \eqref{eq:rhotail}.  If
$x<-z/2$, then $\rho_\beta(x)\le Cz^{\beta-1}$.  Therefore
\[
q_\beta(z)
\le Cz^{\beta-1}\left(
\int_\R\rho_\beta(x)\dd x
+\int_\R\rho_\beta(x-z)\dd x\right)
\le Cz^{\beta-1}.
\]
The convolution is even, and enlarging the constant covers $2\le|z|<4$.
This proves \eqref{eq:qtail}.
\end{proof}

\section{Collective filter closure in arbitrary order}
\label{sec:collective-closure}

The paired estimate of Section~\ref{sec:paired-filter} controls one local
Riesz convolution.  We now build the collective closure needed for the
arbitrary covariance diagrams generated by the exact decomposition in
Section~\ref{sec:chaos-decomposition}.

\begin{lemma}[Optimal rough convolution]\label{lem:roughconvolution}
Let $-1<\beta<0$ and
\begin{equation}
0<a<\min\{1,2(1+\beta)\}.
\label{eq:aconvolution}
\end{equation}
Then
\begin{equation}
\mathcal C_a(k):=\int_{\R^2}\rho_\beta(x)\rho_\beta(y)
|k+x-y|^{-a}\dd x\dd y
\le C_{a,\beta}(1+|k|)^{-a},
\qquad k\in\mathbb R.
\label{eq:roughconvolution}
\end{equation}
In addition, with $\rho_{\beta,R}=\rho_\beta\1_{(-\infty,-R)}$,
for every $R\ge2$,
\begin{equation}
\sup_{k\in\mathbb R}(1+|k|)^a
\int_{\R^2}\rho_{\beta,R}(x)\rho_\beta(y)
|k+x-y|^{-a}\dd x\dd y
\le C_{a,\beta}R^\beta.
\label{eq:uniformtail}
\end{equation}
\end{lemma}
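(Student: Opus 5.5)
The plan is to reduce the double integral to a single convolution against the absolute autocorrelation of Lemma~\ref{lem:autocorrelation}. Substituting $y=x-z$ gives
\[
\mathcal C_a(k)=\int_\R q_\beta(z)\,|k+z|^{-a}\dd z .
\]
Everything then follows from the three properties of $q_\beta$ already proved: $\|q_\beta\|_{L^1}=\|\rho_\beta\|_{L^1}^2<\infty$, the tail bound \eqref{eq:qtail}, and the local bound \eqref{eq:qlocal}.

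\textbf{Local part.} For $|z|\le2$, by \eqref{eq:qlocal} $q_\beta$ is dominated by $1+\sum_s\Psi_\beta(|z-s|)$. The possible singularities are $|z-s|^{2\beta+1}$ (or a logarithm, or nothing), and these sit at the three points $s\in\{-1,0,1\}$.

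\begin{itemize}
\item \emph{Bounded $k$.} The Riesz factor $|k+z|^{-a}$ has its singularity at $z=-k$. A one-dimensional product $|z-s|^{-e}|z+k|^{-a}$ with $e=-(2\beta+1)<1$ and $a<1$ is integrable near each point. It is also uniformly bounded in $k$ when the two singularities coincide, because then the combined exponent is $e+a<1$. This is exactly the hypothesis $a<2(1+\beta)$. In the logarithmic and bounded cases only $a<1$ is needed.
\item \emph{Large $k$.} For $|k|\ge4$, on $|z|\le2$ we have $|k+z|\ge|k|/2$. The local part therefore contributes at most $C|k|^{-a}\int_{|z|\le2}q_\beta$.
\end{itemize}

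\textbf{Tail part.} For $|z|>2$, $q_\beta(z)\le C|z|^{\beta-1}$, which is integrable since $\beta<0$. Split according to whether $|k+z|\ge|k|/2$.

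\begin{itemize}
\item On the region $|k+z|\ge|k|/2$, the contribution is at most $C|k|^{-a}\|q_\beta\|_1$.
\item On the complementary region, $|z|\ge|k|/2$, so $q_\beta\le C|k|^{\beta-1}$. Integrating $|k+z|^{-a}$ over a ball of radius $|k|/2$ gives $C|k|^{1-a}$. The product is $C|k|^{\beta-a}\le C|k|^{-a}$.
\end{itemize}

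For bounded $k$, the tail is trivially bounded, by integrability of $|z|^{\beta-1}$ away from the origin together with local integrability of $|k+z|^{-a}$. Combining the pieces gives \eqref{eq:roughconvolution}.

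\textbf{Truncated estimate \eqref{eq:uniformtail}.} Here I will not use $q_\beta$ but argue directly with the truncated factor. For $x<-R\le-2$, \eqref{eq:rhotail} gives $\rho_{\beta,R}(x)\le C|x|^{\beta-1}$, so $\|\rho_{\beta,R}\|_{L^1}\le CR^\beta$. Since $\rho_{\beta,R}$ is bounded by $CR^{\beta-1}$ and is supported away from its own singularity, the only singularities present are those of $\rho_\beta(y)$, with exponent $\beta>-1$, and of the Riesz kernel, with exponent $a<1$.

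\begin{itemize}
\item \emph{Uniform bound on the inner integral.} For fixed $x$, the $y$-integral $\int\rho_\beta(y)|k+x-y|^{-a}\dd y$ is bounded uniformly in $w=k+x$. Near a coincidence of the singularity of $\rho_\beta$ with the Riesz singularity, the combined exponent is $-\beta+a$. I must check $-\beta+a<1$, i.e.\ $a<1+\beta$. This can fail when $\beta\le-1/2$, and I expect it to be the main obstacle.
\item \emph{Resolving the obstacle.} The remedy is that here $x$ ranges over a region where $\rho_{\beta,R}$ is smooth, so no pairing of endpoints is possible. I will integrate in $x$ first. The map $x\mapsto|k+x-y|^{-a}$ convolved with the bounded, $L^1$ density $\rho_{\beta,R}$ gives $\int\rho_{\beta,R}(x)|k+x-y|^{-a}\dd x\le CR^{\beta}\langle k-y\rangle^{-a}$. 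To obtain this, split $|k+x-y|\le1$, where the bound $R^{\beta-1}$ on $\rho_{\beta,R}$ is used, from the rest, where the decay of $|\,\cdot\,|^{-a}$ is weighed against the $L^1$ mass together with the $|x|^{\beta-1}$ decay, as in the tail step above.
\item \emph{Conclusion.} Then integrate against $\rho_\beta(y)\in L^1$ with its $|y|^{\beta-1}$ tail. The same near/far splitting in $y$ yields $\int\rho_\beta(y)\langle k-y\rangle^{-a}\dd y\le C(1+|k|)^{-a}$. This gives \eqref{eq:uniformtail} with the factor $R^\beta$.
\end{itemize}
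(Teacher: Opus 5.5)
Your proof is correct, but you reach the truncated estimate \eqref{eq:uniformtail} by a genuinely different and shorter route than the paper.

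For \eqref{eq:roughconvolution} you follow the paper: the same Fubini reduction to $\int_\R q_\beta(z)|k+z|^{-a}\dd z$, the same local collision count, and a splitting for large $|k|$. At the collisions, the condition $-(2\beta+1)+a<1$ is exactly $a<2(1+\beta)$. Your two regions suffice. On the paper's third region $A_2$ one already has $|k+z|>K/2$, so the bound $K^{-a}\|q_\beta\|_1$ works there and the scaling step is not needed.

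For \eqref{eq:uniformtail}, the paper works jointly in $(x,y)$. It distinguishes $K\ge R/2$ from $K<R/2$, then splits according to whether $|k+x-y|$ is resonant relative to $K$ or to $|x|$.

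You instead use that the truncated filter has no endpoint singularity and satisfies $\rho_{\beta,R}(x)\le C|x|^{\beta-1}$. This gives the one-filter bound $\int_{x<-R}\rho_\beta(x)|w+x|^{-a}\dd x\le CR^\beta\langle w\rangle^{-a}$. Its analogue for the untruncated $\rho_\beta$ fails whenever $a\ge1+\beta$, which can happen for any $\beta<0$, not only $\beta\le-1/2$. After that step, only a convolution of $\rho_\beta$ with the bounded kernel $\langle\cdot\rangle^{-a}$ remains.

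Your factorization shows transparently that the marked-tail estimate needs only $0<a<1$ and $-1<\beta<0$. The paired threshold enters only through the untruncated local collisions. The paper's case analysis instead parallels its later tail lemmas (compare Lemma~\ref{lem:anchored-tail}). In its subcase $K\le8$ it borrows the compact bound from the first part, and hence the paired threshold.

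One detail to tighten: in the near region $|w+x|\le1$, the sup bound $CR^{\beta-1}$ alone does not give $CR^\beta\langle w\rangle^{-a}$ once $|w|\gg R^{1/a}$. There you must use $\rho_{\beta,R}(x)\le C|x|^{\beta-1}\le C|w|^{\beta-1}$. This holds because $|x|\asymp|w|$ whenever that region is nonempty. Since $|w|\ge R/2$ there, this gives $C|w|^{\beta-1}\le CR^\beta|w|^{-a}$.
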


\begin{proof}
By Fubini,
\[
\mathcal C_a(k)=\int_\R q_\beta(z)|k+z|^{-a}\dd z.
\]
For $|k|\le8$, Lemma~\ref{lem:autocorrelation} leaves only finitely many
local models.  Away from a collision, $|k+z|^{-a}$ is locally integrable
because $a<1$.  At a collision with $s\in\{-1,0,1\}$, put $r=z-s$.
The worst integrand is
$|r|^{2\beta+1-a}$ when $\beta<-1/2$, and this is integrable exactly when
$a<2(1+\beta)$.  The logarithmic and bounded cases are easier.  Thus
$\sup_{|k|\le8}\mathcal C_a(k)<\infty$.

Let $K=|k|>8$.  Split $\R$ into
\begin{align*}
A_0&=\{|z|\le K/2\},\\
A_1&=\{|k+z|\le K/2\},\\
A_2&=\R\setminus(A_0\cup A_1).
\end{align*}
On $A_0$, $|k+z|\ge K/2$, so the integral is at most
$CK^{-a}\|q_\beta\|_1$.  On $A_1$, $|z|\ge K/2$ and
\eqref{eq:qtail} gives
\begin{align*}
\int_{A_1}q_\beta(z)|k+z|^{-a}\dd z
&\le CK^{\beta-1}\int_{|w|\le K/2}|w|^{-a}\dd w\\
&\le CK^{\beta-a}\le CK^{-a}.
\end{align*}
On $A_2$, both $|z|$ and $|k+z|$ are at least $K/2$.  Scaling $z=Ku$
and using \eqref{eq:qtail} gives
\[
\int_{A_2}q_\beta(z)|k+z|^{-a}\dd z
\le CK^{\beta-a}
\int_{\{|u|,|u+\operatorname{sgn}k|\ge1/2\}}
|u|^{\beta-1}|u+\operatorname{sgn}k|^{-a}\dd u.
\]
The last integral is finite because its domain avoids both finite
singularities and its integrand is $O(|u|^{\beta-a-1})$ at infinity.
Since $\beta<0$, this is again at most $CK^{-a}$.  This proves
\eqref{eq:roughconvolution}.

For \eqref{eq:uniformtail}, first note
\[
\|\rho_{\beta,R}\|_1\le C R^\beta.
\]
Write $K=1+|k|$.  Suppose first that $K\ge R/2$.  In the region
$|k+x-y|\ge K/4$, the Riesz factor is $O(K^{-a})$ and integration of the
truncated filter gives $CR^\beta K^{-a}$.  In the resonant region put
$w=k+x-y$.  Split once more according as $|x|\ge K/4$ or
$|x|<K/4$.  If $K\le8$, then $R\le16$ and the already proved compact
bound absorbs this case, so assume $K>8$.  In the first part
$\rho_\beta(x)\le CK^{\beta-1}$, and
integration in the other filter variable gives
\[
CK^{\beta-1}\int_{|w|<K/4}|w|^{-a}\dd w
\le CK^{\beta-a}\le CR^\beta K^{-a},
\]
because $K\ge R/2$ and $\beta<0$.
In the second part the identity $y=k+x-w$ implies $|y|\ge K/3$, after
adjusting the harmless constants.  If $y>1$ the filter vanishes; otherwise
\eqref{eq:rhotail} gives $\rho_\beta(y)\le CK^{\beta-1}$.  Integrating the
truncated $x$-filter
gives the still smaller bound
\[
CR^\beta K^{\beta-1}\int_{|w|<K/4}|w|^{-a}\dd w
\le CR^\beta K^{\beta-a}\le CR^\beta K^{-a}.
\]

Now suppose $K<R/2$.  Split again according as
$|k+x-y|\ge|x|/4$ or not.  The first part is bounded by
\begin{align*}
C\int_{x<-R}\rho_\beta(x)|x|^{-a}\dd x
\int_\R\rho_\beta(y)\dd y
&\le CR^{\beta-a}\\
&\le CR^\beta K^{-a}.
\end{align*}
In the remaining part set $w=k+x-y$.  Since $|w|<|x|/4$ and
$|k|<R/2<|x|/2$, one has $|y|=|k+x-w|\asymp|x|$, so both filters are in
their tails.  Therefore
\begin{align*}
&\int_{x<-R}\int_{|w|<|x|/4}
\rho_\beta(x)\rho_\beta(k+x-w)|w|^{-a}\dd w\dd x\\
&\quad\le C\int_R^\infty r^{2\beta-2}
\left(\int_0^{r/4}w^{-a}\dd w\right)\dd r
\le CR^{2\beta-a}.
\end{align*}
Because $K<R/2$ and $\beta<0$,
$R^{2\beta-a}\le CR^{\beta-a}\le CR^\beta K^{-a}$.
The two cases prove \eqref{eq:uniformtail}.
\end{proof}

\begin{lemma}[Rough H\"older--Finner closure]\label{lem:roughfinner}
Let $\mathcal G=(V,E)$ be a finite multigraph.  A vertex $v$ carries the
measure $\rho_\beta(x_v)\dd x_v$, and an edge $e=\{v,w\}$ carries a
translation $k_e\in\R$ and a Riesz exponent $\lambda_e>0$.  Suppose that
there is a number
\begin{equation}
D<\min\{1,2(1+\beta)\}
\quad\text{such that}\quad
\sum_{e\ni v}\lambda_e\le D\quad(v\in V).
\label{eq:finnerdegree}
\end{equation}
Then
\begin{align}
&\int_{\R^V}\prod_{v\in V}\rho_\beta(x_v)
\prod_{e=\{v,w\}\in E}|k_e+x_v-x_w|^{-\lambda_e}
\prod_{v\in V}\dd x_v
\notag\\
&\hspace{35mm}\le
C_{\mathcal G,D}\prod_{e\in E}(1+|k_e|)^{-\lambda_e}.
\label{eq:roughfinner}
\end{align}
If one prescribed vertex $v_0$ is restricted to $x_{v_0}<-R$, the
right-hand side is multiplied by $CR^\beta$, uniformly for $R\ge2$.
\end{lemma}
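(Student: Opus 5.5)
The plan is to derive \eqref{eq:roughfinner} from Finner's generalized H\"older inequality for functions of pairs of coordinates on a product of probability spaces. The only analytic input is the paired estimate of Lemma~\ref{lem:roughconvolution}, applied with the single exponent $a=D$. Loops $e=\{v,v\}$ carry the constant $|k_e|^{-\lambda_e}$ and do not fit the stated bound for small $|k_e|$, so I will assume $\mathcal G$ is loopless. Without loss of generality $D>0$; if all $\lambda_e$ are smaller, $D$ can be increased slightly while keeping $D<\min\{1,2(1+\beta)\}$.

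\emph{Step 1: normalization.} By Lemma~\ref{lem:autocorrelation}, $m:=\|\rho_\beta\|_1<\infty$, so $\pi(\dd x)=m^{-1}\rho_\beta(x)\dd x$ is a probability measure. The left side of \eqref{eq:roughfinner} equals
\[
m^{|V|}\,\E_{\pi^{\otimes V}}\Bigl[\prod_{e\in E}f_e(x_v,x_w)\Bigr],
\qquad f_e(x,y)=|k_e+x-y|^{-\lambda_e}.
\]

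\emph{Step 2: Finner's inequality.} Set $p_e=D/\lambda_e\ge1$. By \eqref{eq:finnerdegree}, $\sum_{e\ni v}1/p_e\le1$ at every vertex. Where the sum is strictly less than $1$, I will pad with constant factors $1$ whose exponents fill the remaining slack; on a probability space this costs nothing. Finner's inequality then gives
\[
\E_{\pi^{\otimes V}}\Bigl[\prod_e f_e\Bigr]\le\prod_e\|f_e\|_{L^{p_e}(\pi\otimes\pi)}.
\]
Each norm satisfies
\[
\|f_e\|_{L^{p_e}}^{p_e}=m^{-2}\int\rho_\beta(x)\rho_\beta(y)|k_e+x-y|^{-D}\dd x\dd y=m^{-2}\mathcal C_D(k_e).
\]
By \eqref{eq:roughconvolution} with $a=D$, this is at most $C(1+|k_e|)^{-D}$. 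Taking the power $1/p_e=\lambda_e/D$ gives $\|f_e\|_{L^{p_e}}\le C(1+|k_e|)^{-\lambda_e}$, which proves \eqref{eq:roughfinner}.

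\emph{Step 3: the truncated vertex.} For the restriction $x_{v_0}<-R$, the subtle point is that one should not insert \eqref{eq:uniformtail} edge by edge. Doing so would only give $R^{\beta\lambda_e/D}$ per edge, which falls short of the claimed $R^\beta$. Instead I will normalize the vertex $v_0$ separately, using $\pi_R=m_R^{-1}\rho_{\beta,R}\dd x$ with $m_R=\|\rho_{\beta,R}\|_1$. The prefactor then becomes $m^{|V|-1}m_R\le CR^\beta$, and the claimed factor comes entirely from here.

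It remains to show that the edge norms at $v_0$ stay bounded uniformly in $R$. By \eqref{eq:uniformtail},
\[
\int\rho_{\beta,R}\,\rho_\beta\,|k+x-y|^{-D}\le CR^\beta(1+|k|)^{-D}.
\]
This requires the matching lower bound $m_R\ge cR^\beta$. That holds because, from the representation in the proof of Lemma~\ref{lem:autocorrelation}, $\rho_\beta(x)\ge c|x|^{\beta-1}$ for $x<-1$. Dividing by $m_Rm$ therefore leaves $C(1+|k|)^{-D}$, uniformly in $R\ge2$. Finner's inequality holds verbatim for products of distinct probability measures, so the argument of Step 2 goes through unchanged.

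The main obstacle is this bookkeeping in Step 3: placing the whole tail gain in the vertex mass rather than in the edge norms. Once that is done, the lemma reduces to a single application of Lemma~\ref{lem:roughconvolution} with the exponent $D$, which is exactly where the doubled budget $D<\min\{1,2(1+\beta)\}$ enters.
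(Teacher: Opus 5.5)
Your proposal is correct and follows the paper's proof. Both normalize $\rho_\beta\dd x$ to a probability measure, apply H\"older--Finner with exponents $D/\lambda_e$, and bound each edge factor by Lemma~\ref{lem:roughconvolution} with $a=D$.

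The marked vertex is handled with different bookkeeping, but it lands on the same bound. The paper does not change the measure at $v_0$. Instead it writes the indicator of $\{x_{v_0}<-R\}$ as a product of fractional powers of itself: one attached to each edge at $v_0$ with weight $p_e=\lambda_e/D$, and one unary factor with weight $p_0=1-\sum_{e\ni v_0}p_e$. This yields exactly your bound $m_R^{p_0}(CR^\beta)^{1-p_0}$. Consequently your lower bound $m_R\ge cR^\beta$ is true but unnecessary: the upper bound $m_R\le CR^\beta$ together with $p_0\ge0$ already closes the argument.
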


\begin{proof}
Put $M_\beta=\int_\R\rho_\beta(x)\dd x$ and normalize
$\rho_\beta(x)\dd x$ to the probability measure
$\mu_\beta(\dd x)=M_\beta^{-1}\rho_\beta(x)\dd x$.  Set
$r_e=D/\lambda_e$.  Then
\begin{equation}
 \sum_{e\ni v}\frac1{r_e}
 =\frac1D\sum_{e\ni v}\lambda_e\le1
 \qquad(v\in V).
 \label{eq:finner-cover}
\end{equation}
For completeness, the version of H\"older--Finner used here
\cite{Finner1992} says that,
on a product probability space, edge functions $G_e$ satisfying
\eqref{eq:finner-cover} obey
\begin{equation}
 \int\prod_{e\in E}|G_e(x_v,x_w)|\prod_{v\in V}\mu_\beta(\dd x_v)
 \le
 \prod_{e\in E}
 \left(\int|G_e(x,y)|^{r_e}
 \mu_\beta(\dd x)\mu_\beta(\dd y)\right)^{1/r_e}.
 \label{eq:finner-explicit}
    \end{equation}
Indeed, add at each vertex a unary factor equal to $1$ with reciprocal
exponent $1-\sum_{e\ni v}r_e^{-1}$, and apply H\"older successively in the
coordinates $x_v$.  This proves \eqref{eq:finner-explicit} first for simple
functions; monotone convergence gives the nonnegative measurable case.

Apply \eqref{eq:finner-explicit} with
$G_e(x,y)=|k_e+x-y|^{-\lambda_e}$.  Restoring the harmless powers of
$M_\beta$, the left-hand side of \eqref{eq:roughfinner} is bounded by
\[
C\prod_{e\in E}
\left[
\int_{\R^2}\rho_\beta(x)\rho_\beta(y)
|k_e+x-y|^{-D}\dd x\dd y
\right]^{1/r_e}.
\]
Lemma~\ref{lem:roughconvolution} turns the factor associated with $e$ into
$(1+|k_e|)^{-D/r_e}=(1+|k_e|)^{-\lambda_e}$.

We now prove the marked assertion without using the same indicator twice.
Write
\[
 p_e=\frac1{r_e}=\frac{\lambda_e}{D},
 \qquad
 s_0=\sum_{e\ni v_0}p_e,
 \qquad
 p_0=1-s_0\ge0,
\]
and put $A_R=(-\infty,-R)$.  Since indicators are idempotent, one has the
exact fractional factorization
\begin{equation}
 \1_{A_R}(x_{v_0})
 =\1_{A_R}(x_{v_0})^{p_0}
  \prod_{e\ni v_0}\1_{A_R}(x_{v_0})^{p_e}.
 \label{eq:marked-indicator-factorization}
\end{equation}
Here and below the first factor is omitted when $p_0=0$; this avoids any
convention involving the value of $0^0$.
Represent each Riesz factor as
\[
 |k_e+x_v-x_w|^{-\lambda_e}
 =F_e(x_v,x_w)^{p_e},
 \qquad
 F_e(x_v,x_w)=|k_e+x_v-x_w|^{-D}.
\]
For $e\ni v_0$ replace $F_e$ by
$F_{e,R}=\1_{A_R}(x_{v_0})F_e$.  When $p_0>0$, introduce the unary
hyperedge $F_{0,R}=\1_{A_R}(x_{v_0})$ with weight $p_0$; when $p_0=0$,
omit it.  By
\eqref{eq:marked-indicator-factorization}, the product of these weighted
hyperfunctions is exactly the original marked integrand.  At $v_0$ their
total fractional weight is $p_0+\sum_{e\ni v_0}p_e=1$, and at every other
vertex it is at most one.  Applying \eqref{eq:finner-explicit}, including
the unary factor, gives
\begin{align*}
 I_R
 &\le C
 \left(\int_{A_R}\rho_\beta(x)\dd x\right)^{p_0}
 \prod_{e\ni v_0}
 \left[
  \int_{\R^2}\1_{A_R}(x)\rho_\beta(x)\rho_\beta(y)
  |k_e+x-y|^{-D}\dd x\dd y
 \right]^{p_e}\\
 &\qquad\times
 \prod_{e\not\ni v_0}
 \left[
  \int_{\R^2}\rho_\beta(x)\rho_\beta(y)
  |k_e+x-y|^{-D}\dd x\dd y
 \right]^{p_e}.
\end{align*}
If $v_0$ is written as the second endpoint, exchange $x,y$ and replace
$k_e$ by $-k_e$.  The elementary tail estimate
$\int_{A_R}\rho_\beta\le CR^\beta$, together with
\eqref{eq:uniformtail} and \eqref{eq:roughconvolution}, now yields
\[
 I_R\le
 C(R^\beta)^{p_0+\sum_{e\ni v_0}p_e}
 \prod_{e\in E}(1+|k_e|)^{-Dp_e}
 =CR^\beta\prod_{e\in E}(1+|k_e|)^{-\lambda_e}.
\]
This also covers an isolated marked vertex, for which $p_0=1$.
\end{proof}

\begin{lemma}[Two-root lattice face bound]
\label{lem:two-root-lattice-face}
Let $Q$ be a finite weighted multigraph on
$V\cup\{\partial_0,\partial_1\}$, with positive edge weights, and fix the
roots at $0$ and $k\in\mathbb Z$.  The remaining vertices range over
$\mathbb Z$.  Suppose that every $v\in V$ is joined to one of the roots by
an edge of weight strictly larger than $1$.  For a partition $\mathcal P$
separating the roots, let $w_Q(\mathcal P)$ be the total weight of its cut
edges, let $f(\mathcal P)$ be the number of rootless cells, and put
\[
 \Phi_Q(\mathcal P)=w_Q(\mathcal P)-f(\mathcal P).
\]
Consider
\begin{equation}
 \Sigma_Q(k)=
 \sum_{\mathbf n\in\mathbb Z^V}
 \prod_{e=\{v,w\}\in E(Q)}
 \langle n_v-n_w\rangle^{-\lambda_e},
 \qquad n_{\partial_0}=0,\quad n_{\partial_1}=k.
 \label{eq:two-root-lattice-sum}
\end{equation}
If $\Phi_Q(\mathcal P)>a$ for every root-separating partition, then
$\Sigma_Q(k)\le C_a\langle k\rangle^{-a}$.  The same conclusion holds if
$\Phi_Q(\mathcal P)\ge a$ for every such partition and equality occurs for
exactly one partition.
\end{lemma}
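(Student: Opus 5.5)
We follow the standard multiscale (Weinberg-type) power-counting argument for lattice sums, adapted to two fixed roots. First, we dispose of convergence. Each $v\in V$ is joined to a root by an edge of weight $>1$. Hence the sum over $\mathbf n$ is absolutely convergent for each fixed $k$: bounding all other factors by $1$ leaves $\prod_v\langle n_v-n_{r(v)}\rangle^{-\lambda_{e(v)}}$, which is summable. So only the decay in $k$ is at stake, and we may assume $|k|\ge 2$. We then decompose $\mathbb Z^V$ according to the \emph{ultrametric clustering} of the points $\{0,k\}\cup\{n_v\}$. For a scale parameter, two points are merged at dyadic level $j$ when $\langle n_v-n_w\rangle\le 2^j$; this produces a nested family of partitions (a hierarchy/tree). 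On each region of $\mathbb Z^V$ labelled by such a tree $\mathcal T$ with dyadic scales $j_1<j_2<\cdots$, every edge factor satisfies $\langle n_v-n_w\rangle\asymp 2^{j(e)}$, where $j(e)$ is the level at which its endpoints first merge. The number of configurations is $\lesssim\prod 2^{j}$, one factor per merge of a rootless cell into another cell. There are finitely many trees, so it suffices to bound each tree-sum.

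Next we organize the bound for a fixed tree as a sum over scales, with the root-separating scale singled out. Let $J$ be the level at which $\partial_0$ and $\partial_1$ merge; necessarily $2^J\gtrsim\langle k\rangle$. The partitions of the tree at levels $j<J$ separate the roots. The standard telescoping of exponents $2^{-j\lambda_e}$ and counting factors $2^{j}$ gives the tree-sum as
\[
\sum_{j_1<\cdots}\ \prod_{\text{levels }\ell}2^{-(j_{\ell+1}-j_\ell)\,\Phi_Q(\mathcal P_\ell)}\cdot 2^{-j_0\cdot(\cdots)},
\]
up to harmless modifications. Here $\Phi_Q(\mathcal P_\ell)=w_Q(\mathcal P_\ell)-f(\mathcal P_\ell)$ is evaluated at the partition $\mathcal P_\ell$ present between consecutive merge scales. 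This is exactly where the cut weight minus the number of free (rootless) cells enters: each rootless cell contributes one lattice translation of size $2^{j}$, while each cut edge contributes $2^{-j\lambda_e}$. Above $J$ the roots are in one cell. Cells not containing roots there still merge into it, and their sum converges at every level, because each such vertex has an edge of weight $>1$ to a root. In the corresponding partition face, one cell is rootless and its cut weight exceeds $1$, giving a strictly positive deficit; I would verify this by induction on the rootless cells, using the hypothesis. Below $J$, all partitions separate the roots. Writing $j_\ell=J-m_\ell$, the factor $2^{-J\Phi}$ with $\Phi>a$ for every face yields a geometric sum bounded by $C2^{-Ja}\le C\langle k\rangle^{-a}$. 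The sum over $J\gtrsim\log_2\langle k\rangle$ converges since $\Phi>a$ strictly, and the finer scales $m_\ell\ge 0$ sum geometrically, with positive increments in each face's deficit.

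For the borderline case, exactly one partition $\mathcal P^*$ has $\Phi=a$. That face contributes a factor $\sum_{m}2^{0\cdot m}$ over a range. This range is bounded by the neighbouring faces, which are strictly better, so at most one unbounded geometric ratio equals $1$. The ratio-$1$ sum over its scale gap is then dominated by the strictly decaying adjacent levels. The main obstacle I expect is exactly here, together with the careful telescoping bookkeeping. The degenerate scale must be shown to be coupled to the top scale $J$ in a way that does not create a logarithm $\log\langle k\rangle$. The plan is to note that when $\mathcal P^*$ is the face just below $J$, its gap $J-j$ is summed against the strictly positive deficits of all finer faces, which bounds it by a constant. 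When $\mathcal P^*$ is deeper, the neighbouring strict faces absorb it. Uniqueness of the equality face guarantees that no two ratio-$1$ sums are nested, since two such would produce exactly the $\log\langle k\rangle$ loss the statement excludes.
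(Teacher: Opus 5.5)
Your proposal is correct and follows the same approach as the paper. Both use dyadic single-linkage clustering into nested partitions, plateau factors $2^{-r\Phi_Q(\mathcal P)}$ (cut weight minus one translation per rootless cell), extraction of $2^{-aJ}$ with geometric sums over the strict faces, and a strictly positive deficit for the outer plateaux above the root merger, coming from the root edges of weight $>1$. In the borderline case, your remark about a ``sum over $J$'' is unnecessary: $J=\log_2|k|+O(1)$ is fixed by $k$, and the plateau lengths below the root merger add up to $J+O(1)$, so the length of the unique equality plateau is determined by the others and produces neither a free sum nor a logarithm, which is exactly the paper's argument.
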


\begin{proof}
For bounded $k$, discard all non-root edge factors in
\eqref{eq:two-root-lattice-sum}.  The root-edge weights are larger than
$1$, so the resulting product of one-dimensional sums is finite.

Let now $K=|k|\ge2$ and $J=\lceil\log_2K\rceil$.  First restrict every
variable vertex to a finite interval.  For each dyadic level $r\ge0$, let
$\mathcal P_r$ be the partition into connected components of the graph on
$\{0,k\}\cup\{n_v:v\in V\}$ obtained by joining two points whose distance
is at most $2^r$.  The partitions are nested, and after resolving the
finitely many boundary equalities they give a disjoint family of sectors
with uniformly bounded multiplicity.  Below the first level at which the
two roots merge, every plateau is a root-separating partition
$\mathcal P$.

Fix one such plateau.  In each rootless cell choose a distinguished vertex
as its center and encode the remaining coordinates along a spanning forest.
The internal relative coordinates were already fixed at lower merger
levels, while the center has one translation parameter at the present
level.  Thus every rootless cell contributes exactly one scale dimension.
Every edge cut by $\mathcal P$ has length comparable to the plateau scale
and contributes its full weight.  A plateau of length $r$ consequently
contributes
\begin{equation}
 2^{-r\{w_Q(\mathcal P)-f(\mathcal P)\}}
 =2^{-r\Phi_Q(\mathcal P)}.
 \label{eq:two-root-plateau-factor}
\end{equation}
All comparison constants are uniform over the bounded offsets inside the
dyadic cells.  This is the weighted two-root version of the chain
calculation in \eqref{eq:chain-deficit-convex}.

There are finitely many partition chains.  For a fixed chain, factor
$2^{-aJ}$.  The remaining sum over its plateau lengths is geometric with
ratio $2^{-(\Phi_Q(\mathcal P)-a)}$ at every strict face.  If every face is
strict, all these sums are uniform.  If equality occurs only at
$\mathcal P_*$, monotonicity of the cluster tree allows that partition at
most once.  After the lengths of all strict plateaux have been chosen, its
length is fixed by the total depth $J+O(1)$; it therefore creates no
additional sum and no factor $J$.  Thus all sectors up
to the root-merger scale contribute $O(2^{-aJ})=O(K^{-a})$.

It remains to remove the finite-interval restriction.  Above the
root-merger scale, let $\mathcal Q$ be a partition with one cell containing
both roots and all other cells rootless.  If $\tau_v>1$ is the weight of
the prescribed root edge at $v$, its plateau reserve satisfies
\[
 w_Q(\mathcal Q)-f(\mathcal Q)
 \ge\sum_{C\ \mathrm{rootless}}
       \left(\sum_{v\in C}\tau_v-1\right)
 \ge(\min_v\tau_v-1)
       \sum_{C\ \mathrm{rootless}}|C|>0
\]
unless there is no outer cell.  Hence every outer plateau is summed by a geometric series, uniformly in
the truncation interval.  Letting that interval increase to $\mathbb Z$
proves the result.
\end{proof}

\begin{lemma}[Affine-form power counting]
\label{lem:affine-power-counting}
Let $Q\subset\R^n$ be a bounded cube, and let
$L_1,\ldots,L_m$ be nonconstant affine forms with linear parts
$\ell_1,\ldots,\ell_m$ and weights $\lambda_1,\ldots,\lambda_m>0$.  For
$\varnothing\ne J\subset\{1,\ldots,m\}$ put
\[
 r(J)=\operatorname{rank}\{\ell_j:j\in J\}.
\]
Assume that
\begin{equation}
 \sum_{j\in J}\lambda_j<r(J)
 \label{eq:affine-power-counting-condition}
\end{equation}
whenever the affine equations $L_j=0$, $j\in J$, have a common solution.
Then
\begin{equation}
 \int_Q\prod_{j=1}^m|L_j(x)|^{-\lambda_j}\dd x<\infty.
 \label{eq:affine-power-counting-integral}
\end{equation}
If the affine arrangements range over a finite family and
\eqref{eq:affine-power-counting-condition} holds for every member, the
integrals in \eqref{eq:affine-power-counting-integral} are uniformly
bounded over that family.
\end{lemma}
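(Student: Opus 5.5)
We would prove Lemma~\ref{lem:affine-power-counting} by induction on the number $m$ of affine forms, after localizing. The integrand is continuous and positive away from the union of the hyperplanes $\{L_j=0\}$, so only neighbourhoods of points of $\overline Q$ lying on some hyperplane matter. By compactness of $\overline Q$ we cover it by finitely many small balls. Each ball $U$ is centred at a point $x_0$, and we let $J(x_0)=\{j:L_j(x_0)=0\}$. Choosing $U$ small enough, every form with $j\notin J(x_0)$ is bounded away from zero on $U$. Its factor is then bounded and can be discarded. It therefore suffices to bound $\int_U\prod_{j\in J}|L_j|^{-\lambda_j}$ with $J=J(x_0)$. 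All these forms vanish at $x_0$, so after translating $x_0$ to the origin they are linear. The hypothesis applies to every nonempty subset $J'\subset J$, because $x_0$ is a common solution of the corresponding equations.

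For the linear case we would use a dyadic (blow-up) decomposition. We choose coordinates in which $W=\operatorname{span}\{\ell_j:j\in J\}$ has dimension $r=r(J)$ and is spanned by the first coordinate functionals. The integrand then depends only on the projection $y\in\R^r$, and the remaining $n-r$ variables contribute a bounded factor. In $\R^r$ we split the unit ball into shells $2^{-k-1}<|y|\le2^{-k}$. The integrand is homogeneous of degree $-\sum_J\lambda_j$, so scaling shows that shell $k$ contributes
\[
2^{-k(r-\sum_J\lambda_j)}\int_{\{1/2<|y|\le1\}}\prod_{j\in J}|\ell_j(y)|^{-\lambda_j}\dd y .
\]
The prefactors form a geometric series, because $\sum_J\lambda_j<r(J)$.

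The integral over the unit annulus is finite by the induction hypothesis. We cover the compact annulus again by small balls around its points $y_1$. At each such point the vanishing set $J_1=\{j\in J:\ell_j(y_1)=0\}$ is a proper subset of $J$, since $y_1\ne0$ and the $\ell_j$ span $W$. The hypothesis holds for all subsets of $J_1$, so the induction on $|J|$ (or on $m$) applies to these strictly smaller families. The base case of a single form, or of no vanishing forms, is immediate: a single form gives $\int|t|^{-\lambda}\dd t$ with $\lambda<1=r$, which is finite.

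The main obstacle is organizing the induction correctly. The sub-families arising in the annulus must still satisfy the hypothesis, and they do, because it is assumed for every $J$ with a common zero and those common zeros persist. We must also handle the transversal coordinates: the rank is computed on $W$, not on $\R^n$. For the uniformity clause we note that a finite family of arrangements gives finitely many compactness covers, and every constant is finite. Hence the maximum over the family bounds all the integrals.
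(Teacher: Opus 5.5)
Your proof is correct, but it takes a different route from the paper. You localize around each point $x_0\in\overline Q$ and pass to the quotient $\R^{r}$ by the common kernel of the forms vanishing at $x_0$. There you use positive homogeneity to peel off one radial dyadic scale, which gives the geometric factor $\sum_k2^{-k(r(J)-\sum_{j\in J}\lambda_j)}$. You then induct on the number of vanishing forms. The key observation is that at any point of the (closed) unit annulus only a proper subfamily vanishes, because the forms span the dual of the quotient. That subfamily inherits both the hypothesis and the same ranks.

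The paper instead makes one global multiscale decomposition with no induction. It assigns a dyadic level $m_j$ to every active form and uses compactness only once, to ensure that the deep level families $G_k=\{j:m_j\ge k\}$, $k\ge k_0$, have a common zero. It bounds the cell volume by $C2^{-\sum_k r(G_k)}$ through the matroid greedy identity $\sum_{j\in\mathcal B}m_j=\sum_k r(G_k)$. Finally it sums $2^{-\sum_k(r(G_k)-\sum_{j\in G_k}\lambda_j)}$ over level vectors, using the minimal positive margin and the polynomial count of level vectors with a given maximum.

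Your induction is more elementary and avoids the greedy-basis and level-vector bookkeeping. Its constants come from nested compactness covers, which is all the finite-family clause as stated requires. The paper's version writes the bound explicitly as a sum over nested chains of defects. The same machinery (not the lemma itself) is rerun in Lemma~\ref{lem:one-critical-root}, where the volume bound is asserted uniformly in affine translations. There, isolating the single family with defect $nh$ directly yields the $h^{-1}$ pole and the cutoff logarithm.
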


\begin{proof}
It is enough to work on the region where a prescribed subset of the forms
has modulus at most one; the other factors are bounded, and there are only
finitely many such subsets.  For the active forms introduce dyadic levels
$m_j\ge0$ by
\[
 2^{-m_j-1}<|L_j(x)|\le2^{-m_j},
\]
and put $G_k=\{j:m_j\ge k\}$.  If a subfamily $J$ has no common zero, then
compactness of $Q$ gives
\[
 \inf_{x\in Q}\max_{j\in J}|L_j(x)|>0.
\]
Consequently there is $k_0<\infty$ such that, on every nonempty dyadic cell,
$G_k$ has a common zero whenever $k\ge k_0$.

Order the forms by decreasing $m_j$ and choose greedily a basis of their
linear parts.  The matroid greedy identity gives
\[
 \sum_{j\in\mathcal B}m_j=\sum_{k\ge1}r(G_k).
\]
Using the independent affine forms as coordinates and completing them by
fixed coordinates on their common kernel shows that the volume of the
corresponding dyadic cell is at most
\[
 C_Q2^{-\sum_{k\ge1}r(G_k)}.
\]
The singular weights on that cell are at most
$2^{\sum_j\lambda_jm_j}
 =2^{\sum_{k\ge1}\sum_{j\in G_k}\lambda_j}$.  Its integral is therefore
bounded by
\begin{equation}
 C_Q2^{-\sum_{k\ge1}
 \left(r(G_k)-\sum_{j\in G_k}\lambda_j\right)}.
 \label{eq:affine-dyadic-defect}
\end{equation}
There are finitely many relevant subfamilies, so
\eqref{eq:affine-power-counting-condition} has a positive minimum margin
$\delta$ over those having a common zero.  If
$M=\max_jm_j$, then \eqref{eq:affine-dyadic-defect} is at most
$C2^{-\delta(M-k_0)_+}$.  The number of level vectors with maximum $M$ is
$O((M+1)^m)$, and summation over $M$ is finite.  This proves
\eqref{eq:affine-power-counting-integral}.  For a finite family of affine
arrangements, take the minimum of the finitely many positive margins and
the maximum of the finitely many coordinate constants.
\end{proof}

\begin{lemma}[Charged collective closure]
\label{lem:charged-collective-closure}
Put
\[
 \theta_j=-2\gamma_j-1,\qquad
 c_j=\frac{\theta_j}{2},\qquad
 d=\sum_{j=1}^q\theta_j,\qquad
 \tau=1-\beta>1.
\]
Let $D=(V,E)$ be a finite labelled closure diagram.  At every $v\in V$
there is exactly one port $(v,j)$ of charge $c_j$ for each
$j\in\{1,\ldots,q\}$; every port is paired exactly once with a port at a
different vertex, and an edge joining ports $a,b$ has weight
$\lambda_e=c_a+c_b$.  Fix a block map $b:V\to\{0,1\}$.  For $S\subset V$, let $w_D(S)$ be the
total weight of the edges internal to $S$, and let $B_D(S)$ be the total
charge of the ports based in $S$ whose partners lie in $S^c$.  Then
\begin{equation}
 w_D(S)=\frac d2|S|-B_D(S).
 \label{eq:fixed-h-charged-cut}
\end{equation}

Let $F\subset E$ be connected on the vertex set $S$, and put
\[
 R_D(F,S)=w_D(S)-\sum_{e\in F}\lambda_e\ge0.
\]
If no filter endpoint is selected, the affine deficit is
\begin{equation}
 \delta_{\rm free}(F)
 =|S|\left(1-\frac d2\right)-1+B_D(S)+R_D(F,S)>0.
 \label{eq:fixed-h-free-deficit}
\end{equation}
If $P\subset S$ is a nonempty family of $p=|P|$ compatible endpoint
marks, then
\begin{equation}
 \delta_{\rm pin}(F,P)
 =(|S|-p)\left(1-\frac d2\right)
   +ph+B_D(S)+R_D(F,S)>0.
 \label{eq:fixed-h-pinned-deficit}
\end{equation}
Consequently every compact free or endpoint-pinned collision is
absolutely integrable.

Set
\[
 p_D=\sum_{\substack{e=\{v,w\}\in E\\b(v)\ne b(w)}}\lambda_e
\]
and, for $p\ge0$, define
\begin{equation}
 \psi_p(k)=
 \begin{cases}
  1,&p=0,\\
  \langle k\rangle^{-p},&0<p\le1,\\[1mm]
  \langle k\rangle^{-1-\varepsilon_p},&p>1,
 \end{cases}
 \qquad
 \varepsilon_p=\frac12\bigl(\min\{p,\tau\}-1\bigr)>0
 \quad(p>1).
 \label{eq:collective-weight}
\end{equation}
For every fixed $k$, the map $p\mapsto\psi_p(k)$ is nonincreasing.
Then
\begin{align}
 \mathcal I_D(k)
 &:={}
 \int_{\R^V}\prod_{v\in V}\rho_\beta(x_v)
 \prod_{e=\{v,w\}\in E}
 |k(b(v)-b(w))+x_v-x_w|^{-\lambda_e}
 \dd\mathbf x
 \notag\\
 &\le C_D\psi_{p_D}(k),
 \qquad k\in\mathbb Z.
 \label{eq:charged-collective-decay}
\end{align}
Moreover, for every prescribed $v_0\in V$,
\begin{equation}
 \lim_{R\to\infty}\sup_{k\in\mathbb Z}
 \psi_{p_D}(k)^{-1}
 \int_{\R^V}\1_{\{x_{v_0}<-R\}}
 \prod_{v\in V}\rho_\beta(x_v)
 \prod_{e\in E}
 |k(b(v)-b(w))+x_v-x_w|^{-\lambda_e}
 \dd\mathbf x=0.
 \label{eq:charged-collective-marked-tail}
\end{equation}
\end{lemma}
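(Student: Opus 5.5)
The proof has four parts: the cut identity by counting, the two deficits by rewriting that identity, compact integrability via Lemma~\ref{lem:affine-power-counting}, and the decay bound via Lemma~\ref{lem:two-root-lattice-face}.

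\textbf{Cut identity.} Each vertex of $S$ carries total port charge $\sum_j c_j=d/2$, so the ports based in $S$ carry charge $\frac d2|S|$. Every internal edge of $S$ uses two ports in $S$ and has weight equal to the sum of their charges. The remaining ports in $S$ are exactly the outgoing ones, of total charge $B_D(S)$. This gives \eqref{eq:fixed-h-charged-cut}.

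\textbf{Deficits.} For a connected $F$ on $S$, a free collision of the $|S|$ coordinates has $|S|-1$ relative dimensions. The weight carried is $\sum_{e\in F}\lambda_e=w_D(S)-R_D(F,S)$. Substituting the cut identity gives exactly the expression $\delta_{\rm free}$.

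Positivity comes from $d<1$ and $|S|\ge2$, so that $|S|(1-d/2)-1>0$. For the pinned case, $p$ coordinates are pinned to filter endpoints, each with singularity $|x|^\beta$. A pinned vertex contributes one dimension minus $(-\beta)$ from the filter, and we must also account for the edges. The bookkeeping I expect is that dimension $|S|$ minus filter singularities $p(-\beta)$ minus edge weight gives
\[
|S|+p\beta-\tfrac d2|S|+B_D(S)+R_D(F,S).
\]
Using $\beta=h-H_0$ and $H_0=1-d/2$, this equals $(|S|-p)(1-d/2)+ph+B+R>0$. Compatibility of the marks, meaning at most one endpoint per vertex and consistent translates, is what makes the rank count correct.

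\textbf{Compact integrability.} Feed these deficits into Lemma~\ref{lem:affine-power-counting}. The affine forms are the edge differences together with the endpoint forms $x_v-e$, with weights $\lambda_e$ and $-\beta$. A common-zero subfamily corresponds to a collision set $S$ with its edge family and pinned set. Its rank is $|S|-1$ if free and $|S|$ if pinned; a subfamily spanning several components splits additively. The condition \eqref{eq:affine-power-counting-condition} is then exactly the positivity of $\delta$. One point needs care: a subfamily need not contain all edges of $S$. This is why $R_D\ge0$ appears, since dropping edges only increases the deficit.

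\textbf{Decay bound.} For \eqref{eq:charged-collective-decay}, localize each $x_v$ to a unit cell $n_v$ plus an offset. Compact integrability bounds each cell configuration. The filter tails $\rho_\beta(x)\lesssim\langle x\rangle^{\beta-1}$ are encoded as edges of weight $\tau=1-\beta>1$ to two roots: root $0$ for block-$0$ vertices, and root $k$ after shifting block-$1$ variables by $k$. This produces a lattice sum of the form \eqref{eq:two-root-lattice-sum}.

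Apply Lemma~\ref{lem:two-root-lattice-face} with target exponent $a=\min\{p_D,1+\varepsilon_{p_D}\}$. For a root-separating partition, each rootless cell $C$ costs one dimension. It gains $\tau|C|$ through the root edges, or at least $\tau$ for one vertex, plus its outgoing charge by the cut identity. The cell containing root $0$ and the cell containing root $k$ are separated by cut edges of weight at least $p_D$ along block-crossing edges, together with the root-edge reserve $\tau>1$. Hence $\Phi>a$, and the case $p_D>1$ uses the slack $\varepsilon_p<(\tau-1)$. The case $p_D=0$ is just finiteness.

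\textbf{Marked tail.} For \eqref{eq:charged-collective-marked-tail}, the restriction $x_{v_0}<-R$ multiplies the root-edge factor at $v_0$ by a factor that vanishes as $R\to\infty$, since the root edge has weight $\tau>1$ and its summed tail tends to $0$. Dominated convergence, uniform in $k$ through the strict margins, finishes the argument.

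\textbf{Main obstacle.} I expect the verification of $\Phi_Q(\mathcal P)>a$ for every partition in the decay step to be the hardest part. It requires combining the charge identity, the root reserve, and the block-crossing charge, especially when $p_D\le1$ and a single cell captures all crossing edges.
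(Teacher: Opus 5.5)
Your cut identity, both deficit formulas, the compact integrability step, and the reduction to the two-root lattice sum follow the paper's route and are correct. The gap is the face analysis, which is exactly what produces the exact power $\langle k\rangle^{-p_D}$.

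\textbf{The face bound.} You claim $\Phi_Q(\mathcal P)>a$ for every root-separating partition, with $a=p_D$ when $p_D\le1$. This is false at the native partition $\mathcal P_0=\{\{\partial_0\}\cup b^{-1}(0),\{\partial_1\}\cup b^{-1}(1)\}$. It cuts no root edge and has no rootless cell, so $\Phi_Q(\mathcal P_0)=p_D$ exactly. The strict clause of Lemma~\ref{lem:two-root-lattice-face} would only give $\langle k\rangle^{-a}$ for $a<p_D$. The exact power needs the equality clause, and hence a proof that $\mathcal P_0$ is the \emph{unique} minimizer.

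Your supporting sentence fails in both directions. The rooted cells of $\mathcal P_0$ carry no root reserve $\tau$. Conversely, a partition that moves block-$1$ vertices into the cell of $\partial_0$ makes crossing edges internal, so its cut original weight can fall well below $p_D$.

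The missing argument is a dichotomy.
\begin{itemize}
\item If no crossing edge becomes internal, then $\Phi\ge p_D+\tau r_D-f$ with $\tau r_D-f\ge0$. A rootless cell with $s$ vertices costs $s\tau-1>0$, and each foreign vertex in a rooted cell costs $\tau$. Equality forces $\mathcal P_0$.
\item If crossing weight $L>0$ becomes internal, some cell is mixed and costs at least $\tau$, so $\Phi\ge p_D-L+\tau\ge\tau>1$.
\end{itemize}
This gives a unique equality face when $p_D\le1$, and $\Phi\ge\min\{p_D,\tau\}>1+\varepsilon_{p_D}$ for every face when $p_D>1$.

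\textbf{The marked tail.} Dominated convergence gives convergence only for each fixed $k$, not the supremum over $k$ required in \eqref{eq:charged-collective-marked-tail}. The ``strict margins'' you invoke do not exist at the native face.

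The paper's route is quantitative. Once $R$ is large, only the tail term survives at $v_0$. On $x_{v_0}<-R$ use $\langle x_{v_0}\rangle^{-\tau}\le R^{-\delta}\langle x_{v_0}\rangle^{-(\tau-\delta)}$, that is, lower the root weight at $v_0$ to $\tau-\delta$. Take $0<\delta<\min\{\tau-1,g_D\}$, where $g_D$ is the gap of the non-native faces above the target exponent.

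Because $\mathcal P_0$ does not cut the root edge at $v_0$, it is unchanged and remains the unique minimizer. Every other face drops by at most $\delta$ and keeps a positive gap. The outer-plateau hypothesis survives since $\tau-\delta>1$. Lemma~\ref{lem:two-root-lattice-face} then gives $C_DR^{-\delta}\psi_{p_D}(k)$ uniformly in $k$.
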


\begin{proof}
Identity \eqref{eq:fixed-h-charged-cut} follows by counting the charges
based in $S$: an internal edge consumes both endpoint charges and the
unconsumed charge is $B_D(S)$.  For a connected selected edge family on
$S$, the incidence forms have rank $|S|-1$.  Adding at least one compatible
endpoint form raises the rank to $|S|$.  Thus the two deficits are
\[
 |S|-1-\sum_{e\in F}\lambda_e,
 \qquad
 |S|-\sum_{e\in F}\lambda_e-p(-\beta).
\]
Substitution of \eqref{eq:fixed-h-charged-cut} and
$-\beta=1-d/2-h$ gives
\eqref{eq:fixed-h-free-deficit} and
\eqref{eq:fixed-h-pinned-deficit}.  In the free case $|S|\ge2$, so the
first two terms are at least $1-d>0$.  In the pinned case $p\le|S|$ and
$h>0$.  Lemma~\ref{lem:affine-power-counting}, applied to the
connected components of every selected subfamily, proves compact local
integrability.

We turn to the collective tail.  The endpoint bounds
\eqref{eq:rhotail} give, with fixed cutoffs around $0$ and $1$,
\begin{equation}
 \rho_\beta(x)\le C\bigl[
 \chi_0(x)|x|^\beta+
 \chi_1(x)|1-x|^\beta+
 \langle x\rangle^{-\tau}\bigr].
 \label{eq:filter-root-tail-split}
\end{equation}
Put $y_v=b(v)k+x_v$ and use a bounded-overlap smooth partition subordinate
to the unit cells centered at $n_v\in\mathbb Z$.  Call an edge near when
$|n_v-n_w|\le6$.  Far edges factor out with the corresponding discrete
weight, while a selected tail term at $v$ factors out as
$C\langle n_v-b(v)k\rangle^{-\tau}$.  A selected endpoint term restricts
$n_v-b(v)k$ to one of finitely many offsets, on which that same discrete
weight is comparable to $1$.

The remaining cell integral contains only near Riesz forms and endpoint
forms.  For every subfamily having a common zero, decompose its near-edge
graph into connected components and apply
\eqref{eq:fixed-h-free-deficit} or
\eqref{eq:fixed-h-pinned-deficit}; a subfamily of incompatible endpoint
forms has no common singularity.  There are only finitely many near-edge
graphs, endpoint choices, and relative cell offsets, so
Lemma~\ref{lem:affine-power-counting} gives one uniform cell constant.  Summing these finite choices
gives
\begin{equation}
 \mathcal I_D(k)
 \le C_D\sum_{\mathbf n\in\mathbb Z^V}
 \prod_{v\in V}\langle n_v-b(v)k\rangle^{-\tau}
 \prod_{e=\{v,w\}\in E}
 \langle n_v-n_w\rangle^{-\lambda_e}.
 \label{eq:charged-cell-sum}
\end{equation}

Introduce roots $\partial_0,\partial_1$ at $0,k$ and attach every $v$ to
its native root $\partial_{b(v)}$ by an edge of weight $\tau$.  For a
partition $\mathcal P$ of
$V\cup\{\partial_0,\partial_1\}$ separating the roots, let
\[
 r_D(\mathcal P)
 =\#\{v:[v]_{\mathcal P}\ne[\partial_{b(v)}]_{\mathcal P}\},
 \qquad
 f(\mathcal P)
 =\#\{C\in\mathcal P:C\cap\{\partial_0,\partial_1\}=\varnothing\},
\]
and set
\begin{equation}
 \Phi_D(\mathcal P)
 =\sum_{\substack{e=\{v,w\}\in E\\
          [v]_{\mathcal P}\ne[w]_{\mathcal P}}}\lambda_e
   +\tau r_D(\mathcal P)-f(\mathcal P).
 \label{eq:charged-macro-face}
\end{equation}
For the augmented graph, \eqref{eq:charged-macro-face} is exactly the face
$\Phi_Q$ of Lemma~\ref{lem:two-root-lattice-face}: the crossed root edges
give $\tau r_D(\mathcal P)$ and every rootless cell contributes the
translation dimension subtracted in $f(\mathcal P)$.  It remains only to
bound these finitely many faces.

The native partition
\[
 \mathcal P_0=
 \bigl\{\{\partial_0\}\cup b^{-1}(0),
          \{\partial_1\}\cup b^{-1}(1)\bigr\}
\]
has $\Phi_D(\mathcal P_0)=p_D$.  Consider another partition.  If no native
cross-block edge becomes internal, its original-edge contribution is at
least $p_D$, while $\tau r_D-f\ge0$: a rootless cell containing $s$
vertices contributes $s\tau-1>0$, and a rooted cell contributes $\tau$
for each vertex separated from its native root.  Equality forces no
rootless cell and no vertex separated from its native root, hence the
native partition.  If cross-block edges of
total weight $L>0$ become internal, some cell is mixed.  A mixed rooted
cell costs at least $\tau$, whereas a mixed rootless cell costs at least
$2\tau-1>\tau$.  Hence
\begin{equation}
 \Phi_D(\mathcal P)\ge p_D-L+\tau
 \ge\min\{p_D,\tau\}.
 \label{eq:charged-face-lower-bound}
\end{equation}
Thus the native face is the unique minimizer when $p_D\le1<\tau$, and the
second conclusion of Lemma~\ref{lem:two-root-lattice-face} gives the exact
power $\langle k\rangle^{-p_D}$ without a logarithmic loss.  When
$p_D>1$, every face is strictly above $1$; since
$1+\varepsilon_p<\min\{p_D,\tau\}$, the first conclusion of that lemma
proves \eqref{eq:charged-collective-decay}.

For the marked assertion, only the tail term in
\eqref{eq:filter-root-tail-split} remains once $R$ is large.  Let $g_D>0$
be the minimum, over the finite family of relevant faces, of
$\Phi_D-p_D$ away from the native face when $p_D\le1$, and of
$\Phi_D-(1+\varepsilon_p)$ when $p_D>1$.  Choose
\[
 0<\delta<\min\{\tau-1,g_D\},
 \qquad \tau'=\tau-\delta.
\]
Lowering the root-edge weight at $v_0$ to $\tau'$ changes any face by at
most $\delta$.  The native face cuts no root edge and is unchanged, while
all required strict gaps and the outer-plateau hypothesis of
Lemma~\ref{lem:two-root-lattice-face} persist.
On $x_{v_0}<-R$,
\[
 \langle x_{v_0}\rangle^{-\tau}
 \le R^{-\delta}\langle x_{v_0}\rangle^{-\tau'}.
\]
Applying that lemma to the perturbed root weight and repeating the cell sum
gives the stronger bound $C_DR^{-\delta}\psi_{p_D}(k)$ and proves
\eqref{eq:charged-collective-marked-tail}.
\end{proof}

The paired estimate also has the quantitative form needed for strong kernel
rates.  We state it separately because this is where a one-variable argument
would be invalid in the rough regime.  Define
\begin{align}
\Gamma_*(k)={}&\int_{\R^4}
\varphi_\beta(a)\varphi_\beta(b)
\varphi_\beta(a')\varphi_\beta(b')
|a-b|^{-c}|a'-b'|^{-c}
\notag\\
&\hspace{22mm}\times
|k+a-a'|^{-\theta}|k+b-b'|^{-\theta}
\dd a\dd b\dd a'\dd b',
\label{eq:Gammastar}\\
J_*(z)={}&\int_{\R^2}\varphi_\beta(a)\varphi_\beta(b)
|a-b|^{-c}|z-a|^{-\theta}|z-b|^{-\theta}
\dd a\dd b.
\label{eq:Jstar}
\end{align}
Exchanging primed and unprimed variables gives
$\Gamma_*(-k)=\Gamma_*(k)$; this parity is used in the Toeplitz reductions
below.

\begin{lemma}[Marked tail for the anchored three-edge integral]
\label{lem:anchored-tail}
Let $-1<\beta<0$ and $0<c,\theta<1$ with $c+2\theta<2$.  Let
$\rho$ be nonnegative, supported in $(-\infty,1]$, integrable, and such
that $\rho(x)\le C|x|^{\beta-1}$ for $x\le-1$.  Then, for every $R\ge2$
and $|z|\ge2$,
\begin{align}
 &\int_{a<-R}\!\int_\R
 \rho(a)\rho(b)|a-b|^{-c}|z-a|^{-\theta}|z-b|^{-\theta}
 \dd b\dd a
 \notag\\
 &\hspace{35mm}\le
 C R^\beta(1+|z|)^{-2\theta}.
 \label{eq:anchored-tail}
\end{align}
\end{lemma}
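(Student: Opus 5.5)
The plan is to treat the two filter variables asymmetrically. Since $a<-R\le-2$, the marked variable is always in the tail, so $\rho(a)\le C|a|^{\beta-1}$. The free variable $b$ keeps only the integrability of $\rho$ away from its tail. The integrand has three singular factors, located at $b=a$, $b=z$ and $a=z$. We need:
\begin{itemize}
\item one factor $\langle z\rangle^{-\theta}$ from each of $|z-a|^{-\theta}$ and $|z-b|^{-\theta}$;
\item the factor $R^\beta$ from integrating $|a|^{\beta-1}$ over $a<-R$.
\end{itemize}
We split the $b$-integral according to whether $|b|\le|a|/2$ or $|b|>|a|/2$.

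\emph{Region $|b|\le|a|/2$.} Here $|a-b|\asymp|a|$, so $|a-b|^{-c}\le C|a|^{-c}\le C$.

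First we bound the one-variable integral $\int\rho(b)|z-b|^{-\theta}\dd b\le C\langle z\rangle^{-\theta}$ for $|z|\ge2$. If $z\ge2$, then $|z-b|\ge1$ on the support of $\rho$, and $|z-b|\asymp\langle z\rangle$ except on a set where $|b|\gtrsim|z|$, whose $\rho$-mass is $O(|z|^\beta)$. If $z\le-2$, we split at $|b-z|\le|z|/2$. On that set $\rho(b)\le C|z|^{\beta-1}$, and the local integral of $|z-b|^{-\theta}$ gives $|z|^{\beta-\theta}\le|z|^{-\theta}$. Off that set $|z-b|^{-\theta}\le C|z|^{-\theta}$ and $\rho\in L^1$.

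What remains is the outer integral $\int_{a<-R}|a|^{\beta-1}|z-a|^{-\theta}\dd a$, which we estimate in three parts.
\begin{itemize}
\item Where $|z-a|\ge|z|/2$, we have $|z-a|^{-\theta}\le C|z|^{-\theta}$, and the integral of $|a|^{\beta-1}$ gives $CR^\beta|z|^{-\theta}$.
\item Where $|z-a|<|z|/2$ (possible only if $z<0$ and $|z|\gtrsim R$), we have $|a|\asymp|z|$. This part is at most $C|z|^{\beta-1}\cdot|z|^{1-\theta}\le CR^\beta|z|^{-\theta}$.
\end{itemize}
Multiplying the two bounds gives $CR^\beta\langle z\rangle^{-2\theta}$.

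\emph{Region $|b|>|a|/2$.} Both variables are now in the tail, because $a<-R$ and $b\le1$ force $b<-R/2$. Hence $\rho(a)\rho(b)\le C|a|^{\beta-1}|b|^{\beta-1}$, and we face a purely homogeneous double integral over $\{a<-R,\ b<-R/2\}$. We decompose according to distance from $z$.
\begin{itemize}
\item \emph{Neither $a$ nor $b$ within $|z|/2$ of $z$.} Both $\theta$-factors are $O(\langle z\rangle^{-\theta})$. The residual $\int\!\int|a|^{\beta-1}|b|^{\beta-1}|a-b|^{-c}$ over $a<-R$, $b<-R/2$ is $O(R^{2\beta-c})\le CR^\beta$ by scaling, using $c<1$ for the diagonal.
\item \emph{Exactly one variable near $z$.} Here $|z|\gtrsim R$. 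Integrating the near variable against $|z-\cdot|^{-\theta}$, with $|a-b|^{-c}$ frozen at scale $|z|$ or integrated with $c<1$, gives a bound of the same form.
\end{itemize}

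The step I expect to be the main obstacle is the \emph{triple collision}, where both $a$ and $b$ lie within $|z|/2$ of $z$ (so $z<0$ and $|z|\gtrsim R$). There all three Riesz factors can be singular at once. The weights $|a|^{\beta-1}|b|^{\beta-1}$ are comparable to $|z|^{2\beta-2}$, and what remains is the compact integral
\[
\int_{|a-z|,|b-z|<|z|/2}|a-b|^{-c}|z-a|^{-\theta}|z-b|^{-\theta}\dd a\dd b.
\]
After scaling $a=z+|z|u$, $b=z+|z|v$, this equals $|z|^{2-c-2\theta}$ times an integral over a fixed cube. That fixed integral is finite by Lemma~\ref{lem:affine-power-counting}: each single form has weight $<1$ (rank one), and the full collision $u=v=0$ has weight $c+2\theta<2$ (rank two). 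This is exactly where the hypothesis $c+2\theta<2$ is needed. The contribution is therefore
\[
C|z|^{2\beta-c-2\theta}\le CR^{\beta}|z|^{\beta-c}|z|^{-2\theta}\le CR^\beta\langle z\rangle^{-2\theta},
\]
since $|z|\gtrsim R$ and $\beta<0$. Summing the finitely many regions proves \eqref{eq:anchored-tail}.
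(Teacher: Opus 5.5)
Your proof is correct. Its decisive step coincides with the paper's: once both variables are in the tail, you split according to which of $a,b$ lie within $|z|/2$ of $z$, and you handle the triple collision by the same rescaling $a=z+|z|u$, $b=z+|z|v$. Finiteness there uses $c,\theta<1$ and $c+2\theta<2$, which is exactly the paper's sector $A\cap B$.

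The difference is in the preliminary decomposition. The paper first proves the one-edge bound $P_c(x)\lesssim|x|^{-c}$ and treats $z\ge2$ separately via the support condition. For $z=-K$ it then distinguishes $K<R/2$, where the $b$-line is partitioned into $E_1,E_2,E_3$, from $K\ge R/2$, where it uses the four $z$-sectors. Your first split, $|b|\le|a|/2$ versus $|b|>|a|/2$, absorbs both $z\ge2$ and the regime $K<R/2$ at once. On the first region the $c$-edge is bounded and the integrand factorizes into two one-anchored integrals. On the second region, any proximity to $z$ automatically forces $|z|\gtrsim R$, so no comparison of $K$ with $R$ is ever needed.

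What the paper's version buys is that every sector is written out. In yours, the ``exactly one variable near $z$'' step is only sketched. Moreover, ``$|a-b|^{-c}$ frozen at scale $|z|$'' is not literally valid when $a$ is near $z$: $b$ may sit just outside the ball, so $|a-b|$ can be tiny. The sketch does close by integrating with $c<1$:
\begin{itemize}
\item For $a$ near $z$, the bound $\int_{b<-|a|/2}|b|^{\beta-1}|a-b|^{-c}\dd b\lesssim|a|^{\beta-c}$ gives a total of $|z|^{2\beta-c-2\theta}$.
\item For $b$ near $z$, the bound $\int_{|b-z|<|z|/2}|a-b|^{-c}|z-b|^{-\theta}\dd b\lesssim|z|^{1-c-\theta}$ holds uniformly for $|a-z|\ge|z|/2$. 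Combined with $\int_{a<-R}\rho\lesssim R^\beta$, it gives $R^\beta|z|^{\beta-c-2\theta}$.
\end{itemize}
Both are $O(R^\beta|z|^{-2\theta})$ because $|z|\gtrsim R$ and $\beta<0$.
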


\begin{proof}
Put $P_c(x)=\int_\R\rho(y)|x-y|^{-c}\dd y$.  If $x=-K\le-2$, split
the $y$-line into
\[
 \{|y|\le K/2\},\qquad
 \{|x-y|\le K/2\},\qquad
 \{|y|>K/2,\ |x-y|>K/2\}.
\]
On the first set the contribution is $O(K^{-c})$.  On the second,
$y\le-K/2$ and hence it is at most
\[
 CK^{\beta-1}\int_{|w|\le K/2}|w|^{-c}\dd w
 \le CK^{\beta-c}.
\]
On the third set, the substitution $y=Ku$ and the tail bound for $\rho$
give $CK^{\beta-c}$ times an integrable function of $u$; the domain stays
away from both finite singularities, and the power at infinity is
$\beta-c-1<-1$.  Since $\beta<0$,
\begin{equation}
 P_c(x)\le C|x|^{-c},\qquad x\le-2.
 \label{eq:Pc-tail}
\end{equation}
Consequently
\begin{equation}
 \int_{a<-R}\rho(a)P_c(a)\dd a
 \le C\int_R^\infty r^{\beta-c-1}\dd r
 \le CR^\beta.
 \label{eq:Pc-marked}
\end{equation}

If $z\ge2$, the support condition gives
$|z-a|\gtrsim1+z$ and $|z-b|\gtrsim1+z$ for $a,b\le1$.  Thus both anchored
factors are $O((1+z)^{-\theta})$, and
\eqref{eq:Pc-marked} proves \eqref{eq:anchored-tail}.

Let now $z=-K\le-2$.  Suppose first that $K<R/2$.  Partition the $b$-line
into
\[
 E_1=\{|b-a|<|a|/4\},\qquad
 E_2=\{|b-z|<K/2\},\qquad
 E_3=\R\setminus(E_1\cup E_2),
\]
assigning overlaps to the first set in this order.  On $E_1$, both filter
variables are in the same tail and both anchored distances are comparable
with $|a|$; hence the contribution is bounded by
\[
 C\int_R^\infty r^{2\beta-2-2\theta}
 \left(\int_0^{r/4}w^{-c}\dd w\right)\dd r
 \le CR^{2\beta-c-2\theta}.
\]
On $E_2\setminus E_1$, one has $|a-b|\asymp|a|$ and
$|z-a|\asymp|a|$, while
\[
 \int_{|b-z|<K/2}\rho(b)|z-b|^{-\theta}\dd b
 \le CK^{\beta-\theta}\le CK^{-\theta}.
\]
This gives $CR^{\beta-c-\theta}K^{-\theta}$.  On $E_3$, use
$|a-b|\gtrsim|a|$, $|z-b|\gtrsim K$, and $\rho\in L^1$ to obtain the
same bound.  After multiplication by $K^{2\theta}$, all three terms are
$O(R^\beta)$ because $K<R/2$, $\beta<0$, and $c>0$.

Assume finally that $K\ge R/2$, and put
\[
 A=\{|a-z|\le K/4\},\qquad B=\{|b-z|\le K/4\}.
\]
On $A^c\cap B^c$, both anchored factors are $O(K^{-\theta})$, so
\eqref{eq:Pc-marked} gives $CR^\beta K^{-2\theta}$.  On
$A\cap B^c$, use $|z-b|^{-\theta}\le CK^{-\theta}$,
\eqref{eq:Pc-tail}, and $\rho(a)\le CK^{\beta-1}$ to get
\[
 CK^{-\theta}K^{-c}K^{\beta-1}
 \int_{|u|\le K/4}|u|^{-\theta}\dd u
 \le CK^{\beta-c-2\theta}.
 \label{eq:one-resonant-a}
\]
For the nonsymmetric sector $A^c\cap B$, note first that, uniformly for
$|b-z|\le K/4$,
\begin{equation}
 \int_{a<-R}\rho(a)|a-b|^{-c}\dd a
 \le CR^\beta K^{-c}.
 \label{eq:truncated-Pc}
\end{equation}
Indeed, on $|a-b|\le K/2$ use
$\rho(a)\le CK^{\beta-1}$ and integrate $|a-b|^{-c}$; on the complement
use $|a-b|^{-c}\le CK^{-c}$ and
$\int_{a<-R}\rho(a)\dd a\le CR^\beta$.  Since
$|z-a|^{-\theta}\le CK^{-\theta}$ on $A^c$, equations
\eqref{eq:truncated-Pc} and
$\int_B\rho(b)|z-b|^{-\theta}\dd b\le CK^{\beta-\theta}$ give
\[
 CR^\beta K^{\beta-c-2\theta}
 \le CR^\beta K^{-2\theta}.
\]
Finally, on $A\cap B$, set $a=z+Ku$, $b=z+Kv$.  The contribution is at
most
\[
 CK^{2\beta-c-2\theta}
 \int_{|u|,|v|\le1/4}
 |u|^{-\theta}|v|^{-\theta}|u-v|^{-c}\dd u\dd v.
\]
The last integral is finite: pair collisions require $c<1$, the two pinned
collisions require $\theta<1$, and the full collision requires
$c+2\theta<2$.  Because $K\ge R/2$ and $\beta<0$, both
$K^{\beta-c}$ and $K^{2\beta-c}$ are $O(R^\beta)$.  Combining the four
sectors proves \eqref{eq:anchored-tail}.
\end{proof}

\begin{lemma}[Quantitative paired correlations]\label{lem:pairedrate}
Under the assumptions of Theorem~\ref{thm:main}, with
$\delta_\beta=-\beta/(1-\beta)$, one has, for $|k|\ge1$,
\begin{equation}
\left|\Gamma_*(k)-\Lambda_c(\beta)^2|k|^{-2\theta}\right|
\le C|k|^{-2\theta-\delta_\beta}.
\label{eq:Gammarate}
\end{equation}
The function $J_*$ is locally integrable and, for $|z|\ge2$,
\begin{equation}
\left|J_*(z)-\Lambda_c(\beta)|z|^{-2\theta}\right|
\le C|z|^{-2\theta-\delta_\beta}.
\label{eq:Jrate}
\end{equation}
\end{lemma}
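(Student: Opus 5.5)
The approach for both estimates is to isolate the exact paired Riesz energy $\Lambda_c(\beta)$ by freezing the long-range factors at their value at the origin. We then control the error by splitting the filter variables into a ``near'' region $|a|,|b|\le R$ and a tail region, and finally optimize $R$ against $|k|$.

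\textbf{Step 1: the one-variable estimate \eqref{eq:Jrate}.} Fix $|z|\ge2$ and a cutoff $R=|z|^{\sigma}$ with $\sigma\in(0,1)$ to be chosen. Write
\[
J_*(z)-\Lambda_c(\beta)|z|^{-2\theta}
=\int\varphi_\beta(a)\varphi_\beta(b)|a-b|^{-c}
\bigl(|z-a|^{-\theta}|z-b|^{-\theta}-|z|^{-2\theta}\bigr)\dd a\dd b .
\]
This uses the definition \eqref{eq:energy} of $\Lambda_c$, which is absolutely convergent after pairing by Lemma~\ref{lem:roughconvolution}, since $c<d<2(1+\beta)$.

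On $\{|a|,|b|\le R\}$, the mean value theorem gives the bracket as $O(R|z|^{-2\theta-1})$. Integrating against $\rho_\beta(a)\rho_\beta(b)|a-b|^{-c}$, which has finite total mass by \eqref{eq:roughconvolution} at $k=0$, yields $O(R|z|^{-2\theta-1})$.

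When $a<-R$ or $b<-R$ (the filters vanish above $1$), we bound the two pieces separately. The $J$-type piece is controlled by Lemma~\ref{lem:anchored-tail} with $\rho=\rho_\beta$; its hypothesis $c+2\theta<2$ holds because $c+2\theta=d+\theta<1+1/q$. This piece is $O(R^\beta|z|^{-2\theta})$. The subtracted term $|z|^{-2\theta}\int_{a<-R}\rho\rho|a-b|^{-c}$ is bounded by \eqref{eq:uniformtail} at $k=0$, again giving $O(R^\beta|z|^{-2\theta})$.

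Balancing $R|z|^{-1}=R^\beta$ gives $R=|z|^{1/(1-\beta)}$. This is admissible since $1/(1-\beta)<1$ when $\beta<0$, and it produces exactly the loss exponent $-\beta/(1-\beta)=\delta_\beta$. Local integrability of $J_*$ follows from the compact power counting in Lemma~\ref{lem:affine-power-counting}: the pinned collisions require $\theta<1$, and the collision of $a,b,z$ at a filter endpoint is controlled by the paired budget of Lemma~\ref{lem:autocorrelation}.

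\textbf{Step 2: the four-variable estimate \eqref{eq:Gammarate}.} Write the two cross factors as $|k+a-a'|^{-\theta}|k+b-b'|^{-\theta}$. On the box where all four filter variables satisfy $|\cdot|\le R$ with $R\le|k|/4$, the mean value theorem replaces the product by $|k|^{-2\theta}$ with an error $O(R|k|^{-2\theta-1})$. The remaining integral then factorizes as $\Lambda_c(\beta)^2$ minus tail corrections.

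When one variable, say $a$, lies below $-R$, we bound the full integrand in absolute value by a closure diagram on four vertices. It has two internal edges of weight $c$ and two cross-block edges of weight $\theta$, with translation $k$. This is an instance of Lemma~\ref{lem:charged-collective-closure} with $p_D=2\theta<1$. Indeed, the weights arise as sums of port charges $c_j=\theta_j/2$ after the spatial contraction: $c$ corresponds to the $q-1$ contracted labels together with the free $M^2$ labels, and $\theta$ to the two free ports. Its marked-tail statement \eqref{eq:charged-collective-marked-tail} gives $o(1)\,\langle k\rangle^{-2\theta}$.

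\textbf{Main obstacle.} We need a quantitative $R^{\beta}$ rate here rather than the $o(1)$ provided by \eqref{eq:charged-collective-marked-tail}. The weights $c$ and $\theta$ also need not come literally from a closure diagram, so the lemma may not apply verbatim.

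My first attempt is to reduce by Fubini, as in Lemma~\ref{lem:anchored-tail}. Integrating out $b$ and $b'$ first gives the kernel $J$-like inner factor
\[
\int\rho(b)\rho(b')|a-b|^{-c}|a'-b'|^{-c}|k+b-b'|^{-\theta}.
\]
By the argument of Step 1 in relative coordinates, this is $O(\langle a\rangle^{-c}\ldots)$. The outer $a$-integral is then handled exactly as the sectors $A,B,A\cap B$ of Lemma~\ref{lem:anchored-tail}, with $z$ replaced by $a'-k$, and each sector acquires the factor $R^\beta$ from $\int_{a<-R}\rho\le CR^\beta$ or from the scaling $K^{\beta}$ with $K\gtrsim R$.

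With this $O(R^\beta|k|^{-2\theta})$ tail bound, choosing $R=|k|^{1/(1-\beta)}$ gives \eqref{eq:Gammarate}. The parity $\Gamma_*(-k)=\Gamma_*(k)$ reduces the argument to $k\ge1$.
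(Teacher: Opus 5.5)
Your Step~1 is the paper's argument for \eqref{eq:Jrate}: truncation at $R$, the mean value theorem on $\{|a|,|b|\le R\}$, Lemma~\ref{lem:anchored-tail} plus \eqref{eq:uniformtail} for the tails, and $R=|z|^{1/(1-\beta)}$. Two small additions are needed. First, $J_*$ must be shown bounded on compact subsets of $\{|z|\ge2\}$, to absorb the range $|z|<4(R+1)$. Second, local integrability also requires checking the escape of one or both filter variables to $-\infty$.

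The genuine gap is in Step~2. The bound
\[
 \sup_{k}(1+|k|)^{2\theta}\,\bigl|\Gamma_*(k)-\Gamma_{*,R}(k)\bigr|\le CR^\beta
\]
is never established, and nothing weaker than the exact power $R^\beta$ reaches $\delta_\beta$. As you note, \eqref{eq:charged-collective-marked-tail} does not suffice. Its proof gives $R^{-\delta}$ with $\delta<\min\{-\beta,g_D\}$, and balancing against $R/|k|$ then yields only $\delta/(1+\delta)<\delta_\beta$.

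The Fubini fallback is not an estimate either. With $a,a'$ frozen, the inner integral contains $\int\rho_\beta(b')|a'-b'|^{-c}\dd b'$. This behaves like $|a'-e|^{h+\theta-d/2}$ as $a'$ approaches a filter endpoint $e$, and the exponent $\beta+1-c=h+\theta-d/2$ can be negative. It is further coupled to $b$ through $|k+b-b'|^{-\theta}$. Hence there is no bound ``$O(\langle a\rangle^{-c}\cdots)$'' uniform in $a'$. Nor is the outer integral an instance of Lemma~\ref{lem:anchored-tail}: there the anchor $z$ carries no filter weight, whereas your $a'-k$ carries $\rho_\beta(a')$ and the inner singularity. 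Closing this route would require a new four-variable sector analysis, which the proposal does not supply.

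The missing tool is Lemma~\ref{lem:roughfinner}, which is exactly what the paper uses. In the rectangle graph of $\Gamma_*$, every filter vertex has incident Riesz weight $c+\theta=d$, and $d<\min\{1,2(1+\beta)\}=\min\{1,d+2h\}$. So H\"older--Finner with $D=d$ reduces each edge to the paired convolution of Lemma~\ref{lem:roughconvolution}. This gives $|\Gamma_*(k)|\le C(1+|k|)^{-2\theta}$, which also covers bounded $|k|$.

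The marked form of that lemma rests on the fractional factorization \eqref{eq:marked-indicator-factorization} together with \eqref{eq:uniformtail}. It gives $CR^\beta(1+|k|)^{-2\theta}$ whenever any one of $a,b,a',b'$ is restricted to $(-\infty,-R)$. A union bound over the four vertices yields the displayed tail estimate, and the rest of your Step~2 then goes through with $R=|k|^{1/(1-\beta)}$.

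Incidentally, $\Gamma_*$ is a genuine closure diagram: $c$ is the total weight of the $q-1$ contracted port edges, and $\theta$ is that of a maximal--maximal free pair. This does not rescue the charged-closure route, since that lemma's tail rate is the wrong one regardless.
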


\begin{proof}
Truncate every filter variable to $[-R,1]$.  In \eqref{eq:Gammastar}, each
filter vertex has incident degree $c+\theta=d$; the same is true of the two
filter vertices in \eqref{eq:Jstar}.  Since
\[
d<\min\{1,2(1+\beta)\},
\]
Lemma~\ref{lem:roughfinner}, with $D=d$, applies at every saturated vertex
of the rectangle \eqref{eq:Gammastar}.  Its marked-tail form gives
\begin{align}
\sup_{k\in\mathbb R}(1+|k|)^{2\theta}
|\Gamma_*(k)-\Gamma_{*,R}(k)|&\le CR^\beta,
\label{eq:Gammatail}\\
\sup_{|z|\ge2}(1+|z|)^{2\theta}
|J_*(z)-J_{*,R}(z)|&\le CR^\beta.
\label{eq:Jtail}
\end{align}
For \eqref{eq:Jtail}, the two long edges end at an anchored point rather
than another filter vertex.  Since $c=d-\theta$, one has
$c+2\theta=d+\theta<2$.  Lemma~\ref{lem:anchored-tail}, applied with
$\rho=\rho_\beta$, bounds the term in which $a<-R$ by
$CR^\beta(1+|z|)^{-2\theta}$; exchanging $a$ and $b$ gives the other
term.  The union bound for the two marked tails proves \eqref{eq:Jtail}.
Notice that this argument never invokes the generally false one-filter
rough estimate
$\int\rho_\beta(x)|z-x|^{-d}\dd x\lesssim(1+|z|)^{-d}$.

On the truncated domain, if $|k|,|z|\ge4(R+1)$, the mean value theorem gives
a relative error $O(R/|k|)$ and $O(R/|z|)$ when the long edges are replaced
by $|k|^{-\theta}$ and $|z|^{-\theta}$.  The local masses converge to
$\Lambda_c(\beta)$ with error $O(R^\beta)$, again by the marked-tail
estimate.  Therefore
\[
\begin{split}
|k|^{2\theta}
\left|\Gamma_*(k)-\Lambda_c(\beta)^2|k|^{-2\theta}\right|
&\le C\left(R^\beta+\frac R{|k|}\right),\\
|z|^{2\theta}
\left|J_*(z)-\Lambda_c(\beta)|z|^{-2\theta}\right|
&\le C\left(R^\beta+\frac R{|z|}\right).
\end{split}
\]
For this choice, $R/|k|=|k|^{\beta/(1-\beta)}\to0$; hence the condition
$|k|\ge4(R+1)$ holds for all sufficiently large $|k|$.  Choosing
$R=|k|^{1/(1-\beta)}$ and $R=|z|^{1/(1-\beta)}$ proves
\eqref{eq:Gammarate} and \eqref{eq:Jrate} in that range.  The remaining
compact ranges are absorbed by enlarging the constant.  For the $J_*$
term, this uses the following direct uniform fact: on every compact subset
of $\{|z|\ge2\}$, the same four-sector partition as in
Lemma~\ref{lem:anchored-tail}, now without the marked restriction, gives
$\sup_z|J_*(z)|<\infty$.  Indeed, near $a=z$ or $b=z$ the filter is a
uniformly bounded smooth tail and the only collision conditions are
$c,\theta<1$ and $c+2\theta<2$; away from those two neighborhoods the
anchored factors are uniformly bounded and Lemma~\ref{lem:roughconvolution}
applies.  To verify local
integrability on the whole line explicitly, fix $T>0$ and
integrate the absolute value of \eqref{eq:Jstar} over $|z|\le T$.  A
partition around the filter endpoints leaves the following possible pinned
collision surpluses:
\[
\begin{array}{c|c}
\text{colliding variables}&
\text{volume plus filter powers minus edge powers}\\
\hline
\{a,b\}&2+2\beta-c=\theta+2h\\
\{a,z\}\text{ or }\{b,z\}&2+\beta-\theta\\
\{a,b,z\}&3+2\beta-c-2\theta=1+2h-\theta.
\end{array}
\]
They are positive because $h>0$, $\beta>-1$, and $0<\theta<1/2$.
Away from a filter endpoint, pair collisions require only $c<1$ and
$\theta<1$, and the free triple collision has surplus
$2-c-2\theta>0$.  If exactly one filter variable leaves a fixed compact
set, \eqref{eq:rhotail} gives the integrable radial power
$r^{\beta-1-c-\theta}$.  If both escape together, retaining the common
translation coordinate gives
$r^{2\beta-2-c-2\theta}r$, again integrable.  This proves
$\int_{-T}^T|J_*(z)|\dd z<\infty$.
\end{proof}

\section{Chaos decomposition and the leading contraction}
\label{sec:chaos-decomposition}

At unit scale let
\begin{equation}
f_i=L_{i+1}^{(\beta)}-L_i^{(\beta)},
\qquad Y_i=I_q(f_i)=X_{i+1}-X_i,
\label{eq:stationarykernel}
\end{equation}
and let $U_N:L^2(\R)\to L^2(\R)$ be the unitary dilation
\[
(U_Ng)(x)=N^{1/2}g(Nx).
\]
For the mesh-$N^{-1}$ increment kernel write
\[
f_{i,N}=L_{(i+1)/N}^{(\beta)}-L_{i/N}^{(\beta)}.
\]

\begin{lemma}[Exact stationary reduction]\label{lem:stationaryreduction}
For every $i,N$,
\begin{equation}
f_{i,N}=N^{-h}U_N^{\otimes q}f_i.
\label{eq:unitaryscaling}
\end{equation}
If
\begin{equation}
H_{i,m}=f_i\widetilde\otimes_{q-m}f_i,
\qquad
a_{q,m}=(q-m)!\binom q{q-m}^{\!2},
\label{eq:Him}
\end{equation}
then the normalized variation has the exact decomposition
\begin{equation}
Z_N^{(h)}(t)=
\sum_{m=1}^q a_{q,m}
I_{2m}\!\left(
N^{\theta-1}U_N^{\otimes2m}
\sum_{i< Nt}H_{i,m}
\right).
\label{eq:exactchaosdecomposition}
\end{equation}
Thus, after normalization, the explicit power of $N$ is independent of the
visible exponent $h$; all dependence on $h$ remains inside the stationary
filter $\varphi_\beta$.
\end{lemma}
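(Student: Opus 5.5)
The plan has two steps: first establish the scaling identity \eqref{eq:unitaryscaling} by a change of variables, then insert it into the Wiener product formula and collect the powers of $N$.

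For \eqref{eq:unitaryscaling}, write
\[
f_{i,N}(\mathbf x)=A_{\boldsymbol\gamma,\beta}\int_\R\bigl[((i+1)/N-u)_+^\beta-(i/N-u)_+^\beta\bigr]g_{\boldsymbol\gamma}(u\one-\mathbf x)\dd u .
\]
Substitute $u=u'/N$ and $\mathbf x=\mathbf y/N$ with $\mathbf y=N\mathbf x$. The filter contributes $N^{-\beta}$. Each monomial factor contributes $N^{-\gamma_{\sigma(j)}}$, so the symmetrized kernel picks up $N^{-\alpha}$ independently of $\sigma$. The measure contributes $N^{-1}$. Hence
\[
f_{i,N}(\mathbf x)=N^{-\alpha-\beta-1}f_i(N\mathbf x).
\]
On the other hand, $U_N^{\otimes q}f_i(\mathbf x)=N^{q/2}f_i(N\mathbf x)$. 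Since $\alpha+\beta+1+q/2=h$ (as in the proof of Proposition~\ref{prop:processconstruction}), we get $f_{i,N}=N^{-h}U_N^{\otimes q}f_i$.

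For the decomposition, use the product formula for symmetric $f\in L^2_s(\R^q)$:
\[
I_q(f)^2=\sum_{r=0}^q r!\binom qr^{2}I_{2q-2r}(f\widetilde\otimes_r f).
\]
The $r=q$ term is $q!\|f\|^2=\E[I_q(f)^2]$. Applied to $f_{i,N}$, this term equals $\E[(X_{(i+1)/N}-X_{i/N})^2]=N^{-2h}$ by \eqref{eq:incrementvariance}, so it cancels exactly the centering in \eqref{eq:variation}. Reindex the remaining terms by $m=q-r\in\{1,\dots,q\}$, so that $r!\binom qr^2=a_{q,m}$ and the chaos order is $2m$. Unitarity of $U_N$ commutes with contractions and symmetrization:
\[
(U_N^{\otimes q}f)\widetilde\otimes_{q-m}(U_N^{\otimes q}f)=U_N^{\otimes 2m}(f\widetilde\otimes_{q-m}f),
\]
because in each contracted variable the two factors $N^{1/2}$ combine with the Jacobian $N^{-1}$ of $x\mapsto x/N$. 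Therefore
\[
f_{i,N}\widetilde\otimes_{q-m}f_{i,N}=N^{-2h}U_N^{\otimes2m}H_{i,m}.
\]

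Summing over $i<Nt$, using linearity of $I_{2m}$, and multiplying by $N^{2h+\theta-1}$ gives
\[
Z_N^{(h)}(t)=\sum_{m=1}^q a_{q,m}\,I_{2m}\Bigl(N^{\theta-1}U_N^{\otimes2m}\sum_{i<Nt}H_{i,m}\Bigr),
\]
which is \eqref{eq:exactchaosdecomposition}. The factor $N^{-2h}$ from the kernel scaling has cancelled the $N^{2h}$ in the normalization, so the explicit power of $N$ no longer involves $h$.

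There is no real obstacle here; the lemma is bookkeeping. The only points needing care are that each $H_{i,m}$ lies in $L^2$, which holds because contractions of $L^2$ kernels are $L^2$ (and $f_i\in L^2$ by Proposition~\ref{prop:processconstruction}), and that the dilation commutes with contraction, which is exactly the Jacobian computation above.
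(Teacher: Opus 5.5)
Your proof is correct and follows the same route as the paper. The change of variables $u=v/N$ together with the homogeneity of $g_{\boldsymbol\gamma}$ gives $f_{i,N}=N^{-h}U_N^{\otimes q}f_i$; dilations commute with symmetrized contractions; and in the product formula the $r=q$ term cancels the centering $N^{-2h}$, so multiplying by $N^{2h+\theta-1}$ yields the decomposition. Two points the paper leaves implicit are spelled out in your version: the Jacobian check for commuting $U_N$ with contractions, and the identification of the constant term with the centering via \eqref{eq:incrementvariance}.
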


\begin{proof}
In the definition of $f_{i,N}$ set $u=v/N$.  Homogeneity of
$g_{\boldsymbol\gamma}$ gives
\[
f_{i,N}(\mathbf x)
=N^{-\beta-\alpha-1}f_i(N\mathbf x)
=N^{-h}U_N^{\otimes q}f_i(\mathbf x),
\]
because $h=\alpha+q/2+1+\beta$.  Unitary dilations commute with contractions,
so
\[
f_{i,N}\widetilde\otimes_{q-m}f_{i,N}
=N^{-2h}U_N^{\otimes2m}H_{i,m}.
\]
The product formula below, summed over $i$, then gives
$V_N^{(h)}$ with the prefactor $N^{-2h}$.  Multiplication by
$N^{2h+\theta-1}$ proves \eqref{eq:exactchaosdecomposition}.
\end{proof}

The product formula gives
\begin{equation}
I_q(f_{i,N})^2-q!\|f_{i,N}\|_2^2
=\sum_{r=0}^{q-1}r!\binom qr^2
I_{2q-2r}\big(f_{i,N}\widetilde\otimes_r f_{i,N}\big).
\label{eq:productformula}
\end{equation}
Hence $V_N^{(h)}$ is a finite orthogonal sum of even chaoses.  Its only
second-chaos component corresponds to $r=q-1$.

We next enumerate the monomial branches, which makes the word ``all'' in
the residual-chaos argument literal.  For a set $A\subset\{1,\ldots,q\}$
write $\gamma_A=\sum_{a\in A}\gamma_a$.

\begin{proposition}[Finite grammar of contraction branches]
\label{prop:branchgrammar}
For $1\le m\le q$, the kernel $H_{i,m}$ is a finite linear combination of
symmetrizations of
\begin{align}
T_i^{A,B,\varepsilon}(\mathbf x,\mathbf y)
={}&C_{A,B,\varepsilon}
\int_{\R^2}\varphi_\beta(u-i)\varphi_\beta(v-i)
|u-v|^{-c_{A,B}}
\1_{\{\varepsilon(u-v)>0\}}
\notag\\
&\quad\times
\prod_{a\in A}(u-x_a)_+^{\gamma_a}
\prod_{b\in B}(v-y_b)_+^{\gamma_b}\dd u\dd v,
\label{eq:monomialbranch}
\end{align}
where $|A|=|B|=m$, $\varepsilon\in\{-1,1\}$, and
\begin{equation}
c_{A,B}=-2\alpha+\gamma_A+\gamma_B-q+m\ge0.
\label{eq:cAB}
\end{equation}
The equality $c_{A,B}=0$ occurs only when $m=q$.  Every coefficient
$C_{A,B,\varepsilon}$ is a finite sum of products of beta functions.
\end{proposition}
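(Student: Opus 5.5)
The plan is to expand $H_{i,m}=f_i\widetilde\otimes_{q-m}f_i$ directly from the integral representation \eqref{eq:process}. Write $f_i=A_{\boldsymbol\gamma,\beta}\int_\R\varphi_\beta(u-i)\,g_{\boldsymbol\gamma}(u\one-\mathbf x)\dd u$, which follows from stationarity of the increments. Then expand each of the two copies of $g_{\boldsymbol\gamma}$ into its $q!$ monomials $\prod_j(u-x_j)_+^{\gamma_{\sigma(j)}}$. The contraction of order $q-m$ integrates out $q-m$ coordinate slots, each shared between the $u$-copy and the $v$-copy.

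\textbf{Labelling a pair of monomials.} Fix a pair of permutations $(\sigma,\sigma')$ and the set of contracted slots. Let $A$ be the labels $\sigma(j)$ on the free slots of the first copy, and $B$ the labels $\sigma'(j)$ on the free slots of the second copy. Then $|A|=|B|=m$. The contracted slots induce a bijection $\pi:A^c\to B^c$, pairing label $a$ in the first copy with label $\pi(a)$ in the second.

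\textbf{Computing one paired integral.} Each contracted slot contributes $\int_\R(u-z)_+^{\gamma_a}(v-z)_+^{\gamma_b}\dd z$ with $b=\pi(a)$. I split according to $\varepsilon=\operatorname{sgn}(u-v)$.
\begin{itemize}
\item If $u>v$, substituting $z=v-(u-v)s$ gives $B(\gamma_b+1,-\gamma_a-\gamma_b-1)\,|u-v|^{\gamma_a+\gamma_b+1}$.
\item If $u<v$, the roles of $a$ and $b$ are exchanged, giving $B(\gamma_a+1,-\gamma_a-\gamma_b-1)\,|u-v|^{\gamma_a+\gamma_b+1}$.
\end{itemize}
Both beta functions are finite by \eqref{eq:admissible}, since $\gamma_a+1>0$ and $\gamma_a+\gamma_b+1<0$. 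Multiplying over the $q-m$ pairs produces the factor $\1_{\{\varepsilon(u-v)>0\}}$, a constant $C_{A,B,\varepsilon}$ that is a product of beta functions, and the Riesz power $|u-v|^{-c}$ with
\[
 -c=\sum_{a\in A^c}(\gamma_a+\gamma_{\pi(a)}+1)=(\alpha-\gamma_A)+(\alpha-\gamma_B)+(q-m).
\]
This is exactly \eqref{eq:cAB}, and the exponent is independent of $\pi$. Summing over $(\sigma,\sigma')$ and over the choice of contracted slots, and then symmetrizing over the $2m$ free variables, exhibits $H_{i,m}$ as a finite linear combination of symmetrizations of the kernels \eqref{eq:monomialbranch}. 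The coefficients are finite sums of products of beta functions, together with the powers of $A_{\boldsymbol\gamma,\beta}$ and the combinatorial factors.

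\textbf{Sign of $c_{A,B}$.} Rewrite $c_{A,B}=\sum_{a\in A^c}\tfrac12(\theta_a+\theta_{\pi(a)})$. Each term is strictly positive because $\theta_j=-2\gamma_j-1>0$. Hence $c_{A,B}\ge0$, with equality iff $A^c=\varnothing$, i.e.\ $m=q$.

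\textbf{Main obstacle: justifying Fubini.} The delicate step is exchanging the $z$-integrations with the $u,v$ integrals and the product over slots, because in the rough regime a single filter integrated against $|u-v|^{-c}$ need not converge absolutely. I would establish absolute convergence of the representation pointwise almost everywhere in $(\mathbf x,\mathbf y)$ by computing the $L^2$ norm of the absolute-value version of \eqref{eq:monomialbranch}. Squaring and closing the free variables yields a closed diagram of the type in Lemma~\ref{lem:charged-collective-closure} with $k=0$. Its bound \eqref{eq:charged-collective-decay}, or alternatively Lemma~\ref{lem:roughfinner} with $D=d<\min\{1,2(1+\beta)\}$, shows the integral is finite. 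Tonelli then justifies each exchange on the nonnegative integrands, after which the signed identity follows. If Tonelli on the closed diagram is awkward, the fallback is to truncate $\varphi_\beta$ to $[-R,1]$, perform the computation there, and pass $R\to\infty$ in $L^2$ using the marked-tail estimate \eqref{eq:charged-collective-marked-tail}.
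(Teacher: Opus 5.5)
Your proposal is correct and is essentially the paper's proof. Both fix the permutation pair and the bijection $\pi$ of contracted labels, and evaluate each contracted coordinate by the oriented beta identity (inserting $1=\1_{\{u>v\}}+\1_{\{v>u\}}$ when $m=q$). Both then sum the exponents to get $-c_{A,B}$ and read off positivity from $\gamma_a+\gamma_b+1<0$.

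The paper defers the Fubini justification to the truncation argument of Lemma~\ref{lem:diagramtruncation}, which is your fallback, and your Tonelli route through Lemma~\ref{lem:charged-collective-closure} at $k=0$ also works. Drop only the parenthetical alternative via Lemma~\ref{lem:roughfinner} with $D=d$. In the anisotropic self-closure the two filter vertices have degrees $d\pm(\gamma_A-\gamma_B)$, which can exceed $\min\{1,2(1+\beta)\}$.
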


\begin{proof}
Fix two permutations $\sigma,\tau\in\mathfrak S_q$ and a bijection $\pi$
between the $q-m$ contracted coordinates in the two copies.  The labels not
met by $\pi$ form the sets $A$ and $B$.  For a contracted coordinate with
labels $a$ and $b$, \eqref{eq:betaidentity} reads
\begin{align*}
&\int_\R(u-x)_+^{\gamma_a}(v-x)_+^{\gamma_b}\dd x\\
&\quad=\1_{\{u>v\}}B(\gamma_b+1,-\gamma_a-\gamma_b-1)
(u-v)^{\gamma_a+\gamma_b+1}\\
&\qquad\quad+\1_{\{v>u\}}B(\gamma_a+1,-\gamma_a-\gamma_b-1)
(v-u)^{\gamma_a+\gamma_b+1}.
\end{align*}
Thus, when $m<q$, each choice of $\sigma,\tau,\pi$ and of the orientation
of $u-v$ produces exactly one term of the form
\eqref{eq:monomialbranch}.  When $m=q$, there is no contracted coordinate;
we obtain the same two-term grammar by inserting the almost-everywhere
identity
\(
1=\1_{\{u>v\}}+\1_{\{v>u\}}.
\)
The sum of
the $q-m$ powers of $|u-v|$ is
\[
2\alpha-\gamma_A-\gamma_B+q-m=-c_{A,B}.
\]
Every elementary contracted exponent $\gamma_a+\gamma_b+1$ is strictly
negative, so $c_{A,B}>0$ when $q-m\ge1$; when $m=q$, one has
$A=B=\{1,\ldots,q\}$ and the displayed formula gives $c_{A,B}=0$.
The coefficient of the term is the product of the $q-m$ displayed beta
factors, multiplied by $(q!)^{-2}$ and by the finite multiplicity with which
the same triple $(A,B,\varepsilon)$ occurs.  Summing over the
$(q!)^2$ permutation pairs, the finitely many contraction bijections, and
the two orientations proves both the formula and its exhaustivity.
\end{proof}

\begin{lemma}[Truncation justification]\label{lem:diagramtruncation}
The product formula, the monomial expansion, all Fubini rearrangements, and
all covariance closures used below are limits of identities for bounded
compactly supported kernels.  The domination is uniform in the truncation
parameters.
\end{lemma}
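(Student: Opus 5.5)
The plan is to regularize at the level of the filter and the monomials. Replace $\varphi_\beta$ by $\varphi_\beta^{(R,\epsilon)}=\varphi_\beta\1_{[-R,1-\epsilon]}\wedge\epsilon^{-1}$, with the cap applied in modulus. Replace each factor $(x)_+^{\gamma_j}$ by $(x)_+^{\gamma_j}\1_{\{\epsilon<x<R\}}$. The truncated kernels $L_{t}^{R,\epsilon}$ are then bounded and compactly supported, so the product formula \eqref{eq:productformula}, the monomial expansion of Proposition~\ref{prop:branchgrammar} and every Fubini exchange hold literally for them. It therefore suffices to prove two things, and to prove them uniformly over a single family of dominating functions:
\begin{itemize}
\item[(i)] each truncated integrand is dominated by its untruncated absolute-value counterpart;
\item[(ii)] that dominating counterpart is integrable.
\end{itemize}
Dominated convergence then passes every identity to the limit.

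For (i) I would use only monotonicity. The pointwise bounds $|\varphi_\beta^{(R,\epsilon)}|\le\rho_\beta$ and $(x)_+^{\gamma}\1\le(x)_+^\gamma$ hold for every truncation. Each contracted coordinate produces a beta integral as in the proof of Proposition~\ref{prop:branchgrammar}. Its truncated version is a positive integral over a smaller set, so it is bounded by $B(\gamma_b+1,-\gamma_a-\gamma_b-1)|u-v|^{\gamma_a+\gamma_b+1}$, and this bound is uniform in $(R,\epsilon)$. Consequently every covariance closure, meaning every inner product of two branches \eqref{eq:monomialbranch}, or more generally every closed diagram of the kind arising in the fourth cumulants, is dominated by the corresponding absolute closure. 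That absolute closure is an integral $\mathcal I_D(k)$ of the form \eqref{eq:charged-collective-decay}, with filter measures $\rho_\beta$ and Riesz weights $\lambda_e=c_a+c_b$.

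For (ii) I would apply Lemma~\ref{lem:charged-collective-closure}. It shows that $\mathcal I_D(k)\le C_D\psi_{p_D}(k)<\infty$ for each of the finitely many labelled diagrams; the class is finite because $q$ and the number of copies are fixed. The lattice sums over $i,j<Nt$ are finite for each $N$, so no further summability issue arises at this stage. Convergence of the truncated kernels to the true ones in $L^2$ then follows from dominated convergence applied to $\|L^{R,\epsilon}-L\|_2^2$, written as a closure. The marked-tail statement \eqref{eq:charged-collective-marked-tail} supplies the uniform smallness as $R\to\infty$. Isometry and continuity of multiple integrals then give the chaos identities \eqref{eq:exactchaosdecomposition} in $L^2(\Omega)$.

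The main obstacle I expect is that, in the rough range $\beta\le-1/2$, a single filter integrated against one Riesz factor need not be dominated. The paper stresses this for the one-filter estimate $\int\rho_\beta(x)|z-x|^{-d}\,dx$. The domination must therefore be arranged at the level of complete closed diagrams, where the filters appear in pairs, rather than factor by factor. To handle this I would perform the Fubini rearrangements only after closing the diagram. Intermediate partial integrals would be treated solely as almost-everywhere finite functions, defined by Tonelli's theorem on nonnegative integrands, and their finiteness would follow from finiteness of the total absolute closure.
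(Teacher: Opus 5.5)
Your proposal is correct and follows essentially the paper's route: truncate, dominate every truncated closure monotonically by the untruncated absolute closure (controlled by the paired closure estimates of Lemma~\ref{lem:roughconvolution} and Lemma~\ref{lem:charged-collective-closure}), and pass to the limit by dominated convergence and the marked-tail estimate. The differences are cosmetic. You also cap the filter and cut the monomials off below $\epsilon$, where the paper uses $(z\vee\varepsilon)^{\gamma_j}\1_{\{0<z<R\}}$. You obtain $L^2$ convergence of the kernels by expanding $\|L^{R,\epsilon}-L\|_2^2$ into closures, whereas the paper uses a telescoping inequality together with the contraction bound $\|f\otimes_rf-g\otimes_rg\|_2\le(\|f\|_2+\|g\|_2)\|f-g\|_2$. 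Finally, your aside on fourth-cumulant diagrams is not needed here and would not be covered by the two-block Lemma~\ref{lem:charged-collective-closure}.
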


\begin{proof}
For $R>2$ and $0<\varepsilon<1$, restrict every temporal variable to
$[-R,R]$ and replace
\[
(z)_+^{\gamma_j}
\quad\hbox{by}\quad
k_{j,R,\varepsilon}(z)
=(z\vee\varepsilon)^{\gamma_j}\1_{\{0<z<R\}}.
\]
The resulting kernels are bounded and compactly supported.  All products
are therefore absolutely integrable, and the product formula, monomial
expansion, and covariance closures follow from ordinary Fubini and finite
sums.

Let $f_i^{R,\varepsilon}$ denote the truncated increment kernel.  Pointwise,
$k_{j,R,\varepsilon}(z)\to(z)_+^{\gamma_j}$ as first
$\varepsilon\downarrow0$ and then $R\uparrow\infty$, and
\[
0\le k_{j,R,\varepsilon}(z)\le(z)_+^{\gamma_j}.
\]
For nonnegative factors $0\le a_j\le b_j$, the telescoping identity gives
\[
0\le\prod_{j=1}^qb_j-\prod_{j=1}^qa_j
\le\sum_{\ell=1}^q(b_\ell-a_\ell)
\prod_{j\ne\ell}b_j.
\]
Apply this inequality to every monomial before expanding
$\|f_i^{R,\varepsilon}-f_i\|_2^2$, and dominate the temporal filter by
$\rho_\beta$.  Every spatial variable is then bounded by the corresponding
untruncated oriented beta integral \eqref{eq:betaidentity}.  The resulting
temporal majorants are finite sums of two-filter graphs of total Riesz
exponent $d$.  Lemma~\ref{lem:roughconvolution}, with $a=d$, makes them
integrable, uniformly in $(R,\varepsilon)$.  Pointwise convergence and
dominated convergence yield
\begin{equation}
\lim_{R\to\infty}\lim_{\varepsilon\downarrow0}
\|f_i^{R,\varepsilon}-f_i\|_{L^2(\R^q)}=0.
\label{eq:truncatedkernelL2}
\end{equation}
Contractions are continuous since
\[
\|f\otimes_r f-g\otimes_r g\|_2
\le(\|f\|_2+\|g\|_2)\|f-g\|_2.
\]
Hence the product formula and every contracted kernel pass to the limit in
$L^2$.  For a covariance closure, mark every factor changed by either
$R$ or $\varepsilon$.
Lemma~\ref{lem:charged-collective-closure} gives its local integrable
majorant, and \eqref{eq:charged-collective-marked-tail} makes every marked
$R$-tail tend to zero uniformly in the lattice displacement.  Thus every
closed diagram converges absolutely.  Since
Proposition~\ref{prop:branchgrammar}
contains only finitely many branches, limits and branch sums may be
interchanged term by term.  This proves all four assertions of the lemma.
\end{proof}

For the maximal contraction $m=1$, the free labels are $r,s$.  Put
\begin{equation}
c_{r,s}=-2\alpha+\gamma_r+\gamma_s-q+1.
\label{eq:crs}
\end{equation}
For each bijection
$\pi:\{1,\ldots,q\}\setminus\{r\}\to
\{1,\ldots,q\}\setminus\{s\}$ define
\begin{align}
D_{r,s}^{+}
&=\frac1{q^2(q-1)!}\sum_\pi
\prod_{j\ne r}B(\gamma_j+1,-\gamma_j-\gamma_{\pi(j)}-1),
\label{eq:Drsplus}\\
D_{r,s}^{-}
&=\frac1{q^2(q-1)!}\sum_\pi
\prod_{j\ne r}B(\gamma_{\pi(j)}+1,
-\gamma_j-\gamma_{\pi(j)}-1).
\label{eq:Drsminus}
\end{align}

\begin{proposition}[Exact maximal contraction]\label{prop:maxcontraction}
With
\[
h_{i,N}(u)=\left(\frac{i+1}{N}-u\right)_+^\beta
-\left(\frac iN-u\right)_+^\beta,
\]
one has
\begin{align}
&(f_{i,N}\otimes_{q-1}f_{i,N})(x,y)
\notag\\
={}&A_{\boldsymbol\gamma,\beta}^{2}
\sum_{r,s=1}^q\int_{\R^2}h_{i,N}(u)h_{i,N}(v)
(u-x)_+^{\gamma_r}(v-y)_+^{\gamma_s}|u-v|^{-c_{r,s}}
\notag\\
&\hspace{24mm}\times
\big[D_{r,s}^{+}\1_{\{u<v\}}
     +D_{r,s}^{-}\1_{\{v<u\}}\big]\dd u\dd v.
\label{eq:exactmaxcontraction}
\end{align}
If $r,s\in M$, then
\begin{equation}
D_{r,s}^{+}=D_{r,s}^{-}
=\frac{\mathcal B_*}{q^2(q-1)!}.
\label{eq:activeD}
\end{equation}
Thus the sum of all active sectors is unoriented and has coefficient
\begin{equation}
D_M=\frac{\mu^2\mathcal B_*}{q^2(q-1)!}.
\label{eq:DM}
\end{equation}
\end{proposition}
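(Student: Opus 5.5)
The plan is to expand $f_{i,N}\otimes_{q-1}f_{i,N}$ directly from the definition \eqref{eq:process}. For that we write
\[
L_{(i+1)/N}^{(\beta)}-L_{i/N}^{(\beta)}=A_{\boldsymbol\gamma,\beta}\int_\R h_{i,N}(u)g_{\boldsymbol\gamma}(u\one-\mathbf x)\dd u,
\]
so that the contraction becomes
\[
A^2_{\boldsymbol\gamma,\beta}\int h_{i,N}(u)h_{i,N}(v)\,\bigl(g_{\boldsymbol\gamma}(u\one-\cdot)\otimes_{q-1}g_{\boldsymbol\gamma}(v\one-\cdot)\bigr)(x,y)\dd u\dd v.
\]
All Fubini interchanges are justified by Lemma~\ref{lem:diagramtruncation}: we work first with truncated kernels and pass to the limit in $L^2$.

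The first substantive step is the combinatorics of the contraction. We write $g_{\boldsymbol\gamma}=(q!)^{-1}\sum_\sigma\prod_j(x_j)_+^{\gamma_{\sigma(j)}}$ for both copies and contract the $q-1$ common coordinates. In the product of a $\sigma$-term and a $\tau$-term, the uncontracted coordinate is the last one. It carries label $r=\sigma(q)$ in the first copy and $s=\tau(q)$ in the second, giving the factor $(u-x)_+^{\gamma_r}(v-y)_+^{\gamma_s}$. Each contracted coordinate pairs label $\sigma(k)$ with $\tau(k)$, which induces a bijection $\pi=\tau\circ\sigma^{-1}$ from $\{1,\ldots,q\}\setminus\{r\}$ onto $\{1,\ldots,q\}\setminus\{s\}$.

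Fix $r,s$. Each bijection $\pi$ arises from exactly $(q-1)!$ pairs $(\sigma,\tau)$, since $\sigma$ restricted to the first $q-1$ slots is arbitrary. This gives the weight $(q-1)!/(q!)^2=1/(q^2(q-1)!)$, which is the prefactor in \eqref{eq:Drsplus}–\eqref{eq:Drsminus}.

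Each contracted coordinate is evaluated with the oriented beta identity displayed in the proof of Proposition~\ref{prop:branchgrammar} (with $a=j$, $b=\pi(j)$). On $\{v<u\}$ it yields $B(\gamma_{\pi(j)}+1,\cdot)$, and on $\{u<v\}$ it yields $B(\gamma_j+1,\cdot)$, up to matching the orientation conventions carefully. Multiplying over $j\ne r$ gives the power $|u-v|^{\sum_{j\ne r}(\gamma_j+\gamma_{\pi(j)}+1)}$. The exponent is
\[
(\alpha-\gamma_r)+(\alpha-\gamma_s)+q-1=-c_{r,s},
\]
which proves \eqref{eq:exactmaxcontraction}. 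I expect the one delicate point to be keeping the orientation consistent, that is, deciding which indicator gets the $\gamma_j$ beta and which gets the $\gamma_{\pi(j)}$ beta.

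For \eqref{eq:activeD} with $r,s\in M$, we need to compare two sums over bijections. The sum $D^+_{r,s}$ runs over $\prod_{j\ne r}W_{j\pi(j)}$, where the matrix $W_{jk}=B(\gamma_j+1,-\gamma_j-\gamma_k-1)$ has rows $\ne r$ and columns $\ne s$; this is exactly the permanent of the minor of $W$ obtained by deleting row $r$ and column $s$. The sum $D^-_{r,s}$ is the analogous permanent for $W^T$ with rows $\ne s$ and columns $\ne r$, equivalently the minor of $W^T$ deleting row $s$ and column $r$, which by transposition equals the permanent of $W$ deleting row $r$ and column $s$ again. More precisely, substituting $k=\pi(j)$ turns $\prod_{j\ne r}B(\gamma_{\pi(j)}+1,\cdot)$ into $\prod_{k\ne s}W_{k\pi^{-1}(k)}$, the permanent of the minor deleting row $s$ and column $r$.

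Since $\gamma_r=\gamma_s=\gamma_*$, swapping the labels $r$ and $s$ is a simultaneous row/column permutation of $W$ that preserves $W$, so it preserves permanents. Hence both sums equal the permanent of the minor deleting a maximal row and a maximal column. By the remark preceding Proposition~\ref{prop:processconstruction}, this permanent is $\mathcal B_*$ independently of the choice of maximal labels. This gives \eqref{eq:activeD}.

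Because $D^+=D^-$ on the active sectors, the two indicators combine into $1$ almost everywhere, so the active part is unoriented. Moreover $c_{r,s}=c$ for all $r,s\in M$. Summing over the $\mu^2$ pairs in $M^2$ then gives \eqref{eq:DM}.
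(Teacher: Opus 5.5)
Your proposal is correct and follows the paper's proof essentially step for step. It uses the same count of $(q-1)!$ permutation pairs per bijection $\pi$, the same oriented beta evaluation producing $D^{\pm}_{r,s}$ and the exponent $-c_{r,s}$, and the same identification of the active coefficients with $\mathcal B_*$. Your explicit observation that swapping two labels of equal exponent is a symmetry of $W$ is a slightly more careful rendering of the paper's one-line permanent-invariance remark for $D^-_{r,s}$.

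The only blemish is your first description of $D^-_{r,s}$ as a minor of $W^{T}$ with rows $\ne s$ and columns $\ne r$, which has the indices reversed. Your subsequent ``more precisely'' computation, giving the minor of $W$ with row $s$ and column $r$ deleted, is the correct one, and together with the swap symmetry it closes the argument.
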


\begin{proof}
Fix one permutation in each copy of the symmetrized kernel.  If the free
label in the first copy is $r$ and that in the second is $s$, the remaining
$q-1$ labels are matched by a bijection
\[
\pi:\{1,\ldots,q\}\setminus\{r\}
\longrightarrow\{1,\ldots,q\}\setminus\{s\}.
\]
For $u<v$, applying \eqref{eq:betaidentity} to the variable carrying label
$j$ in the first copy and label $\pi(j)$ in the second gives
\[
B(\gamma_j+1,-\gamma_j-\gamma_{\pi(j)}-1)
|u-v|^{\gamma_j+\gamma_{\pi(j)}+1}.
\]
Multiplying over $j\ne r$ yields the coefficient in
\eqref{eq:Drsplus}; the power of $|u-v|$ is
\begin{align*}
\sum_{j\ne r}(\gamma_j+\gamma_{\pi(j)}+1)
&=2\alpha-\gamma_r-\gamma_s+q-1\\
&=-c_{r,s}.
\end{align*}
For $v<u$, the beta identity exchanges the first beta argument and gives
the product in \eqref{eq:Drsminus}.  For fixed $(r,s,\pi)$ there are
exactly $(q-1)!$ pairs of permutations: the contracted positions in the
first copy may be ordered arbitrarily, and their order in the second copy
is then fixed by $\pi$.  Division by the two symmetrization factors gives
$(q-1)!/(q!)^2=1/[q^2(q-1)!]$.  This proves
\eqref{eq:exactmaxcontraction} including its coefficient.

If $r,s\in M$, deleting row $r$ and column $s$ from $W$ leaves the same two
label multisets, up to permutations.  The sum over $\pi$ is therefore the
permanent $\mathcal B_*$.  Since the permanent is invariant under row and
column permutations, both orientations equal
$\mathcal B_*/[q^2(q-1)!]$, proving \eqref{eq:activeD}.  Finally there are
exactly $\mu^2$ pairs in $M^2$, which gives \eqref{eq:DM}.
\end{proof}

\begin{lemma}[Scale of one active branch]\label{lem:scaling}
Fix a monomial branch of the contraction $f_{i,N}\otimes_{q-1}f_{i,N}$
whose two free labels belong to $M$.  After the change of variables
$u=i/N+x/N$ in the two filter variables, the branch has deterministic scale
\begin{equation}
N^{-2\beta-2+c}=N^{-2h-\theta}.
\label{eq:branchscale}
\end{equation}
Consequently multiplication of the sum over $i$ by
$N^{2h+\theta-1}$ leaves a Riemann sum with prefactor $1/N$.
\end{lemma}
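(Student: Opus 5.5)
The plan is to read the scale off Proposition~\ref{prop:maxcontraction} by a direct change of variables. Fix $r,s\in M$, so that $c_{r,s}=-2\alpha+2\gamma_*-q+1$. Using $H_0=\alpha+q/2+1$ and $\theta=-2\gamma_*-1$, this equals
\[
c_{r,s}=2-2H_0-\theta=d-\theta=c .
\]
The branch is the integral over $u,v$ of $h_{i,N}(u)h_{i,N}(v)(u-x)_+^{\gamma_*}(v-y)_+^{\gamma_*}|u-v|^{-c}$. Substitute $u=(i+a)/N$ and $v=(i+b)/N$. Because $(\cdot)_+^\beta$ is homogeneous of degree $\beta$, one gets $h_{i,N}((i+a)/N)=N^{-\beta}\varphi_\beta(a-1)$ up to the shift convention; I will check that this shift matches the form $\varphi_\beta(u-i)$ used in \eqref{eq:monomialbranch}. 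The Jacobian contributes $N^{-2}$, each filter contributes $N^{-\beta}$, and the Riesz factor gives $|u-v|^{-c}=N^{c}|a-b|^{-c}$. The remaining spatial factors $(u-x)_+^{\gamma_*}(v-y)_+^{\gamma_*}$ are left as kernel data for now. The deterministic prefactor is therefore $N^{-2\beta-2+c}$.

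Next, I will verify the exponent identity. We have $-2\beta-2+c=-2\beta-2+2-2H_0-\theta=-2(\beta+H_0)-\theta=-2h-\theta$, since $\beta=h-H_0$. This is \eqref{eq:branchscale}.

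For the consequence, note what happens to the spatial factors. They are $\gamma_*$-homogeneous, so writing them in the dilated variables $x=x'/N$ produces the factor $N^{-2\gamma_*}$. This is exactly compensated by the unitary dilation $U_N^{\otimes2}$ together with the factor $N^{\theta}$ appearing in \eqref{eq:exactchaosdecomposition}: one has $-2\gamma_*-1=\theta$, and the $N^{-1}$ arising from the $L^2$ normalization of $U_N^{\otimes 2}$ is accounted for in the same computation. It is cleaner to view the whole thing through Lemma~\ref{lem:stationaryreduction}. Multiplying the sum over $i<Nt$ by $N^{2h+\theta-1}$ cancels $N^{-2h-\theta}$ and leaves $N^{-1}\sum_{i<Nt}$ applied to an $N$-independent profile evaluated at translated, rescaled arguments, which is a Riemann sum with prefactor $1/N$.

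The only delicate point is bookkeeping. Each time a power of $N$ is moved between the kernel and the dilation $U_N$, the count must be made once, so that no factor is double counted. All integrals involved converge absolutely by Lemma~\ref{lem:roughconvolution} with $a=c<d<\min\{1,2(1+\beta)\}$, which justifies the substitution.
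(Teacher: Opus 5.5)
Your proposal is correct and follows the paper's own argument. The substitution gives $N^{-2}$ from the Jacobian, $N^{-\beta}$ from each filter and $N^{c}$ from the contracted Riesz factor (with $c_{r,s}=d-\theta=c$ for $r,s\in M$), and $-2\beta-2+c=-2h-\theta$, so the $N$ summands multiplied by $N^{2h+\theta-1}$ leave a Riemann sum with prefactor $1/N$. Two harmless slips: first, $u=(i+a)/N$ gives exactly $h_{i,N}(u)=N^{-\beta}\varphi_\beta(a)$, with no shift; second, in your side remark the factor $N^{-2\gamma_*}=N^{\theta+1}$ splits as $N^{\theta}$ times the factor $N^{+1}$ carried by $U_N^{\otimes2}$, not an $N^{-1}$.
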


\begin{proof}
The $q-1$ spatial contractions are evaluated with
\begin{equation}
\begin{aligned}
\int_\R(u-x)_+^{\gamma_j}(v-x)_+^{\gamma_k}\dd x
={}&\1_{\{u<v\}}
B(\gamma_j+1,-\gamma_j-\gamma_k-1)
(v-u)^{\gamma_j+\gamma_k+1}\\
&+\1_{\{u>v\}}
B(\gamma_k+1,-\gamma_j-\gamma_k-1)
(u-v)^{\gamma_j+\gamma_k+1},
\end{aligned}
\label{eq:betaidentity}
\end{equation}
with equality away from the null diagonal $u=v$.  Homogeneity of the
contracted factors
contributes $N^c$, the two filter variables contribute $N^{-2\beta-2}$,
and
\[
-2\beta-2+c=-2(h-H_0)-2+2(1-H_0)-\theta
=-2h-\theta.
\]
Summation produces $N$ terms, so the normalization in
\eqref{eq:mainlimit} is forced.
\end{proof}

We use the following elementary two-point Toeplitz estimate.

\begin{lemma}[Discrete and mixed Toeplitz estimates]\label{lem:toeplitz}
Let $0<a<1$ and $\delta>0$, and set
\begin{equation}
\mathfrak r_N(a,\delta)=
\begin{cases}
N^{-\delta},&\delta<1-a,\\
N^{-(1-a)}\log N,&\delta=1-a,\\
N^{-(1-a)},&\delta>1-a.
\end{cases}
\label{eq:rN}
\end{equation}
If an even sequence $G$ satisfies
\[
|G(k)-\Lambda|k|^{-a}|\le C|k|^{-a-\delta},\qquad |k|\ge1,
\]
and $G(0)$ is finite, then, uniformly for $t$ in compact subsets of
$\R_+$,
\begin{align}
&\left|N^{a-2}\sum_{i,j< Nt}G(j-i)
-\Lambda\int_0^t\!\int_0^t|s-r|^{-a}\dd s\dd r\right|
\le C\mathfrak r_N(a,\delta).
\label{eq:discretetoeplitz}
\end{align}
If $J$ is locally integrable, $\int_{-2}^2|J(z)|\dd z<\infty$, and
\[
|J(z)-\Lambda|z|^{-a}|\le C|z|^{-a-\delta},
\qquad |z|\ge2,
\]
then
\begin{align}
&\left|N^{a-2}\sum_{i< Nt}\int_0^{Nt}J(u-i)\dd u
-\Lambda\int_0^t\!\int_0^t|s-r|^{-a}\dd s\dd r\right|
\le C\mathfrak r_N(a,\delta).
\label{eq:mixedtoeplitz}
\end{align}
\end{lemma}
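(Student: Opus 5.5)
We will prove \eqref{eq:discretetoeplitz} by comparing the double sum with the double integral cell by cell. Put $n=\lfloor Nt\rfloor$ and write $G(k)=\Lambda|k|^{-a}+E(k)$ for $k\ne0$, so that $|E(k)|\le C|k|^{-a-\delta}$. The diagonal $k=0$ contributes $nG(0)$, hence $O(N^{a-1})$ after multiplication by $N^{a-2}$. Because $a<1$, this is at most $N^{-(1-a)}$ and therefore within $\mathfrak r_N$.

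The error sequence contributes
\[
 N^{a-2}\sum_{0<|k|<n}(n-|k|)|E(k)|\le CN^{a-1}\sum_{1\le k<n}k^{-a-\delta}.
\]
This sum is $O(N^{1-a-\delta})$, $O(\log N)$ or $O(1)$ according as $\delta<1-a$, $\delta=1-a$ or $\delta>1-a$. Multiplying by $N^{a-1}$ gives exactly the three cases of \eqref{eq:rN}. The main part is
\[
 \Lambda N^{a-2}\sum_{i\ne j<n}|j-i|^{-a}=\Lambda N^{-2}\sum_{i\ne j}|j/N-i/N|^{-a}.
\]
We compare it with $\Lambda\int_0^{n/N}\!\int_0^{n/N}|s-r|^{-a}$ square by square. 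The near-diagonal squares and the diagonal integral have total mass $O(N^{a-1})$, since $\int_{|s-r|\le 2/N}|s-r|^{-a}=O(N^{a-1})$. On an off-diagonal square at lattice distance $k\ge1$, the mean value theorem gives an error $N^{-2}\cdot O(k^{-a-1}N^{a})$. Summing over all squares yields $N^{a-1}\sum_k k^{-a-1}=O(N^{a-1})$. Finally, replacing the upper limit $n/N$ by $t$ costs $O(1/N)$ because the integrand is integrable. All constants are uniform for $t$ in a compact set, since we only used $n\le NT$.

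For \eqref{eq:mixedtoeplitz} we argue in the same way with the variable $z=u-i$. The range $|z|\le2$ gives at most $n\int_{-2}^2|J|=O(N)$, hence $O(N^{a-1})$ after normalization. For $|z|\ge2$, the error term gives
\[
 N^{a-2}\cdot n\int_2^{NT}z^{-a-\delta}\dd z,
\]
which again falls into the three cases of \eqref{eq:rN}. The main term is $\Lambda N^{a-2}\sum_i\int_0^{Nt}|u-i|^{-a}\dd u$ with $|u-i|\ge2$ imposed; restoring the omitted region costs $O(N^{a-1})$. After the substitution $u=Ns$ it becomes
\[
 \Lambda N^{-1}\sum_i\int_0^t|s-i/N|^{-a}\dd s,
\]
a Riemann sum in $r=i/N$. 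The function $r\mapsto\int_0^t|s-r|^{-a}\dd s$ is $(1-a)$-Hölder, so the Riemann error is $O(N^{-(1-a)})$. All these errors are bounded by $C\mathfrak r_N$.

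The only delicate point is the bookkeeping of the singular diagonal in the Riemann comparison: the integrand is singular there, and the mean value bound must be used only for $k\ge1$, never uniformly. No genuine obstacle is expected beyond checking that each error is at most $C\,\mathfrak r_N$. This holds because $N^{a-1}=N^{-(1-a)}\le\mathfrak r_N$ in every case.
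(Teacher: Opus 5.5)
Your proof is correct and follows the paper's structure: the diagonal term, the three-case remainder, and a Riemann comparison of the main power. Your mixed part, including the $(1-a)$-H\"older continuity of $r\mapsto\int_0^t|s-r|^{-a}\dd s$, is exactly the paper's argument.

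The only variation is the discrete main term. The paper uses evenness to reduce it to a one-dimensional Riemann sum of the decreasing function $x\mapsto(t_N-x)x^{-a}$. You instead compare square by square in two dimensions, which works equally well and does not even need evenness.

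One small slip: the mean value bound is valid only at lattice distance $k\ge2$, because squares at distance $1$ touch the diagonal. Your $2/N$ near-diagonal strip already absorbs those squares, so nothing breaks.
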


\begin{proof}
Fix $0\le t\le T$, put $n=\lfloor Nt\rfloor$ and $t_N=n/N$.  Write
$G(k)=\Lambda |k|^{-a}+R(k)$ for $k\ne0$.  Evenness gives
\begin{align}
N^{a-2}\sum_{i,j<n}G(j-i)
={}&N^{a-2}nG(0)
 +\frac{2\Lambda}{N}\sum_{k=1}^{n-1}
 (t_N-k/N)(k/N)^{-a}\notag\\
&+2N^{a-2}\sum_{k=1}^{n-1}(n-k)R(k).
\label{eq:toeplitzsplit}
\end{align}
The diagonal term is $O_T(N^{a-1})$.  The function
$x\mapsto(t_N-x)x^{-a}$ is nonnegative and decreasing on $(0,t_N)$.
Consequently the difference between its right Riemann sum and its integral
is bounded by the missing first cell plus the first rectangle:
\begin{align}
&\left|\frac1N\sum_{k=1}^{n-1}(t_N-k/N)(k/N)^{-a}
-\int_0^{t_N}(t_N-x)x^{-a}\dd x\right|\notag\\
&\hspace{20mm}\le
\int_0^{1/N}t_Nx^{-a}\dd x
+\frac1N t_N(1/N)^{-a}
\le C_TN^{-(1-a)}.
\label{eq:singularriemann}
\end{align}
Since
\[
2\int_0^t(t-x)x^{-a}\dd x
=\int_0^t\!\int_0^t|s-r|^{-a}\dd s\dd r
=\frac{2t^{2-a}}{(1-a)(2-a)},
\]
and the last function has bounded derivative on $[0,T]$, replacing $t_N$
by $t$ costs $O_T(N^{-1})$.

For the remainder, $|R(k)|\le Ck^{-a-\delta}$ and $n-k\le NT$, whence
\begin{equation}
N^{a-2}\sum_{k=1}^{n-1}(n-k)|R(k)|
\le C_TN^{a-1}\sum_{k=1}^{\lfloor NT\rfloor}k^{-a-\delta}.
\label{eq:toeplitzremainder}
\end{equation}
The sum is bounded respectively by
$CN^{1-a-\delta}$, $C\log N$, or $C$ according as
$a+\delta<1$, $a+\delta=1$, or $a+\delta>1$.  Combining
\eqref{eq:toeplitzsplit}--\eqref{eq:toeplitzremainder} proves
\eqref{eq:discretetoeplitz} with exactly the three cases in
\eqref{eq:rN}.

For the mixed estimate set $J(z)=\Lambda|z|^{-a}+R(z)$ on $|z|>2$.
The contribution of $|u-i|\le2$ to both $J$ and $|u-i|^{-a}$ is at most
\begin{equation}
N^{a-2}\sum_{i<n}
\int_{|u-i|\le2}\big(|J(u-i)|+\Lambda|u-i|^{-a}\big)\dd u
\le C_TN^{-(1-a)}.
\label{eq:mixedstrip}
\end{equation}
On the complement, Tonelli's theorem and the same three elementary
integrals give
\begin{align}
&N^{a-2}\sum_{i<n}\int_0^{Nt}|R(u-i)|\1_{\{|u-i|>2\}}\dd u\notag\\
&\quad\le C_TN^{a-1}
\begin{cases}
N^{1-a-\delta},&a+\delta<1,\\
\log N,&a+\delta=1,\\
1,&a+\delta>1.
\end{cases}
\label{eq:mixedremainder}
\end{align}
It remains to compare the main power with its double integral.  Scaling
$u=Nx$ gives
\[
N^{a-2}\sum_{i<n}\int_0^{Nt}|u-i|^{-a}\dd u
=\frac1N\sum_{i<n}\int_0^t|x-i/N|^{-a}\dd x.
\]
For $r_0=|y-y'|$, split the $x$-integral into
$|x-y|\le2r_0$ and its complement.  The first part is at most
$C_ar_0^{1-a}$; on the second, the mean value theorem gives
\[
r_0\int_{2r_0}^{2T}r^{-a-1}\dd r\le C_{a,T}r_0^{1-a}.
\]
Therefore
\begin{equation}
\int_0^t\big||x-y|^{-a}-|x-y'|^{-a}\big|\dd x
\le C_{a,T}|y-y'|^{1-a}.
\label{eq:riesztranslationmodulus}
\end{equation}
Averaging \eqref{eq:riesztranslationmodulus} over
$y\in[i/N,(i+1)/N]$, then summing $i$, bounds the Riemann error by
$C_TN^{-(1-a)}$; the last partial cell has the same bound.  Together with
\eqref{eq:mixedstrip}--\eqref{eq:mixedremainder}, this proves
\eqref{eq:mixedtoeplitz}.
\end{proof}

Define the normalized active deterministic kernel
\begin{align}
F_{N,t}(x,y)
={}&\frac{D_M}{N}\sum_{i< Nt}
\int_{\R^2}\varphi_\beta(a)\varphi_\beta(b)|a-b|^{-c}
\notag\\
&\quad\times
\left(\frac{i+a}{N}-x\right)_+^{\gamma_*}
\left(\frac{i+b}{N}-y\right)_+^{\gamma_*}\dd a\dd b,
\label{eq:FNactive}
\end{align}
and set
\begin{equation}
F_t=D_M\Lambda_c(\beta)K_t^{(\gamma_*)}.
\label{eq:Ftactive}
\end{equation}
Lemma~\ref{lem:scaling} and Proposition~\ref{prop:maxcontraction} show that
the active part of $Z_N^{(h)}$ is exactly
\begin{equation}
Z_{N,*}^{(h)}(t)=q^2(q-1)!A_{\boldsymbol\gamma,\beta}^2 I_2(F_{N,t}).
\label{eq:randomactive}
\end{equation}

\begin{proposition}[Strong active-kernel convergence]\label{prop:active}
For every $T>0$,
\begin{equation}
\sup_{0\le t\le T}\|F_{N,t}-F_t\|_{L^2(\R^2)}
\le C_T\ell_{\mathrm{act},N}N^{-\kappa_{\mathrm{act}}},
\label{eq:activekernelrate}
\end{equation}
where
\[
\kappa_{\mathrm{act}}
=\frac12\min\{\delta_\beta,1-2\theta\},
\qquad \delta_\beta=\frac{-\beta}{1-\beta},
\]
and $\ell_{\mathrm{act},N}=(\log N)^{1/2}$ if
$\delta_\beta=1-2\theta$, and $1$ otherwise.  Consequently
\begin{equation}
Z_{N,*}^{(h)}(t)\longrightarrow
C_{\boldsymbol\gamma,h}I_2(K_t^{(\gamma_*)})
\quad\text{in every }\mathbb D^{k,p},
\label{eq:activeconvergence}
\end{equation}
locally uniformly in $t$.
\end{proposition}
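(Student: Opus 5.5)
The argument expands the squared $L^2$ distance and matches each of the three resulting terms against the same deterministic limit.
\[
\|F_{N,t}-F_t\|_2^2=\|F_{N,t}\|_2^2-2\langle F_{N,t},F_t\rangle+\|F_t\|_2^2 .
\]

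\emph{Norm term.} Integrating out $x$ and $y$ with the beta identity \eqref{eq:betaidentity} at $\gamma_j=\gamma_k=\gamma_*$ gives the following. Each spatial contraction produces $B(\gamma_*+1,\theta)\,|(i+a)/N-(j+a')/N|^{-\theta}$, and this is symmetric, so no orientation issue arises. Hence
\[
\|F_{N,t}\|_2^2=D_M^2B(\gamma_*+1,\theta)^2N^{2\theta-2}\sum_{i,j<Nt}\Gamma_*(j-i),
\]
where $\Gamma_*$ is \eqref{eq:Gammastar}. Lemma~\ref{lem:pairedrate} gives \eqref{eq:Gammarate} with $a=2\theta<1$ and $\delta=\delta_\beta$. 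Since $\Gamma_*(0)$ is finite by Lemma~\ref{lem:roughfinner}, the discrete Toeplitz estimate \eqref{eq:discretetoeplitz} applies. It yields
\[
\Lambda_c(\beta)^2\int_0^t\!\int_0^t|s-r|^{-2\theta}\dd s\dd r,
\]
up to the prefactor, with error $\mathfrak r_N(2\theta,\delta_\beta)$.

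\emph{Mixed term.} The same computation with $F_t=D_M\Lambda_c(\beta)K_t^{(\gamma_*)}$ gives
\[
\langle F_{N,t},F_t\rangle=D_M^2\Lambda_c(\beta)B(\gamma_*+1,\theta)^2N^{2\theta-2}\sum_{i<Nt}\int_0^{Nt}J_*(u-i)\dd u
\]
after the substitution $s=u/N$, with $J_*$ as in \eqref{eq:Jstar}. Lemma~\ref{lem:pairedrate} supplies local integrability and \eqref{eq:Jrate}, so the mixed estimate \eqref{eq:mixedtoeplitz} gives the same limit with error $\mathfrak r_N(2\theta,\delta_\beta)$.

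\emph{Limit term.} $\|F_t\|_2^2$ equals the identical constant exactly, by the beta identity.

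Adding the three terms cancels the leading parts and leaves a bound $C_T\mathfrak r_N(2\theta,\delta_\beta)$, uniformly in $t\le T$. Taking square roots gives exponent $\tfrac12\min\{\delta_\beta,1-2\theta\}$, with $(\log N)^{1/2}$ exactly when $\delta_\beta=1-2\theta$. This is \eqref{eq:activekernelrate}.

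The step I expect to be the real obstacle is justifying these Fubini exchanges, and the symmetric-sign issue, given that $\varphi_\beta$ is signed and not integrable against one-filter Riesz weights. I would handle this with the truncation of Lemma~\ref{lem:diagramtruncation} and the paired majorants of Lemma~\ref{lem:roughfinner}, both with $D=d$. Everything is then absolutely convergent, because each filter vertex has degree $c+\theta=d<\min\{1,2(1+\beta)\}$.

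For \eqref{eq:activeconvergence}, use \eqref{eq:randomactive} and $I_2$ isometry: the $L^2(\Omega)$ limit of $Z^{(h)}_{N,*}(t)$ is $q^2(q-1)!A^2_{\boldsymbol\gamma,\beta}D_M\Lambda_c(\beta)I_2(K_t^{(\gamma_*)})$. Substituting \eqref{eq:Aexplicit} and \eqref{eq:DM}, the constant is
\[
q^2(q-1)!\cdot\frac{1}{\mathcal B_0\Lambda_d(\beta)}\cdot\frac{\mu^2\mathcal B_*}{q^2(q-1)!}\cdot\Lambda_c(\beta)=C_{\boldsymbol\gamma,h}.
\]
Convergence in $\mathbb D^{k,p}$ then follows from two facts. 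In the second chaos, $D$ and $D^2$ are given by the kernel itself, so $\|I_2(g)\|_{\mathbb D^{k,p}}\le C_{k,p}\|g\|_2$ by hypercontractivity. And the kernel bound above is uniform in $t\le T$, which gives the locally uniform statement.
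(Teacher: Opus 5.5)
Your proposal is correct and follows the paper's proof. You use the same exact beta-identity formulas for $\|F_{N,t}\|_2^2$ and the mixed scalar product (via $\Gamma_*$ and $J_*$), feed them into Lemma~\ref{lem:pairedrate} and the discrete and mixed estimates of Lemma~\ref{lem:toeplitz} to get the $\mathfrak r_N(2\theta,\delta_\beta)$ bound, and conclude with the same coefficient identification and second-chaos hypercontractivity. The paper's extra steps are not needed for the rate: it identifies the weak limit by displaced Riemann sums and evaluates the cross term at $t_N$ plus an endpoint error $\|F_t-F_{t_N}\|_2\le C|t-t_N|^{1-\theta}$, whereas the mixed Toeplitz estimate as stated already lets you work directly at time $t$.
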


\begin{proof}
We first identify the weak limit.  For $f\in C_c^\infty(\R)$ put
\[
\mathcal A_f(s)=\int_\R(s-x)_+^{\gamma_*}f(x)\dd x.
\]
Testing \eqref{eq:FNactive} against $f\otimes g$ gives
\begin{align*}
\langle F_{N,t},f\otimes g\rangle
={}&D_M\int_{\R^2}\varphi_\beta(a)\varphi_\beta(b)|a-b|^{-c}\\
&\quad\times\frac1N\sum_{i<Nt}
\mathcal A_f\!\left(\frac{i+a}{N}\right)
\mathcal A_g\!\left(\frac{i+b}{N}\right)\dd a\dd b.
\end{align*}
Let $\nu=\gamma_*+1\in(0,1/2)$.  Proposition~\ref{prop:displacedriemann},
proved in Appendix~\ref{app:riemann}, gives, for every $R\ge2$, the
quantitative bound
\begin{align}
&\sup_{0\le t\le T}\left|
\langle F_{N,t},f\otimes g\rangle
-D_M\Lambda_c(\beta)\int_0^t
\mathcal A_f(s)\mathcal A_g(s)\dd s\right|\notag\\
&\hspace{30mm}\le C_{R,T,f,g}N^{-\nu}+C_{T,f,g}R^\beta.
\label{eq:weaktestbound}
\end{align}
First let $N\to\infty$ and then $R\to\infty$.  Tensor products of smooth
compactly supported functions are dense in $L^2(\R^2)$.  In addition,
\eqref{eq:FNnorm}, Lemma~\ref{lem:pairedrate}, and
Lemma~\ref{lem:toeplitz} give
$\sup_N\sup_{t\le T}\|F_{N,t}\|_2<\infty$.  Hence convergence on this
dense class extends to every $L^2$ test function and yields
\begin{equation}
F_{N,t}\rightharpoonup F_t\quad\text{weakly in }L^2(\R^2).
\label{eq:activeweak}
\end{equation}

Let
\[
b_{\gamma_*}=B(\gamma_*+1,-2\gamma_*-1).
\]
Applying \eqref{eq:betaidentity} to the two free variables in the squared
norm yields the exact identity (the complete Fubini calculation is recorded
in Proposition~\ref{prop:activescalarproducts} of
Appendix~\ref{app:riemann})
\begin{equation}
\|F_{N,t}\|_2^2
=D_M^2b_{\gamma_*}^2N^{2\theta-2}
\sum_{i,j< Nt}\Gamma_*(j-i).
\label{eq:FNnorm}
\end{equation}
Lemma~\ref{lem:pairedrate} and
\eqref{eq:discretetoeplitz}, with $a=2\theta$, show that this converges to
\[
D_M^2b_{\gamma_*}^2\Lambda_c(\beta)^2
\int_0^t\!\int_0^t|s-r|^{-2\theta}\dd s\dd r
=\|F_t\|_2^2,
\]
and that the difference of the squared norms is bounded by
$C_T\mathfrak r_N(2\theta,\delta_\beta)$.

For the quantitative scalar product, first take $t_N=\lfloor Nt\rfloor/N$.
Two further applications of \eqref{eq:betaidentity} give
\begin{equation}
\langle F_{N,t_N},F_{t_N}\rangle
=D_M^2\Lambda_c(\beta)b_{\gamma_*}^2N^{2\theta-2}
\sum_{i< Nt}\int_0^{\lfloor Nt\rfloor}J_*(u-i)\dd u.
\label{eq:FNcross}
\end{equation}
Equations \eqref{eq:Jrate} and \eqref{eq:mixedtoeplitz} bound its difference
from $\|F_{t_N}\|_2^2$ by the same
$C_T\mathfrak r_N(2\theta,\delta_\beta)$.  Therefore
\[
\|F_{N,t_N}-F_{t_N}\|_2^2
\le C_T\mathfrak r_N(2\theta,\delta_\beta).
\]
The remaining endpoint satisfies
$\|F_t-F_{t_N}\|_2\le C|t-t_N|^{1-\theta}$ and is of smaller order.
Taking square roots proves \eqref{eq:activekernelrate}.

It remains to compute the process coefficient without an unknown
normalization.  Proposition~\ref{prop:processconstruction} gives
$A_{\boldsymbol\gamma,\beta}^2
=1/[\mathcal B_0\Lambda_d(\beta)]$.
Using \eqref{eq:DM} in \eqref{eq:randomactive}, the limiting coefficient is
\[
q^2(q-1)!A_{\boldsymbol\gamma,\beta}^2D_M\Lambda_c(\beta)
=\frac{\mu^2\mathcal B_*}{\mathcal B_0}
\frac{\Lambda_c(\beta)}{\Lambda_d(\beta)}
=C_{\boldsymbol\gamma,h}.
\]
Finally, on the second chaos, $L^2$ kernel convergence is equivalent to
convergence in every fixed $\mathbb D^{k,p}$ by hypercontractivity and the
explicit Malliavin derivative formula.  This proves
\eqref{eq:activeconvergence}.
\end{proof}

\section{Elimination of the residual chaoses}
\label{sec:residual-chaoses}

We record the diagram estimate in a form that will also yield rates.  Let
$T_{N,r}$ be the projection of $Z_N^{(h)}$ on chaos $2q-2r$.  For
$1\le m\le q$ put
\begin{equation}
p_m=2\sum_{j=1}^m\theta_j,
\qquad \theta_j=-2\gamma_j-1,
\label{eq:pm}
\end{equation}
and write
\[
S_n(p)=\sum_{0\le i,j<n}(1+|i-j|)^{-p}.
\]
In terms of \eqref{eq:collective-weight}, also put
\begin{equation}
 \mathscr S_n(p)=\sum_{0\le i,j<n}\psi_p(j-i).
 \label{eq:collective-toeplitz-sum}
\end{equation}

\begin{lemma}[Exhaustive closure of two monomial branches]
\label{lem:exhaustive-closure}
Fix $1\le m\le q$.  Let $B_i$ and $\widetilde B_j$ be two, possibly
different, symmetrized monomial summands in the expansion of $H_{i,m}$ and
$H_{j,m}$ from Proposition~\ref{prop:branchgrammar}.  Denote their four
free-label sets by $A,B,C,D$, each of cardinality $m$, and put
\[
 \Theta_A=\sum_{a\in A}\theta_a,
 \qquad \theta_a=-2\gamma_a-1,
\]
with analogous notation for $B,C,D$.  Then
$\langle B_i,\widetilde B_j\rangle$ is a finite linear combination of
closed four-vertex graph integrals.  Every such closure has exactly $2m$
edges joining the lattice block $i$ to the lattice block $j$.  If a cross
edge pairs labels $a$ and $c$, its Riesz exponent is
\begin{equation}
 \lambda_{a,c}=-\gamma_a-\gamma_c-1
 =\frac{\theta_a+\theta_c}{2}>0.
 \label{eq:cross-edge-weight}
\end{equation}
Consequently the total macroscopic weight of every closure is exactly
\begin{equation}
 p(A,B;C,D)
 =\frac{\Theta_A+\Theta_B+\Theta_C+\Theta_D}{2}
 \ge 2\sum_{a=1}^m\theta_a=p_m.
 \label{eq:closure-weight}
\end{equation}
There is a constant, independent of $i,j$, such that
\begin{equation}
 |\langle B_i,\widetilde B_j\rangle|
 \le C_{B,\widetilde B}\,
 \psi_{p(A,B;C,D)}(j-i).
 \label{eq:cross-branch-decay}
\end{equation}
The number of closures and the sum of their constants are bounded by a
finite constant depending only on $q$, the exponent vector, and the filter.
\end{lemma}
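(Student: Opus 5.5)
The plan is to write both branches explicitly from \eqref{eq:monomialbranch} and compute the scalar product by integrating out all spatial variables first. For fixed $i,j$ the product $B_i(\mathbf x,\mathbf y)\widetilde B_j(\mathbf x,\mathbf y)$, after symmetrization, becomes a finite sum over bijections pairing the $2m$ free spatial variables of $B_i$ (carrying labels in $A$ at the vertex $u$ and in $B$ at the vertex $v$) with the $2m$ free variables of $\widetilde B_j$ (labels in $C$ at $u'$, in $D$ at $v'$). Each pairing of a label $a$ with a label $c$ is evaluated by the oriented beta identity \eqref{eq:betaidentity}. It produces an orientation indicator, a beta constant, and the factor $|u-u'|^{\gamma_a+\gamma_c+1}$ (or the analogous factor for the other vertex pair). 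After the translations $u=i+x_u$, $u'=j+x_{u'}$ this yields a four-vertex graph. Its vertices carry $\varphi_\beta$, bounded in modulus by $\rho_\beta$. The two internal edges $\{u,v\}$ and $\{u',v'\}$ have exponents $c_{A,B}$ and $c_{C,D}$, and there are exactly $2m$ cross edges, one per paired free variable, with the exponent \eqref{eq:cross-edge-weight}. Summing these exponents gives $(\Theta_A+\Theta_B+\Theta_C+\Theta_D)/2$. Since each of $A,B,C,D$ has $m$ elements and $\theta_1\le\cdots$ is ordered oppositely to $\gamma$ (so $\theta_1=\theta_*$ is the smallest), the rearrangement bound $\Theta_A\ge\sum_{a\le m}\theta_a$ gives \eqref{eq:closure-weight}. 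The number of closures is bounded by $(m!)^2\cdot(2m)!\cdot 2^{4m+2}$ times the multiplicities in Proposition~\ref{prop:branchgrammar}, and all constants are finite products of beta functions. This settles the last sentence.

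Next I will verify that this four-vertex graph is a closure diagram in the sense of Lemma~\ref{lem:charged-collective-closure}, with block map $b(u)=b(v)=0$ and $b(u')=b(v')=1$. Each vertex must carry one port of every label $j\in\{1,\dots,q\}$. At the vertex $u$, the labels in $A$ are ports of cross edges. The remaining $q-m$ labels were contracted with the $q-m$ labels of $v$ outside $B$, and each such contraction gave an edge of weight $-\gamma_a-\gamma_b-1=c_a+c_b$. The internal $u$–$v$ edge is therefore a bundle of $q-m$ elementary port edges, and its exponent is $c_{A,B}$ by \eqref{eq:cAB}. When $m=q$ this bundle is empty. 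The orientation indicators are bounded by $1$ and can be dropped once absolute values are taken. Lemma~\ref{lem:charged-collective-closure} then applies with $p_D$ equal to the total cross weight $p(A,B;C,D)$. The lattice displacement is $k=j-i$, entering through $k(b(v)-b(w))$, and the lemma gives \eqref{eq:cross-branch-decay} with the constant $C_D$. Local absolute integrability is also part of that lemma, via \eqref{eq:fixed-h-free-deficit}–\eqref{eq:fixed-h-pinned-deficit}. This justifies the Fubini steps, together with the truncation scheme of Lemma~\ref{lem:diagramtruncation}.

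The main obstacle I expect is the bookkeeping that justifies integrating out the spatial variables before the temporal ones. Each elementary pairing integral converges only off the diagonal, and the rearrangement needs absolute integrability of the full $(q$-spatial plus $4$ temporal$)$-variable integrand. I would handle it as in Lemma~\ref{lem:diagramtruncation}. First compute with the truncated kernels $k_{j,R,\varepsilon}$, where everything is bounded. Then dominate each truncated spatial integral by the untruncated oriented beta integral, which gives the same graph majorant with $\rho_\beta$ in place of $\varphi_\beta$. Finally pass to the limit by dominated convergence, using the integrability in Lemma~\ref{lem:charged-collective-closure}. A secondary check is that no pairing puts two ports of the same vertex on one edge. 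This holds because free variables of $B_i$ are paired only with free variables of $\widetilde B_j$, so every cross edge really joins block $i$ to block $j$.
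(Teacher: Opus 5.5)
Your proposal is correct and follows the paper's proof essentially step by step. Both expand the symmetrizations into relative pairings and evaluate each cross pairing by the oriented beta identity, obtaining exactly $2m$ cross edges of weight $(\theta_a+\theta_c)/2$. Both split $|u-v|^{-c_{A,B}}$ into its $q-m$ labelled port edges so that every vertex carries the full charge list, and then invoke Lemma~\ref{lem:charged-collective-closure} with $k=j-i$ and $p_D=p(A,B;C,D)$.

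Your explicit Fubini justification via Lemma~\ref{lem:diagramtruncation} is a harmless variation. Your closure count is cruder but still finite, which is all the lemma needs; the paper gets $(2m)!\,2^{2m}$ per branch pair from self-adjointness of the symmetrization.
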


\begin{proof}
Before symmetrization, a branch in $H_{i,m}$ has two groups of $m$ free
spatial variables, carrying the labels in $A$ and $B$ and attached to its
two temporal filter vertices.  A branch in $H_{j,m}$ has the analogous
groups $C,D$.  Expand both symmetrizations.  A scalar product in
$L^2(\R^{2m})$ pairs every free coordinate of the first branch with exactly
one free coordinate of the second; after fixing the first ordering, such a
pairing is indexed by a permutation in $\mathfrak S_{2m}$.  Thus there are
exactly $2m$ cross-block spatial integrations in each summand and only
finitely many summands.  This argument also covers cross terms between
different label sets and different orientations.

More formally, if $F_i$ and $G_j$ denote the corresponding unsymmetrized
branches, self-adjointness of orthogonal symmetrization gives
\begin{equation}
 \langle\Sym F_i,\Sym G_j\rangle
 =\frac1{(2m)!}\sum_{\pi\in\mathfrak S_{2m}}
   \langle F_i,G_j\circ\pi\rangle.
 \label{eq:relative-pairings}
\end{equation}
For fixed $\pi$, the two half-space terms in each of the $2m$ beta
identities are indexed by an orientation vector
$\omega\in\{-,+\}^{2m}$.  Hence one branch pair produces at most
$(2m)!2^{2m}$ closed oriented graphs before equal terms are collected.

Suppose one of these integrations pairs a factor
$(u-x)_+^{\gamma_a}$ from the block $i$ with a factor
$(v-x)_+^{\gamma_c}$ from the block $j$.  Formula
\eqref{eq:betaidentity} replaces it, on each of its two orientation
half-spaces, by a finite beta coefficient times
\[
 |u-v|^{\gamma_a+\gamma_c+1}
 =|u-v|^{-\lambda_{a,c}},
\]
where \eqref{eq:cross-edge-weight} follows from the definition of
$\theta_a$.  The temporal variables have the form $i+x_v$ in the first
branch and $j+x_w$ in the second.  Hence every such edge is
\[
 |(j-i)+x_w-x_v|^{-\lambda_{a,c}}
\]
and carries the macroscopic translation $j-i$.  Since every labelled slot
in the disjoint union $A\sqcup B\sqcup C\sqcup D$ occurs at one endpoint of
exactly one cross edge,
summing \eqref{eq:cross-edge-weight} over the $2m$ edges gives the equality
in \eqref{eq:closure-weight}.  The sequence
$\theta_1\le\cdots\le\theta_q$ is increasing, and each of the four sets has
cardinality $m$; therefore each $\Theta$ is at least
$\sum_{a=1}^m\theta_a$, proving the inequality in
\eqref{eq:closure-weight}.

The two internal contracted packets of each branch supply only local edges
inside the block $i$ or $j$.  Split each aggregate factor
$|u-v|^{-c_{A,B}}$ into its $q-m$ labelled port edges.  Their total weight
is
\[
 \sum_{\mathrm{contracted}\ (a,b)}
 \frac{\theta_a+\theta_b}{2}
 =d-\frac{\Theta_A+\Theta_B}{2}=c_{A,B}.
\]
When $m=q$ this sum is empty and the factor is $1$.  The norm closure pairs
every remaining port exactly once, with weights
\eqref{eq:cross-edge-weight}.  Hence each of the four filter vertices
carries once and only once the complete charge list
$c_1,\ldots,c_q$, and every connected component meets both lattice
blocks.  Lemma~\ref{lem:charged-collective-closure} applies with
$k=j-i$ and $p_D=p(A,B;C,D)$, proving
\eqref{eq:cross-branch-decay}.

Finally, Proposition~\ref{prop:branchgrammar} has finitely many label sets,
orientations, contraction bijections, and beta coefficients, while
\eqref{eq:relative-pairings} and its orientation expansion give at most
$(2m)!2^{2m}$ closed graphs for each branch pair.  The charged-closure
constants do not depend on the compact or small-scale truncation parameters.
Summing the corresponding finite constants proves the last assertion,
uniformly throughout the truncation procedure.
\end{proof}

\begin{proposition}[Residual diagrams]\label{prop:residual}
Under the assumptions of Theorem~\ref{thm:main}:
\begin{enumerate}
\item every non-active branch of $T_{N,q-1}$ converges to zero in $L^2$;
\item for each $0\le r\le q-2$, $T_{N,r}\to0$ in $L^2$;
\item the conclusions remain valid in every fixed $\mathbb D^{k,p}$ and
uniformly on compact time intervals.
\end{enumerate}
\end{proposition}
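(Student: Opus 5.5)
We bound the $L^2$ norm of each residual projection by expanding it into covariance diagrams and applying Lemma~\ref{lem:exhaustive-closure}. By \eqref{eq:exactchaosdecomposition} and unitarity of $U_N$, the projection of $Z_N^{(h)}(t)$ on chaos $2m$ has second moment
\[
 (2m)!\,a_{q,m}^2N^{2\theta-2}\sum_{i,j<Nt}\langle H_{i,m},H_{j,m}\rangle .
\]
By Proposition~\ref{prop:branchgrammar}, $H_{i,m}$ is a finite combination of symmetrized monomial branches. We split it as a finite sum of branches $B_i$ and expand the double sum into finitely many pairs $(B_i,\widetilde B_j)$. For each pair, \eqref{eq:cross-branch-decay} gives the bound
\[
 |\langle B_i,\widetilde B_j\rangle|\le C\,\psi_{p}(j-i),
 \qquad p=p(A,B;C,D).
\]
The contribution of the pair is then at most $CN^{2\theta-2}\mathscr S_{\lfloor NT\rfloor}(p)$, uniformly for $t\le T$. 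Because $\psi_p$ is nonincreasing in $p$, it suffices to show that $N^{2\theta-2}\mathscr S_N(p)\to0$ polynomially whenever the pair is non-active.

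\textbf{Step 1: reduce to a Toeplitz power count.} The elementary Toeplitz count gives
\[
 \mathscr S_N(p)\lesssim
 \begin{cases}
  N^{2-p}, & p<1,\\
  N\log N, & p=1,\\
  N, & p>1 .
 \end{cases}
\]
Hence the contribution is $O(N^{2\theta-p})$ when $p<1$, $O(N^{2\theta-1}\log N)$ when $p=1$, and $O(N^{2\theta-1})$ when $p>1$. Since $\theta<1/q\le1/2$, we have $2\theta-1<0$, so the last two cases vanish polynomially. Everything therefore reduces to showing $p>2\theta$ strictly for every non-active branch pair.

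\textbf{Step 2: the power count itself.} Recall $\theta_1=\theta$ is the smallest entry, $\theta_j>\theta$ for $j\notin M$, and every $\theta_j>0$.

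For chaos $2m$ with $m\ge2$, \eqref{eq:closure-weight} gives
\[
 p\ge p_m=2(\theta_1+\theta_2)\ge4\theta>2\theta .
\]
This proves item 2.

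For $m=1$, write $p=(\theta_a+\theta_b+\theta_c+\theta_d)/2$, where the four free labels are $a,b$ (from $B_i$) and $c,d$ (from $\widetilde B_j$). Then $p=2\theta$ only if all four labels lie in $M$. Otherwise $p\ge2\theta+(\theta_{j_0}-\theta)/2$, where $j_0$ is any label outside $M$, and so $p>2\theta$.

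To prove item 1 we need one extra bookkeeping point. The non-active part of $T_{N,q-1}$ is by definition the sum of the branches whose free pair $(r,s)$ is not in $M^2$. Its squared norm expands into pairs in which at least one of the two branches is non-active, so at least one of $a,b,c,d$ lies outside $M$. Every such pair therefore has $p>2\theta$, which gives a rate $N^{-\epsilon}$ with explicit $\epsilon>0$.

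\textbf{Step 3: justification and strengthening.} The finiteness and interchange of all these sums follow from Lemma~\ref{lem:diagramtruncation}; its constants are uniform in truncation and in $t\le T$.

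For item 3, each residual term lies in a fixed finite chaos. Hypercontractivity, together with the explicit formula $D^kI_n(f)=\tfrac{n!}{(n-k)!}I_{n-k}(f)$, converts the $L^2$ kernel bounds into $\mathbb D^{k,p}$ bounds with the same rates. Uniformity on compact time intervals is already built into the sums over $i,j<\lfloor NT\rfloor$, since the bounds are monotone in $t$.

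\textbf{Main obstacle.} The real difficulty is the decay \eqref{eq:cross-branch-decay}, and that is supplied by Lemma~\ref{lem:exhaustive-closure}. What remains in the present proof is the combinatorial check that every non-active closure strictly exceeds the critical weight $2\theta$, including mixed active/non-active pairs. The delicate case is $p\le1$, where we need the exact power $\langle k\rangle^{-p}$ rather than a logarithmically lossy one; that is exactly the regime in which the native face is the unique minimizer in Lemma~\ref{lem:charged-collective-closure}.
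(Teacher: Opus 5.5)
Your proposal is correct and follows the paper's proof essentially verbatim. You expand the squared chaos projections into pairwise closures, bound each by $\psi_p(j-i)$ via Lemma~\ref{lem:exhaustive-closure}, and apply the three-case Toeplitz count. You then check $p>2\theta$ for every non-active second-chaos closure and $p\ge p_m\ge 2(\theta_1+\theta_2)$ in chaos $2m\ge4$, and upgrade by hypercontractivity. For $m>2$ the relation $p_m\ge 2(\theta_1+\theta_2)$ is an inequality, not the equality you wrote.

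The only bookkeeping difference is that the paper bounds each label sector $T_{N,q-1}^{a,b}$ separately, with exact weight $\theta_a+\theta_b$. You instead treat the whole non-active part at once and allow mixed-sector pairs, which is equivalent.
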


\begin{proof}
For a second-chaos branch with free labels $(a,b)$, the two macroscopic edges
have total weight
\begin{equation}
\eta_{a,b}=\theta_a+\theta_b.
\label{eq:etaab}
\end{equation}
Expand the finitely many monomial summands and orientations in this label
sector.  Wiener isometry gives a finite sum over $i,j$ of their pairwise
scalar products.  Lemma~\ref{lem:exhaustive-closure}, with $m=1$ and free
sets $\{a\},\{b\},\{a\},\{b\}$, shows that every closure has total weight
$\eta_{a,b}$.  Hence
\begin{equation}
\|T_{N,q-1}^{a,b}(t)\|_2^2
\le C N^{2\theta-2}\mathscr S_{\lfloor Nt\rfloor}(\eta_{a,b}).
\label{eq:secondbranchbound}
\end{equation}
If $(a,b)\notin M^2$, then $\eta_{a,b}>2\theta$.  Since
\begin{equation}
S_n(p)\le\mathscr S_n(p)\le C_p
\begin{cases}
n^{2-p},&0<p<1,\\
n\log(1+n),&p=1,\\
n,&p>1,
\end{cases}
\label{eq:toeplitzthree}
\end{equation}
the right-hand side of \eqref{eq:secondbranchbound} tends to zero in all
three cases.  If both labels are maximal, the branch is one of those already
included in Proposition~\ref{prop:active}.

For $r\le q-2$, put $m=q-r\ge2$.  The projection $T_{N,r}$ is a finite
sum of the branches in Proposition~\ref{prop:branchgrammar}.  Expand its
squared norm, including all cross terms between distinct label sets,
orientations, and symmetrizations.  Wiener isometry and the unitarity of
$U_N$ give the prefactor
$(2m)!N^{2\theta-2}$ and a double sum over the lattice indices.
Lemma~\ref{lem:exhaustive-closure} indexes every resulting closure and
shows that its macroscopic weight is at least $p_m$.  Since the number and
the constants of the closures have a finite sum, one obtains
\begin{equation}
\|T_{N,r}(t)\|_2^2
\le C N^{2\theta-2}
\mathscr S_{\lfloor Nt\rfloor}(p_m).
\label{eq:toeplitzresidual}
\end{equation}
If $p_m<1$, this is
$O(N^{2\theta-p_m})=O(N^{-2\sum_{j=2}^m\theta_j})$; if $p_m=1$, it is
$O(N^{2\theta-1}\log N)$; and if $p_m>1$, it is
$O(N^{2\theta-1})$.  Every expression tends to zero because $m\ge2$ and
$\theta<1/2$.  The bound is stable when one filter is truncated, and its
marked remainder tends uniformly to zero by
\eqref{eq:charged-collective-marked-tail}; letting the truncation radius
tend to infinity completes the proof.  Orthogonality of the chaoses gives the full
$L^2$ statement.  Hypercontractivity and the explicit action of Malliavin
derivatives on a fixed chaos give the last assertion.
\end{proof}

\begin{proposition}[Branch valuation formula]\label{prop:valuation}
Let $\mathfrak b$ be any closed monomial branch of chaos $2m$, and let
$p(\mathfrak b)$ be the exact sum of the exponents of its $2m$
macroscopic edges.  After the normalization in
\eqref{eq:exactchaosdecomposition}, the squared $L^2$ norm has polynomial
degree at most
\begin{equation}
\Delta(\mathfrak b)=2\theta-2+\omega(p(\mathfrak b)),
\qquad
\omega(p)=\max\{2-p,1\},
\label{eq:branchvaluation}
\end{equation}
with one logarithm at $p=1$.  Moreover:
\begin{enumerate}
\item if $m=1$ and the free labels belong to $M^2$, then
$p=2\theta$ and $\Delta=0$;
\item if $m=1$ and the branch is not active, then $p>2\theta$ and
$\Delta<0$;
\item if $m\ge2$ and $p<1$, then
$\Delta\le-2\sum_{j=2}^m\theta_j<0$;
\item if $m\ge2$ and $p\ge1$, then
the upper degree bound is $\Delta=2\theta-1<0$.
\end{enumerate}
Thus the active sectors listed in Proposition~\ref{prop:maxcontraction} are
the only branches of valuation zero.
\end{proposition}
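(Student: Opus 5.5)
The plan is to read the valuation formula directly off the exhaustive closure bound and the Toeplitz counting estimate. Let $\mathfrak b$ be a closed monomial branch of chaos $2m$. I begin with the exact decomposition \eqref{eq:exactchaosdecomposition}. Wiener isometry and the unitarity of $U_N^{\otimes 2m}$ bound the squared $L^2$ norm of its normalized projection by $(2m)!\,a_{q,m}^2N^{2\theta-2}$ times a double sum over $0\le i,j<\lfloor Nt\rfloor$ of scalar products $\langle B_i,\widetilde B_j\rangle$ between the monomial summands of Proposition~\ref{prop:branchgrammar}. Lemma~\ref{lem:exhaustive-closure} applies to every such pair and bounds each scalar product by $C\psi_{p(\mathfrak b)}(j-i)$. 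Here $p(\mathfrak b)$ is the exact total weight of the $2m$ macroscopic edges. The squared norm is therefore at most $CN^{2\theta-2}\mathscr S_{\lfloor Nt\rfloor}(p(\mathfrak b))$. Inserting \eqref{eq:toeplitzthree} gives the three regimes: $n^{2-p}$ for $p<1$, $n\log n$ at $p=1$, and $n$ for $p>1$. Since $n\asymp N$, this is exactly the degree $2\theta-2+\omega(p)$ with $\omega(p)=\max\{2-p,1\}$, and the single logarithm appears at $p=1$. This proves \eqref{eq:branchvaluation}.

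For the four items I compute $p$ through \eqref{eq:closure-weight}, using $p=(\Theta_A+\Theta_B+\Theta_C+\Theta_D)/2$.

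\emph{Item 1.} For $m=1$ with labels in $M^2$, the four singletons all carry $\theta=\theta_*$. Hence $p=2\theta<1$, and $\Delta=2\theta-2+2-2\theta=0$.

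\emph{Item 2.} For a non-active second-chaos branch, at least one free label has $\theta_a>\theta$, so $p=\theta_a+\theta_b>2\theta$. This is the quantity $\eta_{a,b}$ of \eqref{eq:etaab}. If $p<1$, then $\Delta=2\theta-p<0$. Otherwise $\Delta=2\theta-1<0$ because $\theta<1/q\le1/2$; at $p=1$ the logarithm is harmless since the exponent is strictly negative.

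\emph{Item 3.} For $m\ge2$, \eqref{eq:closure-weight} gives $p\ge p_m=2\sum_{j\le m}\theta_j$. When $p<1$, this yields $\Delta=2\theta-p\le2\theta_1-2\sum_{j\le m}\theta_j=-2\sum_{j=2}^m\theta_j<0$, using $\theta=\theta_1$.

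\emph{Item 4.} When $m\ge2$ and $p\ge1$, $\omega(p)=1$, so $\Delta=2\theta-1<0$.

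The closing sentence then follows. Every branch of chaos $2m\ge4$ has strictly negative valuation by items 3 and 4. Among second-chaos branches, only those with free labels in $M^2$ reach valuation zero, and these are precisely the active sectors gathered in Proposition~\ref{prop:maxcontraction}.

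The one delicate point is that the bound must concern the branch itself, including all its symmetrization and orientation cross terms, and not a single closed graph. I handle this with the finiteness clause of Lemma~\ref{lem:exhaustive-closure}, as in Proposition~\ref{prop:residual}. Every closure arising from the squared norm of a fixed branch has the same label sets, so it has the same exact weight $p(\mathfrak b)$. The finite sum of constants is therefore absorbed into $C$ without changing the degree. For item 2 this also requires checking that the labels of a non-active branch in its own closures are the same free pair $(a,b)$ on both sides, which holds by construction.
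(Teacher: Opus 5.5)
Your proof is correct and follows the paper's argument. The paper likewise bounds each covariance branch by $CN^{2\theta-2}\mathscr S_{\lfloor Nt\rfloor}(p(\mathfrak b))$ using the charged collective closure, which you invoke through its packaged form, Lemma~\ref{lem:exhaustive-closure}; it reads off $\omega(p)$ from \eqref{eq:toeplitzthree} and gets the four cases from $p=\theta_a+\theta_b$ in the second chaos and $p\ge p_m$ in chaos $2m\ge4$, while your remarks on symmetrization cross terms and on $\theta=\theta_1$ in item~3 spell out what the paper leaves implicit.
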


\begin{proof}
Lemma~\ref{lem:charged-collective-closure} contracts every local collision
and controls the collective lattice tail.  Thus a covariance branch is
bounded by
\[
C N^{2\theta-2}\mathscr S_{\lfloor Nt\rfloor}(p(\mathfrak b)).
\]
Equation \eqref{eq:toeplitzthree} gives \eqref{eq:branchvaluation}.  The
four cases follow from $p=\theta_a+\theta_b$ in the second chaos and
$p\ge p_m=2\sum_{j=1}^m\theta_j$ in chaos $2m\ge4$.
\end{proof}

\begin{proof}[Proof of Theorem~\ref{thm:main}]
Fix $t_1,\ldots,t_\ell$.  For each $j$, Proposition~\ref{prop:active}
gives $L^2$ convergence of the active sector to $Z(t_j)$, while
\eqref{eq:secondbranchbound} and \eqref{eq:toeplitzresidual}, with
$n=\max_j\lfloor Nt_j\rfloor$, make the finite residual sum tend to zero in
$L^2$.  Hence
\[
\|Z_N^{(h)}(t_j)-Z(t_j)\|_2\longrightarrow0
\qquad(1\le j\le\ell).
\]
Therefore
\[
\left\|
\bigl(Z_N^{(h)}(t_j)-Z(t_j)\bigr)_{j=1}^\ell
\right\|_{L^2(\Omega;\R^\ell)}^2
=\sum_{j=1}^\ell\|Z_N^{(h)}(t_j)-Z(t_j)\|_2^2
\longrightarrow0,
\]
which is \eqref{eq:mainlimit}.  The one-time $\mathbb D^{k,p}$ conclusion
is already contained in Propositions~\ref{prop:active} and
\ref{prop:residual}.
\end{proof}

\section{Quantitative and functional convergence}
\label{sec:functional-convergence}

Throughout this section we assume the hypotheses of
Theorem~\ref{thm:main}.

We make the exponents explicit.  Write
\[
\theta_j=-2\gamma_j-1,
\qquad \theta=\theta_1.
\]
If $\mu<q$, put $g_*=\gamma_*-\gamma_{\mu+1}$.  Define
\begin{align}
\delta_\beta&=\frac{-\beta}{1-\beta}
=\frac{H_0-h}{1+H_0-h},\label{eq:deltabeta}\\
\kappa_{\mathrm{act}}
&=\frac12\min\{\delta_\beta,1-2\theta\},\label{eq:kactive}\\
\kappa_2&=
\begin{cases}
\displaystyle\min\left\{g_*,\frac{1-2\theta}{2}\right\},&\mu<q,\\[2mm]
+\infty,&\mu=q,
\end{cases}\label{eq:ksecond}\\
\kappa_{\ge4}&=\min\left\{\theta_2,
\frac{1-2\theta}{2}\right\},\label{eq:khigher}\\
\kappa&=\min\{\kappa_{\mathrm{act}},\kappa_2,
\kappa_{\ge4}\}.\label{eq:kglobal}
\end{align}
When $q=2$ the symbol $\theta_2$ has its literal meaning; for a completely
isotropic vector it equals $\theta$.  For $N\ge2$, set
$\ell_N=(\log N)^{1/2}$ if at least one of the following equalities holds:
\begin{equation}
\delta_\beta=1-2\theta,
\qquad
\mu<q\ \text{ and }\ \theta+\theta_{\mu+1}=1,
\qquad
2(\theta+\theta_2)=1.
\label{eq:logfactor}
\end{equation}
Set $\ell_N=1$ otherwise.

\begin{proof}[Proof of the quantitative assertion in
Theorem~\ref{thm:main}]
Lemma~\ref{lem:pairedrate} gives an error
$O(|k|^{-2\theta-\delta_\beta})$ for the squared active norm and the
corresponding mixed error for its scalar product with the limiting kernel.
Indeed the truncation error is $O(R^\beta)$, the compact Taylor error is
$O(R/|k|)$, and the choice $R=|k|^{1/(1-\beta)}$ gives
$R^\beta=|k|^{-\delta_\beta}$.  The discrete and mixed Toeplitz sums with
main exponent $2\theta$ cap the squared-norm rate at $1-2\theta$.  Taking a
square root gives \eqref{eq:kactive}; equality yields the indicated
square-root logarithm.

Suppose first that $\mu<q$.  For a non-active second-chaos branch $(a,b)$,
if
$\eta_{a,b}=\theta_a+\theta_b<1$, equation
\eqref{eq:secondbranchbound} gives the $L^2$ exponent
\[
\frac{\eta_{a,b}}2-\theta.
\]
The smallest value is attained when one label is maximal and the other is
$\mu+1$; it equals
$(\theta_{\mu+1}-\theta)/2
=\gamma_*-\gamma_{\mu+1}=g_*$.  If $\eta_{a,b}\ge1$, the Toeplitz sum gives
the capped exponent $(1-2\theta)/2$.  This proves \eqref{eq:ksecond} when
$\mu<q$.  If $\mu=q$, every second-chaos sector is active, so there is no
second-chaos remainder and the correct convention is $\kappa_2=+\infty$.

For chaos $2m\ge4$, if $p_m<1$, equation
\eqref{eq:toeplitzresidual} gives the norm exponent
$\sum_{j=2}^m\theta_j$, whose minimum is $\theta_2$ at $m=2$.
If $p_m\ge1$, the exponent is capped at $(1-2\theta)/2$.  This proves
\eqref{eq:khigher}.
Orthogonality gives \eqref{eq:kglobal}; hypercontractivity gives
\eqref{eq:malliavinrate}.
Since the variables are already coupled on the same isonormal Gaussian
space, the same estimate also implies convergence in every fixed
Wasserstein distance.
\end{proof}

\begin{lemma}[Block inequality]\label{lem:block}
For every $T>0$, $k\ge0$, and $p>1$, there is a constant such that, for
$0\le j<j+L\le NT$,
\begin{equation}
\left\|Z_N^{(h)}\!\left(\frac{j+L}{N}\right)
-Z_N^{(h)}\!\left(\frac jN\right)\right\|_{\mathbb D^{k,p}}
\le C\left(\frac LN\right)^{1-\theta}.
\label{eq:blockbound}
\end{equation}
Consequently the polygonal interpolation satisfies, for all
$s,t\in[0,T]$,
\begin{equation}
\|\overline Z_N^{(h)}(t)-\overline Z_N^{(h)}(s)\|_{\mathbb D^{k,p}}
\le C|t-s|^{1-\theta}.
\label{eq:incrementbound}
\end{equation}
\end{lemma}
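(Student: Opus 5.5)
The plan is to reduce \eqref{eq:blockbound} to an $L^2(\Omega)$ estimate on each fixed chaos and then reuse the diagram bounds of Section~\ref{sec:residual-chaoses} with the summation window $[0,Nt)$ replaced by the block $[j,j+L)$.

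\textbf{Step 1 (reduction to $L^2$).} By Lemma~\ref{lem:stationaryreduction}, the block increment is
\[
 \Delta:=Z_N^{(h)}\bigl((j+L)/N\bigr)-Z_N^{(h)}(j/N)
 =\sum_{m=1}^q a_{q,m}I_{2m}\Bigl(N^{\theta-1}U_N^{\otimes2m}\sum_{j\le i<j+L}H_{i,m}\Bigr).
\]
This is a finite sum of at most $q$ chaos components of orders $2,\dots,2q$. On a fixed chaos, hypercontractivity and $D I_n(f)=nI_{n-1}(f(\cdot,x))$ make every $\mathbb D^{k,p}$ norm comparable to the $L^2$ norm. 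For $1<p<2$ we also use $\|\cdot\|_p\le\|\cdot\|_2$. It therefore suffices to bound each $\E[I_{2m}(\cdot)^2]$ by $C(L/N)^{2-2\theta}$.

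\textbf{Step 2 (second chaos).} By Lemma~\ref{lem:exhaustive-closure} and stationarity in $i$, the squared norm of every second-chaos branch, active or not, is at most
\[
 CN^{2\theta-2}\sum_{j\le i,i'<j+L}\psi_{\eta}(i-i'),
\]
where $\eta=\theta_a+\theta_b\ge2\theta$. Because $\psi_p(k)$ is nonincreasing in $p$, we may replace $\eta$ by $2\theta$. Then \eqref{eq:toeplitzthree} gives $\mathscr S_L(2\theta)\le CL^{2-2\theta}$, since $2\theta<1$. The second-chaos contribution is thus $O((L/N)^{2-2\theta})$.

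\textbf{Step 3 (higher chaoses).} For $m\ge2$ the same monotonicity gives the bound $CN^{2\theta-2}\mathscr S_L(p_m)$. From \eqref{eq:toeplitzthree}, this is at most $CN^{2\theta-2}L^{2-p_m}$ if $p_m<1$, $CN^{2\theta-2}L\log(1+L)$ if $p_m=1$, and $CN^{2\theta-2}L$ if $p_m>1$. Each is dominated by $CN^{2\theta-2}L^{2-2\theta}$ because $L\ge1$, $p_m>2\theta$, and $2-2\theta>1$. The factor $L\log(1+L)\le CL^{2-2\theta}$ also holds since $1-2\theta>0$.

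Summing over the finitely many branches proves \eqref{eq:blockbound}. The expected obstacle is making sure the diagram constants in Lemma~\ref{lem:exhaustive-closure} are uniform in the starting index $j$. Stationarity of the filters makes each covariance depend only on $i-i'$, so this reduces to the translation invariance already built into the lemma.

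\textbf{Step 4 (polygonal interpolation).} Suppose $s,t$ lie in cells that are far apart. Decompose $\overline Z_N(t)-\overline Z_N(s)$ into two partial-cell pieces plus one grid block. The block is controlled by \eqref{eq:blockbound} with $L/N\le|t-s|+2/N\le3|t-s|$.

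Each partial-cell piece equals a fraction $N|t-t'|\le1$ of a one-cell increment, so its norm is at most
\[
 N|t-t'|\,C N^{\theta-1}=C|t-t'|\,(N|t-t'|)^{-\theta}\cdot|t-t'|^{-(1-\theta)}\cdot|t-t'|^{1-\theta}
 \le C|t-t'|^{1-\theta},
\]
using $N|t-t'|\le1$. If $s,t$ lie in the same or adjacent cells, the same interpolation inequality applies directly. This gives \eqref{eq:incrementbound}.
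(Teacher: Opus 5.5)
Your proposal is correct and follows the paper's proof essentially step for step. Finite-chaos hypercontractivity reduces everything to $L^2$; the closure bound $CN^{2\theta-2}\mathscr S_L(p_m)$, uniform in the block's starting index, is dominated by $C(L/N)^{2-2\theta}$ in each of the three Toeplitz cases; and the interpolation is handled by splitting into two partial cells and one complete block.

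The one error is the displayed chain in Step~4, which is algebraically wrong as written. The correct identity is $N|t-t'|\,CN^{\theta-1}=C(N|t-t'|)^{\theta}|t-t'|^{1-\theta}$, and $N|t-t'|\le1$ then gives the bound you state.
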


\begin{proof}
For a random variable in the fixed finite sum
$\bigoplus_{r=0}^{2q}\mathcal C_r$, hypercontractivity and the formula for
Malliavin derivatives imply
\begin{equation}
\|F\|_{\mathbb D^{k,p}}\le C_{q,k,p}\|F\|_2.
\label{eq:finitechaosnorm}
\end{equation}
For the chaos-$2m$ component of a block, the Wiener isometry,
\eqref{eq:exactchaosdecomposition}, and the general graph closure gives
\[
\|\text{block}_{2m}\|_2^2
\le C_mN^{2\theta-2}\mathscr S_L(p_m)
\le C_m\left(\frac LN\right)^{2-2\theta}.
\]
For the second inequality, use $p_m\ge2m\theta\ge2\theta$ when
$p_m<1$, use $L\log(1+L)\le C_\theta L^{2-2\theta}$ when $p_m=1$, and
use $L\le L^{2-2\theta}$ when $p_m>1$.  Summing the finitely many chaoses, taking square
roots, and applying \eqref{eq:finitechaosnorm} proves
\eqref{eq:blockbound}.

If $s,t$ lie in the same cell, polygonal interpolation gives
\[
\|\overline Z_N^{(h)}(t)-\overline Z_N^{(h)}(s)\|_{\mathbb D^{k,p}}
\le CN|t-s|N^{-(1-\theta)}\le C|t-s|^{1-\theta}.
\]
For different cells, split the increment into the two partial endpoint
cells and the complete block between them.  This proves
\eqref{eq:incrementbound}.
\end{proof}

\begin{proof}[Proof of the functional assertion in
Theorem~\ref{thm:main}]
Fix $\eta<\eta'<1-\theta$ and choose a moment order
$R>\max\{p,2\}$ so large that
$\eta'<1-\theta-1/R$.  Lemma~\ref{lem:block} and the Hilbert-valued
Kolmogorov calculation in Proposition~\ref{prop:holderupgrade} of
Appendix~\ref{app:stochasticdetails} give
\begin{equation}
\sup_N\E\|\overline Z_N^{(h)}\|_{C^{\eta'}([0,T])}^R<\infty.
\label{eq:uniformholdermoment}
\end{equation}
The same proposition applies to each $D^j\overline Z_N^{(h)}$, valued in
$L^2(\R^j)$, because Proposition~\ref{prop:finitechaosmalliavin} in the
same appendix converts the block $L^2$ estimate into the corresponding
derivative estimate.  The
Rosenblatt limit satisfies the identical bounds by applying Wiener isometry
to $K_t^{(\gamma_*)}-K_s^{(\gamma_*)}$.

Propositions~\ref{prop:active} and \ref{prop:residual} give convergence in
$L^p(\Omega;L^2(\R^j))$ at every grid-independent time, for
$0\le j\le k$.  If $t\in[m/N,(m+1)/N]$, Lemma~\ref{lem:block} with
$L=1$ gives
\[
\|\overline Z_N^{(h)}(t)-Z_N^{(h)}(t)\|_{\mathbb D^{k,p}}
\le C N^{-(1-\theta)},
\]
so the same fixed-time convergence holds for the interpolation.
Proposition~\ref{prop:holderupgrade} combines this
pointwise convergence on a countable dense set with
\eqref{eq:uniformholdermoment}; its interpolation inequality then yields
\[
\E\|D^j(\overline Z_N^{(h)}-Z)\|_{C^\eta([0,T];L^2(\R^j))}^p\longrightarrow0.
\]
The processes on both sides are H\"older of order $\eta'>\eta$ and hence
belong to the little space $c^\eta$.  Its norm is the restriction of the
$C^\eta$ norm, so the last display is precisely
\eqref{eq:functional}.  Notice that this componentwise statement does not
invoke, or identify with, a Banach-valued Malliavin space.
\end{proof}

\section{The universal Gaussian boundary at zero roughness}
\label{sec:zero-boundary}

The value $h=0$ is not obtained by substituting $h=0$ in
Theorem~\ref{thm:main}.  It is the sharp singular boundary at which the bare
multiple-integral kernel ceases to have finite energy.  We now identify the
renormalized boundary selected by two different regularizations; throughout
this section only \eqref{eq:admissible} is assumed.

\subsection{Logarithmic failure of the bare kernel}

Put
\begin{equation}
 \beta_0=-H_0=\frac d2-1,
 \qquad
 c_0=2B\left(\frac d2,1-d\right).
 \label{eq:boundary-constants}
\end{equation}
For $h>0$ we write explicitly
\begin{equation}
 X_t^{(h)}:=I_q\!\left(L_t^{(\beta_0+h)}\right),
 \qquad
 A_h:=A_{\boldsymbol\gamma,\beta_0+h}.
 \label{eq:analytic-family}
\end{equation}
At the boundary, where the normalizing constant is not defined, the bare
kernel means
\begin{equation}
 \widetilde L_{t,0}(\mathbf x)
 :=\int_\R\big[(t-u)_+^{\beta_0}-(-u)_+^{\beta_0}\big]
 g_{\boldsymbol\gamma}(u\one-\mathbf x)\dd u.
 \label{eq:bare-boundary-kernel}
\end{equation}
The exact energy formula \eqref{eq:Lambdadexplicit} gives the following
residue.

\begin{proposition}[Sharp endpoint residue]
\label{prop:boundary-residue}
As $h\downarrow0$,
\begin{equation}
 h\Lambda_d(\beta_0+h)\longrightarrow c_0,
 \qquad
 A_{\boldsymbol\gamma,\beta_0+h}^2
 \sim\frac{h}{\mathcal B_0c_0}.
 \label{eq:boundary-residue}
\end{equation}
Moreover, for every fixed $\eta>0$,
\begin{equation}
 \int_\varepsilon^\eta\!\int_\varepsilon^\eta
 x^{\beta_0}y^{\beta_0}|x-y|^{-d}\dd x\dd y
 =c_0\log(\varepsilon^{-1})+O_\eta(1).
 \label{eq:boundary-log}
\end{equation}
Consequently $\widetilde L_{t,0}\notin L^2(\R^q)$ for every $t>0$.
The condition $h>0$ is therefore necessary and sufficient for the
unregularized point evaluation used in \eqref{eq:process}; at $h=0$ the
divergence is logarithmic.
\end{proposition}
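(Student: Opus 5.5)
The plan has three parts: the residue comes from the explicit Fourier formula \eqref{eq:Lambdadexplicit}, the logarithm comes from a scaling reduction at the origin, and non-membership in $L^2$ comes from pairing the contraction identity with that logarithm.

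\emph{Residue.} Set $\beta=\beta_0+h$ in \eqref{eq:Lambdadexplicit}. Then $2+2\beta-d=2h$, so only the denominator factor $\Gamma(1+2h)\sin(\pi h)$ degenerates, and $\sin(\pi h)\sim\pi h$. The numerator tends to $2\Gamma(1-d)\sin(\pi d/2)\Gamma(d/2)^2$, hence
\[
h\Lambda_d(\beta_0+h)\to\frac{2\Gamma(1-d)\sin(\pi d/2)\Gamma(d/2)^2}{\pi}.
\]
I will identify this with $c_0=2\Gamma(d/2)\Gamma(1-d)/\Gamma(1-d/2)$ using the reflection formula $\Gamma(d/2)\Gamma(1-d/2)=\pi/\sin(\pi d/2)$. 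The asymptotics of $A^2$ then follow from \eqref{eq:Aexplicit}.

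\emph{Logarithm.} In \eqref{eq:boundary-log}, substitute $y=xs$. The integrand becomes homogeneous of degree $2\beta_0+1-d=-1$ in $x$:
\[
\int\!\!\int x^{-1}\,s^{\beta_0}|1-s|^{-d}\,\dd s\,\dd x .
\]
Use symmetry to restrict to $y<x$, i.e.\ $s\in(0,1)$, and double. The $x$-integral over $(\varepsilon,\eta)$ with $xs>\varepsilon$ gives $\log(\eta s/\varepsilon)$. The resulting $s$-integral is
\[
2\log(\eta/\varepsilon)\,B(\beta_0+1,1-d)+2\int_{\varepsilon/\eta}^1 s^{\beta_0}(1-s)^{-d}\log s\,\dd s .
\]
The second term is $O(1)$ because $\beta_0>-1$ makes $s^{\beta_0}\log s$ integrable at $0$. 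Since $\beta_0+1=d/2$, the coefficient of $\log(1/\varepsilon)$ is exactly $c_0$.

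\emph{Non-membership.} Use \eqref{eq:gfullcontractionmodel}, justified for nonnegative integrands by Tonelli:
\[
q!\|\widetilde L_{t,0}\|^2=\mathcal B_0\int\!\!\int\varphi(u)\varphi(v)|u-v|^{-d}\,\dd u\,\dd v ,
\]
with $\varphi=\varphi_{\beta_0,t}$. The main obstacle is that $\varphi$ changes sign, so divergence of the signed integral must be extracted carefully. I would argue by truncation: restrict $u,v$ to $\{|u-t|>\varepsilon,\ |u|>\varepsilon\}$, or equivalently regularize the kernel. Near each endpoint $0$ and $t$, the local piece $\varphi\approx$ (one-sided power $x^{\beta_0}$) produces $c_0\log(1/\varepsilon)$ by \eqref{eq:boundary-log}. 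Cross terms between the two endpoints, terms far from the endpoints, and tails are bounded uniformly in $\varepsilon$. For the tails, use \eqref{eq:rhotail} and the $|z|^{\beta-1}$ decay together with $d<1$, as in the proof of Lemma~\ref{lem:roughconvolution}. The remaining case is the mixed term in which an endpoint singularity meets a bounded smooth part; this gives $\int_\varepsilon x^{\beta_0}\,\dd x<\infty$ and is also uniformly bounded. The truncated energy therefore grows like $2c_0\mathcal B_0\log(1/\varepsilon)$. If $\widetilde L_{t,0}$ were in $L^2$, the truncated kernels, obtained by multiplying by indicators in $u$, would converge monotonically in the sense of positive-definite energies, which is a contradiction. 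Alternatively, the Fourier representation shows that the energy equals $\int|\widehat\varphi|^2|\xi|^{d-1}\,\dd\xi\propto\int(1-\cos t\xi)|\xi|^{-1}\,\dd\xi=\infty$; the logarithmic divergence at high frequency is the cleanest rigorous route, and I would use it as the main argument.

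Finally, necessity and sufficiency of $h>0$ follows by combining this divergence with Proposition~\ref{prop:processconstruction}, which gives finiteness for every $h>0$.
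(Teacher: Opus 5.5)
Your residue and logarithm steps are the paper's argument. You use the same specialization of \eqref{eq:Lambdadexplicit}, where only $\sin(\pi h)\sim\pi h$ degenerates, followed by the reflection formula. For the logarithm you use the same substitution $y=xs$ with $2\beta_0+1-d=-1$. Strictly, your coefficient of $\log(\eta/\varepsilon)$ is $\int_{\varepsilon/\eta}^1 s^{\beta_0}(1-s)^{-d}\dd s$ rather than the full beta integral, but the difference is $O((\varepsilon/\eta)^{d/2})$.

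You genuinely differ on non-membership. The paper notes that each endpoint $0$ and $t$ produces the logarithm \eqref{eq:boundary-log}, and reads off the trichotomy in $h$ from the radial endpoint integral $\int_\varepsilon^1 r^{2h-1}\dd r$. Your main argument is instead the global high-frequency divergence $\int(1-\cos t\xi)|\xi|^{-1}\dd\xi=\infty$.

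That route buys two things. It handles the sign change of $\varphi_{\beta_0,t}$ and all cross terms at once. It also covers $h<0$ for free, since the integrand becomes $(1-\cos t\xi)|\xi|^{-1-2h}$. You should say this explicitly: necessity of $h>0$ also requires excluding $-d/2<h<0$, and your appeal to Proposition~\ref{prop:processconstruction} only settles $h>0$.

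What it costs is a justification you did not give. The identity ``$q!\|\widetilde L_{t,0}\|_2^2=$ spectral integral'' cannot come from \eqref{eq:gfullcontractionmodel} and Fubini, because $\int\!\int\rho_{\beta_0}(u)\rho_{\beta_0}(v)|u-v|^{-d}\dd u\dd v=\infty$. One clean fix is to average along the diagonal. Take a symmetric Gaussian $\psi_\epsilon$ with $\widehat{\psi_\epsilon}(\xi)=e^{-\epsilon\xi^2/2}$. Then $\int\widetilde L_{t,0}(\mathbf x+s\one)\psi_\epsilon(s)\dd s$ is the kernel built from the bounded integrable filter $\varphi_{\beta_0,t}*\psi_\epsilon$. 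Its norm is at most $\|\widetilde L_{t,0}\|_2$ by Minkowski's inequality. Its energy is absolutely convergent and equals a positive constant times $\int|\widehat{\varphi_{\beta_0,t}}(\xi)|^2e^{-\epsilon\xi^2}|\xi|^{d-1}\dd\xi$, which increases to $+\infty$ as $\epsilon\downarrow0$.

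Your truncation alternative ends with a step that is not valid as stated. Truncating a signed filter does not act monotonically on the energy, so finiteness of $\|\widetilde L_{t,0}\|_2$ would not by itself bound the truncated energies. The rigorous form of the paper's local count, which resolves exactly the obstacle you identified, is a sign splitting.

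Since $\varphi_{\beta_0,t}>0$ on $(0,t)$, $\varphi_{\beta_0,t}<0$ on $(-\infty,0)$, and $g_{\boldsymbol\gamma}\ge0$, one has $\widetilde L_{t,0}=F^+-F^-$ with $F^\pm\ge0$. Tonelli together with \eqref{eq:gfullcontractionmodel} and \eqref{eq:boundary-log} at $t^-$ gives $\|F^+\|_2=\infty$. On the other hand, $\langle F^+,F^-\rangle<\infty$: the two supports meet only at $0$, where the factor $(t-u)^{\beta_0}$ is bounded, and the tail $|v|^{\beta_0-1-d}$ at $-\infty$ is integrable. Hence $\|\widetilde L_{t,0}\|_2^2\ge\|F^+\|_2^2-2\langle F^+,F^-\rangle=\infty$.
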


\begin{proof}
In \eqref{eq:Lambdadexplicit}, use
$\Gamma(1+2h)\to1$, $h/\sin(\pi h)\to1/\pi$, and
$\Gamma(\beta_0+h+1)\to\Gamma(d/2)$.  The reflection formula changes the
resulting constant into $2B(d/2,1-d)=c_0$.  This proves
\eqref{eq:boundary-residue}.

For \eqref{eq:boundary-log}, integrate over $\varepsilon<y<x<\eta$, put
$y=tx$, and use $2\beta_0-d+1=-1$.  The principal term is
\[
 2B(\beta_0+1,1-d)\int_\varepsilon^\eta\frac{\dd x}{x}.
\]
Replacing the lower limit $\varepsilon/x$ in the beta integral by zero
costs $O(1)$, because $\beta_0>-1$.  This proves
\eqref{eq:boundary-log}.  Each of the two common filter endpoints gives
this divergence, so the unregularized energy is infinite.  More generally,
at $\beta=\beta_0+h$ the radial endpoint integral is comparable to
\[
 \int_\varepsilon^1 r^{2h-1}\dd r.
\]
It is finite exactly for $h>0$, logarithmic for $h=0$, and for $h<0$
diverges like $(-2h)^{-1}\varepsilon^{2h}$.  This proves the asserted
necessity and sufficiency.
\end{proof}

\subsection{Two regularizations, one Gaussian selection}

On an auxiliary product space, let $(\zeta_a)_{a\ge0}$ be an independent
family of standard Gaussian variables, and define the cylindrical
Gaussian boundary field
\begin{equation}
 G_0=0,
 \qquad
 G_t=\frac{\zeta_t-\zeta_0}{\sqrt2},\quad t>0.
 \label{eq:boundary-field}
\end{equation}
Thus $\E[G_t^2]=1$ and $\E[G_tG_s]=1/2$ for distinct positive times.

\begin{proof}[Proof of the $h$-regularization assertions in
Theorem~\ref{thm:zero-main}]
By \eqref{eq:processcovariance}, the covariance converges to that of
\eqref{eq:boundary-field}.  After discarding the coefficient at time zero,
the limiting variance of a linear combination is
\[
 \frac12\sum_{t>0}a_t^2+\frac12\left(\sum_{t>0}a_t\right)^2,
\]
so it vanishes only when all remaining coefficients vanish.  Lemma~\ref{lem:boundary-fourcopy}, proved in
Appendix~\ref{app:boundary-diagrams}, shows that every proper contraction
of every variance-normalized finite linear combination is $O(\sqrt h)$ in
tensor norm.  The fixed-chaos Malliavin--Stein inequality and
Cram\'er--Wold therefore prove the first convergence in
\eqref{eq:zero-main-fdd}.  For the
marginal rate, apply the inequality first to the exactly normalized
variable $t^{-h}X_t^{(h)}$.  Since $|t^h-1|=O_t(h)$, the Wasserstein
triangle inequality then gives the first estimate in
\eqref{eq:zero-main-rates}.

For distinct $s,t>0$, $G_t-G_s\sim N(0,1)$ independently of $|t-s|$, so
$G$ has no stochastically continuous modification.  Finally, suppose that
a marginal converged in probability on the original isonormal space.
Fixed-chaos hypercontractivity makes its squared family uniformly
integrable, so the convergence would hold in $L^2$ and the limit would lie
in the closed $q$th chaos.  But a nonzero $q$th-chaos variable with
$q\ge2$ cannot have a Gaussian law: its contraction of order $q-1$ is
nonzero and the product formula makes its fourth cumulant strictly
positive.  This excludes strong convergence.
\end{proof}

There is an equivalent hard-cutoff selection of the same boundary.  For
$0<\varepsilon<e^{-1}$ define
\begin{align}
 \varphi_{\varepsilon,t}(u)
 &=(t-u)^{\beta_0}\1_{\{u\le t-\varepsilon\}}
   -(-u)^{\beta_0}\1_{\{u\le-\varepsilon\}},
 \label{eq:cutoff-filter}\\
 K_{t,\varepsilon}(\mathbf x)
 &=\int_\R\varphi_{\varepsilon,t}(u)
   g_{\boldsymbol\gamma}(u\one-\mathbf x)\dd u,
 \label{eq:cutoff-kernel}\\
 X_{\varepsilon,t}
 &=\frac{I_q(K_{t,\varepsilon})}
 {\sqrt{2\mathcal B_0c_0\log(1/\varepsilon)}}.
 \label{eq:cutoff-process}
\end{align}

\begin{proposition}[Hard-cutoff covariance and common residue]
\label{prop:cutoff-covariance}
For every fixed $t>0$,
\begin{equation}
 \Var I_q(K_{t,\varepsilon})
 =2\mathcal B_0c_0\log(1/\varepsilon)+O_t(1),
 \label{eq:cutoff-variance}
\end{equation}
whereas two distinct positive times share only one endpoint and hence have
leading covariance
$\mathcal B_0c_0\log(1/\varepsilon)$.  The $h$-regularization and cutoff parameters
are related, endpoint by endpoint, by
\begin{equation}
 \frac1{2h}\quad\longleftrightarrow\quad\log(1/\varepsilon).
 \label{eq:boundary-dictionary}
\end{equation}
\end{proposition}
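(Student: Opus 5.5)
The plan is to reduce the variance computation to the same spatial contraction used in Proposition~\ref{prop:processconstruction}. By \eqref{eq:gfullcontractionmodel},
\[
 \Var I_q(K_{t,\varepsilon})=q!\|K_{t,\varepsilon}\|_2^2
 =\mathcal B_0\int_{\R^2}\varphi_{\varepsilon,t}(u)\varphi_{\varepsilon,t}(v)|u-v|^{-d}\dd u\dd v .
\]
This identity is first justified for truncated kernels, as in Lemma~\ref{lem:diagramtruncation}. Since each cutoff filter is bounded, it is legitimate here.

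We expand $\varphi_{\varepsilon,t}(u)\varphi_{\varepsilon,t}(v)$ into four terms: two diagonal endpoint terms ($t$ with $t$, $0$ with $0$) and two cross terms. After translation, each diagonal term is
\[
 \int_\varepsilon^\infty\!\int_\varepsilon^\infty x^{\beta_0}y^{\beta_0}|x-y|^{-d}\dd x\dd y .
\]
This does not converge at infinity on its own, because $2\beta_0-d+2=0$. We therefore avoid treating the terms separately. Instead, we split the $u,v$ domain into the near-endpoint region $\{|u-s|,|v-s|<\eta\}$ for $s\in\{0,t\}$ and its complement.

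In each near-endpoint region only the singular part of one filter matters. Equation \eqref{eq:boundary-log} of Proposition~\ref{prop:boundary-residue} gives this contribution as $c_0\log(1/\varepsilon)+O_\eta(1)$. The two endpoints therefore produce $2c_0\log(1/\varepsilon)$.

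The main obstacle is showing that the complement contributes $O_t(1)$ uniformly in $\varepsilon$. On the complement, the cutoff indicators act only near the endpoints and are dominated by the uncut filter. The remaining integrand is the bare boundary energy with both endpoint neighbourhoods removed.

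\begin{itemize}
\item \emph{Local part.} Locally, the only singularity left is $|u-v|^{-d}$ with $d<1$, together with the cross-endpoint collisions at distance $t$. These are bounded.
\item \emph{Large scales.} For large $|u|,|v|$ we use the paired difference $\varphi$ rather than single filters. Exactly as in \eqref{eq:rhotail}, $|\varphi_{\beta_0,t}(u)|\le C|u|^{\beta_0-1}$ for $u<-2t$. The tail argument of Lemma~\ref{lem:roughconvolution} still works at $\beta=\beta_0$, because the integral of the tail–tail region is bounded by $R^{2\beta_0-d}$ with $2\beta_0-d=-2<0$.
\item \emph{Mixed terms.} Mixed near/far terms carry one factor $|u|^{\beta_0-1}$ and are integrable.
\end{itemize}
The restriction $a<2(1+\beta)$ fails exactly at $h=0$, but it only enters through the endpoint collisions, which the near region has already isolated. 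This proves \eqref{eq:cutoff-variance}.

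For two distinct positive times $s\ne t$, the kernels $K_{t,\varepsilon}$ and $K_{s,\varepsilon}$ share only the cutoff endpoint at $0$. Their endpoints at $t$ and $s$ are separated by $|t-s|>0$. Repeating the decomposition, the only logarithmic region is the common endpoint at $0$, which gives $c_0\log(1/\varepsilon)$. Regions pairing different endpoints are bounded by the local argument above. Multiplying by $\mathcal B_0$ gives the leading covariance $\mathcal B_0c_0\log(1/\varepsilon)$.

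For the dictionary \eqref{eq:boundary-dictionary}, compare the two regularizations endpoint by endpoint. By Proposition~\ref{prop:boundary-residue}, the $h$-regularized energy is $\mathcal B_0\Lambda_d(\beta_0+h)\sim\mathcal B_0c_0/h=2\cdot\mathcal B_0c_0\cdot(2h)^{-1}$. Each endpoint contributes $\mathcal B_0c_0/(2h)$, which is also visible from the radial form $\int_\varepsilon^1r^{2h-1}\dd r$. The cutoff gives $\mathcal B_0c_0\log(1/\varepsilon)$ per endpoint. Matching the residues identifies $(2h)^{-1}$ with $\log(1/\varepsilon)$. As a consequence, after normalization both families have covariance $1$ on the diagonal and $1/2$ off the diagonal in the limit.
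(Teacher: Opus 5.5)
Your proposal is correct and follows essentially the same route as the paper. You localize the covariance integral around the filter endpoints and extract $c_0\log(1/\varepsilon)$ at each common endpoint from \eqref{eq:boundary-log}, then show that separated endpoint pairs and the complement, including the left tail where the paired difference gains a power, contribute $O_t(1)$; the dictionary then follows by comparing with the residue in Proposition~\ref{prop:boundary-residue}. The paper merely packages the variance and the cross-covariance into the single signed formula \eqref{eq:cutoff-oriented-covariance}, which is a bookkeeping difference only.
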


\begin{proof}[Cutoff covariance and selection]
For $t>0$ put
\[
 E_t=\{0,t\},\qquad \sigma_t(0)=-1,\quad \sigma_t(t)=1,
 \qquad L_\varepsilon=\log(1/\varepsilon).
\]
Localize the double time integral defining the covariance in disjoint
fixed neighborhoods of the endpoints in $E_s\cup E_t$.  At a common
endpoint, translation and \eqref{eq:boundary-log} give its sign product
times $c_0L_\varepsilon$.  Distinct endpoint pairs are uniformly separated
and contribute $O_{s,t}(1)$.  On the complement, including the common
left tail, the difference of the two monomials gains one power and is
integrable.  Hence
\begin{equation}
 q!\langle K_{s,\varepsilon},K_{t,\varepsilon}\rangle
 =\mathcal B_0c_0L_\varepsilon
 \sum_{a\in E_s\cap E_t}\sigma_s(a)\sigma_t(a)+O_{s,t}(1).
 \label{eq:cutoff-oriented-covariance}
\end{equation}
This proves \eqref{eq:cutoff-variance} and the stated cross-covariances.

The cutoff part of Lemma~\ref{lem:boundary-fourcopy} bounds every proper
contraction after the displayed approximate normalization by
$O(L_\varepsilon^{-1/2})$.  For the marginal rate, first divide by the
exact standard deviation $\sigma_{\varepsilon,t}$.  The
Malliavin--Stein bound applies to that unit-variance variable, while
\eqref{eq:cutoff-oriented-covariance} gives
\[
 \frac{\sigma_{\varepsilon,t}}
 {\sqrt{2\mathcal B_0c_0L_\varepsilon}}
 =1+O_t(L_\varepsilon^{-1}).
\]
The Wasserstein triangle inequality proves the second estimate in
\eqref{eq:zero-main-rates}; Cram\'er--Wold, with the same
nondegenerate-variance observation as in the $h$-regularization proof, proves the
second convergence in \eqref{eq:zero-main-fdd}.
\end{proof}

The constants in this section are for fixed $q$, fixed
$\boldsymbol\gamma$, and fixed observation times.  Neither boundary
selection is uniform over a growing time grid; the quadratic-variation
result below deliberately takes the boundary limit with $N$ fixed before
sending $N$ to infinity.

\subsection{Brownian dynamics of the boundary energy}

The cylindrical field $G$ has no pathwise quadratic variation.  We instead
study the well-defined boundary-first selection.  Define
\begin{equation}
 Q_{N,h}(t)=\frac1{\sqrt N}\sum_{i<\lfloor Nt\rfloor}
 \left[\bigl(N^h(X_{(i+1)/N}^{(h)}-X_{i/N}^{(h)})\bigr)^2-1\right],
 \label{eq:boundary-qv}
\end{equation}
and define the cutoff statistic by
\begin{equation}
 Q_{N,\varepsilon}(t)=\frac1{\sqrt N}
 \sum_{i<\lfloor Nt\rfloor}\left[
 (X_{\varepsilon,(i+1)/N}-X_{\varepsilon,i/N})^2
 -\E(X_{\varepsilon,(i+1)/N}-X_{\varepsilon,i/N})^2
 \right].
 \label{eq:boundary-cutoff-qv}
\end{equation}
\begin{proof}[Proof of the boundary-energy assertion in
Theorem~\ref{thm:zero-main}]
For fixed $N$, \eqref{eq:zero-main-fdd} gives joint convergence on the
finite observation grid.  The polynomial map from grid values to partial sums of centered
squared increments is continuous; in the cutoff case
\eqref{eq:cutoff-oriented-covariance} also makes the deterministic
centering converge.  Consequently both inner limits converge in
$D([0,T])$ to the partial-sum process associated with
\[
 D_i=\frac{\zeta_{i+1}-\zeta_i}{\sqrt2}.
\]
It is Gaussian and one-dependent, with covariance $-1/2$ at adjacent
indices.  Therefore, for $Y_i=D_i^2-1$,
\[
 \Var(Y_i)=2,
 \qquad
 \Cov(Y_i,Y_{i+1})=\frac12,
\]
and the long-run variance is $3$.  The functional central limit theorem
for a one-dependent sequence gives $N^{-1/2}\sum_{i<N\cdot}Y_i
\Rightarrow\sqrt3B$.  We verify this directly.  An exact expansion gives
\begin{equation}
 \sum_{i=0}^{n-1}Y_i
 =\sum_{i=1}^{n-1}\xi_i+R_n,
 \qquad
 \xi_i=(\zeta_i^2-1)-\zeta_{i-1}\zeta_i,
 \label{eq:boundary-martingale-decomposition}
\end{equation}
where
\[
 R_n=\frac12(\zeta_0^2-1)+\frac12(\zeta_n^2-1)
 -\zeta_{n-1}\zeta_n.
\]
For $\mathcal F_i=\sigma(\zeta_0,\ldots,\zeta_i)$,
\[
 \E[\xi_i\mid\mathcal F_{i-1}]=0,
 \qquad
 \E[\xi_i^2\mid\mathcal F_{i-1}]=2+\zeta_{i-1}^2.
\]
The predictable quadratic variation divided by $N$ converges uniformly on
compact time intervals to $3t$, and the conditional Lindeberg condition
follows from $\sup_i\E\xi_i^4<\infty$.  The martingale functional central
limit theorem yields the required $\sqrt3B$ limit.  Moreover
$\sup_n\E|R_n|^4<\infty$, so a union bound gives
$N^{-1/2}\max_{n\le NT}|R_n|\to0$ in probability.  The continuous mapping
theorem at fixed $N$, followed by this functional limit, proves the
iterated assertion.
\end{proof}

\section{The simultaneous Gaussian--Rosenblatt transition}
\label{sec:crossover}

We now let the roughness and the mesh vary simultaneously.  The complete
transition is stated first in the isotropic model
\begin{equation}
 \gamma_1=\cdots=\gamma_q=\gamma,
 \qquad
 \theta=-2\gamma-1\in(0,1/q),
 \qquad
 d=q\theta.
 \label{eq:crossover-isotropic}
\end{equation}
This is the maximal range in which the second-chaos projection has no
anisotropic residual sector.

The proof is divided according to the orthogonal decomposition into an
active second-chaos sector and a higher-chaos bulk.  Appendix
\ref{app:root-regular} establishes the uniform diagram estimate shared by
both sectors; Appendix~\ref{app:uniform-active} identifies the active
Rosenblatt contribution; and Appendix~\ref{app:bulk} proves the Gaussian
bulk limit, its asymptotic independence from the active sector, and the
block bounds required for tightness.

For $h_N\downarrow0$, put
\begin{align}
 Y_{i,N}
 &=N^{h_N}\bigl(X_{(i+1)/N}^{(h_N)}-X_{i/N}^{(h_N)}\bigr),
 \label{eq:triangular-increments}\\
 \mathcal Q_N(t)
 &=\frac1{\sqrt N}\sum_{i<\lfloor Nt\rfloor}(Y_{i,N}^2-1)
 =N^{2h_N-1/2}V_N^{(h_N)}(t),
 \label{eq:central-qv}\\
 \lambda_N&=h_NN^{1/2-\theta}.
\end{align}
Let
\begin{equation}
 \overline C_{q,\theta}
 =\frac q{B((1-\theta)/2,\theta)}
 \frac{\Lambda_{(q-1)\theta}(q\theta/2-1)}
 {2B(q\theta/2,1-q\theta)}.
 \label{eq:crossover-constant}
\end{equation}
Proposition~\ref{prop:boundary-residue} gives
\begin{equation}
 C_{\boldsymbol\gamma,h}
 =h\overline C_{q,\theta}+O(h^2).
 \label{eq:active-linear-collapse}
\end{equation}

\begin{proof}[Proof of Theorem~\ref{thm:crossover}]
The product formula gives the exact orthogonal decomposition
\begin{equation}
 \mathcal Q_N=\mathcal A_N+\mathcal B_N,
 \label{eq:active-bulk}
\end{equation}
where $\mathcal A_N$ is the complete second-chaos contraction and
$\mathcal B_N$ is the sum of the chaoses $4,6,\ldots,2q$.
Proposition~\ref{prop:uniform-active} gives, locally uniformly in time,
\begin{equation}
 \left\|\mathcal A_N-\lambda_N\overline C_{q,\theta}
 I_2(K^{(\gamma)})\right\|_2
 \le C_T\bigl(\sqrt{h_N}+\lambda_Nh_N
                    +\lambda_NN^{-\kappa_0}\bigr)
 \label{eq:active-triangular-bound}
\end{equation}
for some $\kappa_0>0$.

Propositions~\ref{prop:bulk-covariance} and
\ref{prop:bulk-fourcopy} show that every finite linear combination of
$\mathcal B_N$ has variance converging to the Brownian covariance $3$ and
all its proper self-contractions tend to zero.  Lemma
\ref{lem:contraction-CS-crossover} then makes every active--bulk contraction
tend to zero as well.  Proposition~\ref{prop:bulk-clt-crossover}, by the
multivariate Malliavin--Stein criterion, yields
\begin{equation}
 (\mathcal A_N,\mathcal B_N)
 \Longrightarrow
 \bigl(\lambda\overline C_{q,\theta}I_2(K^{(\gamma)}),\sqrt3B\bigr)
 \label{eq:joint-crossover}
\end{equation}
with independent coordinates whenever $\lambda_N\to\lambda<\infty$.
The block estimates in Proposition~\ref{prop:crossover-blocks} give joint
tightness and prove \eqref{eq:crossover-brownian}--
\eqref{eq:crossover-mixed}.

If $\lambda_N\to\infty$, \eqref{eq:active-triangular-bound} divided by
$\lambda_N$ gives the Rosenblatt finite-dimensional distributions, and
the active part of Proposition~\ref{prop:crossover-blocks} supplies
tightness.  The relative bulk block bound gives
$\lambda_N^{-1}\mathcal B_N\to0$.  This proves
\eqref{eq:crossover-rosenblatt}.
\end{proof}

The mixed regime differs from the classical $H=3/4$ threshold for the
quadratic variation of fractional Brownian motion.  There the critical
logarithmic normalization remains Gaussian; here a collapsing second-chaos
sector and a Gaussian bulk survive on the same scale, producing an
independent Brownian--Rosenblatt sum.

The phase parameter \eqref{eq:phase-parameter}, rather than
$h_N\log N$, is forced by the orthogonal second-chaos projection.  In
particular, if $\lambda_N\to\infty$, the central family
$\mathcal Q_N(t)$ is not tight at any fixed $t>0$; finite-chaos
hypercontractivity and Paley--Zygmund upgrade the divergent $L^2$ norm of
its active projection to non-tightness.

\section{Sharp normalization and paired-filter thresholds}
\label{sec:sharpness}

Several parts of the theory are optimal in a literal sense.

\begin{proposition}[Unique polynomial normalization]
\label{prop:normalization-sharp}
Fix the parameters of Theorem~\ref{thm:main} and $t>0$.  Put
$r_*=2h+\theta-1$.  Then, for every $r\in\R$,
\begin{equation}
 \begin{aligned}
 r<r_*&\Longrightarrow N^rV_N^{(h)}(t)\to0\quad\text{in }L^2,\\
 r=r_*&\Longrightarrow N^rV_N^{(h)}(t)\to Z(t)\ne0,\\
 r>r_*&\Longrightarrow \|N^rV_N^{(h)}(t)\|_2\to\infty.
 \end{aligned}
 \label{eq:normalization-trichotomy}
\end{equation}
Thus $2h+\theta-1$ is the unique polynomial normalization exponent with a
nondegenerate limit; the scale itself is unique up to a fixed nonzero
multiplicative constant.
\end{proposition}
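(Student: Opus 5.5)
The plan is to write $N^rV_N^{(h)}(t)=N^{r-r_*}Z_N^{(h)}(t)$ and reduce all three regimes to Theorem~\ref{thm:main} together with the non-degeneracy $Z(t)\ne0$. Theorem~\ref{thm:main} gives $Z_N^{(h)}(t)\to Z(t)$ in $L^2$, so $\|Z_N^{(h)}(t)\|_2\to\|Z(t)\|_2$. The middle line of \eqref{eq:normalization-trichotomy} is then immediate once we know $Z(t)\neq0$. If $r<r_*$, the factor $N^{r-r_*}\to0$ multiplies an $L^2$-bounded family, giving convergence to zero in $L^2$. If $r>r_*$, then $\|N^rV_N^{(h)}(t)\|_2=N^{r-r_*}\|Z_N^{(h)}(t)\|_2$, and since the second factor tends to $\|Z(t)\|_2>0$, it is bounded below for large $N$ and the product diverges.

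So the whole statement rests on showing $\|Z(t)\|_2>0$ for $t>0$. By Wiener isometry,
\[
\|Z(t)\|_2^2=2\,C_{\boldsymbol\gamma,h}^2\|K_t^{(\gamma_*)}\|_2^2 .
\]
The constant $C_{\boldsymbol\gamma,h}$ is strictly positive by \eqref{eq:constant}. This uses that $\mu\ge1$, that $\mathcal B_*$ and $\mathcal B_0$ are permanents of matrices with strictly positive beta-function entries (since $\gamma_j+1>0$ and $-\gamma_j-\gamma_k-1>0$), and that $\Lambda_c(\beta),\Lambda_d(\beta)>0$, which was already noted after \eqref{eq:cd} via the explicit formula \eqref{eq:energyexplicit}.

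For the kernel, apply \eqref{eq:betaidentity} in both free variables:
\[
\|K_t^{(\gamma_*)}\|_2^2=b_{\gamma_*}^2\int_0^t\!\int_0^t|s-r|^{-2\theta}\dd s\dd r
=b_{\gamma_*}^2\frac{2t^{2-2\theta}}{(1-2\theta)(2-2\theta)}>0 .
\]
This is the same computation that appears in the proof of Proposition~\ref{prop:active}. Equivalently, $\|F_t\|_2^2>0$ there.

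The final uniqueness sentence follows directly. Any polynomial scale $N^r$ with a nondegenerate limit must have $r=r_*$, because the other two cases give either $0$ or non-tightness in $L^2$. With $r=r_*$ fixed, any other admissible scale $a_N$ with a nonzero limit must satisfy $a_NN^{-r_*}\to$ a nonzero constant, by uniqueness of $L^2$ limits up to scalars.

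I expect no serious obstacle here. The only point needing care is the case $r>r_*$, where $L^2$ divergence requires a lower bound on $\|Z_N^{(h)}(t)\|_2$ for large $N$. That lower bound comes from norm convergence, not merely weak convergence, and strong $L^2$ convergence is exactly what Theorem~\ref{thm:main} provides.
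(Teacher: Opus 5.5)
Your proposal is correct and is essentially the paper's proof: the identity $N^rV_N^{(h)}(t)=N^{r-r_*}Z_N^{(h)}(t)$, combined with the $L^2$ convergence of Theorem~\ref{thm:main} to a nonzero limit. Your explicit check of $\|Z(t)\|_2>0$ spells out the nondegeneracy that the paper takes as read. It uses Wiener isometry, the positivity of $C_{\boldsymbol\gamma,h}$, and the beta-identity evaluation of $\|K_t^{(\gamma_*)}\|_2^2$.
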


\begin{proof}
This is the identity
$N^rV_N=N^{r-r_*}(N^{r_*}V_N)$ together with the nonzero $L^2$ limit in
Theorem~\ref{thm:main}.
\end{proof}

\begin{proposition}[Sharp paired-filter and path thresholds]
\label{prop:thresholds-sharp}
The paired convolution threshold
\begin{equation}
 a<\min\{1,2(1+\beta)\}
 \label{eq:filter-threshold-sharp}
\end{equation}
is necessary and sufficient.  Equality at either active boundary gives at
least a logarithmic divergence; at the corner $\beta=-1/2$, $a=1$, the
truncated divergence is of order $(\log(1/\varepsilon))^2$.  Moreover the path-space conclusions of
Theorem~\ref{thm:main} are sharp: the limit does not belong to
$c^{1-\theta}$.
\end{proposition}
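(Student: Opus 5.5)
The proof has three parts: sufficiency of \eqref{eq:filter-threshold-sharp}, necessity together with the boundary divergence rates, and the path-space sharpness.

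\emph{Sufficiency.} This is exactly Lemma~\ref{lem:roughconvolution}.

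\emph{Necessity and boundary rates.} I work with the identity $\mathcal C_a(k)=\int q_\beta(z)|k+z|^{-a}\dd z$, take $k=0$, and use the two-sided local asymptotics \eqref{eq:qlocal-twosided} near $z=0$. Since $q_\beta$ is bounded below by a positive constant near $z=0$ in every case, the divergence of $\int_{|z|<\eta}|z|^{-a}\dd z$ for $a\ge1$ gives necessity of $a<1$. At $a=1$ the truncated integral $\int_{\varepsilon<|z|<\eta}$ grows like $\log(1/\varepsilon)$. For $\beta<-1/2$ the lower bound $q_\beta(z)\gtrsim|z|^{2\beta+1}$ gives the integrand $|z|^{2\beta+1-a}$. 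This is non-integrable exactly when $a\ge2(1+\beta)$, and at equality the truncated integral is $\asymp\log(1/\varepsilon)$. At the corner $\beta=-1/2$, $a=1$, the integrand is $(1+|\log|z||)|z|^{-1}$, and integrating over $\varepsilon<|z|<\eta$ gives $\tfrac12(\log(1/\varepsilon))^2$ to leading order. The only care needed is that the lower bounds in \eqref{eq:qlocal-twosided} come from positive integrands, so no cancellation can occur. I truncate by $|z|>\varepsilon$, or equivalently by a cutoff on the filters; the two truncations are comparable.

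\emph{Path sharpness.} I use self-similarity. The process $Z(t)=C\,I_2(K_t^{(\gamma_*)})$ is $(1-\theta)$-self-similar with stationary increments and lies in the second chaos. By hypercontractivity, $\E|Z(t+r)-Z(t)|^2=c\,r^{2-2\theta}$ with $c>0$. If $Z$ belonged to $c^{1-\theta}$ with positive probability, then $|Z(t+r)-Z(t)|/r^{1-\theta}\to0$ as $r\downarrow0$ uniformly in $t$ on that event. To contradict this, I take increments over $r_n=2^{-n}$ at the fixed time $0$. By self-similarity the law of $r_n^{-(1-\theta)}(Z(r_n)-Z(0))$ equals the law of $Z(1)$, which is a fixed nondegenerate variable. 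Hence $\P(|Z(r_n)|>\epsilon r_n^{1-\theta})$ is a fixed positive constant, by Paley--Zygmund in finite chaos. Fatou's lemma then gives
\[
\P\bigl(\limsup_n r_n^{-(1-\theta)}|Z(r_n)|>\epsilon\bigr)>0,
\]
which contradicts membership in $c^{1-\theta}$ on a set of positive probability. To upgrade this to almost sure non-membership, I would use a 0--1 law for the tail event. The cleanest route is the ergodicity of the scaling flow $t\mapsto e^{-(1-\theta)s}Z(e^st)$ for a chaos process, which is mixing by decay of the kernel correlations. If that is too heavy, the weaker statement ``does not belong almost surely'' already suffices for sharpness.

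The main obstacle is this last 0--1/ergodicity step. The filter parts are routine once the lower bounds in \eqref{eq:qlocal-twosided} are in hand.
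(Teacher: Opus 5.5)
Your proposal is correct and follows the paper's proof essentially step for step. Necessity and the boundary and corner rates are read off from the local integrand $q_\beta(r)|r|^{-a}$ via \eqref{eq:qlocal-twosided}, and path sharpness comes from $t^{-(1-\theta)}Z(t)\overset{\rm law}{=}Z(1)$ being nondegenerate while little-H\"older membership would force it to zero.

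The ergodicity step you flag as the main obstacle is unnecessary, even for almost-sure non-membership. The events $\{|Z(r_n)|>\epsilon r_n^{1-\theta}\ \text{i.o.}\}$ have probability at least $\P(|Z(1)|>\epsilon)$, and as $\epsilon\downarrow0$ they increase to a subset of $\{Z\notin c^{1-\theta}\}$. Since a nonzero second-chaos variable has no atom at $0$, this lower bound tends to $1$, so $\P(Z\notin c^{1-\theta})=1$ directly.
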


\begin{proof}
The obstruction at $a=1$ is the one-dimensional Riesz singularity.  If
$\beta<-1/2$, \eqref{eq:qlocal-twosided} shows that the local integrand is
$|r|^{2\beta+1-a}$ and is integrable exactly under the second inequality
in \eqref{eq:filter-threshold-sharp}.  The logarithmic cases follow at
equality.  At $\beta=-1/2$, $a=1$, the integrand is comparable to
$|r|^{-1}\log(1/|r|)$, which gives the stated squared logarithm.

Let $H_R=1-\theta$.  Self-similarity gives
$t^{-H_R}Z(t)\overset{\rm law}=Z(1)$ with a nondegenerate right-hand side.
Membership in the little space $c^{H_R}$ would force the left-hand side to
converge to zero almost surely as $t\downarrow0$, a contradiction.
\end{proof}

The charged deficits
\eqref{eq:fixed-h-free-deficit}--\eqref{eq:fixed-h-pinned-deficit}
show that no additional anisotropic inequality is hidden in the closure:
every proper local face is strictly integrable for $h>0$.  When $h=0$, a
fully endpoint-pinned connected component can have zero deficit, and the
resulting logarithm is exactly the divergence identified in
Proposition~\ref{prop:boundary-residue}.  Thus the range $h>0$ in
Theorem~\ref{thm:main} is optimal for the unrenormalized process.

\section{Scope and conclusion}

The fixed-parameter theorem is anisotropic.  The simultaneous theorem is
currently isotropic, because isotropy removes every residual second-chaos
sector before the four-copy Malliavin analysis.  Extending the mixed
transition to arbitrary anisotropy requires a uniform treatment of those
additional second-chaos sectors; it is not a change of normalization.

The endpoint $h=0$ remains outside the bare process, but
Section~\ref{sec:zero-boundary} constructs and identifies its two
renormalized boundary selections.  Their common limit is cylindrical and
has no stochastically continuous, continuous, or c\`adl\`ag realization.
Theorem~\ref{thm:crossover} concerns the
different, simultaneous order of limits and quantifies the resulting
noncommutation.

In summary, the paper proves a full noncentral limit theorem for every
admissible anisotropy in the symmetrized monomial class and every fixed
$0<h\le1/2$, a universal
renormalized boundary field, and the complete isotropic
Gaussian--Rosenblatt phase diagram.  At fixed positive $h$, the
normalization sees the observed roughness while the self-similarity of the
limit is set by the hidden kernel.  At zero roughness, the path topology
disappears but the renormalized quadratic energy retains the Brownian
dynamics $\sqrt3B$.  Finally, the linear collapse
\[
 C_{\boldsymbol\gamma,h}
 =h\overline C_{q,\theta}+O(h^2)
\]
forces the crossover parameter $h_NN^{1/2-\theta}$ and joins the fixed,
boundary-first, and simultaneous limits into one phase diagram.

\clearpage
\appendix

\section*{Guide to the appendices}
This guide complements the conceptual strategy in
Section~\ref{sec:proof-strategy}.  The appendices contain the deferred parts
of the three main proofs; their logical outputs are as follows.
\begin{itemize}
 \item Appendix~\ref{app:riemann} proves weak convergence and supplies the
 exact norm and mixed scalar-product identities for the active kernel.
 Combined with Lemmas~\ref{lem:pairedrate} and \ref{lem:toeplitz}, these give
 Proposition~\ref{prop:active} and hence the leading term of
 Theorem~\ref{thm:main}.
 \item Appendix~\ref{app:stochasticdetails} converts the fixed-time
 finite-chaos estimates into Malliavin--Sobolev and little-H\"older
 convergence.  It closes the two strengthened convergence assertions of
 Theorem~\ref{thm:main}.
 \item Appendix~\ref{app:boundary-diagrams} isolates the unique logarithmic
 boundary root and shows that every proper contraction vanishes after
 normalization.  This is the Gaussianity and rate input for
 Theorem~\ref{thm:zero-main}.
 \item Appendices~\ref{app:root-regular}--\ref{app:bulk} are the three stages
 of Theorem~\ref{thm:crossover}: uniform endpoint-root control, convergence
 of the active second chaos, then the Gaussian bulk, asymptotic independence,
 and tightness.
\end{itemize}
\section{Positive-roughness active sector: displaced Riemann sums and scalar products}
\label{app:riemann}

This appendix closes the deterministic part of
Proposition~\ref{prop:active}.  Proposition~\ref{prop:displacedriemann}
proves the weak convergence of the active kernel while making explicit the
order of the limits in the mesh $N$ and in the tail radius.
Proposition~\ref{prop:activescalarproducts} then supplies the exact norm and
mixed scalar-product identities.  Combined with the Toeplitz estimates in
the main text, these two outputs yield strong $L^2$ convergence of the
active kernel, and therefore the Rosenblatt sector in
Theorem~\ref{thm:main}.

\begin{lemma}[Fractional test transform]
\label{lem:testtransform}
Let $-1<\gamma<0$, $\nu=\gamma+1$, and
\[
\mathcal A_f(s)=\int_\R(s-x)_+^\gamma f(x)\dd x,
\qquad f\in C_c^1(\R).
\]
Then
\begin{equation}
\|\mathcal A_f\|_\infty\le C_{\gamma,f},
\qquad
|\mathcal A_f(s)-\mathcal A_f(t)|
\le C_{\gamma,f}|s-t|^\nu.
\label{eq:testtransformholder}
\end{equation}
\end{lemma}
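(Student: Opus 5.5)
We prove both bounds in \eqref{eq:testtransformholder} directly from the definition, using only that $f$ is compactly supported and $C^1$, together with the integrability of $r^\gamma$ near $0$ for $-1<\gamma<0$. Fix $M$ with $\operatorname{supp}f\subset[-M,M]$.

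\emph{Sup bound.} Write $\mathcal A_f(s)=\int_0^\infty r^\gamma f(s-r)\dd r$. Split the integral at $r=1$:
\[
|\mathcal A_f(s)|\le\|f\|_\infty\int_0^1 r^\gamma\dd r+\int_1^\infty r^\gamma|f(s-r)|\dd r
\le \frac{\|f\|_\infty}{\nu}+\|f\|_1 .
\]
The second piece uses only $r^\gamma\le1$ for $r\ge1$. This gives $\|\mathcal A_f\|_\infty\le C_{\gamma,f}$.

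\emph{H\"older bound.} Take $s<t$ and put $\delta=t-s$. We may assume $\delta\le1$, since otherwise the sup bound gives $|\mathcal A_f(s)-\mathcal A_f(t)|\le 2C\le 2C\delta^\nu$. Use the representation in $x$:
\[
\mathcal A_f(t)-\mathcal A_f(s)=\int_\R\big[(t-x)_+^\gamma-(s-x)_+^\gamma\big]f(x)\dd x.
\]
Split the $x$-line into two regions.

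On the near region $\{x>s-2\delta\}$, bound each term separately by $\|f\|_\infty$ times the integral of the kernel over a window of length at most $3\delta$ next to its singularity. This gives at most $2\|f\|_\infty\int_0^{3\delta}r^\gamma\dd r=C\delta^\nu$.

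On the far region $\{x\le s-2\delta\}$, let $y=s-x\ge2\delta$. The mean value theorem gives
\[
|(y+\delta)^\gamma-y^\gamma|\le|\gamma|\,\delta\, y^{\gamma-1}.
\]
Integrating against $\|f\|_\infty$ yields
\[
\|f\|_\infty|\gamma|\,\delta\int_{2\delta}^\infty y^{\gamma-1}\dd y=C\delta\cdot\delta^{\gamma}=C\delta^\nu .
\]
This step converges because $\gamma-1<-1$. Alternatively, on $|y|\ge1$ one can use $\|f\|_1$ to avoid worrying about the support.

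The computation is routine, and no earlier result of the paper is needed. The only delicate point is choosing the split at scale $\delta$, so that the singular window and the mean value region balance exactly at $\delta^{\nu}$ with no logarithm. This holds since $\gamma\ne0$ strictly, so $\int_{2\delta}^\infty y^{\gamma-1}\dd y$ is a pure power. The $C^1$ hypothesis is not even needed; boundedness and integrability of $f$ suffice.
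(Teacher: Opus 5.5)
Your proof is correct and follows essentially the paper's route: both arguments pull out $\|f\|_\infty$ and show $\int_\R|(y+\delta)_+^\gamma-y_+^\gamma|\dd y\le C\delta^\nu$. The paper does this by the scaling $y=\delta u$ and the finiteness of the unit-scale integral (integrable endpoint singularities plus an $O(u^{\gamma-1})$ tail); you do the same near/far split directly at scale $\delta$, using the mean value theorem for the tail. Your sup bound via splitting at $r=1$ and using $\|f\|_1$ is an equally valid variant of the paper's support-neighbourhood argument.
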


\begin{proof}
The boundedness on a fixed neighbourhood of $\operatorname{supp}f$ follows
from $\int_0^Lr^\gamma\dd r=L^{\gamma+1}/(\gamma+1)$.  To the right of
that neighbourhood, $(s-x)^\gamma\le C(1+s)^\gamma$ on the support of
$f$, while to the left the transform vanishes.  This proves the first
bound.

Let $r=|s-t|$ and assume $t=s+r$.  Extending the $x$-integral only
increases its absolute value, so
\begin{align*}
|\mathcal A_f(t)-\mathcal A_f(s)|
&\le\|f\|_\infty
\int_\R|(y+r)_+^\gamma-y_+^\gamma|\dd y\\
&=\|f\|_\infty r^{\gamma+1}
\int_\R|(u+1)_+^\gamma-u_+^\gamma|\dd u.
\end{align*}
The last integral is finite: the two endpoint singularities are integrable
because $\gamma>-1$, and at infinity the integrand is
$O(u^{\gamma-1})$.  This proves \eqref{eq:testtransformholder}.
\end{proof}

\begin{proposition}[Displaced and truncated Riemann sum]
\label{prop:displacedriemann}
Let $\gamma=\gamma_*$, $\nu=\gamma+1$, and let
$f,g\in C_c^1(\R)$.  For $R\ge2$ and $0\le t\le T$ put
\begin{align*}
\mathcal R_{N,R}(t)
={}&\int_{[-R,1]^2}\varphi_\beta(a)\varphi_\beta(b)|a-b|^{-c}\\
&\quad\times\frac1N\sum_{i< Nt}
\mathcal A_f\!\left(\frac{i+a}{N}\right)
\mathcal A_g\!\left(\frac{i+b}{N}\right)\dd a\dd b.
\end{align*}
Then
\begin{align}
&\sup_{0\le t\le T}\left|
\mathcal R_{N,R}(t)
-\Lambda_{c,R}(\beta)\int_0^t
\mathcal A_f(s)\mathcal A_g(s)\dd s\right|
\le C_{R,T,f,g}N^{-\nu},
\label{eq:compactdisplacedriemann}\\
&\sup_{N\ge1}\sup_{0\le t\le T}
\left|\mathcal R_N(t)-\mathcal R_{N,R}(t)\right|
\le C_{T,f,g}R^\beta,
\label{eq:displacedtail}\\
&|\Lambda_c(\beta)-\Lambda_{c,R}(\beta)|\le CR^\beta,
\label{eq:energymassTail}
\end{align}
where $\Lambda_{c,R}$ is the energy restricted to $[-R,1]^2$ and
$\mathcal R_N$ denotes the same integral over $\R^2$.  Consequently
\begin{equation}
\lim_{R\to\infty}\limsup_{N\to\infty}
\sup_{t\le T}\left|\mathcal R_N(t)
-\Lambda_c(\beta)\int_0^t
\mathcal A_f(s)\mathcal A_g(s)\dd s\right|=0.
\label{eq:orderedriemannlimits}
\end{equation}
\end{proposition}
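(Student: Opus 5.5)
We prove the three estimates separately and then combine them by letting $N\to\infty$ first and $R\to\infty$ second.

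\textbf{Compact estimate \eqref{eq:compactdisplacedriemann}.} Fix $R$. For $(a,b)\in[-R,1]^2$ and $i<Nt$, compare each summand with the cell average $\int_{i/N}^{(i+1)/N}\mathcal A_f(s)\mathcal A_g(s)\dd s$ multiplied by $N$. The displacements satisfy $|(i+a)/N-s|\le(R+2)/N$ on that cell, and likewise for $b$. Lemma~\ref{lem:testtransform} (boundedness and $\nu$-H\"older continuity) then bounds the error of each summand by $C_{f,g}(R+2)^\nu N^{-\nu}$. Summing and multiplying by $1/N$ gives
\[
\Bigl|\frac1N\sum_{i<Nt}\mathcal A_f\!\left(\tfrac{i+a}{N}\right)\mathcal A_g\!\left(\tfrac{i+b}{N}\right)-\int_0^{\lfloor Nt\rfloor/N}\mathcal A_f\mathcal A_g\dd s\Bigr|\le C_{R,T,f,g}N^{-\nu},
\]
uniformly in $(a,b)$ and $t\le T$. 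Replacing $\lfloor Nt\rfloor/N$ by $t$ costs $O(N^{-1})$ by boundedness. We then integrate against $\varphi_\beta(a)\varphi_\beta(b)|a-b|^{-c}$ over $[-R,1]^2$. The absolute weight $\rho_\beta(a)\rho_\beta(b)|a-b|^{-c}$ is integrable by Lemma~\ref{lem:roughconvolution} with $k=0$ and $a=c<\min\{1,2(1+\beta)\}$; this holds because $c<d<2(1+\beta)$, as recorded after \eqref{eq:cd}. The same weight also gives finiteness of $\Lambda_{c,R}(\beta)$, which is the constant produced by this integration.

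\textbf{Tail estimates \eqref{eq:displacedtail} and \eqref{eq:energymassTail}.} The difference $\mathcal R_N-\mathcal R_{N,R}$ is supported where $a<-R$ or $b<-R$; the region $b>1$ is excluded automatically because the filter vanishes there. The inner Riemann sum is bounded by $T\|\mathcal A_f\|_\infty\|\mathcal A_g\|_\infty$ uniformly in $N$, $a$, $b$ and $t\le T$. It therefore suffices to bound
\[
\int_{\R^2}\1_{\{a<-R\}}\rho_\beta(a)\rho_\beta(b)|a-b|^{-c}\dd a\dd b,
\]
together with the symmetric term with $b<-R$. This is exactly \eqref{eq:uniformtail} with $k=0$, which gives $CR^\beta$. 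The same bound, now without the Riemann factor, yields \eqref{eq:energymassTail}.

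\textbf{Combination \eqref{eq:orderedriemannlimits}.} By the triangle inequality,
\[
\sup_{t\le T}\Bigl|\mathcal R_N(t)-\Lambda_c(\beta)\int_0^t\mathcal A_f\mathcal A_g\dd s\Bigr|\le C_{R,T}N^{-\nu}+CR^\beta+CR^\beta T\|\mathcal A_f\mathcal A_g\|_\infty.
\]
Taking $\limsup_{N\to\infty}$ and then $R\to\infty$, and using $\beta<0$, proves \eqref{eq:orderedriemannlimits}.

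\textbf{Main obstacle.} The delicate point is ensuring that the constant in \eqref{eq:compactdisplacedriemann} depends only polynomially on $R$, while the tail bound is uniform in $N$. In particular, no one-filter estimate of the form $\int\rho_\beta(x)|x-y|^{-c}\dd x\lesssim1$ with uniform tails should be invoked. The paired bound \eqref{eq:uniformtail} handles this: it controls the tail of the two-filter weight jointly rather than one filter at a time.
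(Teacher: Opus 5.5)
Your proposal is correct and follows essentially the paper's own proof. The ingredients match: a Hölder comparison on $[-R,1]^2$ that is uniform in $(a,b)$, integrated against the absolute paired weight (finite by Lemma~\ref{lem:roughconvolution} at $k=0$); the tail bound \eqref{eq:uniformtail} at $k=0$ for both \eqref{eq:displacedtail} and \eqref{eq:energymassTail}; and the ordered limits $N\to\infty$ followed by $R\to\infty$. The only difference is cosmetic: you compare each displaced summand directly with the cell average of $\mathcal A_f\mathcal A_g$, whereas the paper inserts $\mathcal A_f(i/N)\mathcal A_g(i/N)$ and bounds the Riemann-sum error separately.
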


\begin{proof}
Set $H(s)=\mathcal A_f(s)\mathcal A_g(s)$.  Lemma
\ref{lem:testtransform} gives $\|H\|_\infty\le C_{f,g}$ and
$|H(s)-H(t)|\le C_{f,g}|s-t|^\nu$.  For $a,b\in[-R,1]$, insert and subtract
$H(i/N)$ in the summand.  The product inequality
$|xy-x'y'|\le|x-x'||y|+|y-y'||x'|$ yields
\begin{align}
&\left|\mathcal A_f\!\left(\frac{i+a}{N}\right)
\mathcal A_g\!\left(\frac{i+b}{N}\right)-H(i/N)\right|
\le C_{f,g}\left(\frac{R+1}{N}\right)^\nu.
\label{eq:shifterror}
\end{align}
Also, with $n=\lfloor Nt\rfloor$,
\begin{align}
\left|\frac1N\sum_{i=0}^{n-1}H(i/N)-\int_0^tH(s)\dd s\right|
&\le\sum_{i=0}^{n-1}\int_{i/N}^{(i+1)/N}
|H(i/N)-H(s)|\dd s+\frac{\|H\|_\infty}{N}\notag\\
&\le C_{T,f,g}(N^{-\nu}+N^{-1}).
\label{eq:holderriemannerror}
\end{align}
Since $\nu<1$, the last display is $O(N^{-\nu})$.  Multiply
\eqref{eq:shifterror}--\eqref{eq:holderriemannerror} by the absolute
energy density and integrate over $[-R,1]^2$.  Its mass is finite by
Lemma~\ref{lem:roughconvolution} at $k=0$, proving
\eqref{eq:compactdisplacedriemann}.

Because $\varphi_\beta$ is supported on $(-\infty,1]$, the complement of
$[-R,1]^2$ is contained in $\{a<-R\}\cup\{b<-R\}$.  The summand is
bounded by $C_{T,f,g}$, uniformly in $N,i,a,b$.  Equation
\eqref{eq:uniformtail}, at $k=0$ and with exponent $c$, therefore gives
\begin{align*}
&\int_{\{a<-R\}\cup\{b<-R\}}
\rho_\beta(a)\rho_\beta(b)|a-b|^{-c}\dd a\dd b\\
&\hspace{30mm}\le 2C R^\beta.
\end{align*}
This proves \eqref{eq:displacedtail}.  The same estimate with the summand
replaced by $1$ proves \eqref{eq:energymassTail}.  Combining the three
bounds, first sending $N$ to infinity for fixed $R$ and then sending
$R$ to infinity, proves \eqref{eq:orderedriemannlimits}.
\end{proof}

\begin{proposition}[Exact active scalar products]
\label{prop:activescalarproducts}
For $t_N=\lfloor Nt\rfloor/N$, the identities
\eqref{eq:FNnorm} and \eqref{eq:FNcross} hold.  Moreover
\begin{equation}
\|F_t-F_s\|_2^2
=D_M^2\Lambda_c(\beta)^2b_{\gamma_*}^2
\int_s^t\!\int_s^t|u-v|^{-2\theta}\dd u\dd v
\le C_T|t-s|^{2-2\theta}.
\label{eq:limitkernelincrementexact}
\end{equation}
\end{proposition}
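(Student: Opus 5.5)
We verify the three identities by a direct Fubini computation, applying the beta identity \eqref{eq:betaidentity} to each free spatial variable. For the norm identity \eqref{eq:FNnorm}, we expand $\|F_{N,t}\|_2^2$ as a double sum over $i,j<Nt$. Each summand contains filter variables $a,b$ from block $i$ and $a',b'$ from block $j$. The $x$-integral of $((i+a)/N-x)_+^{\gamma_*}((j+a')/N-x)_+^{\gamma_*}$ gives, on both orientation half-spaces, the same coefficient $b_{\gamma_*}=B(\gamma_*+1,-2\gamma_*-1)$. This holds because the two exponents coincide, so the two beta factors in \eqref{eq:betaidentity} are equal and the orientation indicators recombine to $1$ almost everywhere. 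The result is
\[
b_{\gamma_*}\,N^{\theta}|j-i+a'-a|^{-\theta},
\]
using $2\gamma_*+1=-\theta$ and homogeneity. The $y$-integral gives the same with $b,b'$. Collecting the prefactor $D_M^2N^{-2}$ and $N^{2\theta}$ then produces $D_M^2b_{\gamma_*}^2N^{2\theta-2}\sum_{i,j}\Gamma_*(j-i)$, with $\Gamma_*$ as in \eqref{eq:Gammastar}.

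The cross identity \eqref{eq:FNcross} follows in the same way. We write $F_{t_N}=D_M\Lambda_c(\beta)\int_0^{t_N}(s-x)_+^{\gamma_*}(s-y)_+^{\gamma_*}\dd s$ and substitute $s=u/N$, which gives $u\in[0,\lfloor Nt\rfloor]$. The two spatial integrations yield $b_{\gamma_*}^2N^{2\theta}|u-i-a|^{-\theta}|u-i-b|^{-\theta}$, and the Jacobian contributes $N^{-1}$. The remaining $(a,b)$ integral is precisely $J_*(u-i)$ from \eqref{eq:Jstar}. For the increment identity \eqref{eq:limitkernelincrementexact}, the kernel difference is $D_M\Lambda_c(\beta)\int_s^t(\cdot)\,\dd u$. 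Squaring and applying the beta identity twice gives the displayed double integral of $|u-v|^{-2\theta}$. Since $2\theta<1$, this integral equals $2|t-s|^{2-2\theta}/[(1-2\theta)(2-2\theta)]$, which proves the bound.

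The main obstacle is justifying Fubini, because $\varphi_\beta$ is signed and the integrals are not a priori absolutely convergent. We dominate the absolute integrand by replacing each $\varphi_\beta$ with $\rho_\beta$. The resulting four-vertex graph has degree $c+\theta=d$ at every filter vertex, and $d<\min\{1,2(1+\beta)\}$, so Lemma~\ref{lem:roughfinner} gives finiteness uniformly in $i,j$. For the mixed term with the anchored variable $u$, we use the local integrability and tail bounds for $|J_*|$ established in Lemma~\ref{lem:pairedrate} and Lemma~\ref{lem:anchored-tail}, applied to the majorant built from $\rho_\beta$. For rigour, we can first work with the truncated kernels of Lemma~\ref{lem:diagramtruncation} and pass to the limit using this uniform domination.
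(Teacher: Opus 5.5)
Your proposal is correct and follows essentially the same route as the paper: you expand the squared norm and the scalar product, apply the equal-exponent beta identity to each free spatial variable, rescale with $u=Ns$ to obtain $\Gamma_*$ and $J_*$, and evaluate the double Riesz integral explicitly for the increment. Your Fubini justification spells out what the paper compresses into ``Tonelli's theorem is justified by the absolute graph bounds'': you dominate by $\rho_\beta$ and invoke Lemma~\ref{lem:roughfinner} with $D=d$, and for the mixed term you use the local integrability of the $\rho_\beta$-majorant of $J_*$ from Lemma~\ref{lem:pairedrate}.
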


\begin{proof}
Write $s_{i,a}=(i+a)/N$.  After expanding the two copies of
$F_{N,t}$, Tonelli's theorem is justified by the absolute graph bounds.
Twice using
\[
\int_\R(s-x)_+^{\gamma_*}(r-x)_+^{\gamma_*}\dd x
=b_{\gamma_*}|s-r|^{-\theta}
\]
gives
\begin{align*}
\|F_{N,t}\|_2^2
={}&\frac{D_M^2b_{\gamma_*}^2}{N^2}
\sum_{i,j<Nt}\int_{\R^4}
\prod_{z\in\{a,b,a',b'\}}\varphi_\beta(z)\\
&\quad\times |a-b|^{-c}|a'-b'|^{-c}
|s_{i,a}-s_{j,a'}|^{-\theta}
|s_{i,b}-s_{j,b'}|^{-\theta}\dd a\dd b\dd a'\dd b'\\
={}&D_M^2b_{\gamma_*}^2N^{2\theta-2}
\sum_{i,j<Nt}\Gamma_*(j-i),
\end{align*}
which is \eqref{eq:FNnorm}.

Next $F_{t_N}=D_M\Lambda_c(\beta)K_{t_N}^{(\gamma_*)}$.
Taking its scalar product with $F_{N,t_N}$ and integrating first in the two
spatial variables gives
\begin{align*}
\langle F_{N,t_N},F_{t_N}\rangle
={}&\frac{D_M^2\Lambda_c(\beta)b_{\gamma_*}^2}{N}
\sum_{i<Nt}\int_0^{t_N}\int_{\R^2}
\varphi_\beta(a)\varphi_\beta(b)|a-b|^{-c}\\
&\quad\times|s_{i,a}-s|^{-\theta}|s_{i,b}-s|^{-\theta}
\dd a\dd b\dd s.
\end{align*}
Set $u=Ns$.  The two Riesz factors contribute $N^{2\theta}$ and
$\dd s=N^{-1}\dd u$, proving \eqref{eq:FNcross}.

Finally, $F_t-F_s=D_M\Lambda_c(\beta)(K_t-K_s)$, and
$K_t-K_s$ is the defining temporal integral restricted to $[s,t]$.
Applying the beta identity to its squared norm proves the equality in
\eqref{eq:limitkernelincrementexact}.  The double integral equals
$2|t-s|^{2-2\theta}/[(1-2\theta)(2-2\theta)]$, proving the bound.
\end{proof}

\section{Positive-roughness stochastic upgrades: finite chaos and little H\"older regularity}
\label{app:stochasticdetails}

The active and residual kernel estimates prove the fixed-time $L^2$
convergence in Theorem~\ref{thm:main}.  This appendix supplies exactly the
two functional-analytic upgrades used afterwards.  Proposition
\ref{prop:finitechaosmalliavin} transfers $L^2$ bounds inside a fixed finite
sum of chaoses to every stated Malliavin--Sobolev norm, and Proposition
\ref{prop:holderupgrade} combines those bounds with the block estimate to
give convergence in the little H\"older topology.  No additional diagram
estimate is required here.

\begin{proposition}[Finite-chaos Malliavin norms]
\label{prop:finitechaosmalliavin}
Let $Q\in\mathbb N$, $k\in\mathbb N$, and $p>1$.  There is a constant
$C_{Q,k,p}$ such that every $F\in\bigoplus_{r=0}^Q\mathcal C_r$ satisfies
\begin{equation}
\|F\|_{\mathbb D^{k,p}}\le C_{Q,k,p}\|F\|_2.
\label{eq:finitechaosmalliavinfull}
\end{equation}
If $F_n\to F$ in $L^2$ and all variables belong to the same finite sum,
then $F_n\to F$ in every $\mathbb D^{k,p}$.
\end{proposition}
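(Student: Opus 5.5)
The plan is to reduce everything to a single chaos and then sum. Write $F=\sum_{r=0}^QJ_rF$ with $J_rF=I_r(f_r)$ and $f_r$ symmetric. By orthogonality, $\|F\|_2^2=\sum_r r!\|f_r\|_2^2$. Since $\|\cdot\|_{\mathbb D^{k,p}}$ is a norm, the triangle inequality gives $\|F\|_{\mathbb D^{k,p}}\le\sum_r\|I_r(f_r)\|_{\mathbb D^{k,p}}$. Each $\|I_r(f_r)\|_2\le\|F\|_2$, so it suffices to prove the bound for a single chaos $r\le Q$ and then take $C_{Q,k,p}=(Q+1)\max_{r\le Q}C_{r,k,p}$.

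Fix $F=I_r(f)$. For $0\le j\le k$ the derivative is explicit:
\[
D^jF=\frac{r!}{(r-j)!}I_{r-j}\bigl(f(\cdot,t_1,\ldots,t_j)\bigr)
\]
for $j\le r$, and $D^jF=0$ for $j>r$. This is an element of the $(r-j)$th chaos with values in the Hilbert space $L^2(\R^j)$. The $\mathbb D^{k,p}$ norm is equivalent to $\sum_{j\le k}(\E\|D^jF\|_{L^2(\R^j)}^p)^{1/p}$, so I would bound each term by a constant times $\|F\|_2$.

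For $p\le2$, Jensen gives $(\E\|D^jF\|^p)^{1/p}\le(\E\|D^jF\|^2)^{1/2}$. For $p>2$, I would use hypercontractivity for Hilbert-valued chaos: for $G$ in the $m$th chaos with values in a separable Hilbert space $\mathfrak H$, $(\E\|G\|_{\mathfrak H}^p)^{1/p}\le(p-1)^{m/2}(\E\|G\|_{\mathfrak H}^2)^{1/2}$. One way to justify this is to apply the Ornstein--Uhlenbeck semigroup contraction $\|P_tG\|_{L^p(\mathfrak H)}\le\|G\|_{L^{2}(\mathfrak H)}$ for $e^{2t}=p-1$, which holds in the Hilbert-valued case via Nelson's theorem. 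A simpler route, which I prefer, is to note that $\|G\|_{\mathfrak H}^2$ lies in chaoses $\le 2m$. Real hypercontractivity applied to $\|G\|_{\mathfrak H}^2$ with exponent $p/2$ then gives $\E\|G\|^p\le C(\E\|G\|^4)^{p/4}$. For the fourth moment I would write
\[
\E\|G\|^4=\textstyle\sum_{e,e'}\E[G_e^2G_{e'}^2]\le\sum\|G_e\|_4^2\|G_{e'}\|_4^2\le C(\sum_e\|G_e\|_2^2)^2,
\]
using scalar hypercontractivity in a fixed chaos. The chaos isometry then gives
\[
\E\|D^jF\|^2=\Bigl(\tfrac{r!}{(r-j)!}\Bigr)^2(r-j)!\|f\|_2^2\le C_{r,j}\,r!\|f\|_2^2=C_{r,j}\|F\|_2^2,
\]
which completes the single-chaos bound.

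The convergence statement follows at once: $F_n-F$ lies in the same finite sum of chaoses, so \eqref{eq:finitechaosmalliavinfull} gives $\|F_n-F\|_{\mathbb D^{k,p}}\le C\|F_n-F\|_2\to0$. The only delicate point is the Hilbert-valued hypercontractivity for $p>2$. I would handle it by the coordinate-expansion and squared-norm argument above, which avoids needing a vector-valued Nelson theorem. In that argument the constant depends only on $Q$, $k$, $p$, with $m\le Q$.
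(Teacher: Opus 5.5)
Your argument is correct and is essentially the paper's proof: reduce to single chaoses, apply the explicit formula for $D^jI_r(f)$ together with the isometry, then Hilbert-valued hypercontractivity for $p>2$ and monotonicity of $L^p$ norms for $p\le2$. The only difference is that you derive the Hilbert-valued hypercontractivity from the scalar one via the coordinate bound $\E\|G\|^4\le 9^m(\E\|G\|^2)^2$, whereas the paper simply cites it. That same bound, applied to the tails of $\sum_e G_e^2$, also gives the $L^2$ convergence you implicitly need to place $\|G\|^2$ in the chaoses of order at most $2m$.
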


\begin{proof}
For $F=\sum_{r=0}^QI_r(f_r)$, combine orthogonality, the derivative
identity
$\E\|D^jI_r(f_r)\|_2^2=r!\E|I_r(f_r)|^2/(r-j)!$, and Hilbert-valued
hypercontractivity, then sum over the finite set $r\le Q$, $j\le k$.
This proves \eqref{eq:finitechaosmalliavinfull} for $p\ge2$; monotonicity
of probability norms gives $1<p<2$.  Apply the estimate to $F_n-F$.
\end{proof}

\begin{proposition}[Quantified H\"older upgrade]
\label{prop:holderupgrade}
Let $B$ be a separable Hilbert space and let $X_N:[0,T]\to B$ be continuous
random functions.  Suppose that for every $r\ge2$ there is $C_r$ such that
\begin{equation}
\sup_N\E\|X_N(0)\|_B^r\le C_r,
\qquad
\sup_N\E\|X_N(t)-X_N(s)\|_B^r
\le C_r|t-s|^{r\chi},
\qquad s,t\in[0,T],
\label{eq:holderupgradeassumption}
\end{equation}
for some $0<\chi\le1$.  Then, for every
$\eta'<\chi-1/r$,
\begin{equation}
\sup_N\E\|X_N\|_{C^{\eta'}([0,T];B)}^r<\infty.
\label{eq:holderupgrademoment}
\end{equation}
Assume in addition that $X_N(t)\to X(t)$ in $L^p(\Omega;B)$ on a dense
subset containing $0$, that $X$ obeys the same increment bound and
$\E\|X(0)\|_B^r<\infty$, and choose
$0<\eta<\eta'<\chi-1/r$ and $r>p$.  Then
\begin{equation}
X_N\longrightarrow X
\quad\hbox{in }L^p(\Omega;C^\eta([0,T];B)).
\label{eq:holderupgradeconvergence}
\end{equation}
\end{proposition}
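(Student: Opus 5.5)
The plan is the standard Kolmogorov--Garsia route in Banach-valued form, followed by a compactness-free interpolation argument for the convergence. For \eqref{eq:holderupgrademoment} I would apply the Garsia--Rodemich--Rumsey inequality (or the dyadic chaining proof of Kolmogorov's criterion, which works verbatim for $B$-valued processes since it uses only the triangle inequality). Concretely, with $\Psi(x)=x^r$ and $p(u)=u^{\chi'+1/r}$ where $\eta'<\chi'<\chi-1/r$, GRR gives
\[
\|X_N(t)-X_N(s)\|_B\le C|t-s|^{\chi'-1/r+1/r}\Bigl(\int_0^T\!\!\int_0^T\frac{\|X_N(u)-X_N(v)\|_B^r}{|u-v|^{r\chi'+1}}\dd u\dd v\Bigr)^{1/r}.
\]
More simply, I would use the dyadic version: the seminorm $[X_N]_{\eta'}$ is bounded by $C\sum_{n}2^{n\eta'}\max_k\|X_N((k+1)T2^{-n})-X_N(kT2^{-n})\|_B$. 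Then I take $L^r$ norms, bound the maximum by the $\ell^r$ sum over the $2^n$ increments, and use \eqref{eq:holderupgradeassumption}. This yields $\sum_n 2^{n(\eta'+1/r-\chi)}<\infty$ uniformly in $N$. Adding $\|X_N(0)\|_B$ controls the sup part of the norm, and continuity of the paths lets one pass from dyadic points to all of $[0,T]$.

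For \eqref{eq:holderupgradeconvergence} I would set $Y_N=X_N-X$. The first step gives $\sup_N\E\|Y_N\|_{C^{\eta'}}^r<\infty$. The key deterministic tool is the interpolation inequality
\[
\|y\|_{C^\eta}\le C\bigl(\max_{t\in\Pi_\delta}\|y(t)\|_B\bigr)^{1-\eta/\eta'}\|y\|_{C^{\eta'}}^{\eta/\eta'}+C\delta^{\eta'-\eta}\|y\|_{C^{\eta'}},
\]
valid for any finite $\delta$-net $\Pi_\delta\subset$ the dense set that contains $0$. For $|t-s|\ge\delta$, the increment is controlled by values at nearby net points plus H\"older errors; for $|t-s|<\delta$, one uses $|t-s|^{\eta'-\eta}\le\delta^{\eta'-\eta}$. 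A simpler version that suffices is $\|y\|_{C^\eta}\le C_\delta\max_{\Pi_\delta}\|y\|_B+C\delta^{\eta'-\eta}\|y\|_{C^{\eta'}}$. To get it, I bound $\sup\|y\|$ by the net maximum plus $\delta^{\eta'}[y]_{\eta'}$, and for $|t-s|\ge\delta$ I bound $\|y(t)-y(s)\|/|t-s|^\eta\le 2\delta^{-\eta}\sup\|y\|$.

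Taking $L^p$ norms then gives
\[
\E\|Y_N\|_{C^\eta}^p\lesssim C_\delta\sum_{t\in\Pi_\delta}\E\|Y_N(t)\|_B^p+\delta^{p(\eta'-\eta)}\sup_N\E\|Y_N\|_{C^{\eta'}}^p.
\]
The first term tends to zero as $N\to\infty$ for fixed $\delta$ by the assumed pointwise $L^p$ convergence, since the net is finite. The second term is uniformly small as $\delta\to0$ because $r>p$ makes the $p$-th moment finite by H\"older's inequality. Sending $N\to\infty$ and then $\delta\to0$ concludes.

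The only delicate step is the chaining bound: one must check that the exponent bookkeeping yields exactly the condition $\eta'<\chi-1/r$, and that separability of $B$ together with continuity of the paths makes the supremum measurable. Neither should cause real difficulty.
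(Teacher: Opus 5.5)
Your proposal is correct and follows essentially the paper's route: dyadic chaining with a union bound over the $2^n$ increments (giving the series $\sum_n 2^{n(\eta'+1/r-\chi)}$) for the uniform $C^{\eta'}$ moment, then convergence on a finite grid combined with that moment. Your additive net inequality $\|y\|_{C^\eta}\le C_\delta\max_{\Pi_\delta}\|y\|_B+C\delta^{\eta'-\eta}\|y\|_{C^{\eta'}}$ is the unoptimized form of the paper's multiplicative bound $\|y\|_{C^\eta}\le C\|y\|_\infty^{1-\eta/\eta'}\|y\|_{C^{\eta'}}^{\eta/\eta'}$ and spares you the separate H\"older-inequality step. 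One slip in your optional GRR aside: with $p(u)=u^{\chi'+1/r}$, GRR gives the exponent $\chi'-1/r$, not $\chi'$, so the right choice is $\chi'=\eta'+1/r<\chi$; since you actually run the dyadic argument, this does not affect the proof.
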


\begin{proof}
After rescaling take $T=1$.  For the dyadic increments
$\Delta_{m,j}X_N$, \eqref{eq:holderupgradeassumption} gives
\[
 \E\left[2^{m\eta'}\max_j\|\Delta_{m,j}X_N\|_B\right]^r
 \le C_r2^{-m[r(\chi-\eta')-1]}.
\]
Minkowski's inequality and dyadic chaining prove
\eqref{eq:holderupgrademoment}.  For $Y_N=X_N-X$, convergence on each
fixed dyadic grid and the uniform $C^{\eta'}$ moment first give
$\|Y_N\|_\infty\to0$ in $L^p$.  The deterministic interpolation bound
$\|y\|_{C^\eta}\le
C\|y\|_\infty^{1-\eta/\eta'}\|y\|_{C^{\eta'}}^{\eta/\eta'}$ and
H\"older's inequality then prove \eqref{eq:holderupgradeconvergence}.
\end{proof}

\section{Gaussian boundary selection: charged diagrams and the unique critical root}
\label{app:boundary-diagrams}

This appendix provides the Gaussianity mechanism missing from the covariance
calculation in Section~\ref{sec:zero-boundary}.  Lemma
\ref{lem:one-critical-root} proves that a connected boundary diagram has
only one potentially critical face, producing at most the pole $h^{-1}$ or
the cutoff logarithm.  Lemma~\ref{lem:boundary-fourcopy}, combined with the
residue in Proposition~\ref{prop:boundary-residue}, then shows that every
proper contraction of a normalized finite linear combination vanishes.
These are precisely the two inputs used for both regularizations in the
proof of Theorem~\ref{thm:zero-main}, including its Wasserstein rates.

Use the port charges
$c_j=\theta_j/2$ of Lemma~\ref{lem:charged-collective-closure}; a spatial
contraction of ports $j,k$ produces the time edge of weight $c_j+c_k$.

Let $D$ be a finite closed diagram on a vertex set $V$: at every vertex
there are exactly the $q$ ports labelled $1,\ldots,q$, and every port is
paired exactly once with a port at a different vertex.  For
$S\subset V$, let $w_D(S)$ be the sum of the weights of the edges internal
to $S$, and let $B_D(S)$ be the total charge of the ports in $S$ paired to
ports in $S^c$.

Identity \eqref{eq:fixed-h-charged-cut} applies without using a block map.
If $D$ is connected and $S$ is nonempty and proper, then $B_D(S)>0$ by
connectivity and positivity of all port charges.

For $\beta_h=d/2-1+h$, put
\[
 \rho_{h,t}(u)=\left|(t-u)_+^{\beta_h}-(-u)_+^{\beta_h}\right|.
\]
For a fixed connected diagram $D$, define
\begin{equation}
 J_{D,h}(\mathbf t)=
 \int_{\R^V}\prod_{v\in V}\rho_{h,t_v}(u_v)
 \prod_{e=\{v,w\}}|u_v-u_w|^{-\lambda_e}\dd\mathbf u.
 \label{eq:boundary-diagram-integral}
\end{equation}
The cutoff version replaces $\rho_{h,t}$ by
$|\varphi_{\varepsilon,t}|$ from \eqref{eq:cutoff-filter}.

\begin{lemma}[One critical root]
\label{lem:one-critical-root}
Fix a connected $D$, a finite set $T$ of positive observation times, and
$0<h_0<\min\{1/2,1-d/2\}$.  Then
\begin{equation}
 J_{D,h}(\mathbf t)\le\frac{C_{D,T,h_0}}{h},
 \qquad 0<h\le h_0,
 \label{eq:boundary-one-pole}
\end{equation}
and
\begin{equation}
 J_{D,\varepsilon}^{\rm cut}(\mathbf t)
 \le C_{D,T,h_0}\bigl(1+\log(1/\varepsilon)\bigr).
 \label{eq:boundary-one-log}
\end{equation}
The constants are uniform over the chosen finite set of times.
\end{lemma}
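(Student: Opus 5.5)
The plan is to repeat the architecture of Lemma~\ref{lem:charged-collective-closure}, but with $h$ allowed to tend to zero, so every constant must stay uniform in $h\in(0,h_0]$. We track exactly which face degenerates.

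First, dominate $\rho_{h,t}$ as in \eqref{eq:filter-root-tail-split}. We use fixed cutoffs around the endpoints $0$ and $t$ for every $t\in T$, which are finitely many and uniformly separated. Near an endpoint we have $\rho_{h,t}(u)\le C|u-a|^{\beta_h}$ with $a\in\{0,t\}$. Away from the endpoints, $\rho_{h,t}(u)\le C\langle u\rangle^{\beta_h-1}$. Since $\beta_h\le d/2-1+h_0<0$, the tail exponent $\tau_h=1-\beta_h\ge 2-d/2-h_0>1$ is bounded away from $1$ uniformly. The whole integral then splits into finitely many sectors, indexed by a choice, for each vertex, of one endpoint term or the tail term.

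Second, perform the local power counting. Take a selected connected edge family $F$ on $S\subset V$ with $p$ endpoint marks at a common endpoint. By the cut identity \eqref{eq:fixed-h-charged-cut}, the pinned deficit is
\[
\delta_{\rm pin}=(|S|-p)\Bigl(1-\frac d2\Bigr)+ph+B_D(S)+R_D(F,S).
\]
If $S\ne V$, then $B_D(S)>0$ by connectivity, so the deficit is at least $\min_S B_D(S)>0$ uniformly in $h$. The same holds if $F$ misses an edge, since then $R_D>0$, or if $p<|S|$, since then the first term is at least $1-d/2$. The free deficit is at least $1-d>0$. The only face with vanishing margin is therefore $S=V$, $F=E$, with every vertex pinned to one common endpoint value $a$; its deficit is $|V|h$. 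Pins to incompatible endpoints have no common zero, so they do not produce a joint singularity. We then apply the dyadic argument of Lemma~\ref{lem:affine-power-counting}. All faces except this one have margin at least $\delta_0>0$ independent of $h$. The critical face contributes the geometric sum $\sum_{k\ge0}2^{-k|V|h}\le C/(|V|h)$. Because this face can be the deepest plateau only once in the monotone chain (the ``at most once'' argument of Lemma~\ref{lem:two-root-lattice-face}), the remaining strict plateaux are summed uniformly. This gives one factor $h^{-1}$ and no more.

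Third, control the macroscopic part by using the lattice reduction \eqref{eq:charged-cell-sum} with a single root. All vertices are attached by tail edges of weight $\tau_h>1$ to the bounded set of endpoints, so every outer plateau has positive reserve, exactly as in the last step of Lemma~\ref{lem:two-root-lattice-face}. This reserve is uniform in $h$, so the tail sum is bounded by a constant. Combining the three steps proves \eqref{eq:boundary-one-pole}.

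For the cutoff version, the filter near an endpoint becomes $|u-a|^{\beta_0}\1_{\{|u-a|\ge\varepsilon\}}$ with $h=0$. The critical plateau sum is then truncated at depth $\log_2(1/\varepsilon)$, which gives $C(1+\log(1/\varepsilon))$. All other faces keep their margins, which proves \eqref{eq:boundary-one-log}.

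The main obstacle is the uniformity in $h$. One has to verify that, apart from the fully pinned complete face, all face margins, both local and macroscopic, are bounded below independently of $h$. One also has to check that the chain summation does not multiply $h^{-1}$ by logarithms coming from nested near-critical faces. I would handle this by listing the face margins explicitly in terms of $B_D(S)$, $R_D$, $1-d/2$, and $\tau_h-1$.
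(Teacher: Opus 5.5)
Your proposal is correct and follows essentially the same route as the paper. The shared steps are: an endpoint/tail domination that keeps the cancellation in the tail; the charged pinned and free defects, which give every proper face an $h$-uniform margin while the complete face pinned at one common endpoint has defect $|V|h$; hence a single geometric series $\sum_{\ell}2^{-|V|h\ell}\lesssim h^{-1}$, truncated at depth $\log_2(1/\varepsilon)$ in the cutoff case.

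The differences are cosmetic. For the tail, the paper simply discards far edges and sums each tail weight $(1+|k_v|)^{-a}$, $a>1$, independently, which is all your single-root lattice sum amounts to. In the cutoff case you should keep both endpoint terms as in \eqref{eq:boundary-cutoff-expansion}, because on $(-\varepsilon,0]$ the filter equals the bounded nonzero term $(t-u)^{\beta_0}$ rather than vanishing.
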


\begin{proof}
Put $n=|V|$ and
\[
 \kappa_h=1-\frac d2-h,
 \qquad
 \kappa_{\partial}=1-\frac d2.
\]
We first record the precise dyadic estimate that will also be used at
infinity.  Fix choices $a_v\in\{0,t_v\}$ and consider the multiset
$\mathcal L_{\mathbf a}$ consisting of the edge forms
$L_e(\mathbf u)=u_v-u_w$, of weight $\lambda_e$, and the $n$ marked
forms $M_v(\mathbf u)=u_v-a_v$, of weight $\kappa_h$.  For a subfamily
$\mathcal G\subset\mathcal L_{\mathbf a}$, let $r(\mathcal G)$ be the
rank of its normals and put
\[
 \delta_h(\mathcal G)
 =r(\mathcal G)-\sum_{L\in\mathcal G}\operatorname{wt}_h(L).
\]
Decompose the graph of the selected edge forms into its nonempty connected
vertex sets $S$, adding a marked isolated vertex as a one-point component,
and let $p$ be the number of selected marks based in $S$.  If $p=0$, then
$|S|\ge2$, the rank on this component is $|S|-1$, and
\begin{equation}
 \delta_h(\mathcal G;S)
 =|S|\left(1-\frac d2\right)-1+B_D(S)+R_D(\mathcal G,S)
 \ge1-d>0,
 \label{eq:boundary-defect-unmarked}
\end{equation}
where
$R_D(\mathcal G,S)=w_D(S)-w_{\mathcal G}(S)\ge0$.  If $p\ge1$, one mark
together with the incidence vectors of a spanning tree has rank $|S|$,
and hence
\begin{equation}
 \delta_h(\mathcal G;S)
 =(|S|-p)\left(1-\frac d2\right)+ph
   +B_D(S)+R_D(\mathcal G,S).
 \label{eq:boundary-defect-marked}
\end{equation}
The ranks and defects add over the components.  It follows from
\eqref{eq:boundary-defect-unmarked}--\eqref{eq:boundary-defect-marked},
connectivity of $D$, and positivity of all edge weights that there exists
$\delta_*>0$, depending only on $D$ and $h_0$, such that
\begin{equation}
 \delta_h(\mathcal G)\ge\delta_*
 \quad\text{for every }\varnothing\ne\mathcal G
       \subsetneq\mathcal L_{\mathbf a},
 \qquad
 \delta_h(\mathcal L_{\mathbf a})=nh,
 \label{eq:boundary-proper-gap}
\end{equation}
uniformly for $0<h\le h_0$ and for all endpoint choices.  Indeed, if
$p<|S|$, the first term in \eqref{eq:boundary-defect-marked} gives a fixed
gap; if $p=|S|$ and $S\subsetneq V$, then $B_D(S)>0$; and if $p=|S|$,
$S=V$, but an edge is missing, then $R_D(\mathcal G,V)>0$.  Thus equality
can degenerate only for the full edge family with all $n$ marks, for which
the defect is exactly $nh$.

We now make the dyadic summation explicit.  On any cube of fixed side
length, decompose the region $|L|\le1$ into
$2^{-m_L-1}<|L|\le2^{-m_L}$, $m_L\ge0$; the parts $|L|>1$ are harmless.
For $k\ge1$ set
\[
 \mathcal G_k(\mathbf m)=\{L:m_L\ge k\}.
\]
Order the forms by decreasing $m_L$ and choose greedily a basis of their
normals.  The matroid greedy identity gives
\[
 \sum_{L\in\mathcal B}m_L
 =\sum_{k\ge1}r(\mathcal G_k(\mathbf m)).
\]
Using the independent forms as coordinates and completing them by fixed
linear coordinates on their common kernel shows, uniformly in all affine
translations, that the corresponding cell has volume at most
\[
 C2^{-\sum_{k\ge1}r(\mathcal G_k(\mathbf m))}.
\]
After inserting the singular weights, its contribution is therefore at
most
\begin{equation}
 C2^{-\sum_{k\ge1}\delta_h(\mathcal G_k(\mathbf m))}.
 \label{eq:boundary-cell-defect}
\end{equation}
Let $\ell=\min_{L\in\mathcal L_{\mathbf a}}m_L$.  For
$1\le k\le\ell$ the family $\mathcal G_k$ is the full family and
contributes $nh$ per level.  After level $\ell$, every nonempty
$\mathcal G_k$ is proper and has defect at least $\delta_*$.  Since the
decreasing sequence $(\mathcal G_k)$ has only finitely many distinct
plateaux, summing first the lengths of its proper plateaux and then $\ell$
gives
\begin{equation}
 \sum_{\mathbf m}
 2^{-\sum_k\delta_h(\mathcal G_k(\mathbf m))}
 \le C_D\sum_{\ell\ge0}2^{-nh\ell}
 \le\frac{C_D}{h}.
 \label{eq:boundary-one-series}
\end{equation}
This is the only geometric series with a vanishing defect; all proper
plateau sums are bounded uniformly.  For the hard cutoff, every marked
term is supported where $|M_v|\ge\varepsilon$.  Thus the common depth
$\ell$ is at most $C+\log_2(1/\varepsilon)$, whereas all proper plateau
sums are unchanged.  At $h=0$, \eqref{eq:boundary-cell-defect}
consequently gives
\begin{equation}
 C_D\bigl(1+\log(1/\varepsilon)\bigr).
 \label{eq:boundary-cutoff-series}
\end{equation}

We apply these estimates first on a fixed compact interval containing all
endpoints.  Since $\beta_h=-\kappa_h$,
\begin{equation}
 \rho_{h,t}(u)
 \le |t-u|^{-\kappa_h}\1_{\{u<t\}}
     +|u|^{-\kappa_h}\1_{\{u<0\}},
 \label{eq:boundary-endpoint-expansion}
\end{equation}
and
\begin{equation}
 |\varphi_{\varepsilon,t}(u)|
 \le |t-u|^{-\kappa_{\partial}}\1_{\{u\le t-\varepsilon\}}
     +|u|^{-\kappa_{\partial}}\1_{\{u\le-\varepsilon\}}.
 \label{eq:boundary-cutoff-expansion}
\end{equation}
Expanding the finite products in
\eqref{eq:boundary-endpoint-expansion} proves the $h$-regularized compact-region
bound after the half-line indicators are dropped.  In the cutoff case, we
replace each half-line indicator by the weaker condition
$\1_{\{|M_v|\ge\varepsilon\}}$ and do not discard this cutoff.  Then
\eqref{eq:boundary-one-series} and \eqref{eq:boundary-cutoff-series},
respectively, prove the required compact-region bounds.

It remains to show that the complement is uniformly bounded; here one
must retain the cancellation in each filter.  Put
$\beta_1=\beta_0+h_0<0$.  Since $T$ is finite, there is $R_T>1$ such
that, for every $t\in T$, $0<h\le h_0$, $0<\varepsilon<e^{-1}$, and
$u\le-R_T$, the mean-value theorem gives
\begin{equation}
 \rho_{h,t}(u)+|\varphi_{\varepsilon,t}(u)|
 \le C_T(1+|u|)^{\beta_1-1}
 =C_T(1+|u|)^{-a},
 \qquad
 a=2-\frac d2-h_0>1.
 \label{eq:boundary-tail-decay}
\end{equation}
Notice that \eqref{eq:boundary-tail-decay} is a tail estimate; it is not
asserted near the cutoff endpoints.

Choose a smooth partition $\chi_0+\chi_\infty=1$ on the support of the
filters, with $\chi_0$ compactly supported and
$\operatorname{supp}\chi_\infty\subset(-\infty,-R_T]$, and expand the
product over the nonempty set $A\subset V$ of vertices carrying
$\chi_\infty$.  Partition each real line into a bounded-overlap smooth
partition subordinate to intervals of uniformly bounded diameter, indexed
by $k_v\in\mathbb Z$.  For $v\in A$,
\eqref{eq:boundary-tail-decay} supplies on that cell the factor
$C(1+|k_v|)^{-a}$.  The indices belonging to $v\notin A$ range over a
fixed finite set.  Declare an edge near when $|k_v-k_w|\le6$.  Every far
edge is bounded by a constant on the cell and may be discarded.  For the
near edges, expand only the compact filters as in
\eqref{eq:boundary-endpoint-expansion} or
\eqref{eq:boundary-cutoff-expansion}.  The resulting affine family is a
proper subfamily of $\mathcal L_{\mathbf a}$, because every vertex of the
nonempty set $A$ lacks a marked form.  Hence
\eqref{eq:boundary-proper-gap}--\eqref{eq:boundary-cell-defect}, now with
no critical full plateau, bound its integral over the unit cell by a
constant independent of $h$, $\varepsilon$, and the indices.  Consequently
the whole cell is bounded by
\[
 C\prod_{v\in A}(1+|k_v|)^{-a}.
\]
Since $a>1$, summing independently over all tail indices is finite.  This
argument allows arbitrary numbers of separated escaping clusters and
therefore covers all compact--tail and tail--tail configurations.  The
finitely many nonempty choices of $A$ contribute $O(1)$ uniformly.
Combining this with \eqref{eq:boundary-one-series}--
\eqref{eq:boundary-cutoff-series} proves
\eqref{eq:boundary-one-pole} and \eqref{eq:boundary-one-log}.
\end{proof}

\begin{lemma}[Four-copy contraction at the boundary]
\label{lem:boundary-fourcopy}
Let $T$ be finite, $a_t\in\R$, and
\[
 f_h=\sum_{t\in T}a_tL_t^{(\beta_0+h)}.
\]
For every $1\le r\le q-1$,
\begin{equation}
 \|f_h\otimes_rf_h\|^2=O(h).
 \label{eq:boundary-fourcopy-analytic}
\end{equation}
If
\[
 k_\varepsilon=
 \frac{\sum_{t\in T}a_tK_{t,\varepsilon}}
 {\sqrt{2\mathcal B_0c_0\log(1/\varepsilon)}},
\]
then
\begin{equation}
 \|k_\varepsilon\otimes_rk_\varepsilon\|^2
 =O\bigl((\log(1/\varepsilon))^{-1}\bigr).
 \label{eq:boundary-fourcopy-cutoff}
\end{equation}
\end{lemma}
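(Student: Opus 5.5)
The approach is to write $\|f_h\otimes_rf_h\|^2$ as a finite sum of closed four-vertex charged diagrams and feed each one to Lemma~\ref{lem:one-critical-root}.

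\textbf{Diagram expansion.} Expand $f_h=A_h\sum_ta_t\widetilde L_t^{(h)}$, where $\widetilde L_t^{(h)}$ is the unnormalized kernel, so that $A_h^2\sim h/(\mathcal B_0c_0)$ by Proposition~\ref{prop:boundary-residue}. The squared norm $\|f_h\otimes_rf_h\|^2$ is then $A_h^4$ times a finite linear combination, over $t_1,\dots,t_4\in T$, of four-copy integrals. After the spatial integrations are carried out with \eqref{eq:betaidentity}, each becomes a closed diagram integral $J_{D,h}(\mathbf t)$ on four vertices, with the port charges of Lemma~\ref{lem:charged-collective-closure}. The justification is the truncation argument of Lemma~\ref{lem:diagramtruncation}, using absolute values of the filters and finitely many orientation branches.

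\textbf{Contraction pattern.} Each copy has $q$ ports. Vertices $1,2$ share $r$ contractions, and so do vertices $3,4$. The $q-r$ free ports of vertex $1$ are paired with free ports of $\{3,4\}$, and likewise for vertex $2$. Since $1\le r\le q-1$, the diagram has at least one edge $1$--$2$, at least one edge $3$--$4$, and at least one edge from $\{1,2\}$ to $\{3,4\}$. So it splits into at most two connected components, and every component has at least two vertices.

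\textbf{Counting poles.} Lemma~\ref{lem:one-critical-root} bounds each connected component by $C/h$. If $D$ is connected, the whole diagram is $O(h^{-1})$, and the squared norm is $O(A_h^4h^{-1})=O(h)$. If $D$ has two components, I need a sharper look. A component on $\{1,3\}$ and one on $\{2,4\}$ is impossible, because the internal $1$--$2$ edges force $1$ and $2$ into the same component. The components must therefore be unions containing both $\{1,2\}$ and $\{3,4\}$, which means $D$ is connected. Hence the case $r<q$ always produces a single connected four-vertex diagram, and the bound $O(h)$ follows.

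\textbf{Cutoff case and main obstacle.} For the cutoff I replace the estimate with \eqref{eq:boundary-one-log}. The same connectivity gives a bound $O(\log(1/\varepsilon))$, and dividing by the normalization squared, $(2\mathcal B_0c_0\log(1/\varepsilon))^2$, yields $O((\log(1/\varepsilon))^{-1})$. The main obstacle is checking that the cutoff diagram expansion still fits the hypotheses of Lemma~\ref{lem:one-critical-root}. The hard-cutoff filters are not homogeneous, so the spatial contractions must be performed before the cutoff is applied only in time, and the absolute values $|\varphi_{\varepsilon,t}|$ are what enter. This is the form the lemma already covers, so I expect it to go through, with uniformity over the finitely many $t_i\in T$.
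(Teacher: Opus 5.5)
Your proposal is correct and follows essentially the paper's proof: you expand into closed four-vertex charged diagrams and note that they are connected, because there are $r\ge1$ contracted edges inside each half and $q-r\ge1$ closing edges between the halves; Lemma~\ref{lem:one-critical-root} then supplies a single $h^{-1}$ (or a single $\log(1/\varepsilon)$), which is multiplied by $A_h^4=O(h^2)$ (respectively by the inverse fourth power of the cutoff denominator). The detour through ``at most two components'' is unnecessary, since the three edge types you list already force connectivity, and connectivity is exactly what rules out a second pole.
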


\begin{proof}
Expand the squared contraction norm into four kernel copies and then into
the finitely many exponent labels.  Spatial integration produces a closed
charged diagram on four temporal vertices.  It is connected: the $r$
contracted ports join the first two copies in each half, and the remaining
$q-r$ ports join the two halves when the norm is closed.  Lemma
\ref{lem:one-critical-root} bounds every unnormalized term by $C/h$ for the
$h$-regularization and by $C\log(1/\varepsilon)$ for the cutoff family.
Proposition~\ref{prop:boundary-residue} gives
$A_{\boldsymbol\gamma,\beta_0+h}^4=O(h^2)$, which proves
\eqref{eq:boundary-fourcopy-analytic}.  The inverse fourth power of the
displayed cutoff denominator is
$O((\log(1/\varepsilon))^{-2})$, which proves
\eqref{eq:boundary-fourcopy-cutoff}.
\end{proof}

\section{Crossover engine: endpoint roots and nested cuts}
\label{app:root-regular}

This is the common diagrammatic input for the simultaneous limit in
Theorem~\ref{thm:crossover}.  Appendix~\ref{app:boundary-diagrams} controls
fixed observation times, but it is not uniform over the translated endpoint
filters and growing time grids of the triangular array.  The lemma below
separates the unique resonant root, with its explicit $h^{-1}$ loss, from a
regular part that retains the full decay across every nested macroscopic
cut.  Appendix~\ref{app:uniform-active} uses the former to control the
active sector, while Appendix~\ref{app:bulk} uses both parts for covariance,
four-copy, and block estimates.

We now assume isotropy \eqref{eq:crossover-isotropic}.  Put
\[
 \beta_h=\frac d2-1+h,
 \qquad A_h=A_{\boldsymbol\gamma,\beta_h},
 \qquad
 \varphi_h(x)=(1-x)_+^{\beta_h}-(-x)_+^{\beta_h},
 \qquad
 \varpi_h=|\varphi_h|.
\]
The following lemma supplies the uniformity missing from a fixed-$h$
diagram estimate.

\begin{lemma}[Endpoint--tail decomposition and nested cuts]
\label{lem:root-regular}
Let $C=(V,E)$ be a connected loopless $q$-regular multigraph, let
$b:V\to\{1,\ldots,s\}$ attach its primitive vertices to macroscopic
blocks, and put
\begin{align}
 J_{C,h}(\mathbf i)
 =\int_{\R^V}&\prod_{v\in V}\varpi_h(x_v)
 \prod_{e=\{v,w\}\in E}
 |i_{b(v)}-i_{b(w)}+x_v-x_w|^{-\theta}\dd\mathbf x.
 \label{eq:translated-component}
\end{align}
Let $\mathfrak R_C(\mathbf i)$ be the event that there are
$z\in\mathbb Z$ and $\varepsilon_v\in\{0,1\}$ such that
\begin{equation}
 i_{b(v)}+\varepsilon_v=z
 \qquad(v\in V).
 \label{eq:root-sector-exact}
\end{equation}
Then
\begin{equation}
 J_{C,h}(\mathbf i)
 \le\frac{C_C}{h}\1_{\mathfrak R_C(\mathbf i)}
      +C_CW_C(\mathbf i),
 \label{eq:root-regular-raw}
\end{equation}
where $W_C$ is a finite sum of shifted graph weights
\begin{equation}
 W_C(\mathbf i)
 =\sum_\alpha\prod_{e=\{v,w\}\in E}
 \left\langle i_{b(v)}-i_{b(w)}+\delta_{\alpha,e}
 \right\rangle^{-\theta},
 \qquad |\delta_{\alpha,e}|\le2.
 \label{eq:regular-soft-weight}
\end{equation}
In particular, since $A_h\asymp\sqrt h$,
\begin{equation}
 A_h^{|V|}J_{C,h}(\mathbf i)
 \le C_C\left[
 h^{|V|/2-1}\1_{\mathfrak R_C(\mathbf i)}
 +h^{|V|/2}W_C(\mathbf i)\right].
 \label{eq:root-regular-normalized}
\end{equation}

The weight in \eqref{eq:regular-soft-weight} has simultaneous nested-cut
decay.  More precisely, in a nested clustering sector of the macroblocks,
assign an edge $e$ to the first scale $R_{\ell(e)}$ at which its endpoint
blocks separate.  Then
\begin{equation}
 W_C(\mathbf i)\le C_C\prod_{e\in E}R_{\ell(e)}^{-\theta}.
 \label{eq:nested-cut-weight}
\end{equation}
Thus every partition $\mathcal P$ carries the exact conormal weight
$\theta e_C(\mathcal P)$, where $e_C(\mathcal P)$ is the number of edges
crossing its cells.
\end{lemma}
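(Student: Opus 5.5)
We split each translated filter by a fixed smooth partition into endpoint pieces and a tail piece, and follow the proof of Lemma~\ref{lem:one-critical-root}. Concretely, for each $v$ we write $\varpi_h(x_v)\le\chi_0(x_v)|x_v|^{\beta_h}+\chi_1(x_v)|1-x_v|^{\beta_h}+C\langle x_v\rangle^{-\tau}$ with $\tau=1-\beta_h>1$, uniformly for $0<h\le h_0$, as in \eqref{eq:filter-root-tail-split}. We then expand the product over $V$. Each term is indexed by a map assigning to each vertex either an endpoint $\varepsilon_v\in\{0,1\}$ or the tail.

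\emph{Case 1: every vertex selects an endpoint.} In absolute coordinates $y_v=i_{b(v)}+x_v$, the endpoint singularity of $v$ sits at $i_{b(v)}+\varepsilon_v$. Suppose these points do not all coincide, so $\mathfrak R_C(\mathbf i)$ fails for this choice. Then the marked forms $M_v$ have no common zero with the full edge family. Every subfamily with a common zero is therefore proper, and \eqref{eq:boundary-proper-gap} gives the uniform gap $\delta_*$. The isotropic charges all equal $\theta/2$, so each edge has weight $\theta$ and $d=q\theta$; this is exactly the setting of that lemma.

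The dyadic estimate \eqref{eq:boundary-cell-defect} then bounds the compact cell integral uniformly in $h$. Far Riesz factors are bounded by $\langle i_{b(v)}-i_{b(w)}+\delta\rangle^{-\theta}$ with $|\delta|\le2$. Near edges are absorbed into the cell constant, and $\langle\cdot\rangle^{-\theta}\asymp1$ on them, so these factors can be reinserted. This yields a term of $W_C$.

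If instead all endpoint points coincide at a common $z$, this is precisely the event \eqref{eq:root-sector-exact}. The full family is then present, and \eqref{eq:boundary-one-series} gives $C/h$. The tails of the compact region are handled as before.

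\emph{Case 2: some vertex selects the tail.} Use unit cells $n_v$, as in the proof of Lemma~\ref{lem:charged-collective-closure}. The tail vertex contributes $\langle n_v-i_{b(v)}\rangle^{-\tau}$, and the local affine family is again proper. The resulting lattice sum has the form \eqref{eq:charged-cell-sum}. Summing out tail vertices with weight $\tau>1$ should produce shifted Riesz weights between the remaining anchors. The key point is a discrete convolution bound: $\sum_n\langle n-a\rangle^{-\tau}\prod_e\langle n-m_e\rangle^{-\theta}$ is dominated by the weight with $n$ replaced by $a+O(1)$. This holds because the $\theta$-weights are bounded and slowly varying, and $\tau>1$.

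Here I expect the \emph{main obstacle}. The domination must be uniform in $h$ and must be done one tail vertex at a time with nested splitting. I would first try the elementary inequality $\langle n-m\rangle^{-\theta}\le C\langle a-m\rangle^{-\theta}\langle n-a\rangle^{\theta}$. Since $\tau-\theta\cdot\deg>1$ may fail, I expect to use Hölder/Finner as in Lemma~\ref{lem:roughfinner}, trading the tail decay across the $q$ incident edges, if the naive bound loses too much. The uniform deficits are guaranteed by $h_0<1-d/2$. The normalized bound \eqref{eq:root-regular-normalized} then follows from $A_h^2\asymp h$, by \eqref{eq:boundary-residue}.

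For \eqref{eq:nested-cut-weight}, fix a nested clustering sector. An edge whose endpoint blocks first separate at scale $R_{\ell(e)}$ satisfies $|i_{b(v)}-i_{b(w)}|\gtrsim R_{\ell(e)}$, up to the bounded shift $\delta_{\alpha,e}$. Hence each factor is at most $CR_{\ell(e)}^{-\theta}$. Edges internal to all clusters have bounded blocks at the smallest scale and contribute $O(1)$. Multiplying over $E$, and summing the finitely many $\alpha$, gives the product bound. The conormal weight of a partition is then read off by counting crossing edges.
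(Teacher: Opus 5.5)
Your proposal follows the paper's proof: the endpoint/tail splitting, the dyadic count of Lemma~\ref{lem:one-critical-root} for the all-endpoint terms, the elimination of tail vertices one at a time, and the least-common-ancestor assignment for the nested-cut bound. In the all-endpoint terms the $h^{-1}$ pole arises only when every center $i_{b(v)}+\varepsilon_v$ coincides, and otherwise every colliding family is proper with a uniform gap. In the elimination, each edge keeps exactly one $\theta$-factor.

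The one step to firm up is the tail convolution. The ``bounded and slowly varying'' (Peetre) justification fails once $\tau-q\theta\le1$. The tool you fall back on is exactly what the paper uses, carried out in the continuum rather than on unit cells. It is H\"older's inequality for the finite measure $\langle x\rangle^{-\tau}\dd x$:
\[
\int\langle x\rangle^{-\tau}\prod_{j=1}^q|x-z_j|^{-\theta}\dd x
\le\prod_{j=1}^q\Bigl(\int\langle x\rangle^{-\tau}|x-z_j|^{-q\theta}\dd x\Bigr)^{1/q}
\le C\prod_{j=1}^q\langle z_j\rangle^{-\theta}.
\]
This works because the incident exponents sum to $q\theta=d<1$.
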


\begin{proof}
Choose $0<h_0<\min\{1/2,1-d/2\}$, put
$\beta_1=d/2-1+h_0<0$, and set
$\tau=1-\beta_1>1$.  With one-sided cutoffs supported in intervals of
radius smaller than $1/4$ around $0$ and $1$, one has uniformly for
$0<h\le h_0$
\begin{equation}
 \varpi_h(x)\le C\left[
 \chi_0(x)|x|^{\beta_h}
 +\chi_1(x)|1-x|^{\beta_h}
 +\omega(x)\right],
 \qquad
 \omega(x)\le C\langle x\rangle^{-\tau}.
 \label{eq:endpoint-tail-splitting}
\end{equation}
Expand the product over $V$.

We first exclude hidden affine subcollisions.  Let $F$ be any selected
subfamily of edge forms on a vertex set $A$, and let $M\subset A$ be the
vertices at which an endpoint mark is selected.  On a connected component
$S$ of $(A,F)$ without a mark and containing at least one selected edge,
one has $|S|\ge2$, the affine rank is $|S|-1$, and
\begin{equation}
 \delta_0(F|_S,\varnothing)
 =|S|-1-\theta|F|_S
 \ge |S|\left(1-\frac d2\right)-1>0.
 \label{eq:root-unmarked-subdefect}
\end{equation}
Unmarked isolated vertices carry no selected form and are omitted.  On a
component carrying $m\ge1$ marks, its rank is $|S|$ and the exact defect
at $h\ge0$ is
\begin{equation}
 \delta_h(F|_S,M\cap S)
 =\frac\theta2\bigl(q|S|-2|F|_S\bigr)
 +(|S|-m)\left(1-\frac d2\right)+mh.
 \label{eq:root-marked-subdefect}
\end{equation}
All three terms are nonnegative.  At $h=0$, equality requires every vertex
of $S$ to be marked and every one of its $q|S|$ ports to be paired by $F$
inside $S$.  Since $C$ is connected, this can happen only for $S=V$ and
$F=E$.  In particular, the case $S=V$, $m=|V|$ with at least one omitted
edge has fixed reserve $\theta(|E|-|F|)\ge\theta$.  Thus the full root is
the only affine family whose defect can vanish at $h=0$.

Consider first a term in which every vertex chooses an endpoint
$\varepsilon_v\in\{0,1\}$.  If a set $S$ of variables approaches a common
assigned center, its full affine defect is
\begin{align}
 |S|(1+\beta_h)-\theta|E(S)|
 &=|S|h+\frac\theta2|\partial_CS|.
 \label{eq:isotropic-affine-defect}
\end{align}
Here we used $q\theta=d$ and
$q|S|=2|E(S)|+|\partial_CS|$.  Every proper cluster has reserve at least
$\theta/2$.  The whole connected component has defect $|V|h$, and this
face exists only if all integer centers $i_{b(v)}+\varepsilon_v$ coincide.
The dyadic argument in the proof of Lemma~\ref{lem:one-critical-root}
therefore gives precisely the first term of \eqref{eq:root-regular-raw}.
If the centers do not all coincide, every equal-center cluster is proper;
its local integral is uniform and every edge between distinct clusters
factors with its full distance power.  The radius-$1/4$ supports ensure
that distinct integer centers remain separated by at least $1/2$.

If at least one vertex chooses $\omega$, the corresponding endpoint mark
is absent and the full affine defect has a fixed positive reserve.  It
remains to check that integrating a tail does not spend the decay of more
than one macroscopic cut.  The required star estimate is the following.  If
$a_j>0$, $A=\sum_ja_j<1$, and $\omega(x)\le C\langle x\rangle^{-\tau}$
with $\tau>1$, then H\"older's inequality gives
\begin{align}
 \int_\R\omega(x)\prod_j|x-z_j|^{-a_j}\dd x
 &\le\prod_j\left(
   \int_\R\omega(x)|x-z_j|^{-A}\dd x
   \right)^{a_j/A}\notag\\
 &\le C\prod_j\langle z_j\rangle^{-a_j}.
 \label{eq:tail-star}
\end{align}
The last convolution bound follows by splitting into neighbourhoods of
$0$, of $z_j$, and their complement; it uses only $A<1$ and $\tau>1$.
At every primitive vertex the sum of the incident exponents is
$q\theta=d<1$, so \eqref{eq:tail-star} applies.  The same proof, with the
tail restricted to $x<-R$, gives the marked estimate
\begin{equation}
 \int_{x<-R}\omega(x)\prod_j|x-z_j|^{-a_j}\dd x
 \le CR^{\beta_1}\prod_j\langle z_j\rangle^{-a_j},
 \qquad R\ge2,
 \label{eq:tail-star-marked}
\end{equation}
where $\beta_1=1-\tau<0$.

We make the elimination invariant explicit.  Order the tail vertices as
$v_1,\ldots,v_m$.  After eliminating $v_1,\ldots,v_j$, associate with every
original edge $e=\{v,w\}$ exactly one factor of exponent $\theta$:
\begin{equation}
 \mathfrak f_e^{(j)}=
 \begin{cases}
 |\Delta_e+x_v-x_w|^{-\theta},
   &v,w\notin\{v_1,\ldots,v_j\},\\
 \langle\Delta_e+\sigma_e x_w\rangle^{-\theta},
   &v\in\{v_1,\ldots,v_j\},\ w\notin\{v_1,\ldots,v_j\},\\
 \langle\Delta_e\rangle^{-\theta},
   &v,w\in\{v_1,\ldots,v_j\},
 \end{cases}
 \label{eq:edge-elimination-invariant}
\end{equation}
where $\Delta_e$ is the corresponding macroscopic displacement plus a
bounded endpoint shift, and $\sigma_e\in\{-1,1\}$ depends only on the
orientation of $e$.  The inductive invariant is that the product of the
factors \eqref{eq:edge-elimination-invariant} contains every original edge
once and only once.

At the next tail vertex, dominate every incident bracket by the
corresponding Riesz factor and apply \eqref{eq:tail-star}.  The sum of the
incident exponents is $q\theta=d<1$, so the integration returns one
one-endpoint bracket for each incident edge and leaves every nonincident
factor unchanged.  This proves the invariant by induction: no edge exponent
is lost, split, or used twice.  After all tail vertices have been removed,
the endpoint supports turn the remaining factors into the shifted factors
in \eqref{eq:regular-soft-weight}; the affine-defect argument then
integrates the remaining proper endpoint clusters uniformly.

A nested sector is described by a rooted cluster tree on the macroblocks.
Set $R_{\{a\}}=1$ at every leaf and assign dyadic radii $R_A\ge4$ to the
internal clusters, increasing towards the root, such that
$\langle i_a-i_b\rangle\asymp R_{A(a,b)}$, where $A(a,b)$ is the least
common ancestor of $a,b$.  Bounded shifts then give
\[
 \langle i_a-i_b+\delta\rangle^{-\theta}
 \le CR_{A(a,b)}^{-\theta}.
\]
Multiplying this inequality over the original edges assigns each edge
once, at its least common ancestor, and proves
\eqref{eq:nested-cut-weight}.  This completes the proof.
\end{proof}

\section{Crossover I: the uniform active kernel}
\label{app:uniform-active}

This appendix proves the active half of the decomposition
\eqref{eq:active-bulk}.  Proposition~\ref{prop:uniform-active} turns the
endpoint-root estimate of Appendix~\ref{app:root-regular} into the uniform
approximation \eqref{eq:active-triangular-bound}.  Together with the
boundary residue, it identifies the Rosenblatt contribution and forces the
phase parameter $h_NN^{1/2-\theta}$.  The fixed-$h$ estimate in
Proposition~\ref{prop:active} is insufficient here because its constants
need not remain bounded as $h\downarrow0$.

Retain isotropy and put
\begin{equation}
 b_\theta=B\left(\frac{1-\theta}{2},\theta\right),
 \qquad
 D_\theta=b_\theta^{q-1},
 \qquad
 \mathcal B_0=q!b_\theta^q.
 \label{eq:isotropic-beta-constants}
\end{equation}
Define
\begin{align}
 F_{N,t}^{(h)}(x,y)
 ={}&\frac{D_\theta}{N}\sum_{i<\lfloor Nt\rfloor}
 \int_{\R^2}\varphi_h(a)\varphi_h(b)|a-b|^{-(q-1)\theta}
 \notag\\
 &\quad\times
 \left(\frac{i+a}{N}-x\right)_+^\gamma
 \left(\frac{i+b}{N}-y\right)_+^\gamma\dd a\dd b,
 \label{eq:triangular-active-kernel}\\
 F_t^{(h)}
 ={}&D_\theta\Lambda_{(q-1)\theta}(\beta_h)
 K_t^{(\gamma)}.
 \label{eq:continuous-active-kernel}
\end{align}
The product formula gives the exact identity
\begin{equation}
 \mathcal A_N^{(h)}(t)
 =q^2(q-1)!A_h^2N^{1/2-\theta}I_2(F_{N,t}^{(h)}).
 \label{eq:active-exact-triangular}
\end{equation}

\begin{proposition}[Uniform triangular active kernel]
\label{prop:uniform-active}
There are $\kappa_0>0$, $h_0>0$, and $C_T<\infty$ such that, for
$0<h\le h_0$,
\begin{equation}
 \sup_{t\le T}\left\|
 \mathcal A_N^{(h)}(t)
 -hN^{1/2-\theta}\overline C_{q,\theta}
 I_2(K_t^{(\gamma)})\right\|_2
 \le C_T\left(
 \sqrt h+h^2N^{1/2-\theta}
 +hN^{1/2-\theta}N^{-\kappa_0}
 \right).
 \label{eq:uniform-active-result}
\end{equation}
One may take every
\begin{equation}
 0<\kappa_0<\frac12\min\left\{
 1-2\theta,
 \frac{1-d/2}{2-d/2}
 \right\}.
 \label{eq:uniform-active-kappa}
\end{equation}
\end{proposition}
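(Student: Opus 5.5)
By \eqref{eq:active-exact-triangular} and the Wiener isometry on the second chaos, the quantity to control is
\[
 q^2(q-1)!A_h^2N^{1/2-\theta}\,\|F_{N,t}^{(h)}-F_t^{(h)}\|_2
 \quad\text{together with}\quad
 N^{1/2-\theta}\bigl|q^2(q-1)!A_h^2D_\theta\Lambda_{(q-1)\theta}(\beta_h)-h\overline C_{q,\theta}\bigr|\,\|K_t^{(\gamma)}\|_2 .
\]
The second, purely deterministic piece is handled first. The explicit formulas \eqref{eq:Lambdacexplicit}, \eqref{eq:Aexplicit} and Proposition~\ref{prop:boundary-residue} give $A_h^2=h/(\mathcal B_0c_0)+O(h^2)$. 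Here $\Lambda_{(q-1)\theta}(\beta_h)$ stays analytic and bounded at $h=0$, because $c=(q-1)\theta<d=2(1+\beta_0)$. This reproduces \eqref{eq:active-linear-collapse} and yields the term $h^2N^{1/2-\theta}$.

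The main task is then a bound
\[
\|F_{N,t}^{(h)}-F_t^{(h)}\|_2\le C_T\bigl(N^{-\kappa_0}+h^{-1/2}N^{-(1/2-\theta)}\bigr)
\]
that is uniform in $0<h\le h_0$. Multiplying by $A_h^2N^{1/2-\theta}\asymp hN^{1/2-\theta}$ turns the first term into $hN^{1/2-\theta}N^{-\kappa_0}$ and the second into $\sqrt h$. I would copy the three-identity strategy of Proposition~\ref{prop:active}: expand
\[
\|F_N-F\|^2=\|F_N\|^2-2\langle F_N,F\rangle+\|F\|^2,
\]
and use \eqref{eq:FNnorm} and \eqref{eq:FNcross}, with $\Gamma_*^{(h)}$ and $J_*^{(h)}$ now built from $\varphi_h$. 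Proposition~\ref{prop:activescalarproducts} and the Toeplitz Lemma~\ref{lem:toeplitz} then reduce everything to uniform versions of Lemma~\ref{lem:pairedrate}. Concretely, I need
\[
|\Gamma_*^{(h)}(k)-\Lambda_c(\beta_h)^2|k|^{-2\theta}|\le C|k|^{-2\theta-\delta}
\quad\text{for }|k|\ge 2,
\]
with $\delta$ uniform. The choice $\delta\approx(1-d/2)/(2-d/2)$ is the value $\delta_\beta$ of Lemma~\ref{lem:pairedrate} at $\beta=\beta_{h_0}$, which accounts for the second entry in \eqref{eq:uniform-active-kappa}. I also need the analogous estimate for $J_*^{(h)}$ with uniform local integrability. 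For these, the tail part of the truncation argument uses the marked bounds \eqref{eq:tail-star-marked} with the fixed $\beta_1=\beta_{h_0}$. The compact Taylor part is $h$-independent.

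The main obstacle is the near-diagonal terms $|j-i|\le 1$ of $\Gamma_*^{(h)}$ and the strip $|u-i|\le 2$ of $J_*^{(h)}$. There, the four (respectively two) filter variables can collapse simultaneously onto a common endpoint, giving the full-root face of defect $4h$ (respectively $2h$). This is exactly the situation covered by Lemma~\ref{lem:root-regular}. I would apply \eqref{eq:root-regular-raw} to the rectangle diagram, which is a connected graph with $|V|=4$ in the isotropic charge count. This gives $\Gamma_*^{(h)}(k)\le C/h$ only on the root event $|k|\le1$, and a uniform regular weight $\langle k\rangle^{-2\theta}$ otherwise. Those $O(N)$ diagonal terms contribute
\[
N^{2\theta-2}\cdot N\cdot h^{-1}=h^{-1}N^{-(1-2\theta)}
\]
to $\|F_N\|^2$, which is the $h^{-1/2}N^{-(1/2-\theta)}$ error above. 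The cross term $J_*^{(h)}$ has only a two-vertex root, whose $h^{-1}$ singularity is integrable in $u$ after the strip integration. To be safe, I would bound that strip by the same $C/h$ and absorb it identically.

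For the regular part, Lemma~\ref{lem:toeplitz} with $a=2\theta$ gives
\[
\mathfrak r_N\bigl(2\theta,\delta\bigr)\le N^{-2\kappa_0}
\]
after a possible logarithm is absorbed by taking $\kappa_0$ strictly below the minimum in \eqref{eq:uniform-active-kappa}. Two further points remain: the $t$-endpoint correction, handled via \eqref{eq:limitkernelincrementexact}, and uniformity in $t\le T$. Both are as in Proposition~\ref{prop:active}, so their constants depend only on $h_0$.
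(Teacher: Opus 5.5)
Your proposal is correct and follows essentially the paper's route. The coefficient collapse comes from the residue of $\Lambda_d(\beta_h)$, and $\|F_{N,t}^{(h)}-F_t^{(h)}\|_2^2$ is handled by the three-identity expansion. The resonant lags $|k|\le1$ are bounded by $C/h$ via Lemma~\ref{lem:root-regular}, which produces the $h^{-1}N^{-(1-2\theta)}$ term. The paired rate is made uniform by freezing the tails at $\beta_1=\beta_{h_0}$, with $h_0$ shrunk so that $\delta_1=-\beta_1/(1-\beta_1)>2\kappa_0$.

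One small correction: the mixed correlation $J_h$ has no critical root at all. The $\{a,b\}$ endpoint collision has defect $\theta+2h$, not $2h$ (you dropped the two outgoing $\theta$-edges), and the full $(a,b,z)$ collision has defect $1-\theta+2h$, so its strip integral is uniformly bounded. Your conservative $C/h$ bound is still harmless, since it lands in the same $h^{-1}N^{-(1-2\theta)}$ term.
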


\begin{proof}
Fix $\kappa_0$ in \eqref{eq:uniform-active-kappa} and put
\[
 \delta_{\rm act}=\frac{1-d/2}{2-d/2}.
\]
Choose $h_0>0$ sufficiently small that
$\beta_1=d/2-1+h_0<0$ and
\[
 \delta_1:=-\frac{\beta_1}{1-\beta_1}>2\kappa_0.
\]
This is possible because $\delta_1\to\delta_{\rm act}$ as
$h_0\downarrow0$.

The coefficient expansion follows directly from
Proposition~\ref{prop:boundary-residue}:
\begin{equation}
 \Lambda_d(\beta_h)=\frac{c_0}{h}+O(1),
 \qquad
 \Lambda_{(q-1)\theta}(\beta_h)
 =\Lambda_{(q-1)\theta}(\beta_0)+O(h),
 \label{eq:active-energy-expansions}
\end{equation}
and hence
\begin{equation}
 \frac q{b_\theta}
 \frac{\Lambda_{(q-1)\theta}(\beta_h)}
      {\Lambda_d(\beta_h)}
 =h\overline C_{q,\theta}+O(h^2).
 \label{eq:active-coefficient-expansion}
\end{equation}

It remains to make the deterministic Riemann approximation uniform.  After
contracting the two free variables in the squared norm of
$F_{N,t}^{(h)}$, the stationary correlation is
\begin{align}
 \Gamma_h(k)=\int_{\R^4}&
 \prod_{z\in\{a,b,a',b'\}}\varphi_h(z)
 |a-b|^{-(q-1)\theta}|a'-b'|^{-(q-1)\theta}
 \notag\\
 &\times|k+a-a'|^{-\theta}|k+b-b'|^{-\theta}
 \dd a\dd b\dd a'\dd b'.
 \label{eq:active-four-filter-correlation}
\end{align}
At $k\in\{-1,0,1\}$, all four filter endpoints can coincide.  The full
radial surplus is
\[
 4+4\beta_h-2(q-1)\theta-2\theta=4h.
\]
Lemma~\ref{lem:root-regular} therefore gives
\begin{equation}
 |\Gamma_h(k)|\le C/h,
 \qquad k\in\{-1,0,1\}.
 \label{eq:active-resonant-bound}
\end{equation}
This loss is real: on a cone at one common endpoint, all factors have fixed
sign and the radial integral is
$\int_0^\eta r^{4h-1}\dd r\asymp h^{-1}$.

For $|k|\ge2$, truncate every filter tail at
$R=|k|^{1/(1-\beta_1)}$.  Estimate \eqref{eq:tail-star-marked} makes the
discarded part $O(R^{\beta_1})$, while first-order expansion of the two
long edges on the retained part costs $O(R/|k|)$.  The two errors balance,
with
\[
 \delta_1=-\frac{\beta_1}{1-\beta_1}>0.
\]
The nested-cut part of Lemma~\ref{lem:root-regular} then gives, uniformly
in $h$,
\begin{equation}
 \left|\Gamma_h(k)
 -\Lambda_{(q-1)\theta}(\beta_h)^2|k|^{-2\theta}\right|
 \le C|k|^{-2\theta-\delta_1}.
 \label{eq:active-regular-tail}
\end{equation}
Define the mixed discrete--continuous correlation entering
$\langle F_{N,t}^{(h)},F_t^{(h)}\rangle$ by
\begin{equation}
 J_h(z)=\int_{\R^2}\varphi_h(a)\varphi_h(b)
 |a-b|^{-(q-1)\theta}|z+a|^{-\theta}|z+b|^{-\theta}\dd a\dd b.
 \label{eq:active-mixed-correlation}
\end{equation}
Its only full local collision has defect $1-\theta+2h>0$ in the variables
$(a,b,z)$; all proper faces have the fixed reserves from
Lemma~\ref{lem:root-regular}.  Consequently
\begin{equation}
 \sup_{0<h\le h_0}\int_{-2}^2|J_h(z)|\dd z<\infty,
 \qquad
 \left|J_h(z)-\Lambda_{(q-1)\theta}(\beta_h)|z|^{-2\theta}\right|
 \le C|z|^{-2\theta-\delta_1}
 \label{eq:active-mixed-uniform}
\end{equation}
for $|z|\ge2$.  Thus the mixed chart has no $h^{-1}$ pole.

Put
\[
 a_*=2\theta,
 \qquad
 L_h=\Lambda_{(q-1)\theta}(\beta_h),
 \qquad
 c_\theta=D_\theta^2b_\theta^2,
\]
and, for $n=\lfloor Nt\rfloor$, put $t_N=n/N$ and
\[
 I_{a_*}(s)=\int_0^s\!\int_0^s|u-v|^{-a_*}\dd u\dd v.
\]
Since $a_*=2\theta<1$, two spatial beta contractions give the exact
identities
\begin{align}
 \|F_{N,t}^{(h)}\|_2^2
 &=c_\theta N^{a_*-2}
   \sum_{i,j<n}\Gamma_h(i-j),
 \label{eq:uniform-FN-norm}\\
 \langle F_{N,t}^{(h)},F_{t_N}^{(h)}\rangle
 &=c_\theta L_hN^{a_*-2}
   \sum_{i<n}\int_0^nJ_h(i-u)\dd u,
 \label{eq:uniform-FN-mixed}\\
 \|F_{t_N}^{(h)}\|_2^2
 &=c_\theta L_h^2I_{a_*}(t_N).
 \label{eq:uniform-F-limit-norm}
\end{align}
Exchanging $(a,b)$ with $(a',b')$ gives
$\Gamma_h(-k)=\Gamma_h(k)$.  Also set $J_h^\vee(z)=J_h(-z)$.  Then
$J_h(i-u)=J_h^\vee(u-i)$, and the bounds in
\eqref{eq:active-mixed-uniform} hold for $J_h^\vee$ with the same
constants.

To isolate the three resonant lags, define the even sequence
\[
 \widetilde\Gamma_h(k)=
 \begin{cases}
  \Gamma_h(k),&|k|\ge2,\\
  L_h^2,&|k|=1,\\
  0,&k=0.
 \end{cases}
\]
The quantities $L_h$ are uniformly bounded for $0<h\le h_0$.
Equation~\eqref{eq:active-regular-tail} and the choice at $|k|=1$ show
that Lemma~\ref{lem:toeplitz}, with $a=a_*$, $\delta=\delta_1$, and
$\Lambda=L_h^2$, applies to $\widetilde\Gamma_h$ with a constant
independent of $h$.  On the other hand,
\begin{align}
 &N^{a_*-2}
 \sum_{i,j<n}|\Gamma_h(i-j)-\widetilde\Gamma_h(i-j)|\notag\\
 &\qquad\le C_TN^{a_*-2}\frac Nh
 =C_Th^{-1}N^{-(1-a_*)}
 \label{eq:uniform-resonant-remainder}
\end{align}
by \eqref{eq:active-resonant-bound}.  Hence
\begin{align}
 \left|
 N^{a_*-2}\sum_{i,j<n}\Gamma_h(i-j)
 -L_h^2I_{a_*}(t_N)
 \right|
 \le C_T\left\{
 h^{-1}N^{-(1-a_*)}+\mathfrak r_N(a_*,\delta_1)
 \right\}.
 \label{eq:uniform-discrete-comparison}
\end{align}

For the mixed term, apply the mixed part of Lemma~\ref{lem:toeplitz} to
$J_h^\vee$ at time $t_N$.  The local estimate and the two-sided tail
estimate in \eqref{eq:active-mixed-uniform} are exactly its hypotheses.
Therefore
\begin{align}
 \left|
 N^{a_*-2}\sum_{i<n}\int_0^nJ_h(i-u)\dd u
 -L_hI_{a_*}(t_N)
 \right|
 \le C_T\mathfrak r_N(a_*,\delta_1).
 \label{eq:uniform-mixed-comparison}
\end{align}
Here the constants are again uniform in $h$.

The choice of $\kappa_0$ and $h_0$ gives the two strict inequalities
\[
 2\kappa_0<1-a_*=1-2\theta,
 \qquad
 2\kappa_0<\delta_1.
\]
Consequently, including the logarithmic threshold
$\delta_1=1-a_*$ in \eqref{eq:rN},
\[
 \mathfrak r_N(a_*,\delta_1)
 \le C_{\kappa_0}N^{-2\kappa_0}.
\]
Combining \eqref{eq:uniform-FN-norm}--
\eqref{eq:uniform-F-limit-norm} with
\eqref{eq:uniform-discrete-comparison}--
\eqref{eq:uniform-mixed-comparison} yields
\[
 \|F_{N,t}^{(h)}-F_{t_N}^{(h)}\|_2^2
 \le C_T\left(
 h^{-1}N^{-(1-2\theta)}+N^{-2\kappa_0}
 \right).
\]
Finally, $F_{N,t}^{(h)}=F_{N,t_N}^{(h)}$, while the beta identity and
stationary increments give
\[
 \|F_t^{(h)}-F_{t_N}^{(h)}\|_2^2
 \le C_T|t-t_N|^{2-2\theta}
 \le C_TN^{-(2-2\theta)}.
\]
Since $2-2\theta>1-2\theta>2\kappa_0$, we obtain, uniformly for $t\le T$,
\begin{equation}
 \|F_{N,t}^{(h)}-F_t^{(h)}\|_2^2
 \le C_T\left(
 h^{-1}N^{-(1-2\theta)}+N^{-2\kappa_0}
 \right),
 \label{eq:active-deterministic-error}
\end{equation}
as claimed.

Take square roots in \eqref{eq:active-deterministic-error}, multiply by
the coefficient $A_h^2N^{1/2-\theta}=O(hN^{1/2-\theta})$ in
\eqref{eq:active-exact-triangular}, and use
\eqref{eq:active-coefficient-expansion}.  The resonant term becomes
$O(\sqrt h)$, the regular term becomes
$O(hN^{1/2-\theta}N^{-\kappa_0})$, and the coefficient error is
$O(h^2N^{1/2-\theta})$.  This proves the proposition.
\end{proof}

\section{Crossover II: the Gaussian bulk, independence, and tightness}
\label{app:bulk}

This appendix proves the complementary half of
Theorem~\ref{thm:crossover} after the active second chaos has been removed.
Proposition~\ref{prop:bulk-covariance} identifies the limiting covariance
$3(s\wedge t)$.  Lemma~\ref{lem:four-block-grammar} and the partition-face
bound lead to Proposition~\ref{prop:bulk-fourcopy}, which kills every
proper contraction; Proposition~\ref{prop:bulk-clt-crossover} then gives
the Gaussian bulk and its asymptotic independence from the active sector.
Finally, Proposition~\ref{prop:crossover-blocks} supplies the tightness
bounds in all three regimes.  These outputs are invoked, in this order, in
the proof of Theorem~\ref{thm:crossover}.

Let $f_i^{(h)}$ be the normalized unit-increment kernel, so that
$Y_i^{(h)}=I_q(f_i^{(h)})$ and $q!\|f_i^{(h)}\|^2=1$.  Put
\begin{equation}
 H_{i,r}^{(h)}
 =f_i^{(h)}\widetilde\otimes_{q-r}f_i^{(h)},
 \qquad
 a_{q,r}=(q-r)!\binom qr^2.
 \label{eq:bulk-product-kernels}
\end{equation}
Then
\begin{equation}
 (Y_i^{(h)})^2-1
 =\sum_{r=1}^qa_{q,r}I_{2r}(H_{i,r}^{(h)}).
 \label{eq:bulk-product-formula}
\end{equation}
In the isotropic model the $r=1$ term is exactly the local active sector.
Let $\xi_{i,h}^{\rm act}$ denote it and let
\begin{equation}
 \xi_{i,h}^{\rm bulk}
 =(Y_i^{(h)})^2-1-\xi_{i,h}^{\rm act}.
 \label{eq:local-bulk}
\end{equation}

The increment correlation is
\begin{equation}
 \rho_h(k)=\frac12\left(
 |k+1|^{2h}+|k-1|^{2h}-2|k|^{2h}\right).
 \label{eq:triangular-increment-correlation}
\end{equation}
For $0<h<1/2$, concavity and telescoping give
\begin{equation}
 \|\rho_h\|_{\ell^1(\mathbb Z)}=2,
 \qquad
 \|\rho_h-\rho_0\|_{\ell^1(\mathbb Z)}
 =2(2^{2h}-1)=O(h),
 \label{eq:rho-l1}
\end{equation}
where $\rho_0(0)=1$, $\rho_0(\pm1)=-1/2$, and $\rho_0(k)=0$ otherwise.

\begin{proposition}[Exact two-copy bulk envelope]
\label{prop:bulk-covariance}
For $0<h\le h_0$ and $k\in\mathbb Z$,
\begin{align}
 |\Cov(\xi_{0,h}^{\rm act},\xi_{k,h}^{\rm act})|
 &\le Ch\1_{\{|k|\le1\}}
 +Ch^2\langle k\rangle^{-2\theta},
 \label{eq:active-covariance-envelope}\\
 \left|\Cov(\xi_{0,h}^{\rm bulk},\xi_{k,h}^{\rm bulk})
 -2\rho_h(k)^2\right|
 &\le Ch\1_{\{|k|\le1\}}
 +Ch^2\sum_{r=2}^q\langle k\rangle^{-2r\theta}.
 \label{eq:bulk-covariance-envelope}
\end{align}
Consequently, for $s,t\ge0$,
\begin{equation}
 \frac1N\sum_{i<Ns}\sum_{j<Nt}
 \Cov(\xi_{i,h_N}^{\rm bulk},\xi_{j,h_N}^{\rm bulk})
 \longrightarrow3(s\wedge t)
 \label{eq:bulk-brownian-covariance}
\end{equation}
whenever $h_N\downarrow0$ and
$h_NN^{1/2-\theta}=O(1)$.
\end{proposition}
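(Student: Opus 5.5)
The plan is to expand both covariances through the product formula \eqref{eq:bulk-product-formula} and Wiener isometry. Orthogonality of chaoses reduces everything to the sums
\[
 \sum_{r}a_{q,r}^2(2r)!\,\langle H_{0,r}^{(h)},H_{k,r}^{(h)}\rangle ,
\]
where the active sector is $r=1$ and the bulk is $2\le r\le q$.

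The first step is an exact identity. The total covariance of $(Y_0^{(h)})^2-1$ and $(Y_k^{(h)})^2-1$ equals the full Gaussian-like expression obtained from the diagram formula. The complete pairing diagrams, in which each copy pairs all $q$ ports with the other copy in two groups (the "two-copy" Gaussian part), give exactly $2(q!)^2\langle f_0,f_k\rangle^2=2\rho_h(k)^2$. Every other contribution is a connected four-vertex closed charged diagram $C$, normalized by $A_h^4$.

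Concretely, $\Cov(Y_0^2,Y_k^2)=2\rho_h(k)^2+\kappa_4$-type terms. The $2\rho_h(k)^2$ term sits entirely in the top chaos $r=q$, where the term with $f_0\otimes f_0$ paired against $f_k\otimes f_k$ factorizes. All remaining terms of each $\langle H_{0,r},H_{k,r}\rangle$ are connected $q$-regular four-vertex diagrams whose blocks are $b=(0,0,k,k)$.

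The second step applies Lemma~\ref{lem:root-regular} to each such connected diagram with $|V|=4$. Estimate \eqref{eq:root-regular-normalized} gives a bound
\[
 C\bigl[h\,\1_{\mathfrak R_C}+h^2W_C\bigr].
\]
The root event $\mathfrak R_C(\mathbf i)$ requires $i_0+\varepsilon=k+\varepsilon'$, hence $|k|\le1$; this produces the term $Ch\1_{\{|k|\le1\}}$. For the regular weight, the two blocks are separated at scale $\langle k\rangle$ and there are exactly $2r$ cross edges, each of weight $\theta$ in the isotropic model (Lemma~\ref{lem:exhaustive-closure}). So \eqref{eq:nested-cut-weight} gives $W_C\le C\langle k\rangle^{-2r\theta}$. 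With $r=1$ this yields \eqref{eq:active-covariance-envelope}, and with $r\ge2$ it yields \eqref{eq:bulk-covariance-envelope}. The main obstacle is checking the exact separation in the top chaos: I must verify that the disconnected pairings are exactly those reproducing $2\rho_h(k)^2$ and that every other $r=q$ pairing is connected. I would do this by tracking which ports of the copies at block $0$ pair with those at block $k$; a closure with no cross edges to one copy factorizes as a product of two-copy inner products.

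For \eqref{eq:bulk-brownian-covariance}, I sum over $i<Ns$ and $j<Nt$ and split the sum into the main term and the error terms.

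\textbf{Main term.} By \eqref{eq:rho-l1}, $2\rho_h^2$ converges in $\ell^1$ to $2\rho_0^2$. The sum $\sum_k 2\rho_0(k)^2=2+1=3$, so the Toeplitz average gives $3(s\wedge t)$; the boundary corrections are $O(1/N)$, and the error from replacing $\rho_h$ by $\rho_0$ is $O(h)$ in $\ell^1$.

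\textbf{Root term.} This contributes $O(h_N)$ after dividing by $N$.

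\textbf{Regular term.} For $r\ge2$ this contributes
\[
 h_N^2N^{-1}\mathscr S_N(2r\theta)\le Ch_N^2\max\{N^{1-4\theta},\log N,1\}.
\]
Here $4\theta$ may be less than $1$, and then I use $h_NN^{1/2-\theta}=O(1)$: indeed $h_N^2N^{1-4\theta}=\lambda_N^2N^{-2\theta}\to0$. The $\log N$ and bounded cases are handled the same way, via $h_N^2\log N\le \lambda_N^2N^{2\theta-1}\log N\to0$.
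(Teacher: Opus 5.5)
Your proposal is correct and follows essentially the paper's argument: the two complete block-to-block pairings in the top chaos give exactly $2\rho_h(k)^2$. Every other term is a connected loopless $q$-regular four-vertex diagram, to which Lemma~\ref{lem:root-regular} applies with $|V|=4$: the root event forces $|k|\le1$, and the $2r$ cross edges give $\langle k\rangle^{-2r\theta}$. The summation then uses \eqref{eq:rho-l1} and $h_N^2N^{1-4\theta}=\lambda_N^2N^{-2\theta}\to0$ exactly as the paper does. Organizing the expansion by chaos through the product formula, rather than by centered fourth-moment pairing diagrams, is only a bookkeeping difference.
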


\begin{proof}
Expand the centered fourth moment of
$(Y_0^{(h)},Y_k^{(h)})$ into primitive pairing diagrams.  The two complete
macro-pairing classes, after summing their internal label pairings, give
exactly $2\rho_h(k)^2$.  A component contained in a single square factors through
the product of the variances and is removed by centering.  Every remaining
diagram is a connected $q$-regular graph on the four primitive copies.

The cut between the two time blocks contains an even number $2r$ of edges.
For $r=1$ the diagram is precisely the covariance of the active
second-chaos sector: each square has $q-1$ internal edges and the two free
ports cross the cut.  After its removal, $r\ge2$.

Apply Lemma~\ref{lem:root-regular}.  A common endpoint is possible only for
$k\in\{-1,0,1\}$ and gives, after the factor $A_h^4=O(h^2)$, the local
bound $O(h)$.  Outside the root sector, the $2r$ crossing edges give
$O(h^2\langle k\rangle^{-2r\theta})$.  Summing the finite set of diagrams
proves \eqref{eq:active-covariance-envelope} and
\eqref{eq:bulk-covariance-envelope}.

The fragmented term converges to Brownian covariance by
\eqref{eq:rho-l1} and
\[
 2\sum_{k\in\mathbb Z}\rho_0(k)^2=3.
\]
The local remainder in \eqref{eq:bulk-covariance-envelope} has
$\ell^1$ norm $O(h_N)$.  For the slowest regular term, if $4\theta<1$,
\[
 \frac{h_N^2}{N}S_N(4\theta)
 \le C h_N^2N^{1-4\theta}=O(N^{-2\theta});
\]
the cases $4\theta=1$ and $4\theta>1$ are smaller.  This proves
\eqref{eq:bulk-brownian-covariance}.
\end{proof}

For $2\le r\le q$, set
\begin{equation}
 b_{r,N,t}=\frac{a_{q,r}}{\sqrt N}
 \sum_{i<\lfloor Nt\rfloor}H_{i,r}^{(h_N)},
 \qquad
 \mathcal B_N(t)=\sum_{r=2}^qI_{2r}(b_{r,N,t}).
 \label{eq:cumulative-bulk-kernels}
\end{equation}
For fixed times $t_j$ and coefficients $u_j$, write
$b_{r,N}[u]=\sum_ju_jb_{r,N,t_j}$ and put
\begin{equation}
 \Delta_N[u]=
 \max_{2\le r\le q}\max_{1\le p\le2r-1}
 \|b_{r,N}[u]\otimes_pb_{r,N}[u]\|^2.
 \label{eq:bulk-fourcopy-defect}
\end{equation}

\begin{lemma}[Exact four-block contraction grammar]
\label{lem:four-block-grammar}
Fix $2\le r\le q$ and $1\le p\le2r-1$.  Every labelled summand in
$\|b_{r,N}[u]\otimes_pb_{r,N}[u]\|^2$ is represented by a loopless
$q$-regular pairing multigraph $G$ on eight primitive vertices, two over
each macroblock $1,2,3,4$.  In each macroblock its two primitive vertices
are joined by exactly $q-r$ internal edges.  Let the external macroscopic
quotient $\overline G$ be obtained by deleting these internal edges and
then contracting the two vertices of each block.  There are exactly $p$
edges between blocks $1,2$, exactly
$p$ edges between blocks $3,4$, and every remaining edge joins
$\{1,2\}$ to $\{3,4\}$.  Consequently $\overline G$ is connected, every
macrodegree equals $2r$, and $\overline G$ is Eulerian.  Every primitive
connected component of $G$ meets at least two macroblocks, so $G$ has at
most four primitive components.

If all four primitive components have two vertices, then necessarily
$r=q$; every component consists of $q$ parallel edges joining two distinct
macroblocks, and the four component supports form a connected two-regular
graph on the four macroblocks, hence a four-cycle.  This is the completely
fragmented packet.  Every other labelled summand is estimated
componentwise by Lemma~\ref{lem:root-regular}; no further exceptional
component pattern occurs.
\end{lemma}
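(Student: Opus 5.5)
The argument is purely combinatorial: write the four-copy contraction norm as an explicit pairing of ports and read off the structure. Expand
\[
\|b_{r,N}[u]\otimes_pb_{r,N}[u]\|^2
=\langle (b\otimes_p b),(b\otimes_p b)\rangle .
\]
Here each $b$ is a sum over $i$ of $H_{i,r}^{(h_N)}=f_i\widetilde\otimes_{q-r}f_i$. Label the four copies of $b$ as macroblocks $1,2,3,4$, with the contraction $\otimes_p$ pairing blocks $1,2$ and blocks $3,4$, and the outer scalar product pairing the survivors. Each $H_{i,r}$ consists of two primitive kernels $f_i$, so there are two primitive vertices per block, each carrying $q$ ports. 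Its internal contraction of order $q-r$ joins them by exactly $q-r$ edges and leaves $r$ free ports on each primitive vertex, hence $2r$ free ports per block.

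Next I track where the free ports go after expanding the symmetrizations in the spirit of \eqref{eq:relative-pairings}. The contraction $\otimes_p$ pairs $p$ free ports of block $1$ with $p$ of block $2$, and likewise for blocks $3,4$. The remaining $2r-p$ free ports of blocks $1,2$ form the variables of $b\otimes_pb$. The outer scalar product pairs them bijectively with the remaining ports of blocks $3,4$, so every remaining edge joins $\{1,2\}$ to $\{3,4\}$. Each port is used exactly once and no port is paired within its own primitive vertex, since internal contractions join the two distinct copies of $f_i$. Hence $G$ is a loopless $q$-regular multigraph on eight vertices. Deleting internal edges and contracting blocks gives macrodegree $2r$ everywhere, so $\overline G$ is Eulerian. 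The graph $\overline G$ is connected because $p\ge1$ links $1$–$2$ and $3$–$4$, and $2r-p\ge1$ links $\{1,2\}$ to $\{3,4\}$.

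For components, I first claim that a primitive component of $G$ cannot live inside one block. The only edges within a block are the $q-r$ internal ones. A component confined to one block would therefore be $q$-regular on those two vertices using only internal edges, forcing $q-r=q$, which is impossible since $r\ge2$. So every component meets at least two blocks and contains at least two vertices, and therefore there are at most four components. If there are exactly four components with two vertices each, no component can contain an internal pair, because such a component would lie in one block. Then internal edges are absent, so $q-r=0$ and $r=q$. Each component is two $q$-regular vertices joined only to each other, which means $q$ parallel edges between distinct blocks. Each block's two vertices lie in two different components, so the support graph on the blocks is $2$-regular. It is connected because it equals the support of $\overline G$, and a connected $2$-regular graph on four vertices is a $4$-cycle. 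This gives the completely fragmented packet.

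The remaining assertion is that every other summand is handled componentwise by Lemma~\ref{lem:root-regular}. That follows because each primitive component is a connected loopless $q$-regular multigraph attached to macroblocks, which is exactly the hypothesis of that lemma. The integral factorizes over components since the temporal filter variables are disjoint. I expect the main obstacle to be the bookkeeping that each port is paired exactly once and that loops are excluded under symmetrization. I would handle it by fixing one ordering in each copy and indexing the pairings by permutations, exactly as in Lemma~\ref{lem:exhaustive-closure}.
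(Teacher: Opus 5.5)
Your proposal is correct and follows the paper's proof essentially step by step. Port counting gives exactly $p$ edges in $1$--$2$ and in $3$--$4$, all other edges crossing between $\{1,2\}$ and $\{3,4\}$, macrodegree $2r$ and connectivity of $\overline G$; the external ports at each primitive vertex force every component out of its block, so there are at most four components. In the fragmented case, your remark that an internal edge would confine a two-vertex component to one block is the paper's three-vertex argument in another form, and it yields $r=q$ and the four-cycle via connectivity of $\overline G$.
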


\begin{proof}
Each $H_{i,r}$ contains two primitive $q$-kernels joined by $q-r$ ports,
leaving $r$ free ports on each primitive vertex.  The contraction
$\otimes_p$ pairs exactly $p$ ports between the first two macroblocks; the
second copy in the squared norm gives the same $p$ edges between the last
two.  The norm closure pairs every remaining port of the first half with a
port of the second half.  Since $p\ge1$ and $2r-p\ge1$, the edges
$1$--$2$, $3$--$4$, and at least one edge between the two halves are
present; thus $\overline G$ is connected without any centering or
cancellation.  Each block has $2r$ external ports, proving the degree and
Eulerian assertions.  Each primitive vertex has $r\ge2$ external ports,
hence its component leaves its macroblock; eight vertices then give at
most four components.

Suppose all components have size two.  If $q-r>0$, the fixed internal edge
to the other primitive vertex in the same macroblock and any external edge
would put at least three vertices in that component, a contradiction.
Thus $r=q$.  A two-vertex $q$-regular loopless component has exactly $q$
parallel edges.  Each macroblock supplies two primitive vertices, hence
the graph of component supports is two-regular; its connectedness follows
from that of $\overline G$, so it is the four-cycle.  All remaining
patterns fall under the componentwise root/regular expansion.
\end{proof}

\begin{lemma}[Dyadic-chain reduction to partition faces]
\label{lem:dyadic-chain-face-reduction}
Let $Q=(\mathcal V,\mathcal E)$ be a finite loopless multigraph, connected
when $|\mathcal V|\ge2$.  For a partition $\mathcal P$ of
$\mathcal V$, write
\[
 k(\mathcal P)=|\mathcal P|,
 \qquad
 e_Q(\mathcal P)
 =\#\{e\in\mathcal E:\text{ the endpoints of $e$ lie in distinct
 cells of $\mathcal P$}\}.
\]
Fix $\alpha\ge0$, $\theta>0$, and define the face deficit
\begin{equation}
 D_\alpha(\mathcal P)
 =2+\alpha-k(\mathcal P)+\theta e_Q(\mathcal P).
 \label{eq:partition-face-deficit}
\end{equation}
Let $I_N\subset\mathbb Z$ be an interval of cardinality $O(N)$, and let
$(\delta_e)_{e\in\mathcal E}$ be uniformly bounded integer shifts.  Then
\begin{equation}
 N^{-2-\alpha}
 \sum_{\mathbf j\in I_N^{\mathcal V}}
 \prod_{e=\{v,w\}\in\mathcal E}
 \left\langle j_v-j_w+\delta_e\right\rangle^{-\theta}
 \le C(\log(2+N))^{|\mathcal V|-1}
 N^{-\min_{\mathcal P}D_\alpha(\mathcal P)}.
 \label{eq:dyadic-chain-face-bound}
\end{equation}
The constant is uniform over the bounded shifts and over intervals $I_N$
of cardinality $O(N)$.
\end{lemma}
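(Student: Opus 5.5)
The bound \eqref{eq:dyadic-chain-face-bound} is a multiscale power-counting statement for a lattice sum. I would prove it by organizing configurations according to their single-linkage cluster tree, exactly as in the proof of Lemma~\ref{lem:two-root-lattice-face}, but now without roots. Two preliminary reductions come first. Because the shifts $\delta_e$ are bounded, $\langle j_v-j_w+\delta_e\rangle\asymp\langle j_v-j_w\rangle$ with constants depending only on the shift bound, so the shifts may be removed at the cost of a constant. For $|\mathcal V|=1$ the sum is at most $|I_N|=O(N)$, and the claim reduces to the single partition with $D_\alpha=2+\alpha-1$; so assume $|\mathcal V|\ge2$.

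For a configuration $\mathbf j\in I_N^{\mathcal V}$ and a dyadic level $\ell=0,1,\dots,L$ with $L=\lceil\log_2|I_N|\rceil+1$, let $\mathcal P_\ell(\mathbf j)$ be the partition of $\mathcal V$ into connected components of the graph joining $v,w$ whenever $|j_v-j_w|\le2^\ell$. These partitions are nested and coarsen as $\ell$ increases. Moreover $\mathcal P_L$ is the one-cell partition, because $|I_N|\le 2^{L-1}$. The sequence $(\mathcal P_\ell)$ therefore changes at most $|\mathcal V|-1$ times. I would partition the configurations by the chain of distinct partitions $\mathcal P^{(0)}\prec\cdots\prec\mathcal P^{(s)}$ and by the merger levels $\ell_1<\cdots<\ell_s\le L$. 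There are finitely many chains and at most $(L+1)^{|\mathcal V|-1}=O((\log N)^{|\mathcal V|-1})$ choices of levels. This is the source of the logarithmic factor, so it suffices to bound each sector by $CN^{2+\alpha-\min_{\mathcal P}D_\alpha(\mathcal P)}$.

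In a fixed sector the plateau $\mathcal P^{(m)}$ persists on the scales $2^{\ell_m}\le\cdot<2^{\ell_{m+1}}$ (with $\ell_0=0$). Each edge $e$ first becomes internal at the merger level $\ell(e)$ where its endpoints fall in the same cell. At that point $|j_v-j_w|\gtrsim2^{\ell(e)-1}$, since they were in different cells one level earlier, which gives the factor $2^{-\theta\ell(e)}$. For the counting, choose in each cell of $\mathcal P^{(m+1)}$ a spanning forest of the merging cells of $\mathcal P^{(m)}$. Each merger of $k(\mathcal P^{(m)})-k(\mathcal P^{(m+1)})$ cells fixes that many relative coordinates to ranges of size $O(2^{\ell_{m+1}})$. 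One global translation of range $|I_N|=O(N)$ remains.

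Writing everything as a telescoping sum over plateaux, the sector is bounded by
\[
 CN\prod_{m}2^{(\ell_{m+1}-\ell_m)\{k(\mathcal P^{(m)})-1-\theta e_Q(\mathcal P^{(m)})\}},
\]
with the exponents read plateau by plateau. Set $\Lambda=\log_2N+O(1)$ and $x_m=(\ell_{m+1}-\ell_m)/\Lambda$, so that $\sum_m x_m\le1$. The exponent of $N$ is then a convex combination, with weights $x_m\ge0$, of the numbers $k(\mathcal P^{(m)})-1-\theta e_Q(\mathcal P^{(m)})$; for the one-cell partition this number is $0$. Hence the exponent is at most $1+\max_{\mathcal P}\bigl(k(\mathcal P)-1-\theta e_Q(\mathcal P)\bigr)$. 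Multiplying by $N^{-2-\alpha}$ gives exactly $N^{-\min_{\mathcal P}D_\alpha(\mathcal P)}$. This is the chain-deficit convexity already used in \eqref{eq:chain-deficit-convex}.

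The main obstacle is the counting step. One must check that each relative coordinate fixed at a merger level contributes only $2^{\ell_{m+1}}$ and never the full $N$. One must also check that every edge is charged exactly once, at its merger level, so that its factor $2^{-\theta\ell(e)}$ is not used twice. I would handle this by encoding coordinates along a spanning tree of the cluster tree, as in Lemma~\ref{lem:two-root-lattice-face}. Uniformity over shifts and over intervals $I_N$ follows because the only inputs are $|I_N|=O(N)$ and the shift bound.
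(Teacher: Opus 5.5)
Your proposal is correct and follows essentially the same route as the paper. Both arguments use single-linkage dyadic clustering sectors indexed by a partition chain and its merger levels. Both count one global translation of range $O(N)$ plus $k(\mathcal P^{(m)})-k(\mathcal P^{(m+1)})$ relative coordinates of size $O(2^{\ell_{m+1}})$ at each merger, and charge each edge exactly once at its merger scale. Both then apply summation by parts to write the exponent as a convex combination of the face deficits, which is the paper's identity \eqref{eq:chain-deficit-convex}. In both, the $(\log N)^{|\mathcal V|-1}$ factor comes from choosing the merger levels.
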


\begin{proof}
The assertion is immediate for bounded $N$.  If $|\mathcal V|=1$, then
$\mathcal E=\varnothing$ and both sides are $O(N^{-1-\alpha})$.  We may
therefore assume $N\ge2$ and $|\mathcal V|\ge2$.

Bounded shifts may first be removed, since
$\langle x+\delta\rangle\asymp\langle x\rangle$ uniformly for
$|\delta|\le C$.  Replace $N$, up to a fixed multiplicative constant, by
$2^J$.

A dyadic clustering sector is described by a strictly increasing chain of
partitions
\[
 \mathcal P_0\prec\mathcal P_1\prec\cdots
 \prec\mathcal P_s=\{\mathcal V\}
\]
and scales
\[
 0=n_0\le n_1\le\cdots\le n_s\le J.
\]
Here two cells of $\mathcal P_{j-1}$ first belong to the same cluster at
distance comparable to $2^{n_j}$.  Such sectors cover
$I_N^{\mathcal V}$ with bounded multiplicity.  Indeed, one may take the
connected components of the graph generated by the relations
$|j_v-j_w|\le2^n$, and then remove repetitions from the resulting chain.

Put
\[
 k_j=k(\mathcal P_j),
 \qquad
 e_j=e_Q(\mathcal P_j).
\]
There are $O(N)$ choices for one global translation.  At the merger
$\mathcal P_{j-1}\prec\mathcal P_j$, the relative locations of the
children contribute at most
\[
 C2^{n_j(k_{j-1}-k_j)}
\]
choices.  Exactly $e_{j-1}-e_j$ graph edges acquire their first nontrivial
scale at that merger, and they contribute
\[
 C2^{-\theta n_j(e_{j-1}-e_j)}.
\]
Consequently the contribution of the sector, after multiplication by
$N^{-2-\alpha}$, is at most $CN^{-d_{\rm ch}}$, where, with
$t_j=n_j/J$ and $F_j=k_j-\theta e_j$, one has
\begin{equation}
 d_{\rm ch}
 =1+\alpha-\sum_{j=1}^s(F_{j-1}-F_j)t_j.
 \label{eq:chain-deficit}
\end{equation}
Since $F_s=1$, summation by parts gives the exact identity
\begin{equation}
 d_{\rm ch}
 =\sum_{j=0}^{s-1}(t_{j+1}-t_j)
 D_\alpha(\mathcal P_j)
 +(1-t_s)D_\alpha(\mathcal P_s).
 \label{eq:chain-deficit-convex}
\end{equation}
The coefficients on the right-hand side are nonnegative and sum to one.
Thus
\[
 d_{\rm ch}\ge\min_{\mathcal P}D_\alpha(\mathcal P).
\]
There are only finitely many partition chains, and a chain has at most
$|\mathcal V|-1$ distinct scales.  The number of scale choices is therefore
$O((J+1)^{|\mathcal V|-1})$, which proves
\eqref{eq:dyadic-chain-face-bound}.
\end{proof}

\begin{lemma}[Root-support capacity on four macroblocks]
\label{lem:root-support-capacity}
Work in the setting of Lemma~\ref{lem:four-block-grammar}.  Select the root
term of Lemma~\ref{lem:root-regular} in exactly $R$ primitive connected
components, and let $\mathcal H_R$ be the hypergraph on the four macroblocks
whose hyperedges are the macroblock supports of the selected components.
Let $c_R$ be the number of connected components of $\mathcal H_R$, isolated
macroblocks included.  Then
\begin{equation}
 c_0=4,
 \qquad c_1\le3,
 \qquad c_2\le2,
 \qquad c_3\le2,
 \qquad c_4=1.
 \label{eq:root-support-capacity}
\end{equation}
After contracting every connected component of $\mathcal H_R$, the regular
macroscopic quotient $Q_R$ is connected and Eulerian.  Consequently, if a
partition of $Q_R$ has $k\ge2$ cells and $e$ crossing microscopic edges,
then $e\ge k$.  When $R=0$, the discrete four-cell partition satisfies
$e\ge8$.
\end{lemma}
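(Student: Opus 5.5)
The plan is to derive every assertion from two facts in Lemma~\ref{lem:four-block-grammar}: each primitive component of $G$ meets at least two macroblocks, and the external quotient $\overline G$ is connected and Eulerian with every macrodegree $2r\ge4$. The only other tool needed is elementary counting of connected components under the merging of supports.

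\emph{Capacity bounds.} With no root selected, $\mathcal H_0$ has no hyperedges, so $c_0=4$. Each selected component contributes a hyperedge of cardinality at least two, and adding a hyperedge never increases the number of components of a hypergraph. A single hyperedge of size at least two merges at least two of the four macroblocks, so $c_1\le3$. Two such hyperedges give $c_2\le2$: this is attained by two disjoint pairs, and any overlap, or a support of size at least three, merges further. Monotonicity then gives $c_3\le c_2\le2$. For $R=4$, Lemma~\ref{lem:four-block-grammar} shows there are at most four primitive components, so all of them are selected. Every external edge of $G$ lies in some component whose support contains both of its endpoint macroblocks. Hence every edge of $\overline G$ joins two macroblocks in the same component of $\mathcal H_4$. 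Since $\overline G$ is connected, $c_4=1$. I expect this last point to be the main place needing care: one must check that the selected components exhaust all of $G$, which is exactly the "at most four components" clause of Lemma~\ref{lem:four-block-grammar}.

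\emph{The quotient $Q_R$.} I would define $Q_R$ by contracting each component of $\mathcal H_R$ in $\overline G$ and discarding the resulting loops. Every edge of a selected component becomes such a loop, so the remaining edges come only from regular components. A quotient of a connected multigraph is connected. The degree of a contracted vertex equals the sum of the macrodegrees $2r$ of its blocks, minus twice the number of discarded loops, so it is even and $Q_R$ is Eulerian.

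\emph{Cut counts.} Take a partition of $Q_R$ into $k\ge2$ cells. Connectivity forces each cell to have at least one crossing edge, and by the handshake lemma the cut degree of every cell in an Eulerian multigraph is even, hence at least $2$. Summing over cells counts each crossing edge twice, so $2e\ge2k$, that is $e\ge k$. Finally, for $R=0$ the discrete four-cell partition of $\overline G$ has every external edge crossing. The internal edges were deleted, so $\overline G$ has no loops, and the crossing count is $e=\tfrac12\cdot4\cdot2r=4r\ge8$ because $r\ge2$.
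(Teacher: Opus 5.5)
Your argument for $c_2\le2$ has a genuine gap. The claim that ``any overlap \dots\ merges further'' is false when the two selected components have the \emph{same} two-block support $\{a,b\}$. In that case $\mathcal H_2$ has the three components $\{a,b\}$, $\{c\}$, $\{d\}$, so counting merges of supports only yields $c_2\le3$. This is exactly the configuration the paper's proof isolates, and excluding it needs more than elementary counting.

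The missing ingredient is structural. Each macroblock carries only two primitive vertices, so two disjoint components that each meet both $a$ and $b$ use all four primitive vertices over $a$ and $b$. Being connected components of $G$, they contain every edge incident to those vertices. Hence no edge of $\overline G$ joins $\{a,b\}$ to $\{c,d\}$, contradicting the connectedness of $\overline G$ from Lemma~\ref{lem:four-block-grammar}.

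Your bound $c_3\le c_2$ is derived by monotonicity from the unproved $c_2\le2$, so it inherits the gap. Once $c_2\le2$ holds for every pair of selected components, your monotonicity step is valid. It is then a slightly cleaner substitute for the paper's direct count, which notes that three components with a common two-block support would need three primitive vertices over each block.

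The remaining parts are correct. Your arguments for $c_0$, $c_1$ and $c_4$, the definition of $Q_R$ with loops discarded, its connectedness and Eulerian property, and the count $e=4r\ge8$ for $R=0$ all match the paper. For $e\ge k$ you take a different route from the paper, and both work. You argue that each cell's cut degree is even by the handshake identity and nonzero by connectivity, hence at least $2$, and then sum over cells. The paper instead contracts the cells and observes that a connected Eulerian multigraph on $k$ vertices has a closed walk through all of them, hence at least $k$ edges.
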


\begin{proof}
Every primitive component meets at least two macroblocks, so every root
hyperedge has cardinality at least two.  Each macroblock contains only two
primitive vertices.  The assertions for $R=0$ and $R=1$ are immediate.  If
$R=2$ left three hypergraph components, the two roots would have to repeat
the same proper two-block support.  They would then consume both primitive
vertices over both supporting blocks and disconnect those blocks from the
remaining macroscopic quotient, contradicting Lemma~\ref{lem:four-block-grammar}.
Thus $c_2\le2$.  If $c_3=3$, the unique nontrivial support component would
again contain only two macroblocks, so all three hyperedges would have that
same support.  This would require three distinct primitive vertices over
each of the two blocks, although only two are available.  Hence $c_3\le2$.
If $R=4$, all primitive components have been selected; their support
hypergraph is the connected macroquotient, and therefore $c_4=1$.

Contracting root supports preserves connectedness.  It also preserves even
macroscopic degrees, because the original degree of every block is $2r$
and every contracted internal edge removes two incidences.  Hence $Q_R$ is
Eulerian.  Contracting the cells of any partition of $Q_R$ produces a
connected Eulerian multigraph on $k$ vertices.  Such a graph contains a
closed walk visiting all vertices and therefore has at least $k$ edges,
which proves $e\ge k$.  For $R=0$, the discrete partition cuts every edge of
the four-block quotient; its four degrees are $2r\ge4$, so
$e=\frac12\sum_{a=1}^4 2r=4r\ge8$.
\end{proof}

\begin{proposition}[Uniform four-copy bulk estimate]
\label{prop:bulk-fourcopy}
For every $L,T<\infty$ and every fixed finite linear combination $u$, there
are $C,M<\infty$ such that
\begin{equation}
 \Delta_N[u]
 \le C(\log N)^M\left(N^{-4\theta}+N^{-1/2}\right)
 \label{eq:uniform-fourcopy}
\end{equation}
whenever $h_NN^{1/2-\theta}\le L$.
\end{proposition}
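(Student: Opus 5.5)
We expand $\|b_{r,N}[u]\otimes_p b_{r,N}[u]\|^2$ into its finitely many labelled summands. Each summand carries the prefactor $N^{-2}$ from the four factors $N^{-1/2}$, the factor $A_{h_N}^8=O(h_N^4)$, and a sum over four macroblock indices $i_1,\dots,i_4<NT$. By Lemma~\ref{lem:four-block-grammar}, each summand is a product over the primitive connected components $C$ of $G$ of integrals $J_{C,h_N}(\mathbf i)$. For each component we insert the root/regular decomposition \eqref{eq:root-regular-raw} of Lemma~\ref{lem:root-regular} and expand. This produces $2^{\#\text{components}}$ terms, indexed by the set of $R$ components in which the root term is selected. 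A selected root on $C$ costs $h^{-1}$ and forces all blocks in the support of $C$ onto a common integer center up to offsets in $\{0,1\}$, so those block indices collapse to a single index. An unselected component contributes a shifted weight $W_C$. The completely fragmented packet is handled separately, as described at the end.

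For a fixed root selection we count powers of $h_N$ and of $N$. Every component has at least two vertices, so each root loses at most one factor $h^{-1}$ against a gain of $h^{|C|/2}$. With eight primitive vertices and the factor $A_h^8$, the net power of $h$ is therefore at least $h^{4-R}$. The surviving lattice sum runs over the $c_R$ contracted root clusters, with the regular edges of the quotient $Q_R$ as weights. We apply Lemma~\ref{lem:dyadic-chain-face-reduction} to $Q_R$ with $|\mathcal V|=c_R$ and prefactor $N^{-2}$. The deficit of a partition is $2-k+\theta e$. Lemma~\ref{lem:root-support-capacity} gives $e\ge k$ for $k\ge2$, hence $2-k+\theta e\ge 2-k(1-\theta)$, while the one-cell partition has deficit $1$. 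We then combine the bound $h_N\le LN^{-(1/2-\theta)}$ with the capacities in \eqref{eq:root-support-capacity}, one value of $R$ at a time.
\begin{itemize}
\item[] $R=0$: there is no $h$ loss and $c_0=4$. The four-cell face has $e\ge8$, so its deficit is at least $-2+8\theta$, and the term is at most $h^4N^{2-8\theta}\lesssim N^{-2+4\theta-8\theta+2}=N^{-4\theta}$, up to logarithms. Intermediate faces are treated the same way.
\item[] $R=1$: the term is $h^3N^{1\cdot}$ at worst with $c_1\le3$, giving $h^3N^{1-3\theta}\lesssim N^{-1/2}$ roughly.
\item[] $R=2,3$: here $c_R\le2$, and the bound is $h^{4-R}$ times at most $N^{0}$ up to logarithms, i.e.\ $h\lesssim N^{-(1/2-\theta)}$ or better. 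It remains to check that this exponent is dominated by the claimed rate.
\item[] $R=4$: $c_4=1$, a single lattice index, and the prefactor $N^{-2}N=N^{-1}$ beats the $h^0$ factor.
\end{itemize}
For each $R$ I would write the exponent $4-R$ of $h$ together with the worst face deficit, and verify that after substituting $h\lesssim N^{\theta-1/2}$ the result is at most $N^{-4\theta}+N^{-1/2}$.

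The completely fragmented packet ($r=q$, four two-vertex components forming a four-cycle) is exactly the case where the regular weights multiply along a cycle, and the macroscopic covariance reduces to a trace of $\rho_h$. For it I would use the trace bound: the packet equals $N^{-2}\sum\rho_h(i_1-i_2)^q\rho_h(i_2-i_3)^q\cdots$ up to constants. This is bounded by $N^{-2}\cdot N\|\rho_h^q\|_{\ell^2}^{\,\ldots}$; concretely, by Young's inequality it is at most $N^{-1}\|\rho_h\|_{\ell^1}^{\,\cdot}$-type quantities, which are $O(N^{-1})$ by \eqref{eq:rho-l1}. It is therefore negligible.

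The main obstacle is the bookkeeping for $R=1,2$, where a root saves vertices but costs $h^{-1}$. I would expect the binding cases to be exactly those producing the two exponents $N^{-4\theta}$ and $N^{-1/2}$. If the crude bound $e\ge k$ turns out to be insufficient for some face, I would refine it using the fact that macrodegrees in $\overline G$ equal $2r\ge4$. This gives $e\ge 2k$ for faces of the unrooted quotient, and I would retain the root-edge charges when bounding those faces.
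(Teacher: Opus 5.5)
Your route is the paper's. You use the root/regular expansion of Lemma~\ref{lem:root-regular} in each primitive component, the count $h^{4-R}$, the capacities $c_R$ of Lemma~\ref{lem:root-support-capacity}, Lemma~\ref{lem:dyadic-chain-face-reduction} on the contracted quotient $Q_R$, and a separate $\ell^1$/trace bound for the fragmented packet. Keeping $h^{4-R}$ outside and taking $\alpha=0$ is equivalent to the paper's $\alpha_R=(4-R)(1/2-\theta)$. Your $R=0$ and $R=1$ computations reproduce the paper's binding rates $N^{-4\theta}$ and $N^{-1/2}$.

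The step for $R=2,3$, however, does not close as written. If the quotient sum is bounded only by $N^0$, you get $h^2\lesssim N^{-(1-2\theta)}$ and $h\lesssim N^{-(1/2-\theta)}$. These are \emph{not} dominated by $N^{-4\theta}+N^{-1/2}$ when $\theta>1/4$, respectively $\theta>1/10$. Both ranges are allowed, since only $0<\theta<1/q\le1/2$ is assumed. What you need is the two-cell face. When $c_R=2$, the quotient $Q_R$ is connected and Eulerian, so $e\ge2$ and the face deficit is $2-2+2\theta=2\theta$; the one-cell face has deficit $1$. By the convexity identity \eqref{eq:chain-deficit-convex} the lattice sum is then $O(\log N\,N^{-2\theta})$. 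The two rows become $N^{-1}$ and $N^{-1/2-\theta}$, exactly the paper's table entries $1$ and $1/2+\theta$. This uses only your own inequality $2-k+\theta e\ge 2-k(1-\theta)$ at $k=2$, so the fix is within reach, but the verification you deferred would fail with the bound you actually wrote.

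Two smaller corrections. First, in the fragmented packet each two-vertex component with $q$ parallel edges is one complete increment covariance. It therefore contributes one factor $\rho_h(i-j)$ up to a constant, not $\rho_h(i-j)^q$. The four-cycle sum is then at most $n\,\|\rho_h\|_{\ell^1}^3\sup_k|\rho_h(k)|$, times $\|u\|_1^4$ for the coefficients. With the prefactor $N^{-2}$ and \eqref{eq:rho-l1} this gives $O(N^{-1})$; the paper writes the same bound as $N^{-2}|\operatorname{Tr}((D_uR_{h,N})^4)|$ together with the Schur test. Second, your fallback claim $e\ge 2k$ on the unrooted quotient is false in general. The cut $\{1,2\}\,|\,\{3,4\}$ carries only $2(2r-p)$ edges, which equals $2$ when $p=2r-1$. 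It is not needed, since $e\ge k$ suffices once the two-cell face is used as above.
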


\begin{proof}
We prove the root-sector count that complements the nested-cut estimate.
Fix a diagram from Lemma~\ref{lem:four-block-grammar} and select the root
term of Lemma~\ref{lem:root-regular} in exactly $R$ primitive components.
Since there are eight primitive vertices in total, its physical $h$ factor
is $h^{4-R}$.  Let $c_R$ be the number of connected components of the
resulting root-support hypergraph, isolated macroblocks included.
Lemma~\ref{lem:root-support-capacity} gives
\begin{equation}
 c_0=4,
 \qquad c_1\le3,
 \qquad c_2\le2,
 \qquad c_3\le2,
 \qquad c_4=1,
 \label{eq:root-hypergraph-components}
\end{equation}
and shows that the contracted regular quotient is connected Eulerian, with
$e\ge k$ on every partition into $k\ge2$ cells and $e\ge8$ on the discrete
four-cell face when $R=0$.

The root conditions identify the indices in each connected component of
the root-support hypergraph up to one of finitely many shifts in
$\{-1,0,1\}$.  Hence there are at most $c_R$ free translations.  Contract
these root supports and denote the resulting regular quotient by $Q_R$.

Put $a_*=1/2-\theta$.  Fix a choice of $R$ root components.  If the
corresponding root constraints are inconsistent, the term vanishes.
Otherwise, choose one free integer coordinate in each connected component
of the root-support hypergraph.  After expanding the finitely many regular
weights and deleting bounded loop factors, the resulting term is bounded
by
\begin{equation}
 C N^{-2}h^{4-R}
 \sum_{\mathbf j\in I_N^{\mathcal V_R}}
 \prod_{e=\{v,w\}\in\mathcal E_R}
 \left\langle j_v-j_w+\delta_e\right\rangle^{-\theta},
 \label{eq:root-sector-quotient-sum}
\end{equation}
where
\[
 |\mathcal V_R|=c_R,
 \qquad |\delta_e|\le C,
\]
and $Q_R=(\mathcal V_R,\mathcal E_R)$ is the contracted regular quotient.
As observed above, $Q_R$ is connected and every one of its cuts has
positive even size.

Since $h\le LN^{-a_*}$, set
\[
 \alpha_R=(4-R)a_*.
\]
Lemma~\ref{lem:dyadic-chain-face-reduction} applies to
\eqref{eq:root-sector-quotient-sum}.  For a partition $\mathcal P$ of
$\mathcal V_R$, with
\[
 k=k(\mathcal P),
 \qquad e=e_{Q_R}(\mathcal P),
\]
its face deficit is
\begin{align}
 D_{\alpha_R}(\mathcal P)
 &=2+(4-R)\left(\frac12-\theta\right)-k+\theta e \notag\\
 &=4-\frac R2-k+\theta(e-4+R)
 =:\delta_R(k,e).
 \label{eq:root-sector-deficit}
\end{align}
Moreover, identity \eqref{eq:chain-deficit-convex} shows that the exponent
of every multiscale chain is a convex combination of these face deficits.
It is therefore enough to check the admissible pairs $(k,e)$:
\begin{center}
\begin{tabular}{c|c|c}
$R$ & admissible faces & $\inf\delta_R$\\
\hline
$0$ & $(4,e\ge8),(3,e\ge3),(2,e\ge2),(1,0)$ &
$\min\{4\theta,1-\theta\}$\\
$1$ & $(3,e\ge3),(2,e\ge2),(1,0)$ & $1/2$\\
$2$ & $(2,e\ge2),(1,0)$ & $1$\\
$3$ & $(2,e\ge2),(1,0)$ & $1/2+\theta$\\
$4$ & $(1,0)$ & $1$
\end{tabular}
\end{center}
Indeed the five row minima are respectively
\begin{align*}
&\min\{4\theta,1-\theta,2-2\theta,3-4\theta\}
=\min\{4\theta,1-\theta\},\\
&\min\{1/2,3/2-\theta,5/2-3\theta\}=1/2,\\
&\min\{1,2-2\theta\}=1,\\
&\min\{1/2+\theta,3/2-\theta\}=1/2+\theta,\qquad 1,
\end{align*}
because $0<\theta<1/q\le1/2$.  Since $1-\theta>1/2$, every
nonfragmented bulk diagram gains at least $\min\{4\theta,1/2\}$.

The number of partition chains is finite and each chain contains at most
three dyadic scales, so their summation contributes only a fixed power of
$\log N$.  The only zero-defect active two-port sector corresponds to
$r=1$; it is absent here because $\mathcal B_N$ contains only the orders
$2\le r\le q$.
For the fragmented packet, let $n=\lfloor NT\rfloor$, set
$R_{h,N}=(\rho_h(i-j))_{0\le i,j<n}$, put
$w_i=\sum_j u_j\1_{\{i<\lfloor Nt_j\rfloor\}}$, and let
$D_u=\operatorname{diag}(w_i)$.  Each fragmented four-cycle is bounded by
\[
 \frac C{N^2}\left|\operatorname{Tr}\bigl((D_uR_{h,N})^4\bigr)\right|
 \le\frac{Cn}{N^2}\|D_uR_{h,N}\|_{\rm op}^4
 \le\frac C N\|u\|_1^4\|\rho_h\|_{\ell^1}^4=O(N^{-1})
\]
by \eqref{eq:rho-l1} and the Schur test.  The number of diagrams depends only on the fixed
$q$, so summation proves \eqref{eq:uniform-fourcopy}.
\end{proof}

We record the Hilbert-space implication of
Proposition~\ref{prop:bulk-fourcopy}.

\begin{lemma}[Contraction Cauchy--Schwarz]
\label{lem:contraction-CS-crossover}
For $f\in\mathcal H^{\odot m}$, $g\in\mathcal H^{\odot n}$, and
$0\le\ell\le m\wedge n$,
\begin{equation}
 \|f\otimes_\ell g\|^2
 =\langle f\otimes_{m-\ell}f,
          g\otimes_{n-\ell}g\rangle
 \le\|f\otimes_{m-\ell}f\|
      \|g\otimes_{n-\ell}g\|.
 \label{eq:contraction-CS-crossover}
\end{equation}
\end{lemma}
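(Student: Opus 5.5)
The plan is to prove the identity in \eqref{eq:contraction-CS-crossover} by a direct Fubini computation on the contracted variables, and then obtain the inequality from the Cauchy--Schwarz inequality in $\mathcal H^{\otimes 2\ell'}$ for the appropriate order.

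Realize $\mathcal H$ as an $L^2$ space, which suffices by isometry, and write $f(\mathbf a,\mathbf s)$ with $\mathbf a\in T^{\ell}$, $\mathbf s\in T^{m-\ell}$, and $g(\mathbf a,\mathbf t)$ with $\mathbf t\in T^{n-\ell}$. Then
\[
 (f\otimes_\ell g)(\mathbf s,\mathbf t)=\int f(\mathbf a,\mathbf s)g(\mathbf a,\mathbf t)\dd\mathbf a.
\]
Squaring and integrating over $(\mathbf s,\mathbf t)$ gives a fourfold integral
\[
 \int f(\mathbf a,\mathbf s)f(\mathbf b,\mathbf s)g(\mathbf a,\mathbf t)g(\mathbf b,\mathbf t)\dd\mathbf a\dd\mathbf b\dd\mathbf s\dd\mathbf t .
\]
Integrating first in $\mathbf s$ produces $(f\otimes_{m-\ell}f)(\mathbf a,\mathbf b)$, using symmetry of $f$ to move the contracted block to the $\mathbf s$ positions. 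Integrating in $\mathbf t$ produces $(g\otimes_{n-\ell}g)(\mathbf a,\mathbf b)$. The remaining integral over $(\mathbf a,\mathbf b)\in T^{2\ell}$ is exactly the scalar product $\langle f\otimes_{m-\ell}f,g\otimes_{n-\ell}g\rangle$ in $\mathcal H^{\otimes2\ell}$. Cauchy--Schwarz then yields the inequality.

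The only step needing care is the justification of Fubini. For this, first take $f,g$ as finite sums of symmetric tensors of an orthonormal basis, or bounded compactly supported kernels as in Lemma~\ref{lem:diagramtruncation}, where all integrals are absolutely convergent. Then pass to the limit using continuity of contractions, $\|f\otimes_r g\|\le\|f\|\|g\|$, applied to both sides.

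A secondary point is the ordering of the variables. No symmetrization is applied to the contractions, and the identity pairs $\mathbf a$ with $\mathbf a$ and $\mathbf b$ with $\mathbf b$ in both factors consistently, so the unsymmetrized scalar product is the correct one. The edge cases $\ell=0$ and $\ell=m\wedge n$ are covered by the same computation, with empty integrations read as pointwise products.
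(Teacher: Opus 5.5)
Your proposal is correct and follows the paper's proof: you expand the squared norm, integrate out the free variables of $f$ and then those of $g$ by Fubini, identify the remaining integral over the contracted variables as $\langle f\otimes_{m-\ell}f,g\otimes_{n-\ell}g\rangle$, and conclude with Cauchy--Schwarz. Your approximation argument for Fubini is a valid extra detail the paper omits. Approximation is not actually needed, though: Tonelli applied to $|f|,|g|$ bounds the absolute fourfold integral by $\|f\|^2\|g\|^2$, which justifies Fubini directly.
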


\begin{proof}
Write the contracted variables as $a,b$ in the squared norm, apply Fubini,
and integrate first the two copies of the free variables of $f$ and then
those of $g$.  This gives the identity; Cauchy--Schwarz gives the
inequality.
\end{proof}

\begin{proposition}[Bulk central limit and asymptotic independence]
\label{prop:bulk-clt-crossover}
If $h_NN^{1/2-\theta}=O(1)$, then
\begin{equation}
 \mathcal B_N\Longrightarrow\sqrt3B
 \label{eq:bulk-functional-fdd}
\end{equation}
in finite-dimensional distributions.  Jointly with the active sector,
\begin{equation}
 (\mathcal A_N,\mathcal B_N)
 \Longrightarrow
 (\lambda\overline C_{q,\theta}I_2(K^{(\gamma)}),\sqrt3B)
 \label{eq:active-bulk-independent}
\end{equation}
along every subsequence on which $\lambda_N\to\lambda<\infty$, and the two
limiting processes are independent.
\end{proposition}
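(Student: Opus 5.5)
The proof applies the multivariate fourth-moment (Peccati--Tudor / Nourdin--Peccati) criterion to finite vectors of fixed-chaos components. Fix times $t_1,\ldots,t_\ell$ and work along a subsequence with $\lambda_N\to\lambda<\infty$. The vector $(\mathcal B_N(t_j))_j$ decomposes orthogonally into its chaos components $I_{2r}(b_{r,N,t_j})$, $2\le r\le q$. By the multivariate criterion for vectors of multiple integrals of fixed orders, asymptotic normality of the whole vector follows from two facts. First, the covariance matrix must converge; Proposition~\ref{prop:bulk-covariance} gives $3(t_i\wedge t_j)$ for the total bulk. Second, every componentwise contraction $b_{r,N}[u]\otimes_p b_{r,N}[u]$, $1\le p\le 2r-1$, must tend to zero. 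This is exactly $\Delta_N[u]\to0$ from Proposition~\ref{prop:bulk-fourcopy}, since $(\log N)^M(N^{-4\theta}+N^{-1/2})\to0$.

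One subtlety is that the criterion needs the covariance of each chaos block separately, not only their sum. I would handle this with Cram\'er--Wold. Each linear combination $\sum_j u_j\mathcal B_N(t_j)=\sum_r I_{2r}(b_{r,N}[u])$ is a finite sum of chaoses whose own contractions vanish. The Nualart--Peccati--Ortiz-Latorre/Peccati--Tudor theorem then says a sum of chaos components with vanishing contractions is asymptotically Gaussian with variance equal to the limit of the total variance. That limit is $3\sum_{i,j}u_iu_j(t_i\wedge t_j)$, which gives \eqref{eq:bulk-functional-fdd}. If the per-chaos variances do not converge, I would extract further subsequences to make them converge; the limit law is the same for every subsequence.

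For the joint statement I would adjoin the active coordinates $\mathcal A_N(t_j)\in\mathcal C_2$. Proposition~\ref{prop:uniform-active} with \eqref{eq:active-triangular-bound} gives convergence in $L^2$ of $\mathcal A_N(t_j)$ to $\lambda\overline C_{q,\theta}I_2(K^{(\gamma)}_{t_j})$ after re-expressing on the fixed space via the isometry (the limits are second-chaos variables of a fixed kernel family). When $\lambda=0$, $\mathcal A_N\to0$ in $L^2$ and Slutsky finishes the argument.

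For $\lambda>0$ I would use the mixed-chaos asymptotic independence theorem (Nourdin--Rosi\'nski / Nourdin--Nualart--Peccati). A vector $(F_N,G_N)$ in which $F_N$ converges in law within fixed chaos and $G_N$ lies in higher chaoses converges jointly to independent limits provided the cross contractions $\|a_N\otimes_\ell b_{r,N}\|\to0$ for $1\le\ell\le 2$. Here $a_N$ is the second-chaos kernel of $\mathcal A_N$ and $b_{r,N}$ the bulk kernels. Lemma~\ref{lem:contraction-CS-crossover} bounds $\|a_N\otimes_\ell b_{r,N}\|^2\le\|a_N\otimes_{2-\ell}a_N\|\,\|b_{r,N}\otimes_{2r-\ell}b_{r,N}\|$. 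The first factor is bounded uniformly: at $\ell=2$ it is $\|a_N\|^2$, and at $\ell=1$ it is $\le\|a_N\|^2$, both bounded since $\mathcal A_N$ is $L^2$-bounded. The second factor has $1\le 2r-\ell\le 2r-1$, so it is controlled by $\Delta_N$ and vanishes. Independence then follows because the covariance between distinct chaoses is zero.

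The main obstacle is making sure the hypotheses of the mixed-chaos independence criterion are actually met. In particular, the active limit is non-Gaussian, so one cannot simply use the Gaussian multivariate theorem. I would first try the version requiring only $\|a_N\otimes_\ell b_N\|\to0$ plus tightness of each marginal. If that version demands convergence of $\E[F_N^2G_N^2]-\E F_N^2\E G_N^2$, I would verify it by expanding the product through the product formula, where every correction term is again a cross contraction killed by the Cauchy--Schwarz bound above.
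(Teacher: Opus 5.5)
Your proposal is correct and follows essentially the paper's route. The inputs are the same: Proposition~\ref{prop:bulk-covariance} for the variance, Proposition~\ref{prop:bulk-fourcopy} for the bulk self-contractions, Lemma~\ref{lem:contraction-CS-crossover} for the active--bulk cross contractions, and the strong $L^2$ bound of Proposition~\ref{prop:uniform-active}. The only differences are that you cite Peccati--Tudor and Nourdin--Rosi\'nski (or the Nourdin--Nualart--Peccati refinement) where the paper runs the Malliavin--Stein bracket and the characteristic-function ODE by hand, and that applying Peccati--Tudor chaos by chaos spares you the cross-order bulk contractions the paper bounds separately.

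One correction: independence of the limit comes from the vanishing cross contractions through that theorem, not from the zero covariance between distinct chaoses. If you use the Nourdin--Rosi\'nski version, also note that the second-chaos and Gaussian limits are moment-determinate.
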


\begin{proof}
For a finite linear combination $B_N[u]$, the derivative/product formula
expands
\[
 \left\langle DB_N[u],-DL^{-1}B_N[u]\right\rangle
 -\E[B_N[u]^2]
\]
into finitely many proper contractions between the kernels $b_{r,N}[u]$.
By orthogonality of the chaos orders and
Proposition~\ref{prop:bulk-covariance},
$\sup_N\|b_{r,N}[u]\|<\infty$ for every $r$.  Same-order proper
contractions have norm at most $\Delta_N[u]^{1/2}$.  If kernels $f_N,g_N$
have respective orders $m<n$ and $1\le\ell<m$,
Lemma~\ref{lem:contraction-CS-crossover} bounds the square
of the cross-contraction by the product of two proper self-contractions;
hence its norm is $O(\Delta_N[u]^{1/2})$.  If $\ell=m$, the smaller tensor
is exhausted, one factor is bounded by $\|f_N\|^2$, and the other
is a proper self-contraction of the larger tensor; hence the norm is
$O(\Delta_N[u]^{1/4})$.  These are all terms in the derivative/product
formula.  Therefore
\begin{equation}
 \left\|
 \left\langle DB_N[u],-DL^{-1}B_N[u]\right\rangle
 -\E[B_N[u]^2]\right\|_2
 \le C\Delta_N[u]^{1/4}\longrightarrow0.
 \label{eq:bulk-stein-bracket}
\end{equation}
Proposition~\ref{prop:bulk-covariance} identifies the limiting variance.
Malliavin integration by parts, applied to the characteristic function,
then gives a centered Gaussian limit.  Cram\'er--Wold proves the Brownian
finite-dimensional distributions.

For an active kernel $a_N$ of order two and a bulk kernel $g_N$ of order
$m\ge4$, Lemma~\ref{lem:contraction-CS-crossover} gives
\begin{align*}
 \|g_N\otimes_1a_N\|^2
 &=\langle g_N\otimes_{m-1}g_N,a_N\otimes_1a_N\rangle,\\
 \|g_N\otimes_2a_N\|^2
 &=\langle g_N\otimes_{m-2}g_N,a_N\otimes a_N\rangle.
\end{align*}
The active kernels are bounded and both bulk self-contractions are proper,
so the mixed Malliavin bracket tends to zero.  To make the independence
step explicit, put
\[
 \Phi_N(s,t)=\E\exp\{is\mathcal A_N[u]+it\mathcal B_N[u]\}.
\]
Malliavin integration by parts gives, uniformly for $(s,t)$ in compact
sets,
\[
 \partial_t\Phi_N(s,t)
 =-t\sigma_N^2\Phi_N(s,t)
 -s\E\!\left[e^{is\mathcal A_N[u]+it\mathcal B_N[u]}
 \langle D\mathcal A_N[u],-DL^{-1}\mathcal B_N[u]\rangle\right]+o(1).
\]
The mixed term tends to zero, $\sigma_N^2$ tends to the Brownian variance,
and $\Phi_N(s,0)$ converges by Proposition~\ref{prop:uniform-active}.
Solving the limiting scalar ODE yields the product characteristic
function.  This proves \eqref{eq:active-bulk-independent}.
\end{proof}

\begin{proposition}[Uniform block bounds]
\label{prop:crossover-blocks}
For every consecutive block $I$ of $m\le NT$ indices,
\begin{align}
 \E|\mathcal A_N(I)|^2
 &\le C\left[
 h_N\frac mN
 +\lambda_N^2\left(\frac mN\right)^{2-2\theta}
 \right],
 \label{eq:active-crossover-block}\\
 \E|\mathcal B_N(I)|^2
 &\le C\frac mN
 +\frac{Ch_N^2}{N}\sum_{r=2}^qS_m(2r\theta).
 \label{eq:bulk-crossover-block}
\end{align}
If $(\lambda_N)$ is bounded, polygonal interpolations of the pair are tight
in $C^\eta([0,T])$ for every $\eta<1/2$.  If $\lambda_N\to\infty$, then
$\lambda_N^{-1}\mathcal A_N$ has polygonal interpolations tight in every
$C^\eta([0,T])$, $\eta<1/2$, its step version has the same $D$-limits, and
$\lambda_N^{-1}\mathcal B_N\to0$ in $D([0,T])$.
\end{proposition}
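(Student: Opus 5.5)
The plan is to reduce both block bounds to the two-copy diagram estimates already proved, and then to derive tightness from fixed-chaos hypercontractivity together with the H\"older upgrade of Proposition~\ref{prop:holderupgrade}. Throughout write $h=h_N$, and let $I$ be a block of $m$ consecutive indices.

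\emph{Active block.} By \eqref{eq:active-exact-triangular}, $\mathcal A_N(I)=q^2(q-1)!A_h^2N^{1/2-\theta}I_2(F_{N,I}^{(h)})$. Here $F_{N,I}^{(h)}$ is the kernel \eqref{eq:triangular-active-kernel} with the $i$-sum restricted to $I$. The exact identity \eqref{eq:uniform-FN-norm}, restricted to $I$, gives
\[
\E|\mathcal A_N(I)|^2\le C\,h^2N^{1-2\theta}N^{2\theta-2}\sum_{i,j\in I}|\Gamma_h(i-j)|,
\]
where we used $A_h^4\asymp h^2$. Split the double sum by lag.
\begin{itemize}
\item The resonant lags $|i-j|\le1$ are at most $3m$ pairs. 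By \eqref{eq:active-resonant-bound} each is $O(1/h)$, so they contribute $Ch\,m/N$.
\item For $|k|\ge2$, \eqref{eq:active-regular-tail} gives $|\Gamma_h(k)|\le C\langle k\rangle^{-2\theta}$ uniformly in $h$. Hence these lags contribute at most $Cm^{2-2\theta}$ to the sum. After multiplication by the prefactor this is $Ch^2N^{1-2\theta}(m/N)^{2-2\theta}=C\lambda_N^2(m/N)^{2-2\theta}$.
\end{itemize}
Together these prove \eqref{eq:active-crossover-block}.

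\emph{Bulk block.} Here $\E|\mathcal B_N(I)|^2=N^{-1}\sum_{i,j\in I}\Cov(\xi_{i,h}^{\rm bulk},\xi_{j,h}^{\rm bulk})$. Insert the envelope \eqref{eq:bulk-covariance-envelope} and bound its three parts separately.
\begin{itemize}
\item Since $|\rho_h|\le1$ and $\|\rho_h\|_{\ell^1}=2$ by \eqref{eq:rho-l1}, the term $2\rho_h^2$ is summable uniformly in $h$ and gives $Cm/N$.
\item The local $Ch\1_{\{|k|\le1\}}$ term gives $Chm/N\le Cm/N$.
\item The regular terms give exactly $Ch^2N^{-1}\sum_{r\ge2}S_m(2r\theta)$.
\end{itemize}
This is \eqref{eq:bulk-crossover-block}.

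\emph{Tightness.} When $\lambda_N\le L$, both right-hand sides are at most $C\,m/N$.
\begin{itemize}
\item For the active part, $h\le1$ and $(m/N)^{2-2\theta}\le C_T\,m/N$.
\item For the bulk with $4\theta<1$, $S_m(4\theta)\le Cm^{2-4\theta}$ and $h^2\le L^2N^{2\theta-1}$. Hence $h^2m^{2-4\theta}/N\le L^2(m/N)N^{2\theta-1}m^{1-4\theta}\le L^2(m/N)N^{-2\theta}$. The other orders $r$ and the cases $4\theta\ge1$ are smaller.
\end{itemize}
Both sectors lie in a fixed finite sum of chaoses, so Proposition~\ref{prop:finitechaosmalliavin} upgrades the $L^2$ bound to $\E|\cdot(I)|^p\le C_p(m/N)^{p/2}$ for every $p$. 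The within-cell interpolation argument of Lemma~\ref{lem:block} extends this to the polygonal interpolations with $\chi=1/2$. Proposition~\ref{prop:holderupgrade}, with $p$ arbitrarily large, then gives bounded $C^{\eta}$ moments for every $\eta<1/2$, hence tightness of the pair in $C^\eta([0,T])$.

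\emph{Regime $\lambda_N\to\infty$.} This is the step I expect to need the most care: the normalized bounds must stay uniform even though $h$ is no longer tied to $N^{\theta-1/2}$ from above. Dividing the active bound by $\lambda_N^2$ gives
\[
\frac{h\,m/N}{\lambda_N^2}=\frac mN\cdot\frac{N^{2\theta-1}}{h}\le\frac mN\,N^{\theta-1/2}\lambda_N^{-1}=o(m/N),
\]
plus $(m/N)^{2-2\theta}\le C\,m/N$. Thus the $C^\eta$ tightness of $\lambda_N^{-1}\mathcal A_N$ follows as before. For the bulk, $h^2m^{2-4\theta}/N=\lambda_N^2N^{-2\theta}(m/N)^{2-4\theta}$. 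Hence
\[
\E|\lambda_N^{-1}\mathcal B_N(I)|^2\le C\bigl(\lambda_N^{-2}+N^{-2\theta}\bigr)\,m/N,
\]
with analogous bounds when $4\theta\ge1$. Hypercontractivity and the chaining in Proposition~\ref{prop:holderupgrade} give $\E\|\lambda_N^{-1}\overline{\mathcal B}_N\|_{C^\eta}^p\to0$. Finally, the step version differs from the polygonal one by at most the maximum over $O(N)$ one-cell increments. By the $p$-th moment bound and a union bound this maximum is $O_P(N^{1/p-1/2})\to0$ for $p>2$, so the two versions have the same $D([0,T])$ limits.
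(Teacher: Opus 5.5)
Your proposal is correct and follows essentially the same route as the paper: you sum the two-copy covariance envelopes over the block, upgrade by fixed-chaos hypercontractivity and Kolmogorov/H\"older chaining, and compare step and polygonal versions by a union bound over one-cell increments (your active bound via \eqref{eq:uniform-FN-norm}, \eqref{eq:active-resonant-bound} and \eqref{eq:active-regular-tail} is just the summed form of \eqref{eq:active-covariance-envelope}). The differences are cosmetic. For $\lambda_N\to\infty$ you use the cruder bound $\E|\lambda_N^{-1}\mathcal A_N(I)|^2\le C\,m/N$ instead of the paper's $\delta^{3/2-\theta}$ refinement on grid increments, which still suffices for every $\eta<1/2$. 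Your vanishing $C^\eta$ moment for $\lambda_N^{-1}\overline{\mathcal B}_N$ follows by applying Proposition~\ref{prop:holderupgrade} to the family rescaled by its vanishing block constant.
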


\begin{proof}
Sum \eqref{eq:active-covariance-envelope} and
\eqref{eq:bulk-covariance-envelope} over the block.  For the active long
tail,
\[
 \frac{h_N^2}{N}S_m(2\theta)
 \le C\lambda_N^2(m/N)^{2-2\theta}.
\]
For the bulk, the fragmented packet is bounded by $Cm/N$ because
$\|\rho_h\|_{\ell^1}=2$, and the slowest residual exponent is $4\theta$.
This proves \eqref{eq:active-crossover-block}--
\eqref{eq:bulk-crossover-block}.

Put $\delta=m/N$ and recall
\[
 S_m(a)\le C
 \begin{cases}
 m^{2-a},&a<1,\\
 m\log(2m),&a=1,\\
 m,&a>1.
 \end{cases}
\]
If $\lambda_N\le L$, then $h_N\le LN^{-1/2+\theta}$.  The $r=2$ residual
in \eqref{eq:bulk-crossover-block} is $O_L(\delta)$: after division by
$\delta$, the three cases $4\theta<1$, $4\theta=1$, and $4\theta>1$ are
bounded respectively by
\[
 N^{-1+2\theta}m^{1-4\theta},\qquad
 N^{-1/2}\log N,\qquad h_N^2.
\]
The higher $r$ are smaller.  Likewise
$h_N\delta+\lambda_N^2\delta^{2-2\theta}\le C_{L,T}\delta$.
Hypercontractivity of the fixed finite sum of chaoses therefore gives, on
the grid,
\[
 \E|\Delta_I\mathcal A_N|^p+
 \E|\Delta_I\mathcal B_N|^p\le C_{p,L,T}\delta^{p/2}.
\]
Linear interpolation inside a cell preserves this estimate.  Choosing
$p$ arbitrarily large in Kolmogorov's criterion gives tightness in every
little H\"older space $c^\eta$, $\eta<1/2$.  Moreover
\[
 \P\!\left(\max_{i\le NT}|\Delta_i\mathcal A_N|
 +\max_{i\le NT}|\Delta_i\mathcal B_N|>\varepsilon\right)
 \le C_{p,\varepsilon}N^{1-p/2}\to0
\]
for $p>2$, so step and polygonal versions have the same limits.

Suppose now that $\lambda_N\to\infty$.  Dividing
\eqref{eq:bulk-crossover-block} by $\lambda_N^2$ gives
\begin{equation}
 \E|\lambda_N^{-1}\mathcal B_N(I)|^2
 \le
 \begin{cases}
 C\lambda_N^{-2}\delta+CN^{-2\theta}\delta^{2-4\theta},&4\theta<1,\\
 C\lambda_N^{-2}\delta+CN^{-1/2}\log N\,\delta,&4\theta=1,\\
 C\lambda_N^{-2}\delta+CN^{-1+2\theta}\delta,&4\theta>1.
 \end{cases}
 \label{eq:supercritical-bulk-blocks}
\end{equation}
In the first case hypercontractivity bounds the $L^p$ increment by
$C_p[\lambda_N^{-1}\delta^{1/2}+N^{-\theta}\delta^{1-2\theta}]$;
the other cases have exponent $1/2$.  Choose
$p\min\{1/2,1-2\theta\}>1$.  The coefficients tend to zero, the
finite-dimensional distributions tend to zero, and the same polygonal and
single-cell argument proves
$\lambda_N^{-1}\mathcal B_N\Rightarrow0$ in $D([0,T])$.

Finally \eqref{eq:active-crossover-block} divided by $\lambda_N^2$ gives
\[
 \E|\lambda_N^{-1}\mathcal A_N(I)|^2
 \le C\frac{h_N}{\lambda_N^2}\delta
 +C\delta^{2-2\theta},
 \qquad
 \frac{h_N}{\lambda_N^2}
 =\frac1{\lambda_NN^{1/2-\theta}}\to0.
\]
For grid increments $\delta\ge N^{-1}$, the first term is eventually
bounded by $\delta^{1+(1/2-\theta)}$.  Hypercontractivity therefore proves
tightness of the active polygonal family in every $c^\eta$, $\eta<1/2$,
and the single-cell argument again
identifies its step version.
\end{proof}

\section*{Statements and declarations}

\paragraph{Author contribution.}
The author conceived the project and is solely responsible for its
conceptualization, formal analysis, proofs, interpretation, and final text.

\paragraph{Funding.}
No funding was received for conducting this study.

\paragraph{Competing interests.}
The author declares no competing interests.

\paragraph{Data availability.}
No datasets were generated or analysed during the current study.

\end{document}